\def\classification#1{\def\@class{#1}}
\DeclareFontFamily{OT1}{rsfs}{}
\DeclareFontShape{OT1}{rsfs}{n}{it}{<-> rsfs10}{}
\DeclareMathAlphabet{\mathscr}{OT1}{rsfs}{n}{it}
\DeclareMathOperator{\Ad}{Ad}
\DeclareMathOperator{\GL}{GL}
\DeclareMathOperator{\SL}{SL}
\DeclareMathOperator{\vdeg}{\overrightarrow{\text{deg}}}
\DeclareMathOperator{\SU}{SU}
\newcommand{\liea}{\mathfrak{a}}
\newcommand{\lieb}{\mathfrak{b}}
\newcommand{\lieg}{\mathfrak{g}}
\newcommand{\lieh}{\mathfrak{h}}
\newcommand{\lieu}{\mathfrak{u}}
\newcommand{\liex}{\mathfrak{x}}
\newcommand{\liegl}{\mathfrak{gl}}
\newcommand{\liev}{\mathfrak{v}}
\newcommand{\liew}{\mathfrak{w}}
\newcommand{\charley}{\rm{char}}
\newcommand{\Z}{\mathbb{Z}}
\newcommand{\Kbar}{\overline{K}}
\newtheorem{prop}{Proposition}[section]
\newtheorem{thm}{Theorem}
\newtheorem{cor}[prop]{Corollary}
\newtheorem{lem}[prop]{Lemma}
\numberwithin{equation}{section}
\begin{document}
\title{Growth in solvable subgroups of $\GL_r(\mathbb{Z}/p\mathbb{Z})$}
\author{Nick Gill}
\address{Department of Mathematics and Statistics, The Open University, Milton Keynes MK7 6AA, UK}
\author{Harald Andr\'es Helfgott}
\address{ENS-DMA, 45 rue d'Ulm, F-75230, Paris, France}

\begin{abstract}
Let $K=\Z/p\Z$ and let $A$ be a subset of $\GL_r(K)$ such that $\langle A
\rangle$ is solvable. We reduce the study of the growth of $A$ under the
group operation to the nilpotent setting. Fix a positive number $C\geq 1$; we prove that either $A$ grows (meaning $|A_3|\geq C|A|$), 
or else there are groups $U_R$ and $S$, with $U_R\unlhd S \unlhd \langle A\rangle$, such that $S/U_R$ is nilpotent, $A_k\cap S$ is large and $U_R\subseteq A_k$, where $k$ depends only on the rank $r$ of $\GL_r(K)$.

Here $A_k = \{x_1 x_2 \dotsb x_k : x_i \in A \cup A^{-1} \cup \{1\}\}$, and the implied constants depend only on the rank $r$ of $\GL_r(K)$.

When combined with recent work by Pyber and Szab\'o, the main result of this paper implies that it is possible to draw the same conclusions without supposing that $\langle A \rangle$ is solvable.


{\it MSC2010: 20G40, 11B30.}

\end{abstract}

\maketitle

\section{Introduction}
Growth in abelian groups has been the focus of classical additive
combinatorics;
the topic is well-studied by now, though much remains to be known.
The study of growth in other groups by means of related techniques
is a more recent phenomenon. 

It is now understood that nilpotent groups behave, in broad terms,
 partly like abelian groups when it comes to growth; for example,
true analogues of Freiman's theorem can be proven to hold there.
 Growth in simple groups -- which is qualitatively different -- 
was studied in \cite{helfgott2}, and the techniques involved were
generalised and developed further in \cite{helfgott3}; after
further work (\cite{bgsu2}, \cite{helfgill}, \cite{dinai}
and \cite[\S 4.1]{varju}), a 
generalisation to all simple groups of bounded rank was completed in
 \cite{ps2} and \cite{bgt2}.

It remains to consider growth in solvable groups, which are in some
sense complementary to simple groups, and display, in general, behaviour
different from that of nilpotent groups. There was some work
on this in \cite{helfgott3}, but the general case remained unsolved.

The main result of this paper is the following:
\begin{thm}\label{t: main}
Let $K=\Z/p\Z$, and let $A$ be a subset of $\GL_r(K)$ such that $\langle A \rangle$ is solvable. Then, for every $C\geq 1$, either
\begin{enumerate}
 \item\label{i: one} $|A_3|\geq C|A|$, or else
\item\label{i: two} there is a unipotent subgroup $U_R$, a solvable group $S$ and an integer $k\ll_{r}1$, such that
\begin{itemize}
 \item $U_R$ and $S$ are both normal in $\langle A \rangle$, and $S/ U_R$ is nilpotent,
\item $A_k$ contains $U_R$, and
\item $|A_k\cap S| \geq C^{-O_r(1)}|A|$. 
\end{itemize}
\end{enumerate}
\end{thm}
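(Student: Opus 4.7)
The plan is to reduce the solvable growth problem to working inside a Borel subgroup $B = T \ltimes U$ of upper triangular matrices, handle the abelian torus quotient by classical additive combinatorics, and then extract unipotent root subgroups from $A_k$ by a commutator/escape argument. First, after passing to a subgroup of $\langle A\rangle$ of index bounded in terms of $r$ and conjugating in $\GL_r(\bar K)$ (neither operation affects the conclusions beyond the implied constants), the Lie--Kolchin theorem applied to the Zariski closure of $\langle A\rangle$ lets us assume that $\langle A\rangle$ lies in a Borel subgroup $B$. Let $\pi\colon B\to T$ be the projection to the diagonal torus, and decompose $U$ under the adjoint action of $T$ into root subgroups $U_\alpha$, $\alpha$ ranging over the roots.

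Apply abelian growth theory (Plünnecke--Ruzsa and Freiman-type results in $T\cong(K^\times)^m$) to $\pi(A)\subseteq T$. If $|\pi(A_3)|\geq C|\pi(A)|$, then conclusion (a) already holds; otherwise, for some $k\ll_r 1$, the set $\pi(A_k)$ is essentially a single coset of a subgroup $T_0\leq T$ with $T_0\subseteq\pi(A_k)$, and a pigeonhole step extracts $A'\subseteq A_k$ of size $\geq C^{-O_r(1)}|A|$ with $\pi(A')\subseteq T_0$. Now, for any root $\alpha$ with $\alpha(T_0)\neq 1$, and for $t\in A_k\cap\pi^{-1}(T_0)$ and $u\in A_k$, the commutator $[t,u]$ lies in $U$ and its $U_\alpha$-component is proportional to $(\alpha(t)-1)$ times the $U_\alpha$-component of $u$. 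A pivot/escape-from-subvarieties argument in the spirit of \cite{helfgott3} and \cite{ps2}, exploiting that $\alpha(t)$ attains many values as $t$ ranges over $A_k\cap T_0$ (unless $A$ grows), then forces all of $U_\alpha$ into $A_{k'}$ for some $k'\ll_r 1$. Iterating over the $O_r(1)$ roots $\alpha$ with $\alpha(T_0)\neq 1$ yields a $T_0$-invariant unipotent subgroup $U_R\subseteq A_{k''}$.

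Finally, set $S=\pi^{-1}(T_0)\cap\langle A\rangle$; since $T_0 U$ is normal in $B$ (the abelian torus $T$ normalizes $T_0$ trivially and normalizes $U$), $S$ is normal in $\langle A\rangle$. The quotient $S/U_R$ embeds in $(U/U_R)\rtimes T_0$, and by construction $T_0$ acts trivially on $U/U_R$, so $S/U_R$ is nilpotent; the lower bound on $|A_k\cap S|$ is immediate from the construction of $A'$. The main obstacle is the commutator/escape argument: one must activate every root $\alpha$ with $\alpha(T_0)\neq 1$ while keeping the multiplicative constant $k$ bounded uniformly in $r$, and handle roots on which $T_0$ acts only partially trivially (these feed the nilpotent quotient rather than $U_R$) without losing the size bound on $A_k\cap S$. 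Maintaining enough genericity in $A\cap T_0$ to pivot through all root directions simultaneously is the delicate combinatorial heart of the proof.
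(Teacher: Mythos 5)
Your reduction to a Borel subgroup and the general shape (torus quotient plus commutators into root subgroups) are in the spirit of the paper, but there is a genuine gap at the torus step, and it is not repairable by the route you propose. You apply ``Pl\"unnecke--Ruzsa and Freiman-type results'' to $\pi(A)\subseteq T\cong (K^\times)^m$ to conclude that $\pi(A_k)$ is essentially a coset of a subgroup $T_0$ with $T_0\subseteq\pi(A_k)$. First, no Freiman-type theorem with polynomial dependence on the doubling constant is known even in $\Z$, so this step cannot produce the $C^{-O_r(1)}$ losses the theorem claims; the paper points this out explicitly in the introduction as the reason the result must be a \emph{reduction to} the nilpotent case rather than a structure theorem including it. Second, the conclusion you draw is simply false: a set with $|\pi(A_3)|<C|\pi(A)|$ need not place any nontrivial subgroup of $T$ inside $\pi(A_k)$ for bounded $k$ (take $\pi(A)$ to be a geometric progression $\{g,g^2,\dots,g^n\}$ in $K^\times$ with $n$ small relative to $\mathrm{ord}(g)$). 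The paper never needs $\pi(A_k)$ to contain a torus subgroup: its group $S$ is $\langle A\rangle\cap G(K)$ for a suitable algebraic $G=UT'$, and the dichotomy driving the argument is not ``small doubling in $T$ versus growth'' but ``large intersection of $A_k$ with the kernel of some root (then shrink the torus and iterate, at most $r^3$ times) versus most elements acting fixed-point-freely on $U_R$ (then the pivoting argument, Prop.~\ref{prop:generous}, either grows $A$ or captures a normal subgroup of $U_R$ inside $A_k$).'' Relatedly, your parenthetical ``unless $A$ grows'' for the case where $\alpha(t)$ takes few values is wrong: concentration of $A$ in a root kernel does not force growth; it triggers the descent to a smaller torus.

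The second, acknowledged, gap is the non-abelian unipotent radical. Your commutator formula ($U_\alpha$-component of $[t,u]$ proportional to $(\alpha(t)-1)$ times that of $u$) holds only modulo the derived subgroup of $U$, and ``iterating over the roots'' does not by itself recover all of $U_R$: one must descend the lower central series of $U$, at each level isolating the root subgroups of the given height by first passing to elements that are trivial on all lower heights (the paper's Lem.~\ref{l: nice actors}), working inside the Cartan subgroup $E U^{j-1}$ via Schreier's lemma, and re-running the abelian-radical argument there, all while keeping $k\ll_r 1$. This is Section~\ref{s: descent} of the paper and is where most of the technical work lies; identifying it as ``the delicate combinatorial heart'' is accurate, but it is precisely the part that remains to be done.
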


Here we write $p$ for a prime number, and $A_k$ for $\{g_1 \dotsb
g_k : g_i\in A\cup A^{-1} \cup \{1\} \}$. For variables $x,y,z$ taking values in $\mathbb{R}$ we write $x=O_y(z)$ to mean that there is a function $f:\mathbb{R}\to\mathbb{R}$ such that $|x|\leq f(y)z$.

Note that, if (\ref{i: one}) does not hold, then 
$|A_k\cap S| \geq C^{-O_r(1)} |A|$ implies immediately that
$A$ is contained in the union of at most $C^{O_r(1)}$ left
(or right) cosets of $S$ (see Lem. \ref{lem:odt}). This is meaningful as soon as $C< |A|^{\delta_r}$, $\delta_r>0$
a constant; in other words, Thm.~\ref{t: main} is stating that $|A_3|\geq
|A|^{1+\delta}$ (for any $\delta \in (0,\delta_r)$) unless $A$
is contained in relatively few cosets of a solvable group (and obeys some additional conditions). In other words, Thm.~\ref{t: main} is
within the family of quantitatively strong results originating in
\cite{helfgott2}.


\subsection{Two extensions}\label{s: extensions}

It turns out that, with a little work, we can strengthen Thm.~\ref{t: main} twice over. The first such improvement will be proved by combining Thm.~\ref{t: main} with work of Pyber and Szab{\'o}. By mutual agreement, this result will be considered joint work with them.

\begin{thm}\label{t: main2}
Let $K=\Z/p\Z$, and let $A$ be a subset of $\GL_r(K)$. Then for every $C\geq 1$, either
\begin{enumerate}
 \item\label{i: one2} $|A_3|\geq C|A|$, or else
\item\label{i: two2} there are two subgroups $H_1\leq H_2$ in $\GL_r(K)$ and an integer $k\ll_{r}1$, such that
\begin{itemize}
 \item $H_1$ and $H_2$ are both normal in $\langle A \rangle$, and $H_2/ H_1$ is nilpotent,
\item $A_k$ contains $H_1$, and
\item $|A_k\cap H_2|\geq C^{-O_r(1)}|A|$.
\end{itemize}
\end{enumerate}
\end{thm}

To make things clear: we are able to remove the requirement that $\langle A \rangle$ is solvable, and state the result for all subsets of $\GL_r(\Z/p\Z)$ (note that, in this more general setting, we cannot conclude that $H_1$ is unipotent). In effect, Thm. \ref{t: main2} reduces the study of the growth of any set in $\GL_r(\Z/p\Z)$ to the nilpotent setting.

It is reasonable to think that a result similar to Thm. \ref{t: main2} should hold for $K$ any finite field; indeed such a result has been conjectured by Lindenstrauss and the second author \cite{taoblog}. In this more general setting, however, it is unclear whether we can find subgroups $H_1$ and $H_2$ with all of the given properties, particularly that of being normal in $\langle A \rangle$. The proof of Thm. \ref{t: main2} that we give in \S\ref{s: extension} relies on the fact that, in unipotent subgroups of $\GL_r(K),$ a subgroup chain $U_1>U_2>\cdots$ has length less than $r^2$. We cannot, of course, use this fact when $K$ is an arbitrary finite field.

The second improvement will be proved by combining Thm.~\ref{t: main2} with work of Tointon \cite{tointon}.\footnote{We thank an anonymous referee for pointing out that our results can be extended in this way, and for sketching the proof.}

\begin{thm}\label{t: main3}
Let $K=\Z/p\Z$, let $C\geq 1$ and let $A$ be a $C$-approximate subgroup of $\GL_r(K)$. Then $A$ is $\exp(C^{O_r(1)})$-controlled by a coset nilprogression of rank $C^{O_r(1)}$ and step at most $r$ that is contained in $A^{C^{O_r(1)}}$.
\end{thm}

Theorem \ref{t: main3} is proved in \S\ref{s: extension 2}, where we also
explain the terminology introduced in the statement. Thm.~\ref{t: main3}
represents the state-of-the-art for general statements concerning growth in 
$\GL_r(K)$; getting polynomial dependencies here would require proving
the Freiman-Ruzsa theorem with polynomial dependencies over abelian groups
-- and that is a difficult open problem \cite{green}. 

\subsection{Methods and structure of the paper}

Our main result is ultimately based on Prop.~\ref{prop:generous}, which is
an improved version of a result of the second author's
(\cite[Cor. 3.2]{helfgott3}). This result has sometimes been called
a ``sum-product result for group actions". This is correct in a historical
sense, in that it has its roots in the sum-product theorems of the
type in \cite{bkt, gk, bk}. At the same time,
it does not use these theorems, but rather translates the underlying idea
into the context of groups acting on groups: a group operation replaces the sum,
while the action replaces the product. This is a basic theme in
this paper: our solvable group must be separated into a maximal torus,
which acts, and a unipotent group, which is acted upon.

In order to apply Prop.~\ref{prop:generous} our first job is to reduce the question of proving growth for an abstract solvable subgroup of $\GL_r(K)$ to the question of proving growth in a subgroup of a connected solvable linear algebraic group $G$. This reduction is done in \S\ref{s: reduction}. Thus we can assume that $A\subseteq G(K) \leq \GL_r(K)$ where $G=UT$ with $U$ the unipotent radical of $G$ and $T$ a maximal torus of $G$. Our method will be to apply Prop.~\ref{prop:generous} to the natural conjugation action of $T$ on $U$.

Our task is to show that if $A$ does not grow rapidly, then we have two subgroups $S$ and $U_R$ with the given properties. By choosing $G$ suitably we can take $S$ to be $G(K)$, the group $U_R$ is defined at \eqref{eq:astar}. Our job is to show that the group $U_R$ is contained in $A_k$ for some $k\ll_r 1$. In the case where $U$ is abelian this fact follows quite easily from an application of Prop.~\ref{prop:generous} (see \S\ref{s: first interesting}). We make use of the property that all elements of $G(K)$ act on $U(K)$ like elements of the torus. More precisely, for all $g\in G(K)$, there exists $t\in T(K)$ such that $gug^{-1}=tut^{-1}$ for all $u\in U(K)$.

When $U$ is not abelian this property does not hold and we cannot apply Prop.~\ref{prop:generous} directly. Instead we resort to a ``descent'' argument, which we describe in \S\ref{s: descent} (this is the first point where we use the fact that our finite field has prime order). Roughly speaking we obtain the group $U_R$ level-by-level: writing $U=U^0>U^1>\cdots$ for the lower central series of $U$, we observe first that $G/U^1$ has an abelian unipotent radical and so we can apply Prop.~\ref{prop:generous} naively, \`a la \S\ref{s: first interesting}, to obtain the group $U_R/U^1(K)$. Next we consider the quotient $G/U^2$ and we seek to obtain the group $U_R/U^2(K)$. There are three components to this task: we must first construct a set of elements in $A_k$ which act like elements of the torus on $U(K)/U^2(K)$; we then use these elements with Prop.~\ref{prop:generous} to obtain part of $U_R/U^2(K)$; finally there are some elements of $U_R/U^2(K)$ which cannot be obtained this way, but can be obtained as 
commutators of elements in $U_R/U^1(K)$.

Now we repeat this process for subsequent quotients $G/U^3$, $G/U^4$, and so on. Since the nilpotency rank of $U$ is bounded above by $r$ this process terminates after $r$ steps and the result follows. The details of the inductive argument are given in \S\ref{s: proof} where Thm.~\ref{t: main} is proved. In that section we also give a proof of the stronger statement in which $U_R$ is normal in $\langle A \rangle$.

In order to nail down the details of the argument just described we have made use of machinery from the theory of linear algebraic groups. In particular it turns out the exponential map is a very convenient tool, particularly for keeping track of commutators. The theory that we need is given in \S\ref{s: background solvable}. 

The final two sections are devoted to proving the stronger statements given in \S\ref{s: extensions}. In \S\ref{s: extension}, we prove Thm.~\ref{t: main2}; this section is joint work with L\'aszl\'o Pyber and Endre Szab\'o. In \S\ref{s: extension 2}, we prove Thm.~\ref{t: main3}.

\subsection{Generalizations}

\emph{Growth in groups over general finite fields.}
One natural plan is
to extend Thm. \ref{t: main} to the case where $K$ is any finite field
$\mathbb{F}_q$. Indeed, all results in Sections \ref{s: lemmas} to \ref{s: first interesting} of the current document apply in this more general setting.
The difficulties involved in generalizing the rest of the paper to the case
$K = \mathbb{F}_q$, $q=p^\alpha$, $\alpha>1$, 
seem mostly technical; as usual, it may happen that
a second-generation proof will deal with the case $\alpha>1$ automatically
(as was the case for finite simple groups (\cite{bgt2}, \cite{ps2})).

\emph{Growth of finite sets in infinite groups.}
One possible generalization consists in proving Thm.~\ref{t: main} again, as stated,
with $K = \mathbb{Z}/p\mathbb{Z}$ replaced by an infinite field. For $K$ of
characteristic zero, this is in several ways easier than for $K =
\mathbb{Z}/p\mathbb{Z}$: (a) the present proof largely goes through, with
simplifications due to the fact that the finite subgroup structure is much
simpler; (b) real and complex methods are also applicable --
see, e.g., \cite{chang, BG2}.
Here (b) reflects in part the situation in additive combinatorics, where
results on growth in $\mathbb{R}$ are generally older and more direct than
results
on growth in $\mathbb{Z}/p\mathbb{Z}$: over $\mathbb{R}$, one can
exploit an ordering, a metric and a topology that do not exist
over $\mathbb{Z}/p\mathbb{Z}$.

The case of infinite $K$ with positive characteristic cannot really be
easier than the case of $K$ finite, since it contains it as a subcase:
for $K = (\mathbb{Z}/p\mathbb{Z})(T)$ and an algebraic group $G$,
a subset $A\subset G(K)$ could be contained in $G(\mathbb{Z}/p\mathbb{Z})$.
A possible strategy in that case could be to aim to prove a ``reduction"
result, much like Theorem 1 in the present paper: either $A$ grows or it
is essentially contained in $G(K')$, $K'<K$, $K'$ finite (a case which
would then be dealt by a generalisation of the present paper to all finite
fields).

In general, in the present paper, finiteness is a challenge to be coped
with, rather than any sort of key assumption. Non-finiteness, whether of
local or global fields, generally entails additional structure that can do
away with essential difficulties and make multiple approaches possible.
The point here -- as in \cite{helfgott2} and much work since then -- is to
use and develop new techniques that yield growth results even when such
additional structure  is not available.

{\em Flattening of measures in infinite groups.} The other possible
generalization of Theorem 1 to infinite fields is of a stronger kind, viz.,
the kind of generalization pioneered by Bourgain-Gamburd in \cite{bgsu2}.
This involves proving results on the ``flattening" of measures under
convolution rather than on the growth of sets under the group operation.
Such results on measures are particularly useful in proofs of expansion.

In finite fields, statements on growth for sets and the
statements on convolutions of measures are essentially equivalent, as was
shown in \cite[\S 3]{bgexp} (``$\ell_2$ flattening")
by means of the Balog-Gowers-Szemer\'edi theorem. In infinite fields,
results on convolutions of measures are harder. The point of \cite{bgsu2}
(on the group $\SU_2$) is that \cite{helfgott2} is robust enough that,
even though its main result is on finite subsets of finite groups, its
proof can be modified to give a theorem on convolution of measures on an
infinite group of the same Lie algebra type, provided that the distances
among the new elements being constructed are kept track of throughout the
modified proof.

It is our intuition that the ideas in the present paper will yield fruit
in this stronger sense under a treatment similar to that in \cite{bgsu2},
though we have not attempted to do this ourselves.

\subsection{Relation to the previous literature} 

There has been plenty of recent work on growth in solvable and nilpotent groups. Fisher, Katz and Peng \cite{FKP} relate growth in a nilpotent Lie group to growth in its Lie algebra; standard facts about nilpotent algebraic groups (which we outline in Section \ref{s: background solvable}) immediately imply analogous results in the context of nilpotent algebraic groups. Breuillard and Green \cite{BG1} generalised the work of Freiman-Ruzsa and Chang to the torsion-free nilpotent case. Finally Tointon \cite{tointon} has recently proved a Freiman-Ruzsa-type theorem for arbitrary nilpotent groups which, in particular, yields the result of Breuillard and Green as a corollary. As our result is essentially a reduction to the nilpotent case, it complements rather than overlaps with these three articles; indeed we will combine our main result with that of Tointon to prove Thm.~\ref{t: main3}.

While \cite{BG2} treats solvable groups, it is limited to subgroups of $\GL_n(\mathbb{C})$, where the problem yields fairly easily to a direct application of the sum-product theorem in its classical form. The setting of the work of Sanders \cite{sanders} is fairly general, but its conditions are very strong, being of Gromov type.

T.\ Tao proved \cite{taofrei} a structure statement on slowly growing sets
in solvable groups. The main two issues are the following: first, as Tao
directly incorporates ideas from Freiman's theorem, the growth he proves
is at best logarithmic; second, the structure whose presence he proves
(``coset nilprogressions'', \cite[Def.\ 1.11]{taofrei}), besides being
somewhat complicated, involves a series of subgroups $H_{i,0}$ that cannot be
easily quotiented out. A simpler structure (a ``nilprogression'') 
is also shown to exist \cite[Thm.\ 1.17]{taofrei}
 but only for totally torsion-free groups.\footnote{We caution the reader that there are a number of slightly differing definitions of nilprogressions, and coset nilprogressions, in the literature. In particular the definitions used by Tao in his work on solvable groups (which were the first such definitions to appear in the literature) are slightly different from the definitions we use in Thm.~\ref{t: main3}. For that theorem we use the definitions of \cite{bgt11}; see \S\ref{s: extension 2} for full details.}

Using model theory, Hrushovski proved results on slowly growing sets in
$\GL_n(K)$, $K$ any field (see in particular \cite[Cor.\ 5.10]{hrush}). These
results were - like \cite{sanders} - both impressively general and 
quantitatively very weak. Hrushovski's Cor.\ 5.10 is in some sense
orthogonal to most of the work in this paper: it is a
reduction to the solvable setting, whereas our focus will be to reduce the
solvable setting to the nilpotent case.

(The situation is somewhat similar in the case of \cite{bgt11} (based partly on \cite{hrush}), which appeared after the first version of 
the present paper was made publicly available as a preprint. The results
in \cite{bgt11} are very general, to the extent of proving what its authors
call the ``Helfgott-Lindenstrauss conjecture" in a qualitative sense.
However their results are, again, quantitatively very weak. Roughly speaking, they
show that, when some necessary conditions are met, $|A A A|\geq C |A|$ for
$C$ an arbitrarily large constant and
$A$ sufficiently large; in contrast we prove, under stronger conditions including, in particular, an embedding in $\GL_r(K)$, that
$|A A A|\geq C |A|$ with $C = |A|^\delta$, $\delta>0$ a constant.
Indeed it is this form of result that was conjectured by the second author;
see the remarks immediately following \cite[Thm. 1.1]{helfgott3}.)

It is clear that, given our limited state of knowledge on the constants in
Freiman's theorem even in the group $\mathbb{Z}$, any result that includes
cases of relatively rapid growth ($|A|^{1+\delta}\ll |A_3|\ll
|A|^{1+\delta'}$, $\delta, \delta'>0$) must either 
be a reduction to the nilpotent case (like Theorems \ref{t: main} 
and \ref{t: main2}) or have worse-than-polynomial dependence (like Thm.~\ref{t: main3}). A possibility for improvement that might be within reach
could be to strengthen Thm. \ref{t: main3} to give 
$O_r(\exp((\log C)^{O(1)}))$-control, as does Sanders' result \cite{sanders}
over abelian groups; this would, of course, involve strengthening Tointon's
result to give the same kind of control.

Cases $r=2,3$ of Thm. \ref{t: main} were proven in 
\cite[\S 7]{helfgott3}.

\subsection{Acknowledgments}
Pablo Spiga provided help with group theory results; Martin Kassabov provided significant assistance in understanding solvable algebraic groups. Thanks are also due in this regard to Emmanuel Breuillard, Kevin Buzzard, Simon Goodwin, Alex Gorodnik, Scott Murray, L\'aszl\'o Pyber and an anonymous referee. In addition Simon Goodwin pointed out an error in the statement of Lem. \ref{l: kirillov} in an earlier version.

Part of this work was completed while the first author was visiting the University of Western Australia and the University of Bristol; he would like to thank members of both maths departments for providing excellent working conditions, and for their interest in the work at hand. The second author would like to thank the Ecole Polytechnique F\'ed\'erale de Lausanne for hosting him during part of his work on this project.

Section~\ref{s: extension} of this paper is joint work with L\'aszl\'o Pyber and Endre Szab\'o; it is a pleasure to thank them for the warm way in which they have shared their considerable insight.

\section{Background from additive combinatorics}\label{s: lemmas}

Let us establish some notation from additive combinatorics. Our notation in this area is standard and, in particular, is identical to that of \cite{helfgott3}. In this section $G$ is an arbitrary group.

Given a positive integer $k$ and a subset $A$ of a group $G$, we define
$$A_k=\{g_1\cdot g_2\cdots g_k \mid g_i\in A\cup A^{-1}\cup \{1\}\}.$$

Given real numbers $a,b,x_1,\dots, x_n$, we write 
$$a\ll_{x_1,\dots, x_n} b \textrm{  or  } O_{x_1,\dots, x_n}(b)$$
to mean that the absolute value of $a$ is at most the real number $b$ multiplied by a constant $c$ depending only on $x_1,\dots, x_n$. When we omit $x_1, \dots, x_n$, and write $a\ll b$ (or $a=O(b)$), we mean that the constant $c$ is absolute.

\subsection{Growth in subgroups and quotients}

The following basic lemmas relate growth in a group $G$ to growth in
subgroups of $G$, and in quotients of $G$. Citations to \cite{helfgott3} are
given in part for the sake of ease of reference; no doubt many of these results may have been known to specialists for a long time.

We introduce some abuse of notation: For $S,T$ two sets, we write $S\backslash T$ where we mean $S\backslash (S\cap T)$. Similarly if $G$ is a group with $W\subset G, N\lhd G$, then we write $W/N$ where we mean $WN/N$.

The following lemma was first stated and proven in the abelian
case by Ruzsa and Turj{\'a}nyi \cite{ruztur}. The proof carries over to the
nonabelian case; the lemma was stated and proven in full generality in
\cite{helfgott2} and \cite{taononcomm}.

\begin{lem}\label{l: tripling}\cite[Lem. 2.2]{helfgott3}
{\rm (Tripling Lemma).} Let $k>2$ be an integer; let $A$ be a finite subset of a group $G$. 
\begin{enumerate}
\item $\frac {|A_3|}{|A|} \leq \left(3\frac{|A\cdot A \cdot A|}{|A|}\right)^3$
\item $\frac {|A_k|}{|A|} \leq \left(\frac{|A_3|}{|A|}\right)^{k-2}$
\end{enumerate}
\end{lem}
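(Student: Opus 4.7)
My plan is to derive both parts from the noncommutative Ruzsa triangle inequality, which states that for any finite subsets $X, Y, Z$ of a group $G$,
$$|X| \cdot |Y Z^{-1}| \leq |Y X^{-1}| \cdot |X Z^{-1}|.$$
This is proven by the injection $(yz^{-1}, x) \mapsto (yx^{-1}, xz^{-1})$ after fixing, for each element of $YZ^{-1}$, a representative pair $(y, z)\in Y\times Z$. From this inequality one deduces the usual family of ``triangle-inequality'' bounds for expressions like $|A^{\epsilon_1}A^{\epsilon_2}|$ and $|A^{\epsilon_1}A^{\epsilon_2}A^{\epsilon_3}|$.

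For part (a), I would decompose $A_3$ as the union of $3^3 = 27$ sets of the form $B_1 B_2 B_3$ with each $B_i \in \{A, A^{-1}, \{1\}\}$. The task is then to show that each such product has cardinality at most $|A\cdot A\cdot A|^3/|A|^2$; summing over the $27$ terms gives
$$|A_3| \leq 27 \cdot \frac{|A\cdot A\cdot A|^3}{|A|^2} = \left(3 \cdot \frac{|A\cdot A\cdot A|}{|A|}\right)^3 |A|,$$
which is the required estimate. For each of the $27$ cases, I would start from the ``positive'' configuration $|A\cdot A\cdot A|$ and apply the Ruzsa triangle inequality a bounded number of times to ``flip'' inverses and to absorb identity factors, each step inflating the bound by at most one factor of $|A\cdot A\cdot A|/|A|$; since the starting product has three factors, at most three such flips are needed.

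For part (b), I would proceed by induction on $k$, with the base $k=3$ being tautological. For the inductive step, the key subclaim is
$$|A_{k+1}| \cdot |A| \leq |A_3| \cdot |A_k|,$$
which follows by applying the noncommutative Ruzsa triangle inequality to $X=A$, together with suitable choices of $Y,Z$ drawn from $A_k$ and from $A\cup A^{-1}\cup\{1\}$, so as to express ``one extra factor'' at the cost of multiplying by $|A_3|/|A|$. Combining this with the inductive hypothesis $|A_k|/|A|\leq (|A_3|/|A|)^{k-2}$ yields $|A_{k+1}|/|A|\leq (|A_3|/|A|)^{k-1}$, closing the induction.

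The main obstacle I expect is the bookkeeping in part (a): among the $27$ cases some (such as $|A^{-1} A A^{-1}|$) require a careful multi-step chain of Ruzsa applications to convert back to $|A\cdot A\cdot A|$, and one must check in each case that the exponent $3$ (rather than something larger) suffices. The inductive step in (b) is comparatively routine once the correct subclaim is identified. As the authors note, all of these estimates are by now standard in noncommutative additive combinatorics, so once the case analysis is organized cleanly the proof should go through.
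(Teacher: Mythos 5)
The paper gives no proof of this lemma --- it is quoted verbatim from \cite[Lem.\ 2.2]{helfgott3} and ultimately rests on the Ruzsa--Tao triangle inequality, which is exactly the route you take, so your argument is the standard one and it is correct: the $27$-fold decomposition with the bound $|B_1B_2B_3|\leq |A\cdot A\cdot A|^3/|A|^2$ in each case does go through (the worst case $A A^{-1} A$ needs the full chain $|A||AA^{-1}A|\leq |AAA^{-1}|\,|AA|$ followed by $|A||AAA^{-1}|\leq |AAA|\,|AA|$, which lands precisely on the exponent $3$). One small correction in part (b): taking $Y=A_k$ and $Z=A\cup A^{-1}\cup\{1\}$ in the triangle inequality with $X=A$ only yields the useless bound $|A|\,|A_{k+1}|\leq |A_{k+1}|\,|A_3|$; the choice that works is $Y=A_{k-1}$, $Z=A_2$, giving $|A|\,|A_{k+1}|\leq |A_{k-1}A^{-1}|\,|AA_2|\leq |A_k|\,|A_3|$ as desired.
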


\begin{lem}\label{l: olson}\cite{olson}
Let $A$ be a generating set of a finite group $G$, $B$ a subset of $G$. Suppose that $A$ contains $1$ and $B$ is non-empty. Then $|AB|\geq \min(|B|+\frac12|A|, |G|)$. In particular, if $A\cdot A\cdot A\neq G$ then $|A\cdot A\cdot A|\geq 2|A|$.
\end{lem}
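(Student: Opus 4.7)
I would prove Olson's lemma by a complementary-set argument combined with an induction on $|B|$. The case $AB=G$ is immediate, so assume $AB\subsetneq G$ and set $D = G\setminus AB$ and $E = G\setminus B$; since $B\subseteq AB$ (because $1\in A$), the desired inequality is equivalent to $|AB\setminus B|\geq \tfrac12|A|$. The key observation is that for every $g\in D$, no element of $A^{-1}g$ lies in $B$ (otherwise $g\in AB$), so $A^{-1}D\subseteq E$, whence $|A^{-1}D|\leq |G|-|B|$.

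Now $A^{-1}$ also generates $G$, contains $1$, and $D$ is non-empty. If $A^{-1}D=G$, then $E=G$, contradicting $B\neq\emptyset$. Otherwise, applying the lemma inductively to the pair $(A^{-1},D)$ gives $|A^{-1}D|\geq |D|+\tfrac12|A|$; combining this with $|A^{-1}D|\leq |G|-|B|$ and $|D|=|G|-|AB|$ rearranges to $|AB|\geq |B|+\tfrac12|A|$, as required. The main obstacle is arranging for the recursive call to $(A^{-1},D)$ to be strictly smaller than the original problem. One natural setup is to induct on $\min(|B|,|D|)$, or on $|B|$ with a case split on whether $|D|<|B|$; in the unfavourable regime $|D|\geq|B|$ (equivalently $|AB|\leq|G|-|B|$), a supplementary argument is needed. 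Here I would bring in the left stabilizer $H = \{g\in G : g\cdot AB = AB\}$. Since $A$ generates $G$, we have $A\not\subseteq H$ (otherwise $H=G$ and $AB=G$), so $AB$ is a proper union of right cosets of $H$; the coset structure constrains the possibilities enough to close the induction, as in Olson's original minimal-counterexample argument \cite{olson}.

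For the ``In particular'' statement, I apply the main inequality twice. First, $AAA\neq G$ together with $1\in A$ forces $AA\neq G$ (otherwise $AAA\supseteq AA\cdot\{1\}=G$), so the inequality with $B=A$ yields $|AA|\geq |A|+\tfrac12|A|=\tfrac32|A|$. Applying it again with $B=AA$, using $A\cdot AA=AAA\neq G$, gives $|AAA|\geq |AA|+\tfrac12|A|\geq 2|A|$.
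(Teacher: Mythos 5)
The paper gives no proof of this lemma (it is quoted from Olson), so your write-up has to stand on its own, and as it stands it does not. The duality step is correct: for $g\in D=G\setminus AB$ one has $A^{-1}g\cap B=\emptyset$, so $A^{-1}D\subseteq G\setminus B$, and \emph{if} the lemma held for the pair $(A^{-1},D)$ the desired inequality for $(A,B)$ would follow by the arithmetic you give. But this is circular rather than inductive. The transformation $(A,B)\mapsto(A^{-1},D)$ is essentially an involution: the ``$D$'' of the new instance is $G\setminus A^{-1}D\supseteq B$, so applying the trick twice returns you to an instance at least as large as the original, and neither $|B|$, nor $|D|$, nor $\min(|B|,|D|)$ is forced to decrease strictly. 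You acknowledge this, but the resolution you offer -- pass to the left stabilizer $H=\{g: gAB=AB\}$, note $A\not\subseteq H$, and assert that ``the coset structure constrains the possibilities enough to close the induction, as in Olson's original minimal-counterexample argument'' -- is precisely the entire content of Olson's theorem, not a supplementary detail. The generation hypothesis and the constant $\tfrac12$ enter exactly at the point where one must handle a large stabilizer (Olson's theorem produces a subgroup $H$ and a nonempty $T\subseteq AB$ with $|AB|\geq|A|+|B|-|H|$ and $HT=T$ or $TH=T$; the case $|H|>\tfrac12|A|$ is where $A\not\subseteq H$ and the coset structure must actually be exploited). None of that argument is carried out, so the main inequality is unproved.

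The ``in particular'' deduction is fine: $1\in A$ and $AAA\neq G$ force $AA\neq G$, whence $|AA|\geq\tfrac32|A|$ (the $|G|$ branch of the minimum is excluded because it would give $AA=G$), and a second application with $B=AA$ gives $|AAA|\geq 2|A|$. To make the whole proof acceptable you should either reproduce Olson's stabilizer argument in full or simply cite his theorem as the paper does and derive the stated form from it, handling the case of a stabilizer of order greater than $\tfrac12|A|$ explicitly.
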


\begin{lem}\label{lem:odt}
Let $H\leq G$ and let $A,B \subset G$ be non-empty finite sets.
Let $l$ be the number of left cosets of $H$ intersecting $A$.
Then
\[|A\cdot B| \geq l |B\cap H|.\]
\end{lem}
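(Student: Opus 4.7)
The strategy is to exhibit $l|B\cap H|$ distinct elements inside $A\cdot B$ by a direct construction that exploits the coset structure. Specifically, enumerate the left cosets of $H$ that meet $A$ as $g_1 H,\dots,g_l H$, and for each $i$ choose an element $a_i \in A\cap g_i H$. Then I would look at the $l$ translates $a_i\cdot(B\cap H)$, all of which are subsets of $A\cdot B$.

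First step: each translate $a_i\cdot(B\cap H)$ has cardinality exactly $|B\cap H|$, because left multiplication by $a_i$ is a bijection of $G$. Second step: since $B\cap H\subseteq H$ and $a_i\in g_i H$, we have $a_i\cdot(B\cap H)\subseteq a_i H=g_i H$. Thus the $i$-th translate lies inside the $i$-th coset. Third step: the cosets $g_1 H,\dots,g_l H$ are pairwise disjoint by definition of coset, so the translates are pairwise disjoint too.

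Putting these together,
\[
|A\cdot B|\;\geq\;\Bigl|\bigsqcup_{i=1}^{l} a_i\cdot(B\cap H)\Bigr|\;=\;\sum_{i=1}^{l}|B\cap H|\;=\;l\,|B\cap H|,
\]
which is the required inequality. There is no real obstacle here: the only thing to be careful about is that we use \emph{left} cosets so that the inclusion $a_i\cdot(B\cap H)\subseteq g_i H$ goes through cleanly — if the statement were phrased with right cosets one would instead work with $(A\cap Hg_i)\cdot$-something, or use $B\cap H$ on the left. The hypotheses that $A$ and $B$ are nonempty and finite are used only to guarantee $l\geq 1$ and that $|B\cap H|$ is a well-defined finite (possibly zero) quantity, in which case the bound is trivial.
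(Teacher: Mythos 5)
Your proof is correct and is essentially the paper's own argument: choose one representative $a_i$ of each left coset of $H$ meeting $A$, observe that the translates $a_i\cdot(B\cap H)$ are pairwise disjoint subsets of $A\cdot B$ each of size $|B\cap H|$. You simply make explicit the disjointness step that the paper leaves implicit.
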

\begin{proof}
Let $x_1, x_2,\dotsc, x_l\in A$ be representatives of distinct left
cosets of $H$.
Then
$$|A\cdot B| \geq 
|A\cdot (B\cap H)|\geq \left|\bigcup_{1\leq j\leq l} x_j \cdot (B\cap H)\right|
= l \cdot |B\cap H|.$$
\end{proof}

\begin{lem}\label{lem:duffy} \cite[Lem. 7.2]{helfgott3} 
Let $G$ be a group and $H$ a subgroup thereof. Let $A\subset G$ be a 
non-empty finite set. 
Then
\[|A^{-1} A \cap H| \geq \frac{|A|}{l},\] 
where $l$ is the number of left cosets of $H$ intersecting $A$. 
\end{lem}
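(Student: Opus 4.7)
The plan is to use a straightforward pigeonhole argument on the partition of $A$ by left cosets of $H$. Since there are exactly $l$ left cosets of $H$ meeting $A$, I would write $A = A_1 \sqcup A_2 \sqcup \cdots \sqcup A_l$, where each $A_i$ is the intersection of $A$ with one such coset. Pigeonhole then gives some index $j$ with $|A_j| \geq |A|/l$.

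Next, I would exploit the defining property of a coset: two elements $a,b \in G$ lie in the same left coset of $H$ if and only if $a^{-1}b \in H$. Applying this to the chosen block $A_j$, we get $a^{-1}b \in H$ for every $a, b \in A_j$. Fixing any $a \in A_j$ (which exists since $A_j$ is non-empty by construction), the map $b \mapsto a^{-1}b$ is an injection $A_j \hookrightarrow A^{-1}A \cap H$ because left multiplication by $a^{-1}$ is a bijection on $G$ and its image on $A_j$ lies inside both $A^{-1}A$ and $H$.

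Combining the two steps yields $|A^{-1}A \cap H| \geq |A_j| \geq |A|/l$, as desired. There is no real obstacle here: the only thing to be careful about is that the map is well-defined into $A^{-1}A$ (which just requires $a \in A$, hence $a^{-1} \in A^{-1}$) and into $H$ (which is the coset observation). The argument is purely combinatorial and does not use finiteness of $H$ or of $G$, only the finiteness of $A$.
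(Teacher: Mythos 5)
Your proof is correct and is essentially identical to the paper's: both use the pigeonhole principle to find a left coset containing at least $|A|/l$ elements of $A$, fix one element $a_0$ of $A$ in that coset, and observe that $a\mapsto a_0^{-1}a$ injects the block into $A^{-1}A\cap H$. Nothing further is needed.
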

\begin{proof} 
By the pigeonhole principle, there is at least one coset $g H$ 
of $H$ containing at least $|A|/l$ elements of $A$ (and thus, in particular, 
at least one element of $A$). Choose an element $a_0 \in g H \cap A$. 
Then, for every $a\in g H \cap |A|$, the element $a_0^{-1} a$ lies both in $H$ and 
in $A^{-1} A$. As $a_0$ is fixed and $a$ varies, the elements $a_0^{-1} a$ 
are distinct.
\end{proof}

The following is a slight generalization of \cite[Lem. 7.3]{helfgott3}.
\begin{lem}\label{l: b1}
Let $H\leq G$ and let $A\subset G$ be a non-empty finite set. Then, for any $k\geq 2$,
$$|A_{k+1}|\geq \frac{|A_k\cap H|}{|A^{-1}A\cap H|}|A|.$$
\end{lem}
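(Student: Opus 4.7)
The plan is straightforward: use $A_{k+1} \supseteq A \cdot (A_k \cap H)$ to reduce the problem to estimating the size of this set product, then handle the latter by bounding the fibers of the multiplication map.

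First, I would note the containment $A_{k+1} \supseteq A \cdot A_k \supseteq A \cdot (A_k \cap H)$, which is immediate from $1 \in A_{j}$ for every $j \geq 1$ (since $1 \in A \cup A^{-1} \cup \{1\}$) and from $A_k \cap H \subseteq A_k$. It therefore suffices to prove
\[|A \cdot (A_k \cap H)| \;\geq\; \frac{|A| \cdot |A_k \cap H|}{|A^{-1} A \cap H|}.\]

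Next, I would analyze the multiplication map $\phi : A \times (A_k \cap H) \to G$, $(a,h) \mapsto ah$. The key computation is the fiber bound: if $a h = a' h'$ with $a, a' \in A$ and $h, h' \in A_k \cap H$, then $a^{-1} a' = h (h')^{-1}$, and this common value lies simultaneously in $A^{-1} A$ and in $H$. Fixing one preimage $(a,h)$ and letting $g := a^{-1}a'$ range over $A^{-1} A \cap H$, every other preimage must take the form $(ag, g^{-1} h)$. Hence each fiber of $\phi$ has at most $|A^{-1} A \cap H|$ elements, and since $|\mathrm{dom}(\phi)| = |A| \cdot |A_k \cap H|$, the displayed inequality follows.

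The only point requiring a little attention is the ordering: one must multiply $A$ on the \emph{left} of $A_k \cap H$, because placing $A$ on the right would produce $A A^{-1} \cap H$ in the denominator, which in a non-abelian group can have different size. Apart from this minor bookkeeping there is no substantive obstacle; the argument is in the same spirit as Lem.~\ref{lem:duffy}, of which the present lemma is a close relative.
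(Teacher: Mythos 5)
Your proof is correct. It takes a mildly different route from the paper's: you bound the fibers of the multiplication map $\phi\colon A\times (A_k\cap H)\to G$ directly, observing that two preimages of the same point differ by an element $g=a^{-1}a'=h(h')^{-1}\in A^{-1}A\cap H$, so each fiber has size at most $|A^{-1}A\cap H|$ and the bound follows in one step. The paper instead factors the argument through the intermediate quantity $l$, the number of left cosets of $H$ meeting $A$: it first shows $|A\cdot A_k|\geq l\,|A_k\cap H|$ (Lem.~\ref{lem:odt}, via disjointness of the translates $x_j(A_k\cap H)$ for coset representatives $x_j$) and then $|A^{-1}A\cap H|\geq |A|/l$ (Lem.~\ref{lem:duffy}, via pigeonhole), and multiplies the two. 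The two arguments are the same double count organized differently; yours is more self-contained, while the paper's version isolates $l$ because Lemmas~\ref{lem:odt} and~\ref{lem:duffy} are reused elsewhere. Your remark about left versus right multiplication (which determines whether $A^{-1}A$ or $AA^{-1}$ appears in the denominator) is accurate and matches the statement as given.
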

\begin{proof}
Let $l$ be the number of left cosets of $H$ intersecting $A$. 
By Lem. \ref{lem:odt} with $B = A_k$,
\[|A_{k+1}| = |A\cdot A_k| \geq l \cdot |A_k \cap H|.\]
Now, by Lem. \ref{lem:duffy},
$|A^{-1} A\cap H|\geq \frac{|A|}{l}$. Hence
$$|A_{k+1}|\geq |A\cdot (A_k\cap H)|\geq l\cdot |A_k\cap H|\geq \frac{|A_k\cap H|}{|A^{-1}A\cap H|}|A|.$$
\end{proof}

We note some other basic results that will be of use later.

\begin{lem}\label{l: b2}\cite[Lem. 7.4]{helfgott3}
Let $H\unlhd G$ and let $\pi:G\to G/H$ be the quotient map. Then, for any finite non-empty subsets $A_1, A_2\subset G$,
$$|(A_1\cup A_2)_4|\geq \frac{|\pi(A_1A_2)|}{|\pi(A_1)|}|A_1|.$$
\end{lem}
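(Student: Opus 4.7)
The plan is to exhibit a large subset of $(A_1\cup A_2)_4$ by combining the product $A_1 A_2$ (which gives many distinct cosets mod $H$) with a large subset of $H$ coming from $A_1^{-1} A_1$. The key observation is that any four-fold product of the form $A_1\cdot A_2\cdot A_1^{-1}\cdot A_1$ sits inside $(A_1\cup A_2)_4$, so anything of this shape may be safely counted.

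First, I apply pigeonhole to the partition of $A_1$ by cosets of $H$: there exists a coset $gH$ with $|A_1\cap gH|\geq |A_1|/|\pi(A_1)|$. Fix any $a_0\in A_1\cap gH$ and set $S=a_0^{-1}(A_1\cap gH)$. Then $S\subseteq H$ and $|S|\geq |A_1|/|\pi(A_1)|$. Now consider the product set $(A_1A_2)\cdot S$, which is contained in $A_1\cdot A_2\cdot A_1^{-1}\cdot A_1$, hence in $(A_1\cup A_2)_4$. I count this set fiber-by-fiber over $\pi$: since $S\subseteq H$, every element of $(A_1A_2)\cdot S$ lies in the same coset of $H$ as its $A_1A_2$-prefix, and for each coset $c\in \pi(A_1A_2)$ we may choose some $a\in A_1A_2$ with $\pi(a)=c$ and obtain $|aS|=|S|$ distinct elements of that coset. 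Summing over the $|\pi(A_1A_2)|$ distinct cosets gives
$$|(A_1A_2)\cdot S|\geq |\pi(A_1A_2)|\cdot |S|\geq \frac{|\pi(A_1A_2)|}{|\pi(A_1)|}|A_1|,$$
which is the claimed bound.

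The argument is essentially bookkeeping and there is no serious obstacle. The only subtlety is ensuring that elements $as$ and $a's'$ with $a,a'\in A_1A_2$ in different cosets of $H$ cannot collide; but a collision $as=a's'$ forces $a'^{-1}a=s's^{-1}\in H$, i.e.\ $\pi(a)=\pi(a')$, so this is automatic. One might also observe that the same strategy underlies Lem.~\ref{lem:duffy} above, so the proof could alternately be phrased by invoking that lemma for the subset $A_1$ and then multiplying by $A_1A_2$ using Lem.~\ref{lem:odt}; I prefer the direct fiber count for clarity.
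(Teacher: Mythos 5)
Your proof is correct and is essentially the standard argument: the paper itself gives no proof here (it cites \cite[Lem.\ 7.4]{helfgott3}), and the cited proof is exactly this pigeonhole-plus-fibre-count, i.e.\ the combination of Lem.~\ref{lem:duffy} (applied to $A_1$) with Lem.~\ref{lem:odt} (applied to $A_1A_2$ times the resulting subset of $H$) that you note at the end. All containments and the disjointness of the fibres check out, so nothing further is needed.
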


\begin{lem}\label{l: b3}
Let $N\unlhd G, R$ a subset of $G$ satisfying $R=R^{-1}$, and $A$ a non-empty finite subset of $G$. Then, for any $C>0$,
$$|AN/N\cap RN/N|\geq \frac{1}{C} |AN/N| 
\implies |A_3\cap RN|\geq \frac{1}{C} |A|.$$
\end{lem}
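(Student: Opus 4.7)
The plan is to work directly with the quotient map $\pi : G \to G/N$ and use the hypothesis to produce many pairwise disjoint ``translates'' of a single fiber that all land inside $A_3 \cap RN$. Note that $R = R^{-1}$ plays no essential role; all we need is that $\pi^{-1}(\pi(R)) = RN$, so that whenever a coset $sN$ of $N$ has $s \in \pi(R)$, we automatically have $sN \subseteq RN$.

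First, set $S = \pi(A) \cap \pi(R)$, so the hypothesis reads $|S| \geq |\pi(A)|/C$. By the pigeonhole principle, there is some coset $s_0 \in \pi(A)$ with
\[
|A \cap s_0 N| \;\geq\; \frac{|A|}{|\pi(A)|}.
\]
Fix $a_0 \in A \cap s_0 N$ once and for all, and for every $s \in S$ fix some representative $a_s \in A \cap s N$ (which exists since $S \subseteq \pi(A)$).

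Next, for each $s \in S$ I would consider the ``shifted fiber''
\[
X_s \;:=\; a_s \, a_0^{-1} \, \bigl(A \cap s_0 N\bigr).
\]
Three observations do the job. (i) Each element of $X_s$ is a product of one element of $A$, one of $A^{-1}$, and one of $A$, hence $X_s \subseteq A_3$. (ii) Every element of $X_s$ satisfies $\pi(a_s a_0^{-1} x) = s \cdot s_0^{-1} \cdot s_0 = s \in \pi(R)$, so $X_s \subseteq sN \subseteq RN$. (iii) Since left multiplication by $a_s a_0^{-1}$ is a bijection, $|X_s| = |A \cap s_0 N|$, and for distinct $s, s' \in S$ the sets $X_s$ and $X_{s'}$ lie in distinct cosets of $N$, hence are disjoint.

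Combining these,
\[
|A_3 \cap RN| \;\geq\; \Bigl|\bigsqcup_{s \in S} X_s\Bigr| \;=\; |S| \cdot |A \cap s_0 N| \;\geq\; \frac{|\pi(A)|}{C} \cdot \frac{|A|}{|\pi(A)|} \;=\; \frac{|A|}{C},
\]
which is the desired bound. There is no real obstacle here: the whole argument is a one-step pigeonhole followed by a disjointness check, and the only mildly delicate point is to remember that the ``shifting'' element $a_0$ must be drawn from a fiber whose size can be lower-bounded by $|A|/|\pi(A)|$, so that the product of the two savings ($1/C$ and $1/|\pi(A)|$) cancels the factor $|\pi(A)|$ coming from $|S|$.
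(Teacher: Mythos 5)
Your proof is correct and follows essentially the same route as the paper's: the paper defines $E=A^{-1}A\cap N$, notes $|E|\geq |A|/|AN/N|$ by the same pigeonhole/averaging step, and then counts the disjoint translates $aE\subseteq A_3\cap RN$ over the cosets in $AN/N\cap RN/N$, which is exactly your family $X_s\subseteq a_sE$. The only difference is presentational — you translate the large fiber itself rather than the set $E$ — and both proofs correctly observe that $R=R^{-1}$ is not actually needed.
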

\begin{proof}
Define $E=A^{-1}A\cap N$.
 Let $g$ be some element of $G$.
Given a fixed element $a_0 \in A \cap g N$, every distinct element $a\in A\cap
g N$ determines a distinct element $a^{-1} a_0$ of $E= A^{-1} A \cap N$. Therefore
$$|E|\geq |A\cap gN|.$$
Thus, for any set $S$ of representatives of the cosets $g N$ with $A\cap g N$
non-empty,
\[|A| = \sum_{g\in S} |A\cap g N| \leq |S| |E| = |AN/N|\cdot |E|.\]
Hence
$$|A_3\cap RN|\geq
 |AN/N\cap RN/N|\cdot |E| \geq \frac{1}{C} |AN/N|\cdot |E| \geq \frac{1}{C}
|A|.$$
\end{proof}

The following lemma is in the spirit of the Cauchy-Davenport theorem \cite[Thm. 5.4]{taovu}.

\begin{lem}\cite[Lem. 2.1]{helfgott3}\label{l: b4}
 Let $A\subseteq G$ with $|A|>\frac12|G|$. Then $A\cdot A=G$.
\end{lem}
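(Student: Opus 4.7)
The plan is to prove this by a direct pigeonhole argument on cardinalities inside a translate. The statement says that if $|A| > |G|/2$, then every $g \in G$ can be written as a product of two elements of $A$, and the natural way to force such a factorisation is to arrange two large subsets of $G$ whose sizes sum to more than $|G|$, so that they must intersect.

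First I would fix an arbitrary $g \in G$ and consider the set $gA^{-1}$. Since left multiplication by $g$ and the map $x \mapsto x^{-1}$ are bijections on $G$, we have $|gA^{-1}| = |A|$. By hypothesis, both $|A|$ and $|gA^{-1}|$ exceed $|G|/2$, so their sum exceeds $|G|$. Since they both sit inside the finite set $G$, inclusion-exclusion (or just counting) forces $A \cap gA^{-1} \neq \emptyset$. Picking any element $a$ in this intersection, we can write $a = g b^{-1}$ for some $b \in A$, and hence $g = a b \in A \cdot A$. Since $g$ was arbitrary, $A \cdot A = G$.

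The argument is essentially one line once the translate $gA^{-1}$ is chosen, so there is no serious obstacle; the only subtlety worth noting is that the hypothesis is strict ($|A| > |G|/2$), which is exactly what is needed to guarantee non-empty intersection rather than merely non-emptiness up to boundary cases. This is also why the lemma is phrased as an analogue of Cauchy--Davenport: both rely on the same pigeonhole principle applied to a suitably chosen translate.
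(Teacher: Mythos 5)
Your proof is correct, and it is exactly the standard pigeonhole argument used in the cited source (the paper itself gives no proof, merely citing \cite[Lem.\ 2.1]{helfgott3}): for each $g$ the sets $A$ and $gA^{-1}$ each have more than $\frac12|G|$ elements, so they intersect, yielding $g=ab$ with $a,b\in A$. Nothing further is needed.
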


\begin{lem}\cite[Lem. 7.6]{helfgott3}\label{l: b5}
 Let $R\subseteq G$ be a subset with $R=R^{-1}$. Let $A\subset G$ be finite; then there is a subset $Y\subset A$ with
$$|Y|\geq\frac{|A|}{|A^{-1}A\cap R|}$$
such that no element of $Y^{-1}Y$ (other than possibly the identity) lies in $R$.
\end{lem}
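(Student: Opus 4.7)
The plan is to set up a greedy selection, equivalent to finding an independent set in an auxiliary graph on $A$. First I would form a graph whose vertex set is $A$, in which two distinct vertices $a, b$ are joined by an edge precisely when $a^{-1}b \in R$; this relation is symmetric because $R = R^{-1}$. A subset $Y \subseteq A$ then satisfies the conclusion of the lemma exactly when $Y$ is an independent set in this graph.

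I would construct $Y$ round by round. Starting with $A_0 = A$ and $Y = \emptyset$, at each round I pick any $y \in A_0$, adjoin $y$ to $Y$, and remove from $A_0$ the set $\{a \in A_0 : y^{-1}a \in R\} \cup \{y\}$; the process terminates when $A_0$ is empty. By construction, no two distinct elements of $Y$ can have their ratio in $R$. The key step is bounding what is removed per round: for any fixed $y \in A$, the map $a \mapsto y^{-1}a$ injects $\{a \in A : y^{-1}a \in R\}$ into $y^{-1}A \cap R \subseteq A^{-1}A \cap R$. Hence each round removes at most $|A^{-1}A \cap R|$ elements of $A_0$, provided $1 \in R$ (so that $y$ is already captured by the first part of the removal); when $1 \notin R$, one may without loss of generality enlarge $R$ to $R \cup \{1\}$, since the conclusion only constrains $Y^{-1}Y$ away from the identity. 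Summing over the rounds yields $|A| \leq |Y| \cdot |A^{-1}A \cap R|$, which is the desired bound.

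I expect no substantial obstacle: the argument is essentially the standard fact that a graph of maximum degree $\Delta$ admits an independent set of size at least $|V|/(\Delta + 1)$, specialised to the explicit degree bound above. The only delicate point is bookkeeping around whether $1$ lies in $R$, handled by the harmless reduction to $R \cup \{1\}$.
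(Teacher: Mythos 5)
Your argument is the standard greedy (maximum-degree independent set) proof, and since the paper only cites this lemma from \cite[Lem.~7.6]{helfgott3} without reproving it, there is nothing to diverge from; the cited proof is the same pigeonhole/greedy count. One point deserves attention, though: your reduction to $R\cup\{1\}$ is not actually harmless for the stated bound. If $1\notin R$ (and $A\neq\emptyset$, so $1\in A^{-1}A$), then $|A^{-1}A\cap(R\cup\{1\})|=|A^{-1}A\cap R|+1$, and your argument then yields only $|Y|\geq |A|/(|A^{-1}A\cap R|+1)$, which is weaker than claimed. However, this is a defect of the statement rather than of your proof: without the hypothesis $1\in R$ the stated bound is false. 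For example, take $G=\Z/3\Z$ (written additively), $A=G$, $R=\{1,2\}$; then $R=R^{-1}$, $|A^{-1}A\cap R|=2$, so the lemma would demand $|Y|\geq 3/2$, i.e.\ $|Y|\geq 2$, yet any two distinct elements of $A$ have difference in $R$, forcing $|Y|=1$. So the lemma should be read with $1\in R$ (equivalently, with denominator $|A^{-1}A\cap(R\cup\{1\})|$), and in that case your count is exact: each greedy step deletes $\{a\in A_0: y^{-1}a\in R\}$, which contains $y$ and injects into $A^{-1}A\cap R$ via $a\mapsto y^{-1}a$, giving $|Y|\geq |A|/|A^{-1}A\cap R|$. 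Every application in this paper satisfies $1\in R$ (e.g.\ in Prop.~\ref{prop:generous} the set $R$ consists of the automorphisms with a nontrivial fixed point, which includes the identity automorphism), so nothing downstream is affected.
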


The next result is a version of Schreier's lemma \cite[\S 4.2]{seress}.

\begin{lem}\label{lem:squid}
Let G be a group. Let $A\subset G$, $H<G$. Suppose $AH/H = G/H$. Then
$\langle A\rangle
= A \cdot \langle A_3 \cap H\rangle$.
\end{lem}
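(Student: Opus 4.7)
The plan is to adapt the classical Schreier lemma to the present setting. Let $\Gamma = \langle A\rangle$. The hypothesis $AH/H = G/H$ says that every left coset of $H$ in $G$ contains an element of $A\subseteq \Gamma$; in particular, the map $\Gamma \to G/H$ is surjective, and cosets of $H\cap \Gamma$ in $\Gamma$ are in bijection with cosets of $H$ in $G$. Using the surjectivity, I will choose a transversal $T \subseteq A$ for $H\cap \Gamma$ in $\Gamma$, so that each coset of $H\cap \Gamma$ has exactly one representative in $T$. For $x\in \Gamma$, write $\overline{x}\in T$ for the representative of the coset $xH$.

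Next I will invoke the standard Schreier construction to produce generators of $H\cap \Gamma$. Namely, for any $t\in T$ and any $s\in A\cup A^{-1}$, the element $t\,s\,\overline{ts}^{\,-1}$ lies in $H$ (since $ts$ and $\overline{ts}$ represent the same coset of $H$) and, because $t\in A$, $s\in A\cup A^{-1}$ and $\overline{ts}\in T\subseteq A$, it is a product of three elements of $A\cup A^{-1}$, hence lies in $A_3\cap H$. The Schreier lemma then gives that $H\cap \Gamma$ is generated by these elements, so
\[H\cap \Gamma \;\subseteq\; \langle A_3\cap H\rangle.\]
The reverse inclusion $\langle A_3\cap H\rangle \subseteq H\cap \Gamma$ is immediate since $A_3\subseteq \Gamma$ and $A_3\cap H\subseteq H$.

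Finally, to conclude, I will decompose an arbitrary $g\in \Gamma$. Pick the representative $a=\overline{g}\in T\subseteq A$. Then $a^{-1}g\in H$, and also $a^{-1}g\in \Gamma$, so by the previous step $a^{-1}g\in \langle A_3\cap H\rangle$. Hence $g = a\cdot(a^{-1}g)\in A\cdot \langle A_3\cap H\rangle$, which is the desired equality. The opposite inclusion $A\cdot \langle A_3\cap H\rangle \subseteq \Gamma$ is trivial.

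The only subtlety — and really the whole content of the lemma — is verifying that the Schreier generators actually lie in $A_3$; this is precisely where the assumption that the transversal $T$ can be chosen inside $A$ (i.e.\ $AH/H=G/H$) is used. Beyond that, the argument is purely formal once Schreier's lemma is in hand.
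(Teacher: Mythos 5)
Your argument is essentially the paper's: the proof given there is exactly the Schreier rewriting process made explicit (it shows that $A\cdot\langle A_3\cap H\rangle$ contains the identity and is stable under left multiplication by $A\cup A^{-1}$), and your observation that the Schreier generators lie in $A_3\cap H$ is precisely the point. One small slip: with your left-coset convention ($\overline{x}$ is the representative of $xH$), the element that lies in $H$ is $\overline{ts}^{\,-1}\,t\,s$, not $t\,s\,\overline{ts}^{\,-1}$ --- the latter need not belong to $H$ unless $H$ is normal; the corrected generator is still a product of three elements of $A\cup A^{-1}$, so nothing else changes. You should also note that the representative of the trivial coset is some $a_0\in A\cap H$ rather than the identity, so Schreier's lemma as usually stated needs a one-line adjustment; since $a_0\in A_3\cap H$, this costs nothing.
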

\begin{proof}
Since $AH/H=G/H$, there is an element $a\in A$ lying in $H$, and thus
$e = a \cdot a^{-1}$ is an
element of $A\cdot \langle A^{-1}\cap H\rangle \subset A\cdot \langle
A_3\cap H\rangle$.
It remains to show that, if $a_1\in A\cup A^{-1}$ and $g = a_2 h$,
where $a_2\in A\cup \{1\}$ and
$h\in \langle A_3 \cap H\rangle$, then $a_1 g = a_1 a_2 h$ lies in
$A \cdot \langle A_3 \cap H\rangle$.

Because $AH/H = G/H$, there is an $a_3\in A$
such that $a_1 a_2 H = a_3 H$. Hence $a_3^{-1} a_1 a_2 \in H$, and so
$a_3^{-1} a_1 a_2 \in A_3 \cap H$. Therefore $a_1 a_2 h = a_3 a_3^{-1}
a_1 a_2 h$ lies in
$A\cdot \langle A_3 \cap H\rangle$.
\end{proof}

\subsection{Pivoting}
 The following result is connected to the idea behind a 
{\it sum-product} theorem; 
it relies on the usage in groups of the technique of {\em pivoting}, 
which can in some sense already be found in some proofs of sum-product 
(for instance \cite{gk}) and was developed further in \cite[\S 3]{helfgott3}.
(The same underlying idea was later used in \cite[Lem. 5.3]{bgt2}.)
Note that we never use a sum-product theorem as such.

This proposition is a strengthening of \cite[Cor 3.2]{helfgott3}. 

\begin{prop}\label{prop:generous}
Let $G$ be a group and $\Gamma$ an abelian group of automorphisms of
$G$. Let $X\subset \Gamma$, and set
\[x = |\{y\in X^{-1} X : \text{$y$ has a fixed point other than
$e\in G$}\}|.\]

Then, for any $W\subset G$, either
\begin{equation}\label{eq:hort1}|(X_2(W))_6| \geq \frac{|X|}{x} |W|\end{equation}
or
\begin{equation}\label{eq:hort2}
(X(W))_8 = \langle \langle X\rangle (\langle W\rangle)\rangle.\end{equation}
\end{prop}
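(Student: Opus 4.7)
The plan is to prove the contrapositive. Assume $|(X_2(W))_6| < \frac{|X|}{x}|W|$ and set $N := \langle \langle X\rangle(\langle W\rangle)\rangle$; the task is to show $(X(W))_8 = N$. Since $(X(W))_8 \subseteq N$ is automatic, only the reverse inclusion is substantive, and for this it suffices to show that $(X(W))_8$ is a subgroup of $G$ that contains $W$ and is stable under the action of $\langle X\rangle$: any such set contains $\langle X\rangle(\langle W\rangle)$ and, being a subgroup, contains $N$.

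These three properties reduce to two inclusions that I would establish: (a) $X_2(W) \subseteq (X(W))_8$, and (b) $(X(W))_9 = (X(W))_8$. Iterating (b) gives $(X(W))_n = (X(W))_8$ for every $n \geq 8$, so $(X(W))_8 = \langle X(W)\rangle$ is a subgroup of $G$. From (a), $W \subseteq (X(W))_8$ (via the identity of $\Gamma$, which lies in $X_2$), and for every $\gamma \in X \cup X^{-1}$ one has $\gamma(X(W)) \subseteq X^2(W) \cup X^{-1}X(W) \subseteq X_2(W) \subseteq (X(W))_8$; together with the subgroup structure from (b) this gives $\langle X\rangle$-invariance. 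Before attacking (a) and (b) I would record the elementary pivoting bound $|X(g)| \geq |X|/x$ for every $g \in G\setminus\{e\}$: a coincidence $\gamma_1(g) = \gamma_2(g)$ with $g \neq e$ forces $\gamma_2^{-1}\gamma_1 \in \mathrm{Stab}(g)\cap X^{-1}X$, a set of cardinality at most $x$ by the definition of $x$, so the map $\gamma \mapsto \gamma(g)$ from $X$ to $G$ has fibers of size at most $x$.

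I would prove (a) and (b) by contradiction: supposing some $u \in (X_2(W)\cup(X(W))_9)\setminus(X(W))_8$, I would exhibit $(|X|/x)|W|$ distinct elements of $(X_2(W))_6$. The factor $|X|/x$ is supplied by the size of any pivot orbit $X(w)$ with $w \in W\setminus\{e\}$, and the factor $|W|$ by running over $w \in W$; the concrete construction multiplies $u$ against short words built from the elements $\gamma(w)^{\pm 1}$ ($\gamma \in X$, $w \in W$) and from elements of $W$, each of which lands in $(X_2(W))_6$. Any collision among the resulting products would translate into a word of length at most $8$ in $X(W)^{\pm 1}$ that represents $u$, contradicting $u \notin (X(W))_8$.

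The main obstacle is precisely the simultaneous-distinctness analysis that delivers both factors $|X|/x$ and $|W|$ at once. This is essentially a double Cauchy--Schwarz pivoting count of the kind developed in \cite[\S 3]{helfgott3} and \cite[Lem.~5.3]{bgt2}, adapted here to $\Gamma$ acting on $G$ by automorphisms; abelianness of $\Gamma$ enters through rearrangements $\gamma_1\gamma_2 = \gamma_2\gamma_1$ that are needed to convert collisions into short-word identities. The sub-case $u \in W$ (corresponding to the identity element of $X_2$) is particularly delicate, since there the pivoting must bootstrap from a missing element of $W$ through two layers of multiplication before reaching a short-word expression in $X(W)^{\pm 1}$.
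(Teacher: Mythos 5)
Your reduction of the second alternative to two inclusions --- (a) $X_2(W)\subseteq (X(W))_8$ and (b) $(X(W))_9=(X(W))_8$, which together make $(X(W))_8$ an $\langle X\rangle$-stable subgroup containing $W$ and hence equal to $\langle\langle X\rangle(\langle W\rangle)\rangle$ --- is sound, and so is the fiber bound $|X(g)|\geq |X|/x$ for $g\neq e$. But the step where you derive (a) and (b) from the smallness of $(X_2(W))_6$ is exactly where the proof has to live, and the mechanism you sketch does not work. If $u\in (X(W))_9\setminus (X(W))_8$ and you form products of $u$ against short words $s_{w,\gamma}$ in the $\gamma(w)^{\pm1}$, those products lie in $(X(W))_{9+O(1)}$, not in $(X_2(W))_6$; and a collision $u s_{w,\gamma}=u s_{w',\gamma'}$ cancels $u$ entirely, so it yields a relation among short words that says nothing about $u$ and cannot be converted into a length-$8$ representation of $u$. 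More fundamentally, the dichotomy in the proposition is not driven by stabilization of $(X(W))_k$ at a fixed step; no local collision argument proves $(X(W))_9=(X(W))_8$.

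The actual proof runs on a different axis. One first refines $X$ to $Y\subseteq X$ with $|Y|\geq |X|/x$ such that no nonidentity element of $Y^{-1}Y$ has a nontrivial fixed point (Lem.~\ref{l: b5}), and calls $\xi$ a \emph{pivot} when the two-variable map $(g,\gamma)\mapsto g\gamma(\xi)$ is forced to be injective on $W\times Y$ --- this simultaneous injectivity, not the single-variable bound $|X(g)|\geq |X|/x$, is what produces the product $|Y|\,|W|$. If a pivot exists in $W$, or one step away from a non-pivot (via $a\in W$ or $y\in X$), the injective difference map $\xi\mapsto\gamma_1(\xi)\gamma_2(\xi)^{-1}$ (injective because $\gamma_1^{-1}\gamma_2$ is fixed-point free) transports the $|Y|\,|W|$ distinct values back into $(X_2(W))_6$, giving (\ref{eq:hort1}). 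If \emph{no} element of $\langle\langle X\rangle(\langle W\rangle)\rangle$ is a pivot, one does not get (\ref{eq:hort1}) at all: instead a disjointness-plus-Cauchy--Schwarz count shows $|W\,Y(\xi_0)|>\tfrac12|\langle\langle X\rangle(\langle W\rangle)\rangle|$ for a well-chosen $\xi_0$, the difference map pushes this into $(X(W))_4$, and Lem.~\ref{l: b4} ($|A|>\tfrac12|G|\Rightarrow A\cdot A=G$) yields (\ref{eq:hort2}) --- which is why the exponent is $8=4+4$. This no-pivot branch, which is the only route to the second alternative, is entirely absent from your plan, so the proposal as it stands has a genuine gap rather than a fillable routine step.
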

Given $A\subset \Gamma$, $B\subset G$, we write $A(B)$ for $\{a(b) :
a\in A, b\in B\}$.
Thus,
$\langle \langle X\rangle (\langle W\rangle)\rangle$ is the 
group generated by all
 elements of the form $y(w)$ with $w\in \langle W\rangle$ and
$y\in \langle X\rangle$.
\begin{proof}
For $\xi \in G$, we define the map
$\phi_\xi : G \times \Gamma \to G$ by
\[\phi_{\xi}(g,\gamma) = g \gamma(\xi).\]

We call $\xi\in G$ a {\em pivot} if, for $g_1,g_2\in W$, 
$\gamma_1,\gamma_2\in X$, we
can have $\phi_{\xi}(g_1,\gamma_1)=\phi_{\xi}(g_2,\gamma_2)$ 
only if $\gamma_1^{-1} \gamma_2$
acts on $G$ with at least one fixed point other than the identity $e\in G$.

By Lem. \ref{l: b5}, there exists a subset $Y\subset X$
with $|Y| \geq |X|/x$ such that no element of $Y^{-1} Y$
(other than possibly the identity) has a fixed point
in $G$ other than the identity.
It is clear that, if $\xi$ is a pivot, then $|\phi_{\xi}(A,Y)| = |Y| |A|$
for any $A\subset G$, and, in particular, for $A=W$.

{\em Case 0: There is a pivot $\xi\in W$.}
Then $\phi_{\xi}(W,Y) \subset (Y(W))_2$, and, at the same time,
$|\phi_{\xi}(W,Y)| = |Y| |W|$. Hence $|(Y(W))_2|\geq |Y| |W|$.

{\em Case 1a: There is a $\xi\in G$, $\xi$ not a pivot, and an $a\in W$
such that $a\xi$ is a pivot.}
Then $|\phi_{a \xi}(W,Y)| = |Y| |W|$. It remains to construct
a subset in $(Y_2(W))_6$ of cardinality $\leq |Y| |W|$. (We can't
assume $\phi_{a \xi}(W,Y)\subset (Y_2(W))_6$ because $\xi$ may not be in $W$.)

Since $\xi$ is not a pivot, there are $g_1, g_2\in W$, $\gamma_1,
\gamma_2\in X$ such that 
$\phi_{\xi}(g_1,\gamma_1)=\phi_{\xi}(g_2,\gamma_2)$
(and so $\gamma_1(\xi) (\gamma_2(\xi))^{-1} = g_1^{-1} g_2$)
 and
$\gamma_1^{-1} \gamma_2$ has $e\in G$ as its only
fixed point in $G$. 

Now, if $x, x'\in G$ satisfy $\gamma_1(x) (\gamma_2(x))^{-1} =
\gamma_1(x') (\gamma_2(x))^{-1}$, then $(x')^{-1} x$ is a fixed
point of $\gamma_2^{-1} \gamma$. Hence $(x')^{-1} x = e$, i.e.,
the map $x\to \gamma_1(x) (\gamma_2(x))^{-1}$ from $G$ to $G$ is injective.

Hence
\[|\{\gamma_1(x) (\gamma_2(x))^{-1} : x\in \phi_{a \xi}(W,Y)\}| = |Y| |W|.\]
Now, for any $g\in W$, $\gamma\in Y$,
\begin{equation}\label{eq:tosco}\begin{aligned}
\gamma_1(\phi_{a\xi}(g,\gamma)) (\gamma_2(\phi_{a\xi}(g,\gamma)))^{-1} 
&= \gamma_1(g \gamma(a\xi)) (\gamma_2(g \gamma(a\xi)))^{-1}\\
&= \gamma_1(g) \gamma(\gamma_1(a\xi) (\gamma_2(a\xi)^{-1}) (\gamma_2(g))^{-1}\\
&= \gamma_1(g) \gamma(\gamma_1(a))
\gamma(\gamma_1(\xi) \gamma_2(\xi)^{-1}) \gamma((\gamma_2(a))^{-1}) 
(\gamma_2(g))^{-1}\\
&= \gamma_1(g) \gamma(\gamma_1(a))
\gamma(g_1^{-1} g_2) \gamma((\gamma_2(a))^{-1}) (\gamma_2(g))^{-1}\\
&\in (Y_2(W))_6 \subset (X_2(W))_6,\end{aligned}\end{equation}
where we have used the fact that $\Gamma$ is abelian. (What we have
done in (\ref{eq:tosco})
is apply the map $x\to \gamma_1(x) (\gamma_2(x))^{-1}$ to
$\phi_{a\xi}(g,\gamma)$ so as to get rid of $\xi$.)

Therefore, $|(X_2(W))_6|\geq |Y| |W|$.

{\em Case 1b: There is a $\xi\in G$, $\xi$ not a pivot, and a $y\in X$
such that $y(\xi)$ is a pivot.}
Then $|\phi_{y(\xi)}(W,Y)| = |Y| |W|$.
Much as in the previous case, we have
Hence
\[|\{\gamma_1(x) (\gamma_2(x))^{-1} : x\in \phi_{y(\xi)}(W,Y)\}| = |Y| |W|.\]
Now, for any $g\in W$, $\gamma\in Y$,
\begin{equation}\label{eq:tosca}\begin{aligned}
\gamma_1(\phi_{y(\xi)}(g,\gamma)) (\gamma_2(\phi_{y(\xi)}(g,\gamma)))^{-1} 
&= \gamma_1(g) \gamma(\gamma_1(y(\xi)) (\gamma_2(y(\xi))^{-1}) (\gamma_2(g))^{-1}\\
&= \gamma_1(g) \gamma(y(\gamma_1(\xi) \gamma_2(\xi)^{-1}))
(\gamma_2(g))^{-1}\\
&= \gamma_1(g) 
\gamma(y(g_1^{-1} g_2)) (\gamma_2(g))^{-1}
\in (Y_2(W))_4 \subset (X_2(W))_4.\end{aligned}\end{equation}

Therefore, $|(X_2(W))_4|\geq |Y| |W|$.

{\em Case 2: No element $\xi \in \langle
\langle X\rangle(\langle W\rangle)\rangle$ is a pivot.}
This means that for every $\xi\in
\langle X\rangle(\langle W\rangle)\rangle$ there are 
$g_1, g_2\in W$, $\gamma_1, \gamma_2 \in X$ such that 
$\gamma_1(\xi) (\gamma_2(\xi))^{-1} = g_1^{-1} g_2$
 and
$\gamma^{-1} \gamma_2$ has $e\in G$ as its only
fixed point in $G$. 

As said before, the map $x \to \gamma_1(x) (\gamma_2(x))^{-1}$ is
injective provided
$\gamma^{-1} \gamma_2$ has $e\in G$ as its only
fixed point in $G$. 
Hence, given $g_1,g_2\in W$, $\gamma_1,\gamma_2\in Y$,
$\gamma_1\ne \gamma_2$,
there is at most one $\xi \in \langle
\langle X\rangle(\langle W\rangle)\rangle$ such that
$\gamma_1(\xi) (\gamma_2(\xi))^{-1} = g_1^{-1} g_2$. This, together
with the fact that there are such $g_1,g_2$, $\gamma_1,\gamma_2$ for
every $\xi \in \langle
\langle X\rangle(\langle W\rangle)\rangle$, already implies that
\begin{equation}\label{eq:tristol}
|Y| |W| \geq 
|\langle \langle X\rangle(\langle W\rangle)\rangle|,\end{equation}
i.e., $Y$ and $W$ are large.

We can prove more. Let 
\[R_{\xi} = \{(g_1,g_2,\gamma_1,\gamma_2)\in W\times W\times
Y\times Y : \gamma_1\ne \gamma_2, g_1 \gamma_1(\xi)
= g_2 \gamma_2(\xi) \}
\]
We have already shown that the sets $R_{\xi}$ are disjoint
as $\xi$ ranges in $G$. Choose $\xi_0\in 
\langle X\rangle(\langle W\rangle)\rangle$ such that
$|R_{\xi_0}|$ is minimal. Then
\[|R_{\xi_0}| \leq \frac{|W|^2 |Y| (|Y|-1)}{
|\langle \langle X\rangle(\langle W\rangle)\rangle|}
< \frac{|W|^2 |Y|^2}{
|\langle \langle X\rangle(\langle W\rangle)\rangle|}\]
and so
\[|\{(g_1,g_2,\gamma_1,\gamma_2)\in W\times W\times
Y\times Y : g_1 \gamma_1(\xi_0)
= g_2 \gamma_2(\xi_0) \}| 
< \frac{|W|^2 |Y|^2}{
|\langle \langle X\rangle(\langle W\rangle)\rangle|} + |W| |Y|.\]

By Cauchy-Schwarz,
\[\begin{aligned}
(|W| |Y|)^2 &= \left(\sum_{r\in W Y(\xi_0)} |\{(g,\gamma)\in
W\times Y: g \gamma(\xi_0) = r\}|\right)^2\\ &\leq
|W Y(\xi_0)| \cdot \sum_{r\in W Y(\xi_0)} |\{(g,\gamma)\in
W\times Y: g \gamma(\xi_0) = r\}|^2\\
&= |W Y(\xi_0)| 
|\{(g_1,g_2,\gamma_1,\gamma_2)\in W\times W\times
Y\times Y : g_1 \gamma_1(\xi_0)
= g_2 \gamma_2(\xi_0) \}|,\end{aligned}\]
and so 
\[|W Y(\xi_0)| > \frac{|W|^2 |Y|^2}{
\frac{|W|^2 |Y|^2}{
|\langle \langle X\rangle(\langle W\rangle)\rangle|} + |W| |Y|}
\geq \frac{|W|^2 |Y|^2}{
2 \frac{|W|^2 |Y|^2}{
|\langle \langle X\rangle(\langle W\rangle)\rangle|}} =
\frac{1}{2}
|\langle \langle X\rangle(\langle W\rangle)\rangle|,\]
where we are using (\ref{eq:tristol}).

Now, recall that $\xi_0$ is not a pivot.
Hence there are $g_1, g_2\in W$, $\gamma_1,
\gamma_2\in X$ such that 
$\gamma_1(\xi_0) (\gamma_2(\xi_0))^{-1} = g_1^{-1} g_2$
 and
$\gamma_1^{-1} \gamma_2$ has $e\in G$ as its only
fixed point in $G$. Proceeding as before, we have
\[|\{\gamma_1(x) (\gamma_2(x))^{-1} : x\in \phi_{\xi_0}(W,Y)\}| 
= |\phi_{\xi_0}(W,Y)| >
\frac{1}{2}
|\langle\langle X\rangle(\langle W\rangle)\rangle|.\]

Now, much as before,
we see that, for any $g\in W$, $\gamma\in Y$,
\begin{equation}\begin{aligned}
\gamma_1(\phi_{\xi}(g,\gamma)) (\gamma_2(\phi_{\xi}(g,\gamma)))^{-1} 
&= \gamma_1(g) \gamma(\gamma_1(y(\xi)) (\gamma_2(y(\xi))^{-1}) (\gamma_2(g))^{-1}\\
&= \gamma_1(g) \gamma(y(\gamma_1(\xi) \gamma_2(\xi)^{-1}))
(\gamma_2(g))^{-1}\\
&= \gamma_1(g) 
\gamma(g_1^{-1} g_2) (\gamma_2(g))^{-1}
\in (Y(W))_4 \subset (X(W))_4.\end{aligned}\end{equation}

Hence
\[|(X(W))_4|> \frac{1}{2} |\langle\langle X\rangle(\langle W\rangle)\rangle|\]
and so, by Lem. \ref{l: b4},
\[(X(W))_8 = \langle\langle X\rangle(\langle W\rangle)\rangle.\]
\end{proof}

\section{Background on solvable groups}\label{s: background solvable}

Let $K$ be a finite field of characteristic $p$ and $K'$ some finite extension of $K$. If $H$ is an algebraic group defined over $K'$, then we call $H$ a {\it $K'$-group}. Now let $G$ be a connected solvable algebraic $K$'-subgroup of $\GL_r$. We are interested in studying $G(K')\cap \GL_r(K)$.

Recall that a Borel subgroup of $\GL_r$ is a closed, connected, solvable
 subgroup $B$ of $\GL_r$, which is maximal for these properties. So, in particular, $G$ is contained in a  Borel subgroup of $\GL_r$. Let $B$ and $B_1$ be two Borel subgroups of $G$; a classic result of algebraic groups says that,  $B(\Kbar)$ and $B_1(\Kbar)$ are conjugate in $\GL_r(\Kbar)$, and in particular are conjugate into the set of upper triangular matrices (see for instance \cite[6.2.7]{springer}).

We say that $G$ is called $K'$-split if it has a composition series $G=G_0\supset G_1 \supset \cdots \supset G_s = \{1\}$ consisting of connected $K'$-subgroups such that $G_i/ G_{i+1}$ is $K'$-isomorphic to $G_a$ or $\GL_1$ \cite[15.1]{borel}. 

We say that $G$ is {\em trigonalizable} over $K'$ if
there exists $x\in \GL_r(K')$  such that $xGx^{-1}$ consists of 
upper-triangular matrices. 
Since $K'$ is finite, $G$ is trigonalizable over $K'$
if and only if $G$ is $K'$-split. What is more, 
every image of $G$ under a $K'$-morphism is $K'$-split \cite[15.4]{borel}.

We can write $G = UT$, where $U$ is unipotent (it is the {\it unipotent radical} of $G$), $T$ is a torus, and both are defined over $K'$
\cite[10.6]{borel}. The groups $U$ and $T$ are $K'$-split if and only if $G$ is $K'$-split. Furthermore, if $U$ is $K'$-split, then any subgroup of $U$ that is defined over $K'$ is $K'$-split. Note too that $U$ is connected \cite[6.3.3]{springer}.

We introduce two assumptions for this section: firstly we assume that $G$ is trigonalizable (and hence $K'$-split) over $K'$ (recall that $K'$ is a finite extension of $K$). Secondly we assume that $p>r$; this implies that $U(\Kbar)$ is a group {\it of exponent $p$}; that is to say, $u^p = 1$ for all $u\in U(\Kbar)$.

Before we proceed we note an abuse of notation: for a variety $V$ defined over $K$, and a subvariety $W/\overline{K}$ defined over the algebraic completion $\overline{K}$ of $K$, we will write $W(K)$ for $W(\overline{K})\cap V(K)$. (We will even speak of the points of $W$ over $K$, meaning $W(K):= W(\overline{K})\cap V(K)$.)

\subsection{Central series, and a more general definition of $G$}

For subgroups $A$ and $B$ of an abstract group $H$ we define
$$[A,B] = \langle [a,b] \, \mid \, a\in A, b\in B \rangle.$$
Define the {\it lower central series of} $H$ to be the series 
$$H=H^0\geq H^1\geq H^2 \geq \cdots,$$
where $H^{i+1} = [H, H^i]$ for $i=0,\dots$.

In this way we can define a lower central series for $U(\Kbar)$; each member of the resulting series of abstract groups turns out to be the set of points over $\Kbar$ of a family of $K'$-groups, $U^0, U^1, \dots$ \cite[2.3]{borel}. We therefore define $U=U^0\geq U^1 \geq\cdots$ to be the lower central series of $U$.

Let $s$ be the nilpotency rank of $U$; i.e.,  $s$ is the smallest number such that $U^s=\{1\}$. Since $G(K)$ lies inside $B(\Kbar)$, and $B(\Kbar)$ has nilpotency rank $r-1$, we conclude that $G(K)$ has nilpotency rank at most $r-1$. Note that $T$ normalizes $U^i$ for all $i$, and $U^i$ is $K'$-split for every $i$.

By definition the quotient $U^i/ U^{i+1}$ is an abelian group that is $K'$-split. It is, therefore, isomorphic to $\underbrace{G_a\times \cdots \times G_a}_{t}$ \cite[14.3.7]{springer}. If $G=B$ it is obvious that $t = r-i-1$ for $i=0,\dots, r-2$. Since $G<B$, we conclude that $t<\frac12r^2$ for all $i$.

It will be useful to prove results when $G$ is not just a subgroup of $\GL_r$, but a quotient of subgroups. Specifically, let $H$ be a connected solvable subgroup of $\GL_r$ defined over a finite extension $K'$ of $K$. Write $H=UT$, as above; define $G=H/U^i$, where $U^i$ is a group in the lower central series of $U$. Then $G$ is connected and solvable, and defined over $K'$. 

The statements that we have made so far in this section all apply in this more general setting. We work in this more general setting for the remainder of the section.

\subsection{The Lie algebra and $\exp$}

We can associate to our linear algebraic group $G$ (resp. $U$, $T$) a Lie
algebra $\mathfrak{g}$ (resp. $\mathfrak{u}$, $\mathfrak{t}$)
in the usual way. We will make frequent use of the adjoint representation ${\rm Ad}:G\to \GL(\lieg)$.

Write $U_r$ for the unipotent radical of $B$, the Borel subgroup containing $G$; let $\lieu_r$ be the Lie algebra of $U_r$. We are able to define the exponential and logarithm map
\begin{equation}\label{e: exp}
  \exp: \lieu_r\to U_r, \, \, X \mapsto \sum_{i=0}^\infty \frac{X^i}{i!}, \textrm{ and  }
\log: U_r\to \lieu_r, \, \, x \mapsto \sum_{i=1}^\infty (-1)^{i+1} \frac{(x-1)^i}i
\end{equation}
in the usual way. Observe that all elements $X$ in $\lieu$ satisfy $X^r=0$ and all elements $x$ in $U$ satisfy $(x-1)^r=0$. Thus these maps are polynomials defined over $\Z[\frac1{r!}]$; in particular, since $p>r$, $\exp$ and $\log$ are defined over $\Z/p\Z$.

The Lie algebra $\lieu_r$ is, by definition, a vector space over $\Kbar$; the Lie algebra, $\lieu$, is a subalgebra of $\lieu_r$ and is also a vector space; in particular, $\lieu$ is an affine algebraic variety defined by a finite set of linear equations. We can, therefore, write $\lieu(L)$ for the set of points of $\lieu$ over some field $L$. Note, then, that $\lieu$ and $\lieu(\Kbar)$ coincide.

We list some standard properties of the exponential map; since we are working with matrix groups, these may be verified directly using (\ref{e: exp}). (In the context of Lie groups, these properties can be used to define the exponential map c.f. \cite[Thm. 3.7]{kirillov}.)

\begin{lem}\label{l: kirillov}
Take $X\in \lieu_r(\Kbar)$, $c_1, c_2\in\Kbar$. Then
\begin{enumerate}
\item\label{l1} $\exp((c_1+c_2)X) = (\exp(c_1X))(\exp(c_2X))$;
\item\label{l2} $\exp(-X) = (\exp X)^{-1}$;
\item\label{l4} The map $\exp:\lieu_r(\Kbar)\to U_r(\Kbar)$ is a bijection, with inverse equal to $\log$.
\end{enumerate}
\end{lem}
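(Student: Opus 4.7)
My plan is to derive all three statements by reducing them to formal-power-series identities, and then invoking the nilpotence of the matrices involved (together with $p > r$) to conclude that these formal identities hold as honest matrix identities.

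For (\ref{l1}), I would note that $c_1 X$ and $c_2 X$ commute, so the Cauchy product rule applies to $(\exp(c_1 X))(\exp(c_2 X))$; after collecting terms of equal total degree in $X$, the coefficient of $X^n$ becomes $\sum_{i+j=n} \frac{c_1^i c_2^j}{i!\, j!} = \frac{(c_1+c_2)^n}{n!}$ by the binomial theorem, matching the coefficient on the left. The only thing to check is that this manipulation is legitimate in our setting. Because $X \in \lieu_r(\Kbar)$ satisfies $X^r = 0$ and $p > r$, both sides are finite sums with coefficients in $\Z[1/r!] \subset \Kbar$, so the identity is simply the evaluation at $X$ of a polynomial identity in a single formal variable over $\Z[1/r!]$, and hence it holds.

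Statement (\ref{l2}) is then immediate from (\ref{l1}) with $c_1 = 1$, $c_2 = -1$, once one observes that $\exp(0) = I$ directly from the defining series.

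For (\ref{l4}), the standard formal power series identities
\[\log\bigl(\exp(T)\bigr) = T \quad \text{and} \quad \exp\bigl(\log(1+U)\bigr) = 1+U\]
hold in $\Z[1/r!][[T]]$ and $\Z[1/r!][[U]]$ respectively. The truncations at degree $r-1$ yield polynomial identities over $\Z[1/r!]$ in one variable. Evaluating these at $X \in \lieu_r(\Kbar)$ (so $X^r = 0$) and at $x - 1$ with $x \in U_r(\Kbar)$ (so $(x-1)^r = 0$), the higher-order terms vanish and we obtain $\log(\exp(X)) = X$ and $\exp(\log(x)) = x$. It remains to note that $\exp$ maps $\lieu_r(\Kbar)$ into $U_r(\Kbar)$ (since $\exp(X) - I$ is a polynomial in $X$ with no constant term, hence nilpotent of degree $< r$) and symmetrically $\log$ maps $U_r(\Kbar)$ into $\lieu_r(\Kbar)$; so the two maps are mutually inverse bijections between these sets.

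There is no serious obstacle here; the only thing one must be careful about is the bookkeeping to confirm that $p > r$ (hence $\gcd(p, r!) = 1$) is exactly what allows every denominator that appears to be invertible, and that the stated maps land in the advertised targets before claiming they are mutual inverses. Both verifications are elementary and follow from the nilpotency condition $X^r = 0$ and $(x-1)^r = 0$ noted in the paragraph preceding the lemma.
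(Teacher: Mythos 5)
Your argument is correct and is exactly the route the paper intends: the paper gives no written proof, merely remarking that the properties "may be verified directly using (\ref{e: exp})," and your reduction to truncated formal-power-series identities over $\Z[1/r!]$, made legitimate by $X^r=0$, $(x-1)^r=0$ and $p>r$, is precisely that direct verification. No gaps.
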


For fixed $X\in\lieu_r(\Kbar)$, define the map
\begin{equation}\label{e: 1 parameter}
\phi_X(t):K\to U_r, \, t\mapsto \exp(tX).
\end{equation}
Item (\ref{l1}) implies that this map is a morphism of linear algebraic groups (a so-called {\it 1-parameter subgroup}); the image of $\phi_X$ is a 1-dimensional subgroup $R$ of $U$ and, differentiating with respect to $t$ one sees that, $d\phi_X(0)=X$. Simple matrix calculations yield that this property uniquely defines the 1-parameter subgroup. (Note that, from here on, we will refer to both $\phi_X$, and the image of $\phi_X$, as a 1-parameter subgroup.)

Choosing $X$ in $\lieu(K')$, for some field $K'$, and using the fact that $\exp$ is defined over $\Z/p\Z$, we conclude that $R$ is a $K'$-group. We will use \cite[2.2]{borel} to generalize this observation to groups generated by (the images of) 1-parameter subgroups.

The restriction of $\exp$ to the Lie algebra $\lieu$ is not, in general, a map into $U$.\footnote{Our thanks to Simon Goodwin for pointing this out.} However, for a sufficiently ``nice" embedding of $G$ in $\GL_r$ this property can hold; we follow McNinch \cite{mcninch} in referring to this as an {\it exponential type representation}. Note that, in this case, the map $exp: \lieu\to U$ is injective since it is a restriction of the injective map $\exp:\lieu_r\to U_r$.

\begin{lem}\label{l: kirillov2}
Suppose that $G$ is of exponential type in $\GL_r$ and let $\phi:U\to U$ be a morphism of algebraic groups defined over a field $K'$; write $d\phi:\lieu\to\lieu$ for the derivative at the identity. Then
$$\phi(\exp X) = \exp(d\phi(X)).$$
In particular, for $t\in T(\Kbar)$,
$$t(\exp X)t^{-1} = \exp ({\rm Ad}(t)(X)).$$
\end{lem}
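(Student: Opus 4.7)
The plan is to reduce the identity $\phi(\exp X) = \exp(d\phi(X))$ to the uniqueness statement for $1$-parameter subgroups recorded in the discussion of (\ref{e: 1 parameter}). For fixed $X \in \lieu(\Kbar)$, I would consider the composite
\[
\psi := \phi \circ \phi_X : G_a \to U,
\]
where $\phi_X(t) = \exp(tX)$ is the $1$-parameter subgroup with derivative $X$ at the origin. Both $\phi$ and $\phi_X$ are morphisms of algebraic groups, so $\psi$ is itself a morphism of algebraic groups, and, viewed through the inclusion $U\subset U_r$, is a $1$-parameter subgroup of $U_r$.

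Next, I would apply the chain rule to obtain $d\psi(0) = d\phi(d\phi_X(0)) = d\phi(X) \in \lieu(\Kbar)$, using that $\phi : U \to U$ implies $d\phi : \lieu \to \lieu$. The uniqueness statement (that any $1$-parameter subgroup of $U_r$ is determined by its derivative at the origin) then forces $\psi(t) = \exp(t\cdot d\phi(X))$ for all $t$; the right-hand side is well-defined as an element of $U$ precisely because $G$ is of exponential type. Specializing to $t=1$ yields the first identity $\phi(\exp X) = \exp(d\phi(X))$.

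For the ``in particular'' clause, I would observe that since $T$ normalizes $U$, conjugation by $t \in T(\Kbar)$ defines a $K'$-morphism of algebraic groups $c_t : U \to U$, $u \mapsto t u t^{-1}$, whose derivative at the identity is by definition $\Ad(t) : \lieu \to \lieu$. Substituting $\phi = c_t$ into the general identity just proved gives $t(\exp X) t^{-1} = \exp(\Ad(t)(X))$. The main subtlety to verify is the bookkeeping that every composition involved stays inside $U$ and not merely $U_r$, so that the exponential-type hypothesis is invoked exactly where needed; once this is in place, the proof is a direct appeal to the universal property of $1$-parameter subgroups, requiring no further computation.
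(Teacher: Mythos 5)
Your proof is correct and is essentially the argument the paper gives: both fix $X$, observe that $s\mapsto\phi(\exp(sX))$ is a $1$-parameter subgroup with tangent vector $d\phi(X)$ at the identity, and invoke the uniqueness of $1$-parameter subgroups to conclude $\phi(\exp(sX))=\exp(s\,d\phi(X))$, specializing to $s=1$. Your additional remarks (the chain rule computation, the role of the exponential-type hypothesis in keeping $\exp(d\phi(X))$ inside $U$, and identifying conjugation by $t$ as the morphism whose differential is $\Ad(t)$) merely make explicit what the paper leaves implicit.
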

\begin{proof}
Fix $X\in\lieu$ and take $s\in\Kbar$. Observe that $\phi(\exp(sX))$ is a 1-parameter subgroup in $U$ with tangent vector at identity $d\phi (d\exp (X)) = d\phi (X)$. Thus, by the uniqueness of 1-parameter subgroups $d\phi(\exp(sX))$ = $\exp(sd\phi(X))$.
\end{proof}

We will assume from here on that $G$ is of exponential type in $\GL_r$. Now recall that the unipotent radical $U$ of $G$ is defined over a finite field $K'$. 

\begin{lem}\label{l: bijection}
Let $L$ be a finite field contained in, equal to, or containing $K'$. The map
$\exp:\lieu(L)\to U(L)$ is a bijection.
\end{lem}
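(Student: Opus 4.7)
The plan is to deduce the claim directly from the bijectivity of $\exp$ at the $\Kbar$-level (Lemma \ref{l: kirillov}(\ref{l4})) by a Galois-descent-style argument, using the fact that $\exp$ and $\log$ are given by polynomials with coefficients in $\Z[1/r!]$, which, since $p>r$, makes them defined over $\F_p$ and hence over any field $L$ of characteristic $p$.

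First I would verify that $\exp$ and $\log$ restrict to well-defined maps in both directions. If $X\in\lieu(L)$, then $\exp(X)\in U(\Kbar)$ by the exponential-type hypothesis, while the polynomial formula for $\exp$ has coefficients in $\F_p\subseteq L$, so the entries of $\exp(X)$ lie in $L$; hence $\exp(X)\in U(L)$. For the reverse direction, I need that $\log$ carries $U(L)$ into $\lieu(L)$; the same polynomial-coefficients argument shows $\log(u)\in\lieu_r(L)$ for $u\in U(L)$, and one needs to check further that $\log(u)\in\lieu$. This follows from the exponential-type hypothesis together with a dimension argument: $\exp\colon\lieu_r\to U_r$ is an injective morphism of varieties, and its restriction to the irreducible closed subvariety $\lieu$ lands in the irreducible closed subvariety $U$ of the same dimension as $\lieu$, so $\exp(\lieu)=U$ as varieties over $\Kbar$, and therefore $\log(U)=\lieu$.

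With both restrictions in hand, the bijectivity is immediate. Since $\log\circ\exp=\id$ and $\exp\circ\log=\id$ hold on $\lieu_r(\Kbar)$ and $U_r(\Kbar)$ respectively (by Lemma \ref{l: kirillov}(\ref{l4})), they continue to hold after restriction to the $L$-points $\lieu(L)\subseteq\lieu_r(\Kbar)$ and $U(L)\subseteq U_r(\Kbar)$. Thus $\exp\colon\lieu(L)\to U(L)$ is a bijection with inverse $\log$.

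The only non-routine step is the second one, namely checking $\log(U(L))\subseteq\lieu(L)$, and more specifically the varietal equality $\exp(\lieu)=U$; everything else is bookkeeping about the field of definition of polynomial maps. This equality rests on the exponential-type hypothesis, which was precisely introduced earlier to guarantee that $\exp$ restricted to $\lieu$ is a map into $U$ (rather than merely into $U_r$), and on the basic fact that an injective morphism between irreducible varieties of the same dimension is surjective on $\Kbar$-points.
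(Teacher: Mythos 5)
Your proof is correct, but it takes a different route from the paper's. The paper splits into two cases: injectivity follows (as in your argument) by restricting the injective map $\exp:\lieu_r(L)\to U_r(L)$; surjectivity is then proved by a counting argument ($|\lieu(L)|=|U(L)|$) when $L\supseteq K'$, and, when $L\subseteq K'$, by observing directly from the formula (\ref{e: exp}) that an $X$ with some matrix entry outside $L$ has $\exp(X)\notin U(L)$. You instead prove the single varietal identity $\exp(\lieu)=U$ over $\Kbar$ and then descend to $L$-points using the fact that both $\exp$ and $\log$ are polynomial maps with coefficients in $\F_p$; this is uniform in $L$ and avoids the case split, which is a genuine (and arguably cleaner) alternative. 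One caveat: the ``basic fact'' you invoke --- that an injective morphism between irreducible varieties of the same dimension is surjective on $\Kbar$-points --- is false as stated (an open immersion such as $\mathbb{A}^1\setminus\{0\}\hookrightarrow\mathbb{A}^1$ is a counterexample). Your application is nonetheless sound, because here $\exp:\lieu_r\to U_r$ is an isomorphism of varieties with polynomial inverse $\log$, so $\exp(\lieu)$ is a \emph{closed} irreducible subvariety of $U$ of dimension $\dim\lieu=\dim U$, and a closed irreducible subvariety of full dimension in the irreducible variety $U$ must be all of $U$. You should state the closedness explicitly rather than appealing to the general (false) principle.
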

\begin{proof}
The map $\exp:\lieu\to U$ is defined over $\Z$, and so maps elements of $\lieu(L)$ to elements of $U(L)$; in other words the map $\exp:\lieu(L)\to U(L)$ is well-defined.

By definition the algebra $\lieu$ lies inside $\lieu_r$ a maximal unipotent lie subalgebra of $\mathfrak{gl}_r$. The map $\exp$, as we have defined it, is a restriction of the map $\exp: \lieu_r\to U_r$, where $U_r$ is a maximal unipotent subgroup of $\GL_r$. 

The map $\exp: \lieu_r(L)\to U_r(L)$ is an injection, hence the same can be said for the restriction $\exp: \mathfrak{u}(L)\to U(L)$. If $L$ contains $K'$, then $|\lieu(L)|=|U(L)|$, and so the map $\exp$ is a surjection as required.

We must prove that $\exp$ is a surjection when $L$ is contained in $K'$. It is sufficient to prove that if $X\in \lieu(K')\backslash\lieu(L)$, then $\exp(X)\not\in U(L)$. If we represent $X$ as a strictly upper-diagonal matrix with some entries not contained in $L$, then this follows directly from the definition of $\exp$, equation (\ref{e: exp}).
\end{proof}

\subsection{Weights and roots}

If $H$ is a closed subgroup of $U$ that is normalized by $T$, then $\lieh$, the Lie algebra of $H$, is also $T$-invariant (under the adjoint representation). This allows us to define {\bf weights} and {\bf roots} for the group $G$. We proceed in a similar way to \cite[8.17]{borel}. 

The group $T$ acts on $\lieu$ (considered as a vector space over $\Kbar$) so we have a rational representation of $T$; then we can decompose $\mathfrak{u}$ into weight spaces:
$\lieu_\alpha = \{v\in \lieu \, \mid \, \Ad(t)v=\alpha(t)v \textrm{ for all } t\in T\}$.
Here $\alpha: T\to \GL_1$ is a character of $T$. Those $\alpha$ for which $\lieu_\alpha\neq\{0\}$ are called the {\bf weights} of $T$ in $\lieu$. We write $\Phi$ for the set of weights of $T$ in $\lieu$; then 
$$\lieu = \oplus_{\alpha\in\Phi} \lieu_\alpha.$$
We allow the possibility that $\alpha$ is the trivial weight. We will write $\Phi^*$ for the set of non-trivial weights in $\Phi$, we call $\Phi^*$ the set of {\bf roots} of $G$ relative to $T$.

Note first that if $\alpha$ is defined over a field $K'$, then $\lieu_\alpha$ is defined over $K'$. On the other hand observe that $\lieu_\alpha$ is not necessarily a subalgebra of $\lieu$ (since it may not be closed under $[\,,\,]$). However any $1$-dimensional subspace of $\lieu$ is a subalgebra of $\lieu$ (since $[u,ku] = 0$ for every $u\in \lieu, k\in\Kbar$).

\subsection{Weight and root subgroups}

We reiterate that the group $G$ is of exponential type in $\GL_r$. A {\it weight subgroup} of $U$ is a 1-parameter subgroup $R$ that is defined over $K'$, and is normalized by $T$. Since $R$ is normalized by $T$, the Lie algebra 
$\mathfrak{r}$ of $R$ is also $T$-invariant. In other words $\mathfrak{r}$ lies inside $\lieu_\alpha$ for some weight $\alpha$ of $T$ in $\lieu$. We write $\alpha(R)$ for the weight associated with a weight subgroup $R$.

If $\alpha(R)\in\Phi^*$ (i.e.,  $\alpha(R)$ is a root), then we call $R$ a {\bf root subgroup}.

\begin{lem}\label{l: roots}
Let $U=V^0> V^1 > V^2 > \cdots V^s = \{1\}$ be a series of closed connected normal $K'$-subgroups of $G$ of exponential type in $\GL_r$ such that $V^i/ V^{i+1}$ is abelian for $i=0,\dots, s-1$. There exist a finite set of weight subgroups $R_1,\dots, R_d$ in $U$ where $d=\dim U$, such that any element $u\in U(\Kbar)$ can be written $u=r_1 \cdots r_d$ and $r_i\in R_i(\Kbar)$ for $i=1,\dots, d$. 

The weight subgroups can be chosen so that
\begin{enumerate}
\item\label{n-1} $R_l \cdots R_d$ is a normal subgroup of $G$ for $l=1,\dots, d$;
\item\label{no} $R_j(\Kbar)\cap (R_l(\Kbar) \cdots R_d(\Kbar)) = \{1\} $ for $j<l$;
\item\label{n+1} the representation of $u$ is unique.
\item\label{n+2} there exist integers $1=d_0 < d_1 < \cdots < d_{s-1}\leq d$ so that
$$U^i = R_{d_i}R_{d_i+1} \cdots R_d,$$
for $i=0,\dots, s$.
\end{enumerate}
\end{lem}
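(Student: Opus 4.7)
The plan is to work inside the Lie algebra via the $T$- and $G$-equivariant bijection $\exp\colon\lieu\to U$ (Lemmas \ref{l: kirillov2} and \ref{l: bijection}), and to construct a $K'$-basis $X_1,\dots,X_d$ of $\lieu$ whose elements are $T$-weight vectors, arranged so that the descending subspaces $M_l := \operatorname{span}_{\Kbar}(X_l,\dots,X_d)$ form a complete flag of $G$-invariant Lie ideals of $\lieu$ refining both the given filtration $\{\liev^i\}$ and the lower central series $\{\lieu^i\}$. The required weight subgroups are then $R_l := \exp(\mathbb{A}^1\cdot X_l)$.

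To build the flag I would iterate the Lie--Kolchin theorem. Each $\liev^i$ and each $\lieu^j$ is a $G$-invariant Lie ideal, and $G$ acts on each successive quotient as a solvable $K'$-split group; Lie--Kolchin (applied over $K'$, using $K'$-splitness of $G$) refines each such quotient into a maximal chain of $G$-invariant $K'$-subspaces. Splicing produces the desired complete chain $M_1\supset\cdots\supset M_{d+1}=0$. Crucially, since $U$ is unipotent, its action on each one-dimensional quotient $M_l/M_{l+1}$ is trivial, so the $G$-action factors through $T$ via a character $\alpha_l\in\Phi\cup\{0\}$; semisimplicity of $T$ then gives a unique $T$-stable lift of $M_l/M_{l+1}$ to a line $L_l\subset M_l\cap\lieu_{\alpha_l}$, defined over $K'$. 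Picking any $0\ne X_l\in L_l(K')$ yields a $K'$-basis of weight vectors, and each $R_l=\exp(\mathbb{A}^1 X_l)$ is a one-parameter $K'$-subgroup contained in $\lieu_{\alpha_l}$, hence a weight subgroup.

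With this basis in hand, the four conclusions follow by standard arguments. For (\ref{n-1}), $M_l$ being a $G$-invariant Lie ideal makes $\exp(M_l)$ a subgroup (using that $U$ has exponent $p$ with $p>r$), equal to $R_l\cdots R_d$ by the Mal'cev-coordinate construction applied to the central chain $M_\bullet$, and normal in $G$ by Lem.~\ref{l: kirillov2}. For (\ref{no}), injectivity of $\exp$ yields $R_j\cap(R_l\cdots R_d)=\exp(\langle X_j\rangle\cap M_l)=\{1\}$ whenever $j<l$. For (\ref{n+1}), one inductively peels off the $R_l$-factor working modulo $\exp(M_{l+1})$, uniqueness then following from (\ref{no}). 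For (\ref{n+2}), the indices $d_i$ are chosen so that $M_{d_i}=\lieu^i$, which is possible precisely because the flag was engineered to pass through every $\lieu^i$.

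The main obstacle is producing a single complete $G$-invariant flag that simultaneously refines both the prescribed $\liev^\bullet$ series and the lower central series $\lieu^\bullet$ while remaining defined over $K'$. This is where the $K'$-splitness of $G$ is essential: it ensures that every application of Lie--Kolchin can be performed $K'$-rationally and that the weights $\alpha_l$ are $K'$-characters, so that the weight spaces $\lieu_{\alpha_l}$, and hence the lines $L_l$, are themselves $K'$-defined.
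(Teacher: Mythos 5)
Your construction is, at its core, the same as the paper's: both proofs produce a basis of $\lieu$ consisting of $T$-weight vectors adapted to a $G$-stable filtration and then exponentiate each basis line to a one-parameter weight subgroup. The difference is organisational. The paper inducts on $\dim U$ along the given series $V^\bullet$: assuming the weight groups have been found for $V^1$, it splits each weight space as $\liev_\alpha=(\liev_\alpha\cap\liev^1)\oplus\liew_\alpha$ and takes bases of the complements $\liew_\alpha$; normality of the partial products $R_l\cdots R_d$ is then deduced from the abelianness of $U/V^1$ together with $T$-invariance and the inductive hypothesis. You instead build a complete $G$-invariant flag $M_1\supset\cdots\supset M_{d+1}=0$ in one pass via Lie--Kolchin and lift each one-dimensional graded piece to a $T$-weight line. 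Your route makes (\ref{n-1}) cleaner, since $R_l\cdots R_d=\exp(M_l)$ with $M_l$ a $G$-stable ideal, at the cost of having to justify the existence of the flag. A small slip: the $T$-stable lift $L_l$ of $M_l/M_{l+1}$ is not unique (any line of $M_l\cap\lieu_{\alpha_l}$ complementary to $M_{l+1}\cap\lieu_{\alpha_l}$ will do); but only existence is used, so this is harmless.

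There is, however, one step that fails as written: you require the complete flag to refine \emph{both} the given series $\liev^\bullet$ \emph{and} the lower central series of $\lieu$ simultaneously. A complete flag containing a prescribed family of subspaces exists only if that family is totally ordered by inclusion, and beyond $[\lieu,\lieu]\subseteq\liev^1$ there is no general comparability between the two chains. For instance, with $U=U_1\times U_2$, where $U_1$ has nilpotency class $3$ and $U_2$ is a Heisenberg group, one may take $V^1=U_1^1\times U_2$ and $V^2=\{1\}\times Z(U_2)$, a series with abelian quotients; then $V^2$ and the term $U^2=U_1^2\times\{1\}$ of the lower central series are incomparable, so no complete flag passes through both. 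The fix is simply to drop the extra requirement: conclusion (\ref{n+2}) concerns the series given in the hypothesis (the ``$U^i$'' there is the $V^i$ of the statement; in the paper's application the $V^i$ are \emph{chosen} to be the lower central series, which is why the two coincide there), so it suffices to refine $\liev^\bullet$ alone, which Lie--Kolchin applied to each quotient $\liev^i/\liev^{i+1}$ certainly permits. With that repair, and with uniqueness of $L_l$ weakened to existence, your argument goes through.
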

\begin{proof}
If $U$ has dimension $1$, then define $R_1=U$, and we are done. Now proceed by induction on the dimension of $U$. Then we can assume that root groups exist for $V^1$ satisfying the four given properties; label these weight groups $R_{e+1}, \dots, R_d$. In addition write $\liev^1$ for the Lie algebra of $V^1$.

For each $\alpha\in\Phi$ we can write $\liev_\alpha = \liex_\alpha\oplus \liew_\alpha$ where $\liex_\alpha = \liev_\alpha\cap \liev^1$ and $\liew_\alpha$ is a complement. Define
$$\Phi^1 = \{\alpha\in\Phi \, \mid \, \liev_\alpha\neq
\liex_\alpha\},$$
i.e.,  the set of roots whose root spaces do not lie wholly within $\liev^1$.
Then we can decompose $\liev$ as follows:
$$\liev = \liev^1 \bigoplus \left(\bigoplus\limits_{\alpha\in\Phi^1} \liew_\alpha\right).$$

Now we construct our root groups: we choose a basis for each $\liew_\alpha$, we let $\{v_1, \dots, v_e\}$ be the union of these bases and then set $\liew_i = \langle v_i \rangle$ for $i=1,\dots, e$. Define $R_i$ to be the 1-parameter subgroup given by $v_i$, i.e.,  $R_i(\Kbar)=\exp(\liew_i(\Kbar))$ is a closed 1-dimensional subgroup with $\liew_i$ as a Lie algebra.

Now observe that the subgroups $R_i$ are normalised by $T$; then, since $U/V^1$ is abelian, we obtain that 
\[R_{l} R_{l+1} \dotsb R_{e-1} V^1 /V^1\]
is a group for any $i\geq 0$, $l\geq 1$ with $d_i\leq l < d_{i+1}$. Now, since $V^1 = R_eR_{e+1} \dotsb R_d$, we obtain that (\ref{n-1}) holds.

Let us now prove property (\ref{no}). If a closed (i.e., algebraic) subgroup $H_1$
of an algebraic group $H$ normalizes a closed subgroup $H_2$ of $H$,
and both $H_1$ and $H_2$ are connected, then $H_1 H_2 = H_2\rtimes H_1$
is a closed, connected subgroup of $H$ \cite[\S 7.5]{humphreys3}. Hence $R_l\dotsc R_d$ is a closed
connected subgroup of $U$.
Now apply the inverse of $\exp$ to $R_j(\Kbar)$ (with $j<l$) and to $R_l\cdots R_d(\Kbar)$ to yield the respective Lie algebras. By construction the intersections of these Lie algebras is $\{0\}$. Since the $\exp$ map is one-to-one, we conclude that $R_j(\Kbar)\cap R_l\cdots R_d(\Kbar) = \{1\}$ as required.

We need to prove uniqueness. We proceed by induction on the dimension of $U$. Clearly the statement is true if this dimension is equal to $1$; now suppose that $\dim U = d$, and suppose that $r_1 \cdots r_{d-1} r_d = r_1' \cdots r_{d-1}' r_d'$; then 
$$(r_1')^{-1}r_1r_2\cdots r_{d-1}=r_2'\cdots r_{d-1}' \in R_2\cdots R_{d}.$$
 Now $r_1 (r_1')^{-1} \in R_2\cdots R_{d} \cap R_1$, thus $r_1 =r_1'$ by property (\ref{no}). This implies that $r_2\cdots r_{d} = r_2'\cdots r_{k}'$, and the result follows by induction.

Finally (\ref{n+2}) follow by construction.
\end{proof}

\begin{cor}\label{c: pajama}
Let $R$ be a weight subgroup of $G$ defined over a field extension $K'$ of $K$. Then
\begin{enumerate}
\item $|R(K)|\leq |K|$;
\item Define a character $\beta: T\to \GL_1$ via
$$tx_R(s)t^{-1} = x_R(\beta(t) s).$$
Then $\beta=\alpha(R)$. 
\end{enumerate}
\end{cor}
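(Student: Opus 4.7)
The plan is to exploit the parameterization $x_R = \phi_X: \Aff^1 \to R$, $s \mapsto \exp(sX)$, where $X$ is a nonzero generator of the one-dimensional Lie algebra $\mathfrak{r}$ of $R$. Since $R$ is a weight subgroup with weight $\alpha(R)$, by definition $\mathfrak{r} \subseteq \lieu_{\alpha(R)}$, so $X \in \lieu_{\alpha(R)}$ and hence $\Ad(t)(X) = \alpha(R)(t)\,X$ for every $t \in T(\Kbar)$. The map $x_R$ is a bijection onto $R(\Kbar)$ by Lem.~\ref{l: kirillov}\eqref{l4}, and since $X$ can be taken with entries in $K'$ (as $R$ is defined over $K'$), $x_R$ is a morphism of varieties defined over $K'$.

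For part (1), observe that both $\exp$ and $\log$ are defined over $\Z[1/r!]$ and hence over $K$ (since $p > r$); consequently, for any $s \in \Kbar$, $x_R(s) = \exp(sX)$ lies in $GL_r(K)$ if and only if $\log(x_R(s)) = sX$ lies in $\mathfrak{gl}_r(K)$. Pick any nonzero entry $x_{i_0 j_0}$ of $X$; the condition $sX \in \mathfrak{gl}_r(K)$ forces $sx_{i_0 j_0} \in K$, so $s$ lies in the one-dimensional $K$-subspace $x_{i_0 j_0}^{-1}\,K$ of $K'$, which has exactly $|K|$ elements. Since $x_R$ is injective, $|R(K)| \leq |K|$.

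For part (2), conjugation by $t \in T(\Kbar)$ restricts to a morphism $\phi_t : R \to R$ of algebraic groups (since $T$ normalizes $R$). Applying Lem.~\ref{l: kirillov2} to $\phi_t$ gives
\[
t\,x_R(s)\,t^{-1} \;=\; t\,\exp(sX)\,t^{-1} \;=\; \exp\!\bigl(\Ad(t)(sX)\bigr) \;=\; \exp\!\bigl(s\,\alpha(R)(t)\,X\bigr) \;=\; x_R\bigl(\alpha(R)(t)\,s\bigr).
\]
Comparing with the defining relation $t\,x_R(s)\,t^{-1} = x_R(\beta(t)\,s)$ and using that $x_R$ is injective yields $\beta(t)\,s = \alpha(R)(t)\,s$ for all $s$, so $\beta = \alpha(R)$.

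Neither part presents a real obstacle; the only point requiring any care is the bookkeeping between $K$ and $K'$ in part (1), where one must invoke the fact that $\exp$ and $\log$ are defined over $\Z/p\Z$ to convert the condition ``$x_R(s) \in GL_r(K)$'' into the linear condition ``$sX \in \mathfrak{gl}_r(K)$'' on $s$.
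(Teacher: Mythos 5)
Your proof is correct and follows essentially the same route as the paper's: part (1) reduces, via the fact that $\exp$ and $\log$ are defined over $\Z/p\Z$ (this is the content of Lem.~\ref{l: bijection}), to counting the $K$-points on the line $\Kbar X$ inside $\lieu(K)$, which is at most $|K|$; and part (2) is exactly the intended application of Lem.~\ref{l: kirillov2} combined with $\Ad(t)X=\alpha(R)(t)X$.
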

\begin{proof}
Let $\mathfrak{r}$ be the Lie algebra of $R$. Lem. \ref{l: bijection} implies that the $\exp$ map induces a one-to-one correspondence between the number of points in $\mathfrak{r}(K)$ and $R(K)$. Now $\mathfrak{r}$ is a 1-dimensional subspace of $\lieu$, hence there is a $v\in\lieu$ such that
$$\mathfrak{r}(K) = \{kv |k\in\Kbar\}\cap \lieu(K).$$
Clearly it is not possible for there to be more than $|K|$ elements in this set.

The second property is a consequence of Lem.\ \ref{l: kirillov2}.
\end{proof}

\subsection{Height and standard form}\label{s: height}

Lem. \ref{l: roots} allows us to make a number of useful definitions. We apply the Lem. \ref{l: roots} to the group $G$; the groups $V^i$ in (\ref{n+2}) are prescribed to be members of the lower central series of $U$; in other words $V^i=U^i$ for $i=0,1,\dots$. We now define $\Phi_R$ to be a set of weight subgroups for $G$ that satisfy Lem. \ref{l: roots} in this setting. 

Note that, to apply Lem. \ref{l: roots} in this way we need to be sure that $U^i$ is of exponential type in $\GL_r$ for each $i$. This is clear enough: for some (finite) $s$, $U^s$ is trivial, and the result holds. Then, the result holds for $U^{s-1}$ by an argument similar to that given in Lem. \ref{l: alex}. A repetition of this argument in the quotient $U/U^{s-1}$ achieves the same result for $U^{s-2}$, and so on.

Write $\Phi^*_R$ for the set of root subgroups in $\Phi_R$. Recall that $d=\dim U$, and note that there may be more than $d$ weight subgroups in $U$; $\Phi_R$ does not necessarily contain all of them.

Lem. \ref{l: roots} yields a natural notion of {\it height} in $\Phi_R$; let the height 
${\rm ht}(R)$ of a weight subgroup $R$ be 
${\rm ht}(R)=i$, where $i$ is the first member of the derived series of $U$ that does not contain $R$. Thus we will have weight subgroups of heights $1,\dots, s$. Observe that, by construction, the weight subgroups $R_1, \dots, R_d$ are ordered by increasing height; in other words, they satisfy ${\rm ht}(R_i)\leq {\rm ht}(R_{i+1})$ for $i=1,\dots, d-1$. If $\Sigma$ is a subset of $\Phi_R$ then write
$$\Sigma^i = \{R\in \Sigma_R \, \mid \, {\rm ht}(R)=i\}.$$

The lemma also allows us to consider a {\it standard form} for an element $g\in G(\Kbar)$. We write
\begin{equation}\label{standard}
g = x_{R_1}(s_1)\cdots x_{R_d}(s_d) t
\end{equation}
where $t\in T(\Kbar)$, $s_i \in \overline{K}$
 and $x_{R_i}(s_i) = \exp(s_i v_i) \in R_i(\Kbar)$ for $i=1,\dots, d$. It will be convenient for us to define a function 
$$t_{R_i}: G\to \Kbar, \, g \mapsto s_i,$$
where $s_i$ is the corresponding element of $\Kbar$ given in the standard form for $g$, as in (\ref{standard}).

\subsection{The groups $U_L, U_\Lambda, U_R$, and $E$}

Consider the lower central series for $G$; once again this is a series of connected normal $K'$-subgroups of $G$: 
$$G=G_0> G_1\geq G_2 \geq \cdots.$$ 
Note that $G_i\leq U$ for all $i\geq 1$. Since $G$ is not
in general nilpotent, we define $U_L$ to be the last term in the lower central series for $G$; that is $U_L=G^i$ where $[G, G^i] = G^i$. Alternatively $U_L$ can be thought of as the smallest normal subgroup of $G$ such that $G/ U_L$ is nilpotent. By definition $U_L$ is $K'$-split and connected.

Define $\Lambda = \Phi_R\backslash\Phi_R^*$; in other words $\Lambda$ is the set of weight subgroups in $\Phi_R$ that are not root subgroups:
$$\Lambda =\{ R_i \, \mid \, \alpha(R_i) = 1\}.$$
Now we define 
$$U_\Lambda = \langle R \, \mid \, R \textrm{ is a weight subgroup, and } \alpha(R)=1\rangle.$$
The next couple of results give information about the group $U_\Lambda$. 

\begin{lem}\label{l: conj lambda}
 Take $u\in U_\Lambda$. Let $R$ be a weight subgroup of $G$. Then $uRu^{-1}$ is a weight subgroup of $G$ and $\alpha(R) = \alpha(uRu^{-1})$.
\end{lem}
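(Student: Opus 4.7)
The plan is to verify the three defining properties of a weight subgroup for $uRu^{-1}$, and then to compute the action of $T$ on it. The crux is to observe that $T$ \emph{centralizes} $U_\Lambda$, after which everything falls out almost formally.

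First I would check that $uRu^{-1}$ is a 1-parameter subgroup defined over $K'$. Writing $R$ as the image of $\phi_X$ for some $X\in\lieu(K')$, conjugation by $u$ is a $K'$-morphism of algebraic groups; by Lem.\ \ref{l: kirillov2} applied to $\phi:g\mapsto ugu^{-1}$, we get $uRu^{-1}=\exp(\langle d\phi(X)\rangle)$, which is a 1-dimensional closed connected $K'$-subgroup of $U$ with tangent vector $d\phi(X)\in\lieu(K')$, hence a 1-parameter subgroup defined over $K'$.

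The main step is to show $T$ centralizes $U_\Lambda$. If $R'\in\Lambda$ and $t\in T$, then by definition of the weight, $t\,x_{R'}(s)\,t^{-1} = x_{R'}(\alpha(R')(t)\cdot s) = x_{R'}(s)$, since $\alpha(R')$ is the trivial character. Thus $T$ centralizes each generating weight subgroup of $U_\Lambda$. Since conjugation by $t$ is a group automorphism, if it fixes every generator it fixes every word in those generators, so $tut^{-1}=u$ for every $u\in U_\Lambda$.

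Given this, the rest is immediate. For any $t\in T$, $s\in\Kbar$, we compute
\[
t\cdot(u\,x_R(s)\,u^{-1})\cdot t^{-1}
= (tut^{-1})(t\,x_R(s)\,t^{-1})(tu^{-1}t^{-1})
= u\,x_R(\alpha(R)(t)\cdot s)\,u^{-1},
\]
using the centralization in the first and third factors and the weight relation for $R$ in the middle. This shows at once that $T$ normalizes $uRu^{-1}$ (so $uRu^{-1}$ is a weight subgroup) and that the character $\beta$ determined by the conjugation action of $T$ on $uRu^{-1}$, via the formula in Cor.\ \ref{c: pajama}(b), satisfies $\beta = \alpha(R)$. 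Hence $\alpha(uRu^{-1}) = \alpha(R)$, as claimed. I do not expect any real obstacle here beyond the one conceptual observation that trivial-weight subgroups are pointwise $T$-fixed.
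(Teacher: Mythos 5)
Your proposal is correct and follows essentially the same route as the paper: the paper also writes $t(uRu^{-1})t^{-1}=(tut^{-1})(tRt^{-1})(tu^{-1}t^{-1})=uRu^{-1}$, relying on $T$ centralizing $U_\Lambda$ and normalizing $R$, and then computes $t(ux_R(s)u^{-1})t^{-1}=ux_R(\alpha(R)(t)s)u^{-1}$ to identify the weight. Your version merely makes explicit two points the paper leaves implicit (that trivial-weight subgroups are pointwise $T$-fixed, and that $uRu^{-1}$ is again a 1-parameter $K'$-subgroup), which is fine.
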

\begin{proof}
Consider $tuRu^{-1}t^{-1}$ for $t\in T$:
$$tuRu^{-1}t^{-1} = (tut^{-1}) (tRt^{-1}) (tu^{-1}t) = uRu^{-1}.$$ 
Thus $uRu^{-1}$ is a weight subgroup of $U$. 

Recall that we write $x_R(s)$ for an element of the weight subgroup $R$, with $s$ an element of $\Kbar$. The weight subgroup $uRu^{-1}$ has elements $ux_R(s)u^{-1}$, with the map 
$$\Kbar \to uRu^{-1}, \, s\mapsto ux_R(s)u^{-1}$$
 an isomorphism. Then
$t(ux_R(s)u^{-1})t^{-1} = ux_R(\alpha(R)(t)s)u^{-1}$, and so $\alpha(R)= \alpha(uRu^{-1})$ as required.
\end{proof}

Given a group $G$ and $H_1,H_2<G$, we write $C_{H_1}(H_2)$ for the
intersection
$H_1 \cap C_G(H_2)$ of $H_1$ with the centraliser $C_G(H_2)$ of $H_2$.

\begin{lem}\label{l: desc}
 $C_{U(\Kbar)}(T(\Kbar)) = U_\Lambda(\Kbar) = R_{i_1}(\Kbar) \cdots R_{i_l}(\Kbar)$ where $\Lambda = \{R_{i_1}, \dots, R_{i_l}\}$.
\end{lem}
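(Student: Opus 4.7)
The plan is to establish the two equalities by proving the circular chain of inclusions
\[U_\Lambda(\Kbar) \;\subseteq\; C_{U(\Kbar)}(T(\Kbar)) \;\subseteq\; R_{i_1}(\Kbar) \cdots R_{i_l}(\Kbar) \;\subseteq\; U_\Lambda(\Kbar),\]
of which the first and third are essentially tautological. For the third, each $R_{i_j}$ is one of the generating weight subgroups in the definition of $U_\Lambda$, so the product lies in $U_\Lambda(\Kbar)$. For the first, Cor.\ \ref{c: pajama}(2) gives $t x_R(s) t^{-1} = x_R(\alpha(R)(t)\, s)$, so every weight subgroup $R$ with $\alpha(R)=1$ (whether or not it lies in $\Phi_R$) is pointwise centralised by $T(\Kbar)$; hence the group $U_\Lambda$ they generate does too.

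The substantive step is the middle inclusion $C_{U(\Kbar)}(T(\Kbar)) \subseteq R_{i_1}(\Kbar)\cdots R_{i_l}(\Kbar)$. Given $u \in U(\Kbar)$ centralised by $T(\Kbar)$, I would write it in the standard form of Lem.\ \ref{l: roots}, namely $u = x_{R_1}(s_1)\cdots x_{R_d}(s_d)$ (the torus factor in \eqref{standard} is trivial since $u\in U$). Conjugating by an arbitrary $t\in T(\Kbar)$ and applying Cor.\ \ref{c: pajama}(2) factor by factor yields
\[t u t^{-1} \;=\; x_{R_1}(\alpha(R_1)(t)\, s_1)\cdots x_{R_d}(\alpha(R_d)(t)\, s_d),\]
which is again in standard form. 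By the uniqueness clause, Lem.\ \ref{l: roots}(\ref{n+1}), comparing with $u$ forces $\alpha(R_i)(t)\, s_i = s_i$ for every $i$ and every $t\in T(\Kbar)$.

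It remains to deduce that $s_i = 0$ whenever $R_i \notin \Lambda$, i.e.\ whenever $\alpha(R_i)$ is a non-trivial character. This is where the splitness of $T$ enters: since $T$ is $K'$-split, $T\cong (\mathrm{GL}_1)^n$, and any non-trivial character of $T$ must take some value other than $1$ at some point of $T(\Kbar)$; picking such a $t$ in the identity $\alpha(R_i)(t)\, s_i = s_i$ forces $s_i=0$. The surviving indices are exactly those with $R_i\in\Lambda$, so $u\in R_{i_1}(\Kbar)\cdots R_{i_l}(\Kbar)$, closing the chain. The principal technical point — and the only place the proof is not automatic — is this appeal to the separation of characters on the split torus; everything else is the standard-form bookkeeping of Lem.\ \ref{l: roots}.
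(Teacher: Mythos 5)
Your proof is correct and follows essentially the same route as the paper: both reduce to showing $C_{U(\Kbar)}(T(\Kbar))\subseteq R_{i_1}(\Kbar)\cdots R_{i_l}(\Kbar)$ by writing a centralising element in the standard form of Lem.~\ref{l: roots}, conjugating by $t$, and invoking uniqueness to force $s_j=0$ for every root subgroup $R_j$. The only difference is cosmetic: you spell out (via splitness of $T$) why a non-trivial character takes a value other than $1$, a point the paper leaves implicit.
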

\begin{proof}
 Lem.\ \ref{l: conj lambda} implies that $R_{i_1}\cdots R_{i_l}$ is a group. Since $R_{i_j}\in\Lambda$ implies that $R_{i_1}(\Kbar)$ clearly centralizes $T(\Kbar)$ we conclude that
$$R_{i_1}(\Kbar) \cdots R_{i_l}(\Kbar) \leq U_\Lambda(\Kbar)\leq C_{U(\Kbar)}(T(\Kbar))$$
Hence it is sufficient to prove that $C_{U(\Kbar)}(T(\Kbar))\leq R_{i_1}(\Kbar)\cdots R_{i_l}(\Kbar)$. 

Now suppose that $u\in C_{U(\Kbar)}(T(\Kbar))$ and $u\not\in R_{i_1}(\Kbar)\cdots R_{i_l}(\Kbar)$. Then, by Lem. \ref{l: roots}, 
$u=x_{R_1}(s_1)\cdots x_{R_k}(s_k)$ for some $s_i\in \Kbar$. By assumption $s_j\neq 0$ for some $j$ such that $\alpha(R_j)\neq 1$. But this implies that 
$tut^{-1}=x_{R_1}(s_1')\cdots x_{R_k}(s_k')$ with $s_j \neq s_j'$. Since the expression for $tut^{-1}$ is unique, we conclude that $tut^{-1}\neq u$ which is a contradiction.
\end{proof}

Now consider $C_{G(\Kbar)}(T(\Kbar))$; it turns out that this group is the set of points over $\Kbar$ for a $K'$-subgroup of $G$ \cite[18.2]{borel}. We denote this $K'$-subgroup of $G$ by $E$; it is a {\it Cartan subgroup} of $G$, and is a maximal connected nilpotent $K'$-subgroup in $G$ \cite[12.1]{borel}.

\begin{cor}
The Cartan subgroup $E$ satisfies $E = T\times U_\Lambda$. 
\end{cor}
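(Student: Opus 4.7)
The plan is to show inclusions in both directions on $\overline{K}$-points; since $E$ and $T\times U_\Lambda$ are both closed $K'$-subgroups of $G$, equality on $\overline{K}$-points will suffice.

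First I would verify that $T\times U_\Lambda$ embeds as a subgroup of $E$. The torus $T$ lies in $E$ trivially, because $T$ is abelian and so $T(\overline{K}) \subseteq C_{G(\overline{K})}(T(\overline{K}))$. The group $U_\Lambda$ lies in $E$ by Lem.\ \ref{l: desc}, which identifies $U_\Lambda(\overline{K})$ with $C_{U(\overline{K})}(T(\overline{K}))$. Since $U_\Lambda$ centralizes $T$, the set $TU_\Lambda$ is a subgroup of $E$ in which the two factors commute. Moreover $T\cap U_\Lambda \subseteq T\cap U = \{1\}$, because a torus and the unipotent radical meet trivially. Hence $T U_\Lambda$ is an internal direct product $T\times U_\Lambda$, realized as a subgroup of $E$.

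For the reverse inclusion, I would take an arbitrary $g\in E(\overline{K})$ and use the decomposition $G = UT$ from \cite[10.6]{borel} to write $g = ut$ with $u\in U(\overline{K})$ and $t\in T(\overline{K})$. Since $g$ centralizes $T(\overline{K})$ and so does $t$ (as $T$ is abelian), the element $u = gt^{-1}$ centralizes $T(\overline{K})$. Thus $u\in C_{U(\overline{K})}(T(\overline{K})) = U_\Lambda(\overline{K})$ by Lem.\ \ref{l: desc}, and therefore $g = ut\in (T\times U_\Lambda)(\overline{K})$.

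Putting the two inclusions together gives $E(\overline{K}) = (T\times U_\Lambda)(\overline{K})$, and equality of the underlying $K'$-groups follows. There is no real obstacle here: the only point requiring care is the observation that the $UT$ decomposition of $g$ places the unipotent part $u$ in the centralizer of $T$ as soon as $g$ does, which is what lets Lem.\ \ref{l: desc} finish the argument.
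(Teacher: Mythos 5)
Your argument is correct and is precisely the one the paper leaves implicit: the corollary is stated without proof because it follows directly from the definition $E(\overline{K})=C_{G(\overline{K})}(T(\overline{K}))$, the decomposition $G=U\rtimes T$, and Lem.~\ref{l: desc}, which is exactly how you argue. Both inclusions and the trivial-intersection observation are sound, so there is nothing to add.
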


Note that, since $E$ is a $K'$-group, we conclude that $U_\Lambda$ is a $K'$-group (both are, therefore, $K'$-split) \cite[15.4, 15.5]{borel}. Furthermore, Lem. \ref{l: desc} implies that $U_\Lambda$ is of exponential type in $\GL_r$; the same can be said, therefore, of $E$.

Now we turn our attention from those weight subgroups that are not root subgroups, to those that are. We define $U_R$ to be the subgroup of $U$ that is generated by root subgroups:
\begin{equation}\label{eq:astar}
U_R = \langle R \, \mid \, R\in\Phi_R^* \rangle.\end{equation}

\begin{lem}
$U_R$ is normal in $G$.
\end{lem}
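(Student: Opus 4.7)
Since $G = UT$, to prove $U_R \unlhd G$ it suffices to show that $T$ and $U$ each normalise $U_R$. For $T$: by the very definition of a weight subgroup, every $R \in \Phi_R^*$ is $T$-invariant, so $T$ normalises the group they generate. It remains to handle $U$.

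The plan for $U$ is to identify $U_R$ with $\exp(\lieu^*)$, where $\lieu^* := \bigoplus_{\alpha \in \Phi^*} \lieu_\alpha$ is the sum of the non-trivial weight spaces, and then to read off normality from the fact that $\lieu^*$ is a Lie ideal of $\lieu$. Since $U$ sits inside the unipotent radical $U_r$ of the containing Borel, the weights of $T$ on $\lieu$ are all positive roots of $GL_r$; in particular $\alpha + \beta \ne 0$ whenever $\alpha \in \Phi$ and $\beta \in \Phi^*$. Combined with the standard inclusion $[\lieu_\alpha, \lieu_\beta] \subseteq \lieu_{\alpha + \beta}$, this forces $\lieu^*$ to be a Lie ideal of $\lieu$. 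On the other hand, the construction in Lem. \ref{l: roots} produces $1$-dimensional algebras $\liew_i = \mathrm{Lie}(R_i)$ ($R_i \in \Phi_R$) that form a basis of $\lieu$ and each lies in a single weight space; under the decomposition $\lieu = \lieu_0 \oplus \lieu^*$, one has $\liew_i \subseteq \lieu_0$ precisely when $R_i \in \Lambda$, so the subspace spanned by the $\liew_i$ with $R_i \in \Phi_R^*$ is exactly $\lieu^*$. The group $U_R$ is closed and connected by \cite[\S 7.5]{humphreys3}, and its Lie algebra is the Lie subalgebra of $\lieu$ generated by these $\liew_i$; since the $\liew_i$ already span the Lie subalgebra $\lieu^*$, we get $\mathrm{Lie}(U_R) = \lieu^*$, and Lem. \ref{l: kirillov}(\ref{l4}) then yields $U_R = \exp(\lieu^*)$.

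Finally, fix $u \in U$ and write $u = \exp(Y)$ with $Y \in \lieu$ (again Lem. \ref{l: kirillov}(\ref{l4})). By Lem. \ref{l: kirillov2}, conjugation by $u$ on $U$ transports through $\exp$ to the linear map $\Ad(u)$ on $\lieu$, and a direct computation from the power series gives $\Ad(\exp Y) = \exp(\mathrm{ad}\,Y)$ on $\lieu$. Because $\lieu^*$ is an ideal, $\mathrm{ad}\,Y$, and hence its nilpotent exponential, preserves $\lieu^*$. Therefore
\[u U_R u^{-1} = \exp\bigl(\Ad(u)(\lieu^*)\bigr) \subseteq \exp(\lieu^*) = U_R,\]
completing the argument. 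The one delicate point is verifying that the generators in $\Phi_R^*$ span all of $\lieu^*$ — i.e., that Lem. \ref{l: roots} produces enough root subgroups — everything after that is routine transport between $U$ and $\lieu$ via $\exp$.
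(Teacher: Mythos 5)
There is a genuine gap: the two central claims of your argument --- that $\lieu^* := \bigoplus_{\alpha\in\Phi^*}\lieu_\alpha$ is a Lie ideal of $\lieu$, and that $U_R = \exp(\lieu^*)$ --- are both false in general. The error is the assertion that ``$\alpha+\beta\neq 0$ whenever $\alpha\in\Phi$ and $\beta\in\Phi^*$.'' The weights of $T$ on $\lieu$ are \emph{restrictions to the subtorus $T$} of the positive roots of $GL_r$ relative to $T_r$; positivity is a property of characters of $T_r$ and is not inherited by restrictions, and two nontrivial restrictions can multiply to the trivial character. Concretely, take $r=4$, $U=U_4$ the full upper unitriangular group and $T=\{\mathrm{diag}(t,t^{-1},t,t^{-1})\}$. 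Then $E_{12}$ and $E_{23}$ lie in weight spaces for the nontrivial weights $t\mapsto t^2$ and $t\mapsto t^{-2}$, but $[E_{12},E_{23}]=E_{13}$ lies in the trivial weight space (as does $E_{24}$). So $\lieu^*=\mathrm{span}(E_{12},E_{23},E_{34},E_{14})$ is not even a subalgebra, $\exp(\lieu^*)$ is not a group, and the Lie algebra generated by the root directions is strictly larger than their span: here $U_R=\langle R_{12},R_{23},R_{34}\rangle=U$, which properly contains any candidate ``$\exp(\lieu^*)$.'' Your closing remark identifies the wrong delicate point --- the issue is not whether the chosen generators span $\lieu^*$, but that $U_R$ can spill out of $\lieu^*$ into the trivial weight space, so the whole reduction to an ideal of $\lieu$ collapses. (The part of your argument handling conjugation by $T$ is fine.)

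The paper avoids weight arithmetic entirely. It writes $g\in G$ in standard form $g=x_{R_1}(s_1)\cdots x_{R_d}(s_d)t$ and reduces to conjugating an element of a root subgroup by an element of a weight subgroup in $\Lambda$; Lem.~\ref{l: conj lambda} then says that conjugation by $U_\Lambda$ carries a weight subgroup to a weight subgroup \emph{of the same weight}, so the conjugate of a root subgroup is again a root subgroup and lands in $U_R$ (conjugation by elements of root subgroups stays inside $U_R$ trivially, since $U_R$ is a group containing them). If you want to repair your approach along Lie-algebra lines, the correct invariant subspace is $\mathrm{Lie}(U_L)$ for $U_L$ the last term of the lower central series (cf.\ Lem.~\ref{l: urul}), not the sum of nontrivial weight spaces --- but proving $U_R=U_L$ is essentially the content of a later lemma, so this route is circular here.
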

\begin{proof}
Take $g\in G$ and write $g$ in standard form:
$$g=x_{R_1}(s_1)\cdots x_{R_d}(s_d)t.$$
Let $R_i$ be a root subgroup, and take $r\in R_i$; it is sufficient to prove that $grg^{-1}\in U_R$.

It is easy to see that this reduces to showing that
$x_{R_i}(s_i)x_{R_j}(s_j)x_{R_i}(-s_i)$ is in $U_R$, where $R_i\in\Lambda$, and $R_j\in \Phi_R^*$. This result follows from Lem. \ref{l: conj lambda}.
\end{proof}

We want to connect our understanding of the groups $U_L, U_R$, and $E$; first an easy technical lemma.

\begin{lem}\label{l: connected intersection}
Let $U_1, U_2$ be connected unipotent $K'$-subgroups of $G$. Then $U_1(\Kbar)\cap U_2(\Kbar)$ is the set of points over $\Kbar$ for a connected unipotent $K'$-subgroup of $G$.
\end{lem}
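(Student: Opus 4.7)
The plan is to transport the intersection $U_1\cap U_2$ from the group $U_r$ to its Lie algebra $\lieu_r$ via $\log$, where ``connected'' becomes the trivial fact that the intersection of two linear subspaces is a linear subspace, and then push back via $\exp$. A preliminary reduction: both $U_1$ and $U_2$ lie inside the unipotent radical $U$ of $G$. Indeed, $G = UT$ is connected solvable, and any unipotent element of $G$ lies in $U$ (a unipotent element of the torus $T$ must be $1$), so any unipotent subgroup of $G$ sits inside $U$.

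Next I invoke the exp/log correspondence. Since $G$ is of exponential type in $GL_r$ and $p>r$, the maps $\exp:\lieu_r\to U_r$ and $\log:U_r\to \lieu_r$ are mutually inverse $K'$-morphisms of varieties (defined over $\Z[1/r!]\subset K$), by Lem.\ \ref{l: kirillov} and Lem.\ \ref{l: bijection}. The key sub-step is to show that for each connected unipotent $K'$-subgroup $U_i$ of $U$, one has $\log(U_i)=\lieu_i$, the Lie algebra of $U_i$. This follows from the Baker--Campbell--Hausdorff formula, which is a polynomial on $\lieu_r$ because $U_r$ has nilpotency class $r-1<p$; it transports the group law on $U_r$ to a polynomial group law on $\lieu_r$ whose differential at $0$ is addition. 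Hence $\log(U_i)$ is a closed connected $K'$-subvariety of $\lieu_r$ whose tangent space at $0$ is $\lieu_i$ and which is stable under a polynomial group law agreeing to first order with addition; this forces $\log(U_i)=\lieu_i$, and in particular $U_i$ is itself of exponential type inside $U$.

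With this in hand, the conclusion is a one-liner. Since $\exp$ is a bijection and $\log(U_i)=\lieu_i$, we have
\[U_1\cap U_2=\exp(\lieu_1)\cap \exp(\lieu_2)=\exp(\lieu_1\cap \lieu_2).\]
The intersection $\lieu_1\cap \lieu_2$ is an intersection of two $K'$-linear subspaces of $\lieu_r$, hence a $K'$-linear subspace; in particular it is irreducible (so connected) and defined over $K'$. Applying the $K'$-isomorphism of varieties $\exp$ yields $\exp(\lieu_1\cap \lieu_2)$ as a connected $K'$-closed subvariety of $U_r$; it equals the subgroup $U_1\cap U_2$ and is unipotent because it lies in $U_r$.

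The main obstacle is the middle sub-step, $\log(U_i)=\lieu_i$: it is not purely formal, and is precisely where the hypothesis $p>r$ earns its keep, ensuring that the Lie-theoretic exponential identifies the variety structures of $\lieu_r$ and $U_r$ in a way that carries linear subspaces to closed connected subgroups. Once it is granted, the rest of the argument is routine bookkeeping with the $K'$-structure.
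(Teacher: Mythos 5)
Your proof is correct and follows essentially the same route as the paper: identify $U_1\cap U_2$ with $\exp(\lieu_1\cap\lieu_2)$ via the $\exp$/$\log$ correspondence, note that the intersection of two linear subspaces is a linear subspace (hence irreducible and connected), and push forward along the morphism $\exp$. Your intermediate step $\log(U_i)=\lieu_i$ is a point the paper's proof simply asserts (by writing $\exp X\in U_i$ for $X\in\lieu_i$ and conversely), so your BCH justification is a welcome, if slightly compressed, elaboration rather than a departure.
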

\begin{proof}
It is clear that $U_1(\Kbar)\cap U_2(\Kbar)$ is the set of points over $\Kbar$ for a unipotent $K'$-subgroup of $G$, which we denote by $U_1\cap U_2$. We need to show connectedness.

Write $\lieu_1$ (resp. $\lieu_2$) for the Lie algebra of $U_1$ (resp. $U_2$). Let $X$ be an element of $\lieu_1(\Kbar)\cap \lieu_2(\Kbar)$; then $\exp X \in (U_1\cap U_2)(\Kbar)$. Conversely if $X\not\in \lieu_1(\Kbar)\cap \lieu_2(\Kbar)$, then either $\exp X\not\in U_1(\Kbar)$ or $\exp X \not\in U_2(\Kbar)$. We conclude that $\exp(\lieu_1(\Kbar)\cap \lieu_2(\Kbar)) = (U_1\cap U_2)(\Kbar)$. Now Lem. \ref{l: kirillov} implies that $U_1\cap U_2$ is connected.
\end{proof}

Define $G^i = U^i T$; since $U_i$, $T$, and the action of $T$ on $U_i$ are defined over $K'$, we conclude that $G^i$ is also defined over $K'$, and hence is $K'$-split. We can define $(U^i)_\Lambda$ with respect to the $G_i$; then  Lem. \ref{l: desc} implies that 
$$(U^i)_\Lambda = U_\Lambda\cap U^i.$$
On the other hand we can define $(U^i)_R$ with respect to the group. Observe that $(U^i)_R\leq U^i \cap U_R$.

Let $U_L$ be the last term in the lower central series of $G$. The next lemma asserts that $U_L$ and $U_R$ are equal: this will be important later as it implies that $G(K)/U_R(K)$ is nilpotent.

\begin{lem}\label{l: urul}
$U_R=U_L$ and $G=U_RE$, where $E=C_G(T)$, a Cartan subgroup of $G$.
\end{lem}
\begin{proof}
By definition $G/U_L$ is nilpotent and so, by \cite[10,6]{borel}, $G/U_L\cong
U/U_L \times T$. Let $R$ be a root subgroup; then $[R,T]\neq \{1\}$.

Now suppose that $R\cap U_L=\{1\}$. Then $G/U_L$ contains a normal subgroup that does not commute with $T$. This is a contradiction. 

Thus $R\cap U_L$ is non-trivial. Since $R$ is 1-dimensional, Lem. \ref{l: connected intersection} implies that $R<U_L$. We conclude that all root subgroups lie in $U_L$ and, in particular, $U_L$ contains $U_R$.

Conversely we want to prove that $U_R$ contains $U_L$; equivalently we can show that $G/U_R$ is nilpotent. Since $E$ is nilpotent, it is sufficent to prove that $G=U_RE$; equivalently, we show that $U=U_RU_\Lambda$. 

This is immediate if $U$ is abelian. Now suppose that the result holds for $U$ of nilpotency rank less than $s$. Write $u=x_{R_1}(s_1) \cdots x_{R_k}(s_k)$. Observe that
$$u U^1 = \prod\limits_{R_i\in(\Phi_R^*)^1} x_{R_i}(s_i')\prod\limits_{R_j\in\Lambda^1}x_{R_j}(s_j') U^1.$$
for some $s_i', s_j'\in \Kbar$. By induction we can write $U^1 = (U^1)_R(U^1)_\Lambda$. Thus we can write
$$u = \prod\limits_{R_i\in(\Phi_R^*)^1} x_{R_i}(s_i') \prod\limits_{R_j\in\Lambda^1}x_{R_j}(s_j') v_R v_\Lambda,$$
where $v_R\in (U^1)_R$ and $v_\Lambda\in(U^1)_\Lambda$. Now Lem.\ \ref{l: conj
  lambda} implies that, for $R_j\in\Lambda^1$, and $R_i$ a root subgroup in $U^1$,
the group $x_{R_j}(s_j') R_i (x_{R_j}(s_j'))^{-1}$ is a root subgroup in $U^1$, and so must lie in $(U^1)_R$. Thus, in particular,
$$u = \left(\prod\limits_{R_i\in(\Phi_R^*)^1}x_{R_i}(s_i') v_R'\right) \left(\prod\limits_{R_j\in\Lambda^1}x_{R_j}(s_j') v_\Lambda\right),$$ 
for some $v_R'\in (U^1)_R$. But now observe that 
$$\prod\limits_{R_i\in(\Phi_R^*)^1} x_{R_i}(s_i')v_R'\in U_R, \textrm{ and }\prod\limits_{R_j\in\Lambda^1}x_{R_j}(s_j') v_\Lambda\in U_\Lambda;$$ 
the result follows.
\end{proof}

The above result should be compared with \cite[9.7]{boreltits2}. We have seen already that $U_L$ is defined over $K'$; hence $U_R$ is also. In particular $U_R$ is $K'$-split.



\subsection{Commutators}
For $A,B$ two $K'$-subgroups of $G$, define
$$M=\langle [a,b] \, \mid \, a\in A(\Kbar), b\in B(\Kbar) \rangle.$$
If $A$ is connected, then \cite[2.3]{borel} implies that the abstract group $M$ is in fact the set of points over $\Kbar$ for a $K'$-subgroup of $G$; we denote this $K'$-group $[A,B]$. We investigate the group $[A,B]$ for $A,B$ weight subgroups of $G$.

\begin{lem}\label{l: alex}
 Let $A,B$ be connected closed 1-dimensional subgroups of $G$ such that $[A,B]$ is central and non-trivial in $G$. Then $[A,B]$ is a 1-dimensional $K'$-subgroup of $G$. Furthermore, for a field $K$,
$$[A,B](K) = \{[a,b] \, \mid \, a\in A(K), b\in B(K) \}.$$
\end{lem}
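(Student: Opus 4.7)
The plan is to exploit the Baker–Campbell–Hausdorff formula, which truncates drastically under the centrality hypothesis on $[A,B]$, giving an explicit closed form for the commutator map.

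First I would reduce to the unipotent case (the relevant application in this paper): $A$ and $B$ are $1$-parameter unipotent subgroups, so using Lem.\ \ref{l: kirillov} I parametrize $A = \{\exp(sX) : s\in\Kbar\}$ and $B = \{\exp(tY) : t\in\Kbar\}$ for $X\in\liea(K')$, $Y\in\lieb(K')$ generating the respective $1$-dimensional Lie algebras of $A$ and $B$.

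Next, I would observe that the hypothesis that $[A,B]$ is central in $G$ translates at the Lie algebra level to $[X,[X,Y]] = [Y,[X,Y]] = 0$; that is, $Z := [X,Y]$ commutes with both $X$ and $Y$. Under this condition the Baker–Campbell–Hausdorff series terminates at the first bracket term, yielding
\[
\exp(sX)\exp(tY) = \exp\!\bigl(sX + tY + \tfrac{st}{2}[X,Y]\bigr).
\]
This is a finite matrix identity, valid over any field of characteristic $p>r$, derivable directly from the polynomial expressions for $\exp$ in (\ref{e: exp}); all higher BCH terms vanish by the centrality of $[X,Y]$. A short computation then gives
\[
[\exp(sX),\exp(tY)] = \exp\!\bigl(st\,[X,Y]\bigr),
\]
since the two exponentials produced by multiplying $\exp(sX)\exp(tY)$ and $(\exp(sX)\exp(tY))^{-1}$ have arguments that commute, and hence combine additively.

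From this explicit formula it follows immediately that $\{[a,b] : a\in A,\, b\in B\}$ equals the $1$-parameter subgroup $\exp(\Kbar\cdot [X,Y])$: this is closed, connected, $1$-dimensional, defined over $K'$ (since $[X,Y]\in\lieu(K')$), and is already a group — hence it equals $[A,B]$, proving the first assertion. For the second, by Lem.\ \ref{l: bijection} I may choose $X\in\liea(K)$, $Y\in\lieb(K)$; as $s,t$ range over $K$ the product $st$ ranges over all of $K$ (take $t=1$), so $\{[a,b] : a\in A(K),\, b\in B(K)\} = \{\exp(u[X,Y]) : u\in K\}$, which equals $[A,B](K)$ by one more application of Lem.\ \ref{l: bijection}. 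The main subtlety I expect is verifying the truncated BCH identity rigorously in this algebraic setting — not by appeal to the analytic formula but by a direct matrix calculation using that $X$, $Y$, and $[X,Y]$ pairwise commute under the centrality assumption, so that the polynomial expression for $\exp$ (valid since $p>r$) can be manipulated as in the classical Heisenberg computation. Once that identity is in hand, everything else is straightforward bookkeeping with Lem.\ \ref{l: bijection}.
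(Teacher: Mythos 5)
Your proof is correct, and it is precisely the alternative route that the paper itself flags in the remark immediately following the lemma (``the Baker--Campbell--Hausdorff formula yields an alternative proof''). The paper's own argument avoids BCH: it expands the product of the four polynomial exponentials directly, $(1+a+\tfrac{a^2}{2}+\cdots)(1+b+\cdots)(1-a+\cdots)(1-b+\cdots) = 1+[a,b]+\cdots$, uses centrality of $\lieh$ to discard the higher brackets, and then checks by hand that $\{1+k[a,b] : k\in\Kbar\}$ is closed under multiplication and is the image of a morphism $G_a\to[A,B]$. Your version instead truncates BCH to $\exp(sX)\exp(tY)=\exp(sX+tY+\tfrac{st}{2}[X,Y])$ and deduces $[\exp(sX),\exp(tY)]=\exp(st[X,Y])$. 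What your route buys is a cleaner endgame: the image is manifestly the $1$-parameter subgroup $\exp(\Kbar\,[X,Y])$, so the group law and the $K'$-rationality come for free from Lem.~\ref{l: kirillov} and the construction of $1$-parameter subgroups, whereas the paper's identification of $\{1+k[a,b]\}$ as a group has to quietly absorb terms like $kl[a,b]^2$ into the ``higher order'' ellipsis. What it costs is that you must justify the truncated BCH identity itself in this positive-characteristic polynomial setting; as you note, this is a finite Heisenberg-type computation using that $X$, $Y$ and $[X,Y]$ interact as required and that $p>r$ makes the denominators invertible, so the cost is small. Both arguments share the same mild gap of passing from centrality of the group $[A,B]$ in $G$ to centrality of $[X,Y]$ in the Lie algebra, so you are no worse off than the paper there. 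Your handling of the $K$-points statement (choose $X,Y$ rational, set $t=1$, let $s$ range over $K$, and invoke Lem.~\ref{l: bijection}) matches the paper's intent and is, if anything, slightly more explicit.
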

\begin{proof}
Write $\liea$ (resp. $\lieb$) for the Lie algebra of $A$ (resp. $B$); let $H=[A,B]$ and write $\lieh$ for the Lie algebra of $H$; these are Lie subalgebras of $\lieg$, the Lie algebra of $G$, which is in turn a Lie subalgebra of $\liegl_r$.

Note that, since $H$ is central in $G$, $\lieh$ is central in $\lieg$. Now $A$ (resp. $B$) is the image of $\liea$ (resp. $\lieb$) under the $\exp$ map. Take $a\in\liea(\Kbar)$, $b\in\lieb(\Kbar)$ and consider
\begin{equation*}
\begin{aligned}[]
&[\exp(a), \exp(b)] \\
&= \exp(a)\cdot \exp(b)\cdot \exp(-a) \cdot \exp(-b) \\
&= (1+a+\frac{a^2}2+\cdots)(1+b+\frac{b^2}2+\cdots)(1-a+\frac{a^2}2+\cdots)(1-b+\frac{b^2}2+\cdots) \\
&= 1+[a,b]+ \cdots
\end{aligned}
\end{equation*}
Note that we are using $[\,, \,]$ in two ways here - as a commutator in the group, and as the Lie bracket. Note too that $1$ is the identity matrix in $\liegl_r$. Finally note that, in the last line, $1+[a,b]+\cdots$ means $1+[a,b]$ {\it plus higher order Lie brackets}. Since $\lieh$ is central in $\lieg$ we conclude that
$$[\exp(a),\exp(b)] = 1+[a,b].$$

Now observe that for $k,l\in \Kbar$
$$(1+[a,b])(1+[ka, lb]) = 1+(kl+1)[a,b]+ \cdots$$
Again we can ignore the higher order Lie brackets. In particular this implies that the set of commutators 
$$\{[u,v] \, \mid \, u\in A(\Kbar), v\in B(\Kbar)\}$$
is a group, and so is equal to $[A,B](\Kbar)$. Moreover, for fixed 
$a\in\liea(\overline{K})$, $b\in\lieb(\overline{K})$, this group is equal to
$$\{1+k[a,b] \, \mid \, k\in\Kbar\}$$
Clearly the map
$$G_a \to [A,B], \, k\mapsto 1+k[a,b]$$
is a morphism of algebraic groups, and we conclude that $[A,B]$ is
one-dimensional as required. If $A$ and $B$ are defined over $K'$, then
$a$, $b$ can be chosen to be in $\mathfrak{a}(K')$ and
$\mathfrak{b}(K')$, respectively, and so $\lbrack A,B\rbrack$ is defined over
$K'$.
\end{proof}

Note that the Baker-Campbell-Hausdorff formula yields an alternative proof of Lem. \ref{l: alex}.

\begin{cor}
 Suppose that $A,B$ are weight subgroups of $G$ such that $[A,B]$ is non-trivial and central in $U$. Then $[A,B]$ is a weight subgroup of $G$.
\end{cor}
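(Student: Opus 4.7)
The plan is to combine Lem.~\ref{l: alex} with the fact that $A$ and $B$ are normalized by $T$. By Lem.~\ref{l: alex}, the hypothesis that $[A,B]$ is non-trivial and central in $U$ already gives us that $[A,B]$ is a $1$-dimensional connected $K'$-subgroup of $G$, and that (over $\overline{K}$) every element of $[A,B]$ is a commutator of elements of $A$ and $B$. In view of these facts, the only thing left to verify is that $T$ normalizes $[A,B]$; since $[A,B]$ is $1$-dimensional and connected, this already makes it a weight subgroup (it is automatically a $1$-parameter subgroup because it is a closed connected $1$-dimensional subgroup of $U$, to which the $\exp$--$\log$ correspondence applies).

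To check that $T$ normalizes $[A,B]$, fix $t\in T(\Kbar)$. Because $A$ and $B$ are weight subgroups, $tAt^{-1}=A$ and $tBt^{-1}=B$. Consequently, for every $a\in A(\Kbar)$ and $b\in B(\Kbar)$,
\[
t\,[a,b]\,t^{-1}
= \bigl[t a t^{-1},\,t b t^{-1}\bigr]
\in [A,B](\Kbar),
\]
so $t\,[A,B](\Kbar)\,t^{-1}\subseteq [A,B](\Kbar)$; applying the same argument to $t^{-1}$ gives equality. Hence $T$ normalizes $[A,B]$, and $[A,B]$ is a weight subgroup, as required.

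There is nothing really hard here; the only mild subtlety is making sure one uses Lem.~\ref{l: alex} to pass between the abstract commutator set $\{[a,b] : a\in A, b\in B\}$ and the algebraic group $[A,B]$, and then the ``$t$ normalizes $A$ and $B$'' argument works cleanly at the level of single commutators. As a bonus, one can read off the weight: if $\alpha=\alpha(A)$ and $\beta=\alpha(B)$, then using Lem.~\ref{l: kirillov2} together with the identification in the proof of Lem.~\ref{l: alex} (every element of $[A,B]$ has the form $\exp(k[\mathbf{a},\mathbf{b}])$ for fixed $\mathbf{a}\in\liea$, $\mathbf{b}\in\lieb$) gives
\[
t\,\exp(k[\mathbf{a},\mathbf{b}])\,t^{-1}
=\exp\!\bigl(\Ad(t)(k[\mathbf{a},\mathbf{b}])\bigr)
=\exp\!\bigl(k\,[\Ad(t)\mathbf{a},\Ad(t)\mathbf{b}]\bigr)
=\exp\!\bigl(k\,\alpha(t)\beta(t)\,[\mathbf{a},\mathbf{b}]\bigr),
\]
so $\alpha([A,B])=\alpha\cdot\beta$, though the statement of the corollary does not demand this.
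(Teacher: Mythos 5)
Your proof is correct and follows essentially the same route as the paper: invoke Lem.~\ref{l: alex} to identify $[A,B](\Kbar)$ with the set of commutators $\{[a,b]\}$, then use the identity $t[a,b]t^{-1}=[tat^{-1},tbt^{-1}]$ together with the $T$-invariance of $A$ and $B$ to conclude that $T$ normalizes $[A,B]$. The extra remarks (that $[A,B]$ is a $1$-parameter $K'$-subgroup, and that its weight is $\alpha(A)\alpha(B)$) are accurate but not needed beyond what the paper records.
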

\begin{proof}
The previous lemma implies that $[A,B]$ is the set of commutators of $A$ and $B$. Now take $u\in A(\Kbar), v\in B(\Kbar), t\in T(\Kbar)$. Observe that
$$t[u,v]t^{-1} = [tut^{-1},tvt^{-1}].$$
Since $A$ and $B$ are weight groups, $T$ normalizes $A$ and $B$ and we conclude that $t[u,v]t^{-1}\in[A,B](\Kbar)$ as required.
\end{proof}

\begin{lem}\label{l: interesting 1}
 Either $G$ is nilpotent, or $(\Phi^*)^1$ is non-empty.
\end{lem}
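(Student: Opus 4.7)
The plan is to argue by contrapositive: assume $(\Phi_R^*)^1 = \emptyset$ and deduce that $G$ is nilpotent. By Lem.\ \ref{l: urul}, $G = U_R E$ with $E$ nilpotent, so it suffices to show $U_R = \{1\}$; equivalently, that $\Phi_R^* = \emptyset$.

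First I would pass to the Lie algebra side. By Lem.\ \ref{l: roots}(\ref{n+2}) applied with $V^i = U^i$, the height-one weight subgroups $R_1,\dotsc,R_{d_1-1}$ satisfy $U = R_1\cdots R_{d_1-1}\,U^1$; via Lem.\ \ref{l: bijection} and Lem.\ \ref{l: kirillov2} their one-dimensional Lie algebras $\mathfrak{r}_i$ assemble into a $T$-stable complement $V = \mathfrak{r}_1 \oplus \cdots \oplus \mathfrak{r}_{d_1-1}$ of $\lieu^1$ in $\lieu$, on which $T$ acts by the characters $\alpha(R_1),\dotsc,\alpha(R_{d_1-1})$. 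The hypothesis $(\Phi_R^*)^1 = \emptyset$ says each of these characters is trivial, so $V \subseteq \lieu_1$, the trivial-weight subspace of $\lieu$.

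Next I would promote this from $V$ to all of $\lieu$ by a generation argument. Let $\lieh$ be the Lie subalgebra of $\lieu$ generated by $V$. I claim $\lieh + \lieu^k = \lieu$ for every $k\geq 1$, proved by induction on $k$: the base case $k = 1$ is simply $V + \lieu^1 = \lieu$, and if $\lieu = \lieh + \lieu^k$, then
\[
\lieu^1 = [\lieu,\lieu] \subseteq [\lieh,\lieh] + [\lieh,\lieu^k] + [\lieu^k,\lieu^k] \subseteq \lieh + \lieu^{k+1},
\]
so $\lieu = V + \lieu^1 \subseteq \lieh + \lieu^{k+1}$. Since $\lieu$ is nilpotent (of class at most $r-1$), $\lieu^k = 0$ for $k$ large enough, hence $\lieh = \lieu$. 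Because the Lie bracket multiplies weights ($[\lieu_\alpha,\lieu_\beta]\subseteq \lieu_{\alpha\beta}$), iterated brackets of elements of $V \subseteq \lieu_1$ remain in $\lieu_1$; therefore $\lieu = \lieu_1$. This says $\Phi^* = \emptyset$, so in particular $\Phi_R^* = \emptyset$, $U_R = \{1\}$, and $G$ is nilpotent.

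The main obstacle I expect is less conceptual than notational: one must be careful about the identification, via the exponential bijection, of the $T$-action on $U/U^1$ with the adjoint action on $\lieu/\lieu^1$, so that the group-theoretic hypothesis ``no height-one root subgroups'' actually produces the Lie-algebra statement $V \subseteq \lieu_1$. This is exactly what Lem.\ \ref{l: kirillov2} and the standing exponential-type hypothesis (with $p>r$) are there to handle.
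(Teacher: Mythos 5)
Your argument is correct, and the underlying idea is the same as the paper's: the height-one weight subgroups generate everything, and if they all carry the trivial weight then the whole of $U$ is centralized by $T$. The difference is that you execute this at the level of the Lie algebra, whereas the paper stays at the group level and is consequently much shorter: it simply notes that $U(\Kbar)$ is generated by the height-one weight subgroups (the group-theoretic counterpart of your induction $\lieh+\lieu^k=\lieu$, via $HU^1=U\Rightarrow H=U$ for $U$ nilpotent), that each of these is centralized by $T$ when its weight is trivial, hence so is the group they generate; then $G=U\times T$ is nilpotent by \cite[10.6]{borel}. Your route buys nothing extra here and costs you two additional identifications that the group-level proof avoids: that $\exp$ matches the algebraic lower central series of $U$ with the Lie-algebra lower central series (so that your base case $V+\lieu^1=\lieu$ really is what Lem.~\ref{l: roots}(\ref{n+2}) provides), and that the trivial-weight space is closed under the bracket. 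Both are fine under the standing exponential-type hypothesis with $p>r$ --- and you could in fact shortcut your own ending by concluding directly from $\lieu=\lieu_1$ that $T$ centralizes $U$, rather than routing through $U_R=\{1\}$ and Lem.~\ref{l: urul} --- but the paper's two-line group argument is the more economical one.
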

\begin{proof}
 Suppose that $(\Phi^*)^1$ is empty; in other words $\alpha(R) = 1$ for all $R\in\Phi^1$. Since $U(\Kbar)$ is generated by $\{R(\Kbar) \, \mid \, R\in\Phi^1\}$, this implies that $U$ is centralized by $T$. So $G=U\times T$ and \cite[10.6]{borel} implies the result.
\end{proof}

\subsection{Root kernels}

Recall that the action of $T$ on a root subgroup $R$ induces a character $\alpha: T\to \GL_1$. We note first of all that this character (which we call a root) is a regular map over $K'$.

Now given such a root $\alpha: T\to \GL_1$ we can extend to a character $\alpha: G\to\Kbar$ simply by defining $\alpha(g)=\alpha(t)$ where $g=ut$ for $u\in U$, $t\in T$.

In what follows the {\it kernel} of a root will be important; to ensure that there is no confusion we write $\ker_G(\alpha)$ (resp. $\ker_T(\alpha)$) when we want to think of $\alpha$ as a function from $G$ (resp. $T$) to $\Kbar$. Note that the group $\ker_G(\alpha)$ is a solvable linear algebraic group defined over $K'$. 

We will require that root kernels are connected; this fact is not true in general. However if we restrict the structure of the group $G$, then this fact holds. We clarify how we make this restriction in the following lemma.

\begin{lem}\label{l: torus normal}
Let $U_0$ be a unipotent $K'$-subgroup of a Borel subgroup $B=U_rT_r$ of $\GL_r$, with $B$ also defined over $K'$. Then $N_{T_r(\Kbar)}(U_0(\Kbar))$ is the set of points over $\Kbar$ of $T_0$, a connected $K'$-subgroup of $T$.
\end{lem}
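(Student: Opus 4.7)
The plan is to pass from the unipotent subgroup $U_0$ to its Lie algebra via the bijection $\exp : \lieu_r \to U_r$ of Lem.\ \ref{l: bijection} (applicable since $p > r$), and then analyze the stabilizer of $\lieu_0 := \log(U_0)$ in $T_r$ under the adjoint action. By Lem.\ \ref{l: kirillov2}, an element $t \in T_r(\Kbar)$ normalizes $U_0(\Kbar)$ if and only if $\Ad(t)(\lieu_0) = \lieu_0$, so $T_0$ is the stabilizer of the subspace $\lieu_0$ of $\lieu_r$ under $T_r$ acting via $\Ad$, manifestly a closed $K'$-subgroup of $T_r$; the content of the lemma is therefore the connectedness.

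The mechanics will proceed via the weight-space decomposition $\lieu_r = \bigoplus_{i<j}\Kbar\cdot E_{ij}$, where each $\Kbar\cdot E_{ij}$ is the 1-dimensional $T_r$-weight space for the weight $\alpha_{ij} = e_i - e_j$. I will fix a total order $\prec$ on the weights $\{\alpha_{ij}\}$ and put $\lieu_0$ in echelon form, extracting a basis $v_1,\dotsc,v_k$ of $\lieu_0$ with distinct leading weights $\alpha_{s_1}\succ\cdots\succ \alpha_{s_k}$ and pivot coefficients $(v_i)_{\alpha_{s_j}} = \delta_{ij}\,E_{\alpha_{s_j}}$. Matching pivot coordinates in $\Ad(t)(v_i)\in \lieu_0$ forces $\Ad(t)(v_i) = \alpha_{s_i}(t)\, v_i$, and matching non-pivot coordinates then gives $\alpha(t) = \alpha_{s_i}(t)$ for every non-pivot weight $\alpha$ with $(v_i)_\alpha \neq 0$. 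Consequently
\[
T_0 \;=\; \bigcap_{i,\,\alpha}\,\ker_{T_r}\bigl(\alpha - \alpha_{s_i}\bigr),
\]
the intersection ranging over all such pairs.

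The main obstacle is showing this intersection is connected. Each character $\alpha - \alpha_{s_i}$, viewed as a vector in $X^*(T_r) \cong \Z^r$, is of the form $e_a - e_b - e_c + e_d$ with $a<b$, $c<d$, and a short case analysis (separating whether $\{a,b\}$ and $\{c,d\}$ are disjoint or share an index) shows that any such nontrivial vector has a coordinate equal to $\pm 1$; hence each single character is primitive and its kernel in $T_r$ is connected. Intersections of connected subtori can however be disconnected in general, so I must verify that the sublattice $M\subseteq X^*(T_r)$ generated by the full list of characters $\alpha - \alpha_{s_i}$ is saturated, equivalently that $X^*(T_r)/M$ is torsion-free. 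This is where the hypothesis that $\lieu_0$ is a Lie subalgebra --- forced by $U_0$ being a connected algebraic subgroup of $U_r$ --- becomes essential: co-occurrence of two non-pivot weights $\alpha,\alpha'$ in the support of some $v_i$ yields $\alpha-\alpha' \in M$, and the bracket relations $[v_i,v_j]\in \lieu_0$ supply further relations of this kind, equating sums of leading weights with pivots of other $v_m$'s. Combining these rewriting rules rules out all potential torsion in $X^*(T_r)/M$ and delivers the desired saturation, hence the connectedness of $T_0$. The $K'$-rationality of $T_0$ is automatic, since both $U_0$ and the character equations defining $T_0$ are $K'$-rational.
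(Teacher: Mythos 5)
Your reduction is, up to the point where it stops, essentially a more precise version of what the paper does: the paper also expands a basis of $\lieu_0$ in the root basis $\{E_{ij}\}$ of $\lieu_r$, observes that $N_{T_r(\Kbar)}(U_0(\Kbar))$ is cut out by equations $\phi_i(g)=\phi_j(g)$ between roots of $GL_r$, and then simply \emph{asserts} that these equations define a connected subgroup. You have correctly identified that assertion as the entire content of the lemma, reformulated it as saturation of the sublattice $M\subseteq X^*(T_r)$ generated by the occurring differences of roots, and correctly noted that each individual difference $e_a-e_b-e_c+e_d$ is primitive, so each single kernel is connected.

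The final step, however, is a genuine gap: the claim that the subalgebra condition forces $M$ to be saturated is not proved, and it is false. Take $r=4$ and $\lieu_0=\Kbar(E_{13}+E_{24})\oplus\Kbar(E_{14}+E_{23})$. All brackets and all associative products of elements of $\lieu_0$ vanish, so $\lieu_0$ is an abelian subalgebra and $U_0=1+\lieu_0\cong G_a\times G_a$ is a closed connected unipotent subgroup of exponential type defined over the prime field. Your recipe yields the characters $\chi_1=(e_1-e_3)-(e_2-e_4)$ and $\chi_2=(e_1-e_4)-(e_2-e_3)$; each is primitive, but $\chi_1+\chi_2=2(e_1-e_2)$ while $e_1-e_2\notin\langle\chi_1,\chi_2\rangle$, so $X^*(T_4)/M$ has $2$-torsion. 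Concretely, $N_{T_4(\Kbar)}(U_0(\Kbar))=\{\mathrm{diag}(d_1,\epsilon d_1,d_3,\epsilon d_3):\epsilon=\pm1\}$ has two components: $\mathrm{diag}(1,-1,1,-1)$ fixes $E_{13}+E_{24}$ and negates $E_{14}+E_{23}$, hence normalizes $U_0$, but lies outside the identity component. Since the brackets here are all zero, the ``further relations'' you invoke contribute nothing, so no rewriting argument can rescue saturation; indeed the gap is unfillable because the lemma as stated is false (it becomes true only if $T_0$ is taken to be the identity component of the normalizer, or $U_0$ is further restricted), and the paper's own proof founders on exactly the same unproved connectedness claim. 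A secondary defect: your opening equivalence ``$t$ normalizes $U_0(\Kbar)$ iff $\Ad(t)\lieu_0=\lieu_0$'' requires $U_0$ to be of exponential type, which the hypotheses do not grant; for $p=5$, $r=4$, the subgroup $\{1+aE_{12}+a^{5}E_{34}\}$ has Lie algebra $\Kbar E_{12}$, whose $\Ad$-stabilizer is all of $T_4$, while its actual normalizer in $T_4$ is the proper subtorus $d_3d_4^{-1}=(d_1d_2^{-1})^{5}$.
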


Note that a connected $K'$-subgroup of $T$ is, precisely, a subtorus of $T$.

\begin{proof}
Write $T_r(\Kbar)$ as the set of invertible diagonal matrices. Let $\Phi_R=\{R_1, \cdots, R_d\}=\Phi_R^*$ be a set of weight groups for the group $B=UT$; let $\phi_i:T(\Kbar)\to\Kbar$ be the root associated with $R_i$ for $i=1,\dots, d$. Let $\mathfrak{r}_i$ be an element of $\lieu(\Kbar)$ such that $\exp(\mathfrak{r}_i)\in R_i(\Kbar)$; then $\{\mathfrak{r}_1, \dots, \mathfrak{r}_d\}$ is a basis for $\lieu(\Kbar)$.

Now write $N$ for $N_{T_r(\Kbar)}(U_0(\Kbar)))$ and observe that $N$ is a subgroup of $T(\Kbar)$; one can therefore apply Lem. \ref{l: roots} to the group $U_0\rtimes N$. (Although Lem. \ref{l: roots} is stated for a closed connected solvable group $G$; the proof follows through for any simultaneously diagonalizable abstract group (such as $N$), diagonalizing a closed unipotent group (such as $U_1)$.) Write $E_1, \dots, E_{d_1}$ for the resulting set of weight subgroups in $U_1$; choose $\mathfrak{e}_i\in \lieu(\Kbar)$ such that $\exp(\mathfrak{e}_1)\in E_i(\Kbar)$ for $i=1,\dots, d_1$.


The condition that $E_l$ is a weight subgroup can now be translated into a statement about the expansion of vector $\mathfrak{e}_l$ in terms of the basis $\{\mathfrak{r}_1, \dots, \mathfrak{r}_d\}$. Write 
$$\mathfrak{e}_l = a_1\mathfrak{r}_1+ \cdots +  a_d \mathfrak{r}_d$$
for $a_1,\dots, a_d\in\Kbar$. Define
$$\Phi_R^{\mathfrak{e}_l}=\{ R_i\in\Phi_R \, \mid \, a_i\neq 0\}.$$
Then $E_l$ is a weight subgroup if and only if for all $g\in N$, for all $R_i,R_j\in {\Phi_R^{\mathfrak{e}_l}}$, we have 
\begin{equation}\label{e: equality}
\phi_i(g)=\phi_j(g).
\end{equation}

Thus the group $N$ satisfies a number of equations of the form (\ref{e: equality}) for various $i,j\in \{1,\dots, n\}$. Conversely, these equations define a closed, connected $K'$-subgroup $T_0$ of $T_r$ such that $T_0(\Kbar)$ normalizes $U_1(\Kbar)$. We conclude, therefore, that $N=T_0(\Kbar)$ as required. 

\end{proof}

\begin{cor}
The roots $T_0\to \GL_1$ with respect to the group $U_0T_0$ are restrictions of the roots $T_r\to \GL_1$ with respect to the group $U_rT_r$.
\end{cor}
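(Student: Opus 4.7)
The plan is to unpack the construction from the proof of Lem. \ref{l: torus normal} and show that the roots of $T_0$ acting on $U_0$ are tautologically restrictions of the roots $\phi_i : T_r \to GL_1$ of the ambient Borel. First I would fix a weight subgroup $E_l$ of $U_0$ relative to $T_0$ and let $\beta = \alpha(E_l) : T_0 \to GL_1$ be its root. By Cor. \ref{c: pajama}(2) together with Lem. \ref{l: kirillov2}, the defining relation for $\beta$ is equivalent to the eigenvalue equation $\Ad(t) \mathfrak{e}_l = \beta(t) \mathfrak{e}_l$ for all $t \in T_0(\Kbar)$, where $\mathfrak{e}_l \in \lieu(\Kbar)$ generates the Lie algebra of $E_l$.

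Next I would expand $\mathfrak{e}_l = a_1 \mathfrak{r}_1 + \cdots + a_d \mathfrak{r}_d$ in the eigenbasis of $\Ad(T_r)$ on $\lieu(\Kbar)$ that was used in the proof of Lem. \ref{l: torus normal}. Since $\Ad(t) \mathfrak{r}_i = \phi_i(t) \mathfrak{r}_i$ for every $t \in T_r(\Kbar) \supseteq T_0(\Kbar)$, comparing coefficients in the eigenvalue equation yields $\phi_i(t) = \beta(t)$ for every $t \in T_0(\Kbar)$ and every $i \in \Phi_R^{\mathfrak{e}_l}$. In other words, $\beta$ agrees with the restriction $\phi_i\vert_{T_0}$ for any such $i$ (and at least one such $i$ exists because $\mathfrak{e}_l \neq 0$), which is exactly the desired conclusion.

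The consistency of this identification across different choices of $i \in \Phi_R^{\mathfrak{e}_l}$ is automatic, since the identity (\ref{e: equality}) is precisely the condition that was used to cut $T_0$ out of $T_r$ in Lem. \ref{l: torus normal}. Consequently there is essentially no obstacle to overcome: the corollary is a bookkeeping observation already embedded in the preceding proof, and the only substantive point is that the weight $\beta$ can be read off from any nonzero coefficient of $\mathfrak{e}_l$ in the basis $\{\mathfrak{r}_1, \dots, \mathfrak{r}_d\}$.
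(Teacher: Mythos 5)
Your proposal is correct and is essentially the paper's own argument, fully unpacked: the paper's proof is the one-line observation that, in the notation of Lem.~\ref{l: torus normal}, $\alpha(E_l)=\alpha(R_j)$ for any $R_j\in\Phi_R^{\mathfrak{e}_l}$, which is exactly what you establish by expanding $\mathfrak{e}_l$ in the eigenbasis $\{\mathfrak{r}_1,\dots,\mathfrak{r}_d\}$ and comparing coefficients. The remark that consistency across the choices of $i\in\Phi_R^{\mathfrak{e}_l}$ is guaranteed by the equations (\ref{e: equality}) cutting out $T_0$ is a useful, correct addition.
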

\begin{proof}
Using the notation of the previous proof it is clear that $\alpha(E_i)=\alpha(R_j)$ where $R_j\in\Phi_R^{\mathfrak{e}_i}$.
\end{proof}

\begin{cor}\label{c: torus normal}
Let $\xi_1,\dots, \xi_m: T_0\to \GL_1$ be a subset of a set of roots with respect to the group $U_0T_0$. Then the group
\begin{equation*}
T_m=\ker_{T_0}(\xi_1) \cap \cdots \cap \ker_{T_0}(\xi_m)
\end{equation*}
is a subtorus of $T_0$. Furthermore if $m\geq 1$, then $\dim T_m < \dim T_0$ and if $\xi_1,\dots, \xi_m$ are all the roots with respect to the group $U_0T_0$, then $U_0T_m$ is nilpotent.
\end{cor}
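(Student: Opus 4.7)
The plan is to view $T_m$ as a subgroup of the ambient Borel torus $T_r$ cut out by character equations, in the spirit of the proof of Lem.~\ref{l: torus normal}. By the immediately preceding corollary, each root $\xi_i: T_0 \to GL_1$ is the restriction to $T_0$ of some root $\phi_{k_i}: T_r \to GL_1$ of the full Borel. Thus, as a subset of $T_r$, the group $T_m$ is the common zero locus of the character equations $\phi_{k_i} = 1$ ($i=1,\dots,m$) together with the system $\phi_i = \phi_j$ defining $T_0 \leq T_r$ in Lem.~\ref{l: torus normal}. Rerunning the argument of that lemma with this enlarged system should produce a closed connected $K'$-subgroup of $T_r$, i.e.\ a subtorus, contained in $T_0$.

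Given that $T_m$ is a subtorus of $T_0$, the dimension bound is immediate: each $\xi_i$ is non-trivial by definition of a root, so $\ker_{T_0}(\xi_i) \subsetneq T_0$, and a proper subtorus has strictly smaller dimension. Hence $\dim T_m \leq \dim \ker_{T_0}(\xi_1) < \dim T_0$ whenever $m \geq 1$.

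For the final claim, assume $\xi_1, \ldots, \xi_m$ are all of the roots of $U_0 T_0$. By Cor.~\ref{c: pajama}, conjugation of a root subgroup $R$ by $t \in T_0$ has scaling factor $\alpha(R)(t)$; for $t \in T_m$ every such scaling factor equals $1$, so $T_m$ centralizes every root subgroup of $U_0$. Non-root weight subgroups of $U_0$ are centralized by all of $T_0$, hence by $T_m$. Since $U_0$ is generated by its weight subgroups (Lem.~\ref{l: roots}), $T_m$ centralizes $U_0$, so $U_0 T_m = U_0 \times T_m$ is the direct product of a unipotent group and a torus, and is therefore nilpotent.

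The main technical point is the connectedness of $T_m$. For a generic list of characters on a torus the joint kernel need not be connected: the quotient of the character lattice by the subgroup they generate may have torsion, producing a finite component group. What saves us is the very specific form of the characters here, namely ratios of coordinate characters on the diagonal torus of $GL_r$, which is exactly the setting in which the construction in the proof of Lem.~\ref{l: torus normal} delivers a connected subgroup by direct inspection. Once this connectedness is granted, the dimension count and the nilpotency assertion are essentially bookkeeping.
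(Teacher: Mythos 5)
Your argument is correct and follows the paper's own proof essentially step for step: extend the $\xi_i$ to roots of the full Borel via the preceding corollary, realise $T_m$ inside $T_r$ as the solution set of equations $\phi_i=\phi_j$, $\phi_l=1$ (whence connectedness, as in Lem.~\ref{l: torus normal}), get the dimension drop from properness plus connectedness of $T_0$, and get nilpotency because $T_m$ centralizes every weight subgroup and hence all of $U_0$. Your closing remark correctly isolates the one genuinely delicate point (connectedness of a joint character kernel), which the paper itself dispatches with the same appeal to the special form of the defining equations.
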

\begin{proof}
The previous corollary implies that $\xi_1,\dots, \xi_m$ can be extended to roots $T_r\to\GL_1$ with respect to the group $U_r T_r$. Let $\phi_1,\dots, \phi_d:T_r\to \GL_1$ be a full set of roots for the group $B=U_rT_r$; then the group $T_m$ is defined by a finite set of equations of the form
$$\phi_i=\phi_j, \, \, \, \, \phi_l=1,$$
for various choices of $i,j$ and $l$. Clearly these equations define a subtorus of $T$ as required.

If $m\geq1$, then $T_m$ is a proper subgroup of $T_0$; then, since $T_0$ is connected, we have $\dim T_m<\dim T_0$. Finally, if $\xi_1,\dots, \xi_m$ are all the roots with respect to the group $U_0T_0$, then $T_m$ centralizes $U_0$, and so $U_0T_m = U_0\times T_m$ is nilpotent as required.
\end{proof}

\section{From abstract solvable groups to linear algebraic solvable groups}\label{s: reduction}

In order to prove Thm. \ref{t: main} we need to establish the connection (in the context of growth) between abstract solvable subgroups of $\GL_r(K)$ and connected solvable linear algebraic subgroups of exponential type in $\GL_r$ that are defined over a finite field $K'$. Establishing this connection is the aim of this section; specifically we prove the following result:

\begin{prop}\label{p: bounded degree}
Let $G$ be a subgroup of $\GL_r(K)$.
Let $H<G$ be a subgroup of finite index.

Suppose that, for every finite subset $A\subset H$ and every
$C\geq 1$ there is an integer $k\ll_{r} 1$ such that either
\begin{enumerate}
\item\label{eq:gaca1}
$|A_3|\geq C |A|$, or else
\item\label{eq:gaca2}
$\langle A \rangle$ contains a subgroup $U_R$ and a normal
subgroup $S$ such that
\begin{itemize}
\item $U_R$ is unipotent and $S$ is solvable,
\item $U_R \lhd S$ and $S/U_R$  is nilpotent,
\item $A_k$ contains $U_R$, and
\item $A$ is contained in the union of at most $C^{O_r(1)}$ cosets of $S$.
\end{itemize}
\end{enumerate}

Then, for every finite subset $A\subset G$ and every $C\geq 1$, we have the same
conclusion: either (\ref{eq:gaca1}) holds or (\ref{eq:gaca2}) holds
(with $O_r(1)$ replaced by  $O_{r,|G:H|}(1)$).
\end{prop}

We remark that if we add the requirement that $\langle A\rangle = H$ to the conditions, then
we obtain the conclusion above provided the set $A$ satisfies the condition that $\langle A \rangle = G$.
(This is so because, in Prop.\ \ref{prop:usef},
 (\ref{eq:gagar}) gives us that $\langle A\rangle = G$
implies $\langle A_H\rangle = H$.)

To prove Prop.~\ref{p: bounded degree} we will need, first, a classical result of Mal'cev \cite{malcev} (see also \cite[(3.1.6)]{lr}) concerning the structure of solvable subgroups of $\GL_r(F)$ where $F$ is an algebraically closed field.

\begin{prop}\label{p: break down}
Let $S$ be an abstract solvable subgroup of $\GL_r(F)$ where $F$ is an algebraically closed field. Then $S$ contains a subgroup $H$ such that $[S:H]\ll_r1$ and $H$ is trigonalizable over $F$.
\end{prop}

\begin{prop}\label{p: bounded borel}
Let $S$ be an abstract solvable subgroup of $\GL_r(K)$. Then $S$ has a normal subgroup $H$ such that $[S:H]\ll_r1$, and $H$ lies in $B(K)$ where $B$ is a Borel subgroup of $\GL_r$ defined and trigonalizable over $K'$, a field extension of $K$ of degree at most $r$. 
\end{prop}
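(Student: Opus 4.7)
The plan is to induct on $r$, applying Proposition~\ref{p: break down} (the Aschbacher-style structure theorem) in the inductive step; for the induction to close, I will prove the Proposition simultaneously for any finite base field in place of $\Z/p\Z$. For the base case $r=1$, $GL_1$ is itself a $K$-split Borel, so $H=S$ suffices. For $r\geq 2$, apply Proposition~\ref{p: break down} to $S$; since $S$ is solvable, alternative (2) (almost quasi-simple) is impossible because any almost quasi-simple group has a non-abelian simple quotient, so there is $S_0\leq S$ with $[S:S_0]\ll_r 1$ and $S_0\subseteq M_1 Z$, with $M_1$ of type $\curly_1$, $\curly_3$, $\curly_4$, or $\curly_5$. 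Replacing $S_0$ by its normal core in $S$ (still of index $\ll_r 1$), we may take $S_0\unlhd S$.

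Each type reduces to a smaller instance of the Proposition. In $\curly_1$, $M_1 = U\rtimes(GL_m(q)\times GL_{r-m}(q))$ is a parabolic (and $Z\subseteq M_1$, so $S_0\subseteq M_1$); projecting $S_0$ onto each Levi factor and applying induction produces bounded-index normal subgroups inside Borels $B_m$, $B_{r-m}$ defined over finite extensions of $K$, and pulling back through $U$ gives a bounded-index normal subgroup of $S_0$ inside $U\cdot(B_m\times B_{r-m})$, which is a Borel of $GL_r$ defined over the compositum. Types $\curly_3$ and $\curly_4$ are analogous: after induction on the smaller factor (with $m<r$ or $r_1,r_2<r$), the Borel of the smaller factor, viewed via the appropriate embedding as a closed connected solvable subgroup of $GL_r$, is contained in a Borel of $GL_r$ by Lie--Kolchin; this containing Borel is defined over a finite extension of $K$ because every $\Kbar$-point of the flag variety of $GL_r$ lies in $GL_r(\F_{p^n})$ for some $n$. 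In $\curly_5$, $S_0\leq GL_r(q_0)Z$ with $q=q_0^t$, $t>1$: the subgroup $S_1=S_0 Z\cap GL_r(q_0)$ is a solvable subgroup of $GL_r(\F_{q_0})$ to which the Proposition applies recursively, yielding a Borel $B_1$ of $GL_r$ containing a bounded-index normal subgroup $H_1$ of $S_1$; since the center $Z$ lies in every Borel of $GL_r$, the preimage of $H_1 Z/Z$ in $S_0$ is a bounded-index normal subgroup of $S_0$ contained in $B_1(\Kbar)$.

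For termination, observe that since $K=\Z/p\Z$ is prime, case $\curly_5$ cannot occur at the first step. Writing $q=p^e$, the potential $(r,re)$ in lexicographic order strictly decreases at every recursive step: $\curly_1$ and $\curly_4$ strictly decrease $r$ while preserving $e$; $\curly_3$ sends $(r,e)\to(m,te)$ with $r=mt$, strictly decreasing $r$ and preserving the product $re$; and $\curly_5$ strictly decreases $e$ while preserving $r$. Since $re$ is non-increasing from its initial value $r$, every intermediate pair $(r',r'e')$ satisfies $r'e'\leq r$, so the potential takes at most $O(r\log r)$ values, the recursion terminates in $\ll_r 1$ steps, and the cumulative index $[S:H]$ remains $\ll_r 1$. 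The main obstacle is case $\curly_5$, in which the scalar factor $[Z:Z\cap GL_r(q_0)]=(q-1)/(q_0-1)$ is not bounded in $r$ and therefore must be absorbed into the lifted Borel (valid because $Z$ lies in every Borel) rather than into the index bound; a final normal-core argument then restores normality of $H$ in $S$ and not merely in $S_0$.
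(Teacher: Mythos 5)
Your proof is correct and shares the paper's skeleton---induction on $r$ driven by Prop.~\ref{p: break down}, with the almost quasi-simple alternative excluded by solvability and normality restored by passing to the core---but it handles two of the four families genuinely differently. For $\curly_4$, you apply induction to both tensor factors and then invoke Lie--Kolchin on the image of $B_{r_1}\times B_{r_2}$ under the Kronecker embedding; the paper instead takes the inductively obtained bounded-index subgroup $H_0$ of the central product of Borels, splits it as $O_p(H_0)\rtimes T_0$ by Schur--Zassenhaus, and falls back either on the centralizer of a semisimple element (a product of smaller general linear groups over extension fields) or on Borel--Tits, which lands it back in the parabolic case $\curly_1$. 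Your route is shorter; the paper's avoids discussing the field of definition of the image of the tensor embedding, which you dispatch with the observation that any $\Kbar$-point of the flag variety is defined over some finite field. For $\curly_5$, the paper disposes of the case in one sentence (``either $r=1$ or $G$ has a bounded index subgroup that lies in one of the other families''), implicitly re-running the structure theorem over the subfield; your explicit descent via $S_1=S_0Z\cap GL_r(q_0)$, with the unbounded scalar factor absorbed into the Borel rather than into the index, together with the lexicographic potential $(r,re)$, supplies the termination argument the paper leaves to the reader---and some such argument is genuinely needed, since $\curly_3$ enlarges the field, making $\curly_5$ available at later stages even though $K=\Z/p\Z$ is prime. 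One cosmetic slip: an almost quasi-simple group need not have a non-abelian simple \emph{quotient}; the correct reason alternative (2) of Prop.~\ref{p: break down} is excluded is that a solvable group cannot contain a quasi-simple (perfect, non-trivial) subgroup.
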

\begin{proof}
Observe first that if $S$ admits a subgroup $H$ satisfying all conditions except for normality, then we are done (we simply take the core of $H$ - the intersection of its conjugates in $S$ - to be the normal subgroup we are looking for). This observation and Prop.~\ref{p: break down} imply that it is sufficient to prove the following: if $H$ is a subgroup of $\GL_r(K)$ that is trigonalizable over $\overline{K}$, then $H$ lies in $B(K)$ where $B$ is a Borel subgroup of $\GL_r$ defined and trigonalizable over $K'$, a field extension of $K$ of degree at most $r$. 

The result is trivial for $r=1$ since $\GL_r(K)=B(K)$ in this case. Assume then that $r>1$. Suppose first that $H$ contains no non-trivial unipotent elements. Then $H$ lies inside a maximal torus $T$ of $\GL_r(K)$ and the result follows immediately from the standard classification of maximal tori in $\GL_r(K)$ (see, for instance, chapter 3 of \cite{carter}).

If, on the other hand, $H$ contains a unipotent element, then, in particular, $H$ contains a normal unipotent subgroup. Now the Borel-Tits theorem (\cite{boreltits}; see also \cite[Theorem 3.1.3]{gls3}) implies that $H$ lies inside a proper parabolic subgroup $P$ of $\GL_r(K)$. Since $P$ is conjugate in $\GL_r(K)$ to a group of block-diagonal matrices and since the Levi complement of $P$ is isomorphic to a direct product $\GL_{r_1}(K)\times \GL_{r-r_1}(K)$ for some $r_1>1$, the result follows by induction on $r$.
\end{proof}

The next set of results are designed to show that ``if we have growth in a subgroup of bounded index, then we have growth in the group.''

\begin{prop}\label{prop:usef}
Let $G$ be a group. Let $H\triangleleft G$ be a normal subgroup of finite index.
Let $A\subset G$ such that $\langle A\rangle=G$.

Then there is a subset $A_H\subset A_k \cap H$, $k\ll_{|G:H|} 1$,
 such that
\begin{equation}\label{eq:msta}
A \subset \bigcup_{g\in J} g A_H,
\end{equation}
\begin{equation}\label{eq:gagar}
\langle A\rangle = \bigcup_{g\in J} g\langle A_H\rangle,\end{equation}
where $J\subset A_k$ 
is a subset of a full set of coset representatives of $G/H$, and
$\langle A_H\rangle$ is normal in $\langle A\rangle$.

Moreover, $|A|\ll_{|G:H|} |A_H|\ll_{|G:H|} |A|$.
Furthermore, given any $H'\triangleleft \langle A_H\rangle$,
\[\left(\bigcap_{g\in J} g H' g^{-1}\right) \triangleleft \langle A\rangle.\]
Lastly, for every $g\in J \cup J^{-1}$, $g A_H g^{-1} \subset (A_H)_3$.
\end{prop}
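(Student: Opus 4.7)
The plan is to build $A_H$ explicitly as the set of short words in $A$ and the chosen coset representatives that land in $H$, picking the word structure carefully so that standard Schreier generators for $H$ are included, the size bound $|A_H| \ll_{|G:H|} |A|$ holds, and the set is nearly closed under $J$-conjugation in a three-fold sense.

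First I would fix a full transversal $J \subset A_n$ for $H$ in $G$, with $n = |G:H|$ and $1 \in J$; such a $J$ exists because $\langle A\rangle = G$ projects to a generating set of the finite group $G/H$, whose Cayley graph diameter is at most $n$. Writing $J_0 = J \cup J^{-1} \cup \{1\}$ and $\rho : G \to J$ for the projection with $\rho(g)^{-1} g \in H$, I would define
\[
A_H \;=\; \bigl\{\, j_1\, x\, j_2 \;:\; j_1, j_2 \in J_0,\ x \in A \cup A^{-1} \cup J_0,\ j_1 x j_2 \in H \,\bigr\}.
\]
Then $A_H = A_H^{-1}$, $1\in A_H$, $A_H \subset A_k \cap H$ for some $k \ll_n 1$, and $|A_H| \leq |J_0|^2(2|A| + |J_0|) \ll_n |A|$.

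Next I would verify the easier properties. For $a \in A$, the element $\rho(a)^{-1}a = \rho(a)^{-1}\cdot a \cdot 1$ lies in $A_H$, giving $a \in \rho(a)\,A_H \subset J\,A_H$, which is (\ref{eq:msta}). The standard Schreier generators $\{j s\,\rho(js)^{-1} : j\in J,\ s\in A\cup A^{-1}\}$ are all contained in $A_H$ and generate $H$ (by Schreier's theorem, since $J$ is a full transversal and $\langle A\rangle = G$), so $\langle A_H\rangle = H$; combining with $G = \bigcup_{j\in J} jH$ gives (\ref{eq:gagar}), and $\langle A_H\rangle = H$ is normal in $\langle A\rangle = G$ by hypothesis. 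The lower bound $|A_H|\geq |A|/|J|$ follows from $A \subset J\,A_H$. For the normal-core claim, any $g \in G$ factors as $g = j h$ with $j \in J$, $h \in H$, whence $gH'g^{-1} = j(h H' h^{-1})j^{-1} = j H' j^{-1}$ since $H' \triangleleft H$; thus $G$-conjugation permutes the finite family $\{jH'j^{-1}\}_{j\in J}$, and its intersection is therefore normal in $G$.

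The main obstacle is the last claim $gA_Hg^{-1}\subset (A_H)_3$ for $g\in J\cup J^{-1}$, and this is precisely what forces $A_H$ to allow middle entries $x \in J_0$ rather than merely $x\in A\cup A^{-1}\cup\{1\}$. For $a_H = j_1 x j_2 \in A_H$ and $g \in J\cup J^{-1}$, I would set $k_1 = \rho(gj_1)$ and $k_2 = \rho(k_1 x)$, giving the telescoping identity
\[
g\,a_H\,g^{-1} \;=\; (g j_1 k_1^{-1})\cdot(k_1 x k_2^{-1})\cdot(k_2 j_2 g^{-1}).
\]
The first factor lies in $H$ since $g j_1 \in k_1 H$ and $H$ is normal, the second similarly lies in $H$ by the choice of $k_2$, and the third must lie in $H$ because the left-hand side does. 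Each factor has the form $j_1'\cdot x'\cdot j_2'$ with $j_1', j_2' \in J_0$ and $x' \in J_0\cup A\cup A^{-1}$, so each is in $A_H$, giving $g A_H g^{-1} \subset A_H\cdot A_H\cdot A_H \subset (A_H)_3$. Had we restricted the middle entry to $A\cup A^{-1}\cup\{1\}$, the first and third factors (whose middle entries $j_1, j_2$ lie in $J_0$) would not fit, so this flexibility in the definition of $A_H$ is essential.
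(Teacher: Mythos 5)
Your proof is correct, and it takes a genuinely different route from the paper's. The paper first decomposes $A=\bigcup_{g\in J}gC_g$ with $C_g\subset H$ and builds $A_H$ out of $J_2$-conjugates of the pieces $C_{g'}\cup C_{g'}^{-1}$ together with correction elements $\overline{g^{-1}}g$ and $g\overline{g^{-1}}$; it then proves normality of $\langle A_H\rangle$ in $\langle A\rangle$ by a fairly long direct computation with coset representatives, never identifying $\langle A_H\rangle$ explicitly. You instead take $A_H$ to be all ``sandwich'' words $j_1xj_2$ that land in $H$, note that this set contains a full system of Schreier generators so that $\langle A_H\rangle=H$ exactly, after which normality of $\langle A_H\rangle$ and of $\bigcap_{g\in J}gH'g^{-1}$ follow at once from $H\triangleleft G=\langle A\rangle$ and $H'\triangleleft H$; the conjugation claim $gA_Hg^{-1}\subset(A_H)_3$ is then handled by a clean telescoping insertion of representatives, and your observation that the middle slot must be allowed to range over $J_0$ (not just $A\cup A^{-1}\cup\{1\}$) is exactly the point that makes that step work. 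The trade-off is that your argument leans harder on the hypothesis $\langle A\rangle=G$: without it, $\langle A_H\rangle$ could be a proper subgroup of $H$, and neither the Schreier step nor the reduction of $H'\triangleleft\langle A_H\rangle$ to $H'\triangleleft H$ would go through, whereas the paper's computation is written so as to be insensitive to this. Since that hypothesis is part of the statement, your shortcut is legitimate and yields a shorter, more transparent proof; a minor bonus is that by symmetrizing only the auxiliary set $J_0=J\cup J^{-1}\cup\{1\}$ you avoid the paper's assertion that $J$ itself can be chosen with $J=J^{-1}$.
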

\begin{proof}
We can assume without loss of generality that $\{a\cdot H: a\in A\}$ generates
$G/H$. Thus, for every left coset of $H$, we can find a $g\in A_k$
($k\leq |G:H|$) contained in that coset. 
Write $A = \cup_{g\in J} g C_g$, where $J\subset A_k$ 
is a full set of coset representatives
of $G/H$ and $C_g\subset H$ for every $g\in J$. 
We can choose $J$ so that $e\in J$ and $J = J^{-1}$.

Let \begin{equation}\label{eq:shwa}
A_H = \bigcup_{g\in J_2} \bigcup_{g'\in J}
g (C_{g'} \cup C_{g'}^{-1}) g^{-1}\; \cup \bigcup_{g\in J_2} 
\left\{\overline{g^{-1}} g, g \overline{g^{-1}}\right\},\end{equation}
where, for $g\in G$, $\overline{g}$ denotes the element of $J$ in the same left coset of
$H$ as $g$.
Since $H\triangleleft G$ and $C_g\subset H$ for every $g\in J$, 
$A_H$ is contained in $H$. It is clear that
$A_H\subset A_{k'} \cap H$ with $k' = 5 k + 1$. It is also clear that
$|A|\ll_{|G:H|} |A_H|\ll_{|G:H|} |A|$.
We also have (\ref{eq:msta}) because $A = \cup_{g \in J} g C_g$ and
$C_g \subset A_H$ for every $g\in J$ (by definition (\ref{eq:shwa})).

Let us now check that $g A_H g^{-1} \subset (A_H)_3$ for every $g\in J =
J\cup
J^{-1}$. Let $a\in A_H$. If $a\in g_0 (C_{g'} \cup C_{g'}^{-1}) g_0^{-1}$ for
some $g_0\in J_2$, then $g a g^{-1} \in g_2 g_1 (C_{g'} \cup C_{g'}^{-1})
g_1^{-1} g_2^{-1}$ for some $g_1 \in J\cup J^{-1} \cup \{e\}$,
$g_2\in J_2$. Let $g_3 = \overline{g_2^{-1}}^{-1} \in J^{-1}$. Then
\[g a g^{-1} \in g_2 \overline{g_2^{-1}} \cdot g_3 g_1 (C_{g'} \cup
C_{g'}^{-1}) g_1^{-1} g_3^{-1} \cdot (g_2 \overline{g_2^{-1}})^{-1} \in
(A_H)_3,\]
as was desired.

It remains to show that $\langle A\rangle
= \bigcup_{g\in J} g\langle A_H\rangle$. The inclusion 
$g\langle A_H\rangle \subset \langle A\rangle$, $g\in J$, is easy.
To show that $\langle A\rangle = \langle \bigcup_g g C_g\rangle$ is contained
in $\bigcup_{g\in J} g\langle A_H\rangle$, it is enough to show that, if 
$x\in \bigcup_{g\in J} g C_g$ and $y\in \bigcup_{g\in J} g \langle A_H\rangle$,
then $x y$ and $x^{-1} y$ are in $\bigcup_{g\in J} g \langle A_H\rangle$. 

Let us see: for $x$ and $y$ as above, $x y = g c g' a$ for some
$g,g'\in J$, $c\in C_g$, $a\in \langle A_H\rangle$, and so 
\[\begin{aligned}
x y &= g c g' a = g g' (g')^{-1} c g' a \in g g' \langle A_H\rangle\\
&= \overline{g g'}\cdot \overline{(g g')^{-1}} g g' \langle A_H\rangle
=  \overline{g g'} \langle A_H\rangle = g'' \langle A_H\rangle
\end{aligned}\] 
for some $g''\in J$. Similarly,
\[\begin{aligned}
x^{-1} y &= c^{-1} g^{-1} g' a = g^{-1} g' (g^{-1} g')^{-1} c^{-1} g^{-1} g' a
\in g^{-1} g' \langle A_H\rangle\\
&= \overline{g^{-1} g'}\cdot \overline{(g^{-1} g')^{-1}} g^{-1} g'
 \langle A_H\rangle = \overline{g^{-1} g'} \langle A_H\rangle = 
g''\langle A_H\rangle
\end{aligned}\] for some $g''\in J$. Hence
$\langle A\rangle \subset \cup_{g\in J} g\langle A_H\rangle$, and so
$\langle A\rangle = \cup_{g\in J} g\langle A_H\rangle$.

To show that $\langle A_H\rangle$ is normal in $\langle A\rangle$,
it is enough to show that $g A_H g^{-1} \subset \langle A_H\rangle$
for every $g\in J\cup J^{-1}$.
 First, note that, for all $g''\in J$, $g\in J_2$,
$g'\in J\cup J^{-1}$, $c\in C_{g'} \cup C_{g'}^{-1}$,
\[g'' g c (g'' g)^{-1} = g'' g\overline{(g'' g)^{-1}} 
(\overline{(g'' g)^{-1}})^{-1} c \overline{(g'' g)^{-1}}
(\overline{(g'' g)^{-1}})^{-1} (g'' g)^{-1} \in A_H \cdot A_H \cdot A_H
\subset \langle A_H\rangle,\]
where we recall that $\overline{g}\in J$ for every $g\in G$. Next,
we see that, for $g_1,g_2,g_3\in J\cup J^{-1}$,
\[\begin{aligned}
g_1 \overline{(g_2 g_3)^{-1}} 
g_2 g_3 g_1^{-1} 
&= g_1 \overline{(g_2 g_3)^{-1}} \overline{(g_1 \overline{(g_2 g_3)^{-1}})^{-1}}
\overline{g_1 \overline{(g_2 g_3)^{-1}}}
g_2 \overline{g_3 g_1^{-1}} \overline{(g_3 g_1^{-1})^{-1}}
 g_3 g_1^{-1} \\ &\in A_H
\overline{g_1 \overline{(g_2 g_3)^{-1}}}
g_2 \overline{g_3 g_1^{-1}}
 A_H .\end{aligned}\]
Now \[
(g_2 \overline{g_3 g_1^{-1}})^{-1} = \overline{(g_3 g_1^{-1})^{-1}} g_2^{-1}
= g_1 g_3^{-1} h g_2^{-1} = g_1 g_3^{-1} g_2^{-1} h' = g_1 \overline{(g_2 g_3)^{-1}} h'' h' 
\] for some $h,h',h''\in H$. Hence
$\overline{(g_2 \overline{g_3 g_1^{-1}})^{-1}} =
\overline{g_1 \overline{(g_2 g_3)^{-1}}}$, and so
\[\overline{g_1 \overline{(g_2 g_3)^{-1}}}
g_2 \overline{g_3 g_1^{-1}}\in A_H.\]
We conclude that $g_1 \overline{(g_2 g_3)^{-1}} 
g_2 g_3 g_1^{-1} \in \langle A_H\rangle$. By a similar argument,
$g_1 g_2 g_3 \overline{(g_2 g_3)^{-1}} 
g_1^{-1} \in \langle A_H\rangle$. Hence 
$g_1 A_H g_1^{-1} \subset \langle A_H\rangle$
for every $g_1\in J\cup J^{-1}$, as desired.

Let us now examine
$H'' = \bigcap_{g\in J} g H' g^{-1}$, where $H'\triangleleft \langle A_H\rangle$.
 For $g, g'\in J$, $h\in \langle A_H\rangle$,
\begin{equation*}
\begin{aligned}
g' h g H' g^{-1} h^{-1} (g')^{-1} &= g' g g^{-1} h g H' g^{-1} h^{-1} g g^{-1} (g')^{-1}\\
&= g' g h' H' (h')^{-1} (g' g)^{-1} \\
&= g' g H' (g' g)^{-1}, 
\end{aligned}
\end{equation*}

where $h' = g^{-1} h g \in \langle A_H\rangle$. 
(Recall that $\langle A_H\rangle$ is normal in $\langle A\rangle$.)
Thus
\[\begin{aligned}
g' h H'' (g' h)^{-1} &= 
\bigcap_{g\in J} g' h g H' g^{-1} h^{-1} (g')^{-1} =
\bigcap_{g\in J} g' g H' (g' g)^{-1}\\  &= \bigcap_{g\in J} \overline{g' g}
\overline{(g' g)^{-1}} g' g 
H' (\overline{(g' g)^{-1}} g' g)^{-1} \overline{g' g}^{-1}.\end{aligned}\]
Now $\overline{(g' g)^{-1}} g' g \in A_H$, and thus normalises $H'$. 
As $g$ runs through the elements of $J$ while $g'$ is fixed, $\overline{g' g}$
runs through each element of $J$ exactly once. Hence
\[g' h H'' (g' h)^{-1} = \bigcap_{g\in J} g H' g^{-1} = H''\]
for all $g\in J$, $h\in \langle A_H\rangle$, and so $g H'' g^{-1} = H''$
for all $g\in \langle A\rangle$, as was desired.
\end{proof}

The following lemma is basic.

\begin{lem}\label{lem:vangeli}
Let $H$ be a group. Let $H_1\triangleleft H$, $H'<H$. Then
$(H_1 \cap H') \triangleleft H'$. Moreover, 
$H'/(H_1 \cap H')$ is isomorphic to a subgroup of
$H/H_1$.
\end{lem}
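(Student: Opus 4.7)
The plan is to recognise this as a version of the second (``diamond'') isomorphism theorem and proceed in the standard way, with the main tool being the restriction of the quotient map $H \to H/H_1$ to the subgroup $H'$.

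First I would verify that $H_1 \cap H'$ is normal in $H'$ by a direct conjugation check: take $h' \in H'$ and $x \in H_1 \cap H'$. Since $H_1 \triangleleft H$ and $x \in H_1$, we have $h' x (h')^{-1} \in H_1$. Since $H'$ is a subgroup containing both $h'$ and $x$, we also have $h' x (h')^{-1} \in H'$. Hence $h' x (h')^{-1} \in H_1 \cap H'$, giving normality.

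Next I would construct the embedding. Let $\pi: H \to H/H_1$ be the canonical surjection, and consider the restriction $\pi|_{H'}: H' \to H/H_1$. This is a group homomorphism (as a restriction of a homomorphism to a subgroup), with kernel $\{h' \in H' : h' \in H_1\} = H' \cap H_1 = H_1 \cap H'$. By the first isomorphism theorem, $H'/(H_1 \cap H')$ is isomorphic to $\pi(H')$, which is a subgroup of $H/H_1$. This completes the proof.

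There is no real obstacle here: both claims are immediate from definitions, and the statement is essentially a restatement of a textbook isomorphism theorem. The only mild care needed is in noting that $H'$ is only a subgroup (not assumed normal) of $H$, so one should not attempt to form a quotient of $H$ by $H'$; but the argument above only needs $H'$ to be a subgroup and $H_1$ to be normal in $H$, which is precisely what is assumed.
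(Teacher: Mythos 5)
Your proof is correct and follows essentially the same route as the paper: a direct conjugation check for normality, and then the injective homomorphism $H'/(H_1\cap H')\to H/H_1$ (the paper defines this map on cosets directly and checks injectivity, whereas you obtain it via the first isomorphism theorem applied to the restricted quotient map -- the same map either way).
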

\begin{proof}
For any $g\in H'$ and any $h\in H_1\cap H'$, we have $g h g^{-1}\in H_1$
(because $H_1$ is normal) and $g h g^{-1} \in H'$ (because $g$ and $h$
are in $H'$). Thus, $H_1 \cap H' \triangleleft H'$.

We define a map $\iota:H'/(H_1\cap H')\to H/H_1$ as follows:
$\iota(g (H_1\cap H')) = g H_1$. It is easy to see that the map is
a well-defined homomorphism. Since its kernel is $\{e\}$, it is also
injective. 
\end{proof}

The following is a slight generalisation of \cite[Lem. 7.16]{helfgott3}.

\begin{lem}\label{lem:coc}
Let $M$ be a group. Let $N_1, N_2,\dotsc, N_k \triangleleft M$. 
Let $A\subset M$ be such that
$A$ is contained in the union of $\leq n_j$ left cosets of $N_j$ for $j=1,2,\dotsc,k$.
Then $A$ is contained in the union of $\leq n_1 n_2\dotsb n_k$ 
left cosets of $N_1\cap N_2 \cap \dotsc \cap N_k$.
\end{lem}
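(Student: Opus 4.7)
The plan is to induct on $k$, with the base case $k = 1$ being trivial. The key observation for the inductive step is the following elementary fact, which does not in fact require normality: for any two subgroups $H_1, H_2$ of a group $M$ and any $g_1, g_2 \in M$, the intersection $g_1 H_1 \cap g_2 H_2$ is either empty or a single left coset of $H_1 \cap H_2$. Indeed, if $x \in g_1 H_1 \cap g_2 H_2$, then $g_1 H_1 = x H_1$ and $g_2 H_2 = x H_2$, so $g_1 H_1 \cap g_2 H_2 = x H_1 \cap x H_2 = x(H_1 \cap H_2)$.

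Granted this observation, the case $k = 2$ is immediate: writing $A \subset \bigcup_{i=1}^{n_1} g_i N_1$ and $A \subset \bigcup_{j=1}^{n_2} h_j N_2$, we have
\[
A \;\subset\; \bigcup_{i,j} \bigl(g_i N_1 \cap h_j N_2\bigr),
\]
and each non-empty term on the right is a single left coset of $N_1 \cap N_2$, yielding at most $n_1 n_2$ cosets in total.

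For general $k \geq 2$, the inductive hypothesis gives that $A$ is contained in the union of at most $n_1 n_2 \dotsb n_{k-1}$ left cosets of $N_1 \cap \dotsb \cap N_{k-1}$. Since the intersection of normal subgroups is normal, I can then apply the $k=2$ step to the two normal subgroups $N_1 \cap \dotsb \cap N_{k-1}$ and $N_k$, with covering bounds $n_1 \dotsb n_{k-1}$ and $n_k$ respectively, to conclude that $A$ lies in the union of at most $n_1 n_2 \dotsb n_k$ left cosets of $(N_1 \cap \dotsb \cap N_{k-1}) \cap N_k = N_1 \cap \dotsb \cap N_k$.

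Since the result is essentially formal and rests entirely on the coset-intersection identity $g_1 H_1 \cap g_2 H_2 = x(H_1 \cap H_2)$, I do not anticipate any real obstacle; the hypothesis that the $N_j$ are normal is invoked only to ensure the intersections appearing in the induction are again normal (so that the next step's hypotheses are formally satisfied), and could be dropped if one rephrased the statement purely in terms of left cosets of subgroups.
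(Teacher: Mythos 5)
Your proof is correct, but it takes a different route from the paper's. The paper argues globally: it considers the map $\iota:M/(N_1\cap\dotsb\cap N_k)\to M/N_1\times\dotsb\times M/N_k$ sending $g(N_1\cap\dotsb\cap N_k)$ to $(gN_1,\dotsc,gN_k)$, observes that it is a well-defined injective homomorphism, and notes that the image of $A$ lands in at most $n_1n_2\dotsb n_k$ elements of the product, so $A$ meets at most that many cosets of the intersection. You instead induct on $k$ using the pointwise identity $g_1H_1\cap g_2H_2=x(H_1\cap H_2)$ for any $x$ in the intersection. The two arguments buy slightly different things: the paper's is a one-step argument with no induction, at the cost of phrasing everything through quotient groups (hence invoking normality, though even there the same map on coset spaces would be well defined and injective without it); yours is more elementary and makes explicit that normality is irrelevant to the statement, which is a genuine (if minor) strengthening --- as you note, the hypothesis is only needed so that the intermediate intersections remain normal for the formal restatement, and the lemma holds verbatim for arbitrary subgroups and left cosets. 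Both are complete proofs; for the application in the paper (where the $N_j$ are conjugates of a normal subgroup, hence normal) either suffices.
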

\begin{proof}
The map $\iota:M/(N_1 \cap N_2 \cap \dotsc \cap N_k) 
\to M/N_1 \times M/N_2 \times \dotsb \times M/N_k$ given by
$\iota(g (N_1 \cap N_2 \cap \dotsc \cap N_k)) 
= (g N_1, g N_2,\dotsc , g N_k)$ is a well-defined homomorphism;
since its kernel is trivial, it is also injective. The image of
$\iota(A\cdot (N_1 \cap N_2\cap \dotsc \cap N_k)$ 
is of size at most $n_1 \cdot n_2 \dotsb n_k$;
hence $A\cdot (N_1\cap N_2 \cap \dotsc \cap N_k) 
\subset M/(N_1 \cap N_2 \cap \dotsc \cap N_k)$ is of size at most
$n_1 \cdot n_2 \dotsb n_k$.
\end{proof}

We are now able to prove the main result of this section.

\begin{proof}[Proof of Prop.~\ref{p: bounded degree}]
It is well-known that a subgroup of a group $G$ of index $m$ always contains
a normal subgroup of $G$ of index $\leq m!$ (take the kernel of the 
representation of $G$ by left multiplication on $G/H$). Thus, we may assume
without loss of generality that $H$ is normal in $G$.

Let $A\subset G$ and $C\geq 1$ be given. Suppose that $|A_3|\leq 2|A|$; then Lem. \ref{l: olson} implies that $A_3 = \langle A \rangle$ and (\ref{eq:gaca2}) follows immediately with $U_R=S=\langle A \rangle$. So assume that $C\geq2$.
 
Let $A_H$ and $J$ be as in Prop. \ref{prop:usef}.
Suppose conclusion 
(\ref{eq:gaca1}) in the statement of the present proposition
 does not hold for $A_H$, as otherwise (\ref{eq:gaca1}) 
for $A$ follows immediately. Then conclusion (\ref{eq:gaca2}) must hold
for $A_H$; denote the subgroups we obtain by $U_{R,H}$ and $S_H$.

Let $S = \bigcap_{g\in J} g S_H g^{-1}$. By Prop.\ \ref{prop:usef}
(with $H' = S_H$), we have
 $S\triangleleft \langle A\rangle$. Let $U_R = S\cap U_{R,H}$.
By Lem. \ref{lem:vangeli}, $U_R$ is a normal subgroup of $S$ and $S/U_R$
is isomorphic to a subgroup of $S_H/U_{R,H}$. Hence $S/U_R$ is nilpotent.
Since $(A_H)_k$ contains $U_{R,H}$, it is obvious that $A_k$ (which contains
$(A_H)_k)$) contains $U_{R}\subset U_{R,H}$. 

It remains to bound the number of  cosets occupied by $A$. We are given that
$A_H$ lies in at most $C^{O_r(1)}$  cosets of $S_H$. 
By Prop.\ \ref{prop:usef}, $g^{-1} A_H g \in (A_H)_3$ for
every $g\in J$. Hence $g^{-1} A_H g$ lies in at most $C^{3 O_r(1)}$ left cosets of $S_H$.
(Recall that $S_H\triangleleft \langle A_H\rangle$.) Thus
$A_H$ lies in at most $C^{3 O_r(1)}$  cosets of $g S_H g^{-1}$.
Therefore, by Lem. \ref{lem:coc}, $A_H$ is contained
in at most $C^{3 |J| O_r(1)}
\leq C^{3 |G:H|}$ cosets of 
$S = \bigcap_{g\in J} g S_H g^{-1}$. Thus, by (\ref{eq:msta}),
 $A$ is contained in at most
\[|J| C^{O_r(|G:H|)}
\leq |G:H| C^{O_r(|G:H|)}\leq C^{O_{r, |G:H|}(1)}\] cosets of $S$.
\end{proof}

\section{Growth when $U$ is abelian}\label{s: first interesting}

As we shall see when we come to prove Thm. \ref{t: main} in Section \ref{s: proof}, the results of the previous section allow us to work under some extra assumptions.

For this section we let $A_0$ be a set contained in $G_0(K)$,
where $G_0$ is a connected solvable linear algebraic subgroup of $\GL_r$ that is defined, and trigonalizable, over a finite extension $K'/K$. We require, in addition, that $G_0$ is of exponential type in $\GL_r$.

We write $G_0=U_0T_0$. We assume that
\begin{equation}\label{e: sz}
\langle A_0 \rangle = \left(\langle A_0 \rangle \cap U_0(K) \right) \rtimes \left(\langle A_0 \rangle \cap T_0(K)\right).
\end{equation}
We are able to do this since $G_0(K) = U_0(K)\rtimes T_0(K)$; then the Schur-Zassenhaus theorem implies that there exists $g\in G_0(K)$ such that $\langle a^g \, \mid \, a\in A_0\rangle$ satisfies (\ref{e: sz}). We can then study the set $\{a^g \, \mid \, a\in A_0 \}$ in order to establish all the results we need concerning $A_0$.

Our focus for this section is on the group $G = G_0/(U_0)^1$. Write $G=UT$, and observe that $U$ is abelian. Define $\Phi, \Phi^*, \Phi_R = \{R_1, \dots, R_d\}, \Phi_R^*, \Lambda, U_R$, and $U_\Lambda$ as per Section \ref{s: background solvable}. Write $A$ for the set $A_0/(U_0)^1(K)$; thus $A$ is a subset of $G(K)$.

Let us note two easy consequences \cite[9.7]{boreltits2} of the fact that $U$ is abelian:
\begin{equation}\label{e: abelian}
\begin{aligned}
U&=U_\Lambda \times U_R; \\
[G,T]&=U_R.
\end{aligned}
\end{equation}
In fact, we can do a little better:
\begin{lem}\label{l: coincide}
Assume $U$ is abelian. Then $$[G,G]=U_R.$$
\end{lem}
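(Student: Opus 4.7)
The plan is to sandwich $[G,G]$ between $U_R$ on one side and $U_R$ on the other, using the decomposition $U = U_\Lambda \times U_R$ from (\ref{e: abelian}).

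For the inclusion $U_R \subseteq [G,G]$, I would simply quote the second identity in (\ref{e: abelian}): since $U_R = [G,T]$ and $T \leq G$, every generator of $U_R$ is already a commutator of two elements of $G$, so $U_R \subseteq [G,G]$.

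For the reverse inclusion $[G,G] \subseteq U_R$, the strategy is to show that $G/U_R$ is abelian. Using $G = UT$ and $U = U_\Lambda \times U_R$, we have $G/U_R = U_\Lambda \cdot T$ (more precisely, the image of $U_\Lambda T$ in the quotient). Now $U_\Lambda$ is abelian (it sits inside the abelian group $U$), $T$ is abelian (it is a torus), and by Lem.\ \ref{l: desc} the subgroup $U_\Lambda$ is centralised by $T$. Hence $U_\Lambda \cdot T \cong U_\Lambda \times T$ is abelian, so $G/U_R$ is abelian, and therefore $[G,G] \subseteq U_R$.

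Combining the two inclusions gives the equality. There is no real obstacle here: the only subtle point is recognising that $U_\Lambda$ commutes with $T$ (which is built into the definition of $\Lambda = \Phi_R \setminus \Phi_R^*$ as the set of weight subgroups with trivial weight, and is made explicit in Lem.\ \ref{l: desc}), after which the argument is a direct quotient computation made possible by the extra hypothesis that $U$ is abelian.
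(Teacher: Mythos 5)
Your proof is correct. Both inclusions are handled soundly: $U_R\subseteq[G,G]$ follows from $[G,T]=U_R$ exactly as in the paper, and your argument for $[G,G]\subseteq U_R$ is valid because $G/U_R$ is generated by the (commuting, abelian) images of $U_\Lambda$ and $T$, using the normality of $U_R$ in $G$ (established just before this lemma), the decomposition $U=U_\Lambda\times U_R$ from (\ref{e: abelian}), and $C_U(T)=U_\Lambda$ from Lem.\ \ref{l: desc}. The paper takes a more computational route for the second inclusion: it writes two arbitrary elements $g,h$ in standard form $x_{R_1}(s_1)\cdots x_{R_d}(s_d)t$ and expands $[g,h]$ term by term, observing that the components in the $\Lambda$-directions cancel because $T$ acts trivially there, so that $[g,h]=\prod_{R\in\Phi_R^*}x_R(t_R)\in U_R$. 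The two arguments rest on the same underlying facts, but yours is the cleaner structural version (the quotient $G/U_R$ is a product of two commuting abelian groups, hence abelian), whereas the paper's explicit standard-form computation avoids invoking the normality of $U_R$ and the direct-product splitting as black boxes and instead exhibits the commutator concretely.
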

\begin{proof}
In light of the fact that $[G,T]=U_R$ it is sufficient to prove that $[G,G]\leq U_R$. Take $g,h\in G(\Kbar)$ and write these in standard form:
$$g= x_{R_1}(s_1)\cdots x_{R_d}(s_d)t, \, \, 
h = x_{R_1}(s_1')\cdots x_{R_d}(s_d')t'.$$
Then observe that, since $U$ and $T$ are abelian,
\begin{equation*}
\begin{aligned}[] 
[g,h]&= ghg^{-1}h^{-1} \\
&= x_{R_1}(s_1)(x_{R_1}(-s_1))^{t'}(x_1(s_1'))^tx_{R_1}(-s_1') \cdots
x_{R_d}(s_d)(x_{R_d}(-s_d))^{t'}(x_d(s_d'))^tx_{R_d}(-s_d').
\end{aligned}
\end{equation*}
If $R\in\Lambda$ then the action of $T$ on $R$ is trivial. Thus we obtain
$$[g,h] = \prod_{R\in \Phi_R^*} x_R(t_R)$$
for some $t_R\in\Kbar$. Clearly $[g,h]\in U_R(\Kbar)$.
\end{proof}

%

%
%

\begin{lem}\label{lem:pqmg}
Assume $U$ is abelian. Let $g\in G(K)$ lie outside the kernel of every root.
Then
\[\phi_g:x \to \lbrack g,x\rbrack\]
is an injective map from $U_R(K)$ to $U_R(K)$.
\end{lem}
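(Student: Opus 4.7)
The plan is to reduce $\phi_g$ to an operation depending only on the torus part of $g$. Since $U$ is abelian and $G(K) = U(K) \rtimes T(K)$, I would write $g = ut$ with $u \in U(K)$ and $t \in T(K)$. For any $x \in U_R(K) \subset U(K)$, the element $txt^{-1}$ still lies in $U(K)$ because $T$ normalizes $U$. Because $U$ is abelian, $u$ commutes with both $x$ and $txt^{-1}$, so
\[
\phi_g(x) = (ut)\,x\,(ut)^{-1}\,x^{-1} = u(txt^{-1})u^{-1}x^{-1} = txt^{-1}x^{-1} = [t,x].
\]
Moreover, by Lemma \ref{l: coincide}, $[G,G] = U_R$, so $\phi_g(x) \in U_R(K)$ automatically; hence $\phi_g$ does indeed map $U_R(K)$ into $U_R(K)$.

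Next I would compute $[t,x]$ explicitly on an arbitrary element of $U_R(K)$. By Lemma \ref{l: roots}, any $x \in U_R(K)$ admits a unique standard form
\[
x = \prod_{R \in \Phi_R^*} x_R(s_R), \qquad s_R \in \overline{K}.
\]
Since $U$ is abelian (so all the factors $x_R(s_R)$ and their $t$-conjugates commute) and each root subgroup is one-dimensional with $x_R(s)x_R(s') = x_R(s+s')$, Corollary \ref{c: pajama} gives $tx_R(s)t^{-1} = x_R(\alpha(R)(t)\,s)$, and therefore
\[
[t,x] \;=\; \prod_{R \in \Phi_R^*} x_R\bigl((\alpha(R)(t) - 1)\,s_R\bigr).
\]

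Finally, for injectivity, suppose $\phi_g(x) = \phi_g(y)$ for $x,y \in U_R(K)$, with standard forms $x = \prod_R x_R(s_R)$ and $y = \prod_R x_R(s'_R)$. The previous computation, combined with the uniqueness of standard form (Lemma \ref{l: roots}(c)), forces
\[
(\alpha(R)(t) - 1)\,s_R = (\alpha(R)(t) - 1)\,s'_R
\]
for every $R \in \Phi_R^*$. By hypothesis $g$ lies outside $\ker_G(\alpha(R))$ for every root, so $\alpha(R)(t) = \alpha(R)(g) \neq 1$; the scalar $\alpha(R)(t) - 1$ is therefore a nonzero element of the field $\overline{K}$, and we may cancel to get $s_R = s'_R$ for every $R$. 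Hence $x = y$.

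The argument is essentially a direct calculation; the only point that requires a moment's care is verifying that conjugation by $t$ acts diagonally on the standard-form factorization, which is where abelianness of $U$ and the one-dimensionality of each root subgroup are both used. Once that is in place, the hypothesis that $g$ avoids every root kernel turns the question of injectivity into the trivial observation that multiplication by a nonzero scalar is injective on a field.
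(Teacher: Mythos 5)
Your proof is correct. It takes the same underlying route as the paper --- use abelianness of $U$ to reduce conjugation by $g=ut$ to conjugation by the torus part $t$, then exploit that every root is nontrivial on $t$ --- but packages the final step differently. The paper argues by contradiction: from $\phi_g(x)=\phi_g(y)$ it extracts that $g$ fixes $x^{-1}y$ under conjugation, and then asserts (without spelling it out) that an element outside every root kernel acts on $U_R(K)$ with no fixed point other than the identity. Your version instead diagonalizes $\phi_g$ explicitly in the standard-form coordinates of Lem.\ \ref{l: roots}, showing $\phi_g$ acts on the coordinate $s_R$ by multiplication by $\alpha(R)(t)-1$, and cancels the nonzero scalar. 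This is slightly longer but has the virtue of actually justifying the step the paper leaves implicit; the one point worth being careful about, which you handle correctly, is that comparing coefficients requires the uniqueness clause of Lem.\ \ref{l: roots} and the fact that, for $U$ abelian, reordering the factors $x_R(\cdot)$ is harmless.
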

\begin{proof}
By Lem. \ref{l: coincide},
$\phi_g(U_R)\subset U_R$. Now suppose that $g x g^{-1} x^{-1} = g y g^{-1} y^{-1}$
for $x,y\in U_R(K)$, $x\ne y$. Then $g^{-1} x^{-1} y g = x^{-1} y$, i.e.,
$g$ has a fixed point in $U_R(K)$ other than the identity. For $U$ abelian,
this contradicts
the assumption that $g$ lie outside the kernel of every root.
\end{proof}

\begin{prop}\label{p: newinterestgrow}
Let $K,A,$ and $G$ be as defined at the start of this section. There exists a positive integer $k\ll_{r} 1$ such that, for $C\geq 1$, one of the following holds:
\begin{enumerate}
\item\label{it:jola}
 $|A_k\cap \ker_G(\alpha(R))(K)|\geq  \frac1C|A|$ for some $R\in\Phi_R^*$;
\item\label{it:jolb} $|A_k|\geq C|A|$;
\item\label{it:jolc}  $A_k$ contains a normal subgroup $H$ of $U(K)$ such that $\langle A\rangle/H$ is abelian.
\end{enumerate}
\end{prop}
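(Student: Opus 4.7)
The plan is to apply the pivoting proposition, Prop.~\ref{prop:generous}, to the conjugation action of $\langle A\rangle$ on $U_R(K)$. Because $U$ is abelian and normal in $G$, this action factors through the quotient $G/U=T$, so the induced image $\Gamma$ of $\langle A\rangle$ in $\operatorname{Aut}(U_R(K))$ is abelian; let $X\subseteq\Gamma$ be the image of $A$. Take $W:=A_4\cap U_R(K)$. By Lem.~\ref{l: coincide} we have $[G,G]=U_R$, so $W$ contains every commutator $[a,b]$ with $a,b\in A\cup A^{-1}$.

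By Lem.~\ref{lem:pqmg}, an element $y\in\Gamma$ has a fixed point in $U_R$ other than the identity if and only if any lift of $y$ to $G(K)$ lies in $\ker_G(\alpha(R))(K)$ for some $R\in\Phi_R^*$. Writing $x$ for the number of such $y$ in $X^{-1}X$, we get $x\leq\sum_{R\in\Phi_R^*}|A^{-1}A\cap\ker_G(\alpha(R))(K)|$. If conclusion~(\ref{it:jola}) fails for the final value of $k$ we will choose (which will be $\geq 2$), then $A^{-1}A\subseteq A_k$ forces each summand to be $<|A|/C$, and since $|\Phi_R^*|\leq\dim U\ll_r 1$ we obtain $|X|/x\gg_r C$. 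Now apply Prop.~\ref{prop:generous}.

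In the case $(X(W))_8=\langle\langle X\rangle(\langle W\rangle)\rangle$, set $H:=(X(W))_8$. Since $\langle X\rangle$ is the image of $\langle A\rangle$ in $\Gamma$ acting by conjugation, $H$ is the normal closure of $\langle W\rangle$ inside $\langle A\rangle$, hence normal in $\langle A\rangle$ and contained in $U_R(K)\subseteq U(K)$; as $U(K)$ is abelian, $H$ is also normal in $U(K)$. Because $W$ contains every commutator of a pair from $A\cup A^{-1}$, the normal subgroup $H$ contains $[\langle A\rangle,\langle A\rangle]$, so $\langle A\rangle/H$ is abelian. Since $X(W)\subseteq A\cdot W\cdot A^{-1}\subseteq A_6$, we have $H\subseteq A_{48}$, delivering~(\ref{it:jolc}).

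In the remaining case, $|(X_2(W))_6|\geq(|X|/x)|W|\gg_r C|W|$. Unwinding, $(X_2(W))_6\subseteq U_R(K)\cap A_{48}$, so $|A_{48}\cap U_R(K)|\gg_r C|W|=C|A_4\cap U_R(K)|\geq C|A^{-1}A\cap U_R(K)|$. Lem.~\ref{l: b1} applied to the subgroup $U_R(K)$ then gives
\[
|A_{49}|\geq\frac{|A_{48}\cap U_R(K)|}{|A^{-1}A\cap U_R(K)|}\,|A|\gg_r C|A|,
\]
which is~(\ref{it:jolb}) after adjusting the constant implicit in $C$. Taking $k=49$ throughout works. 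The main obstacle is exactly this final step: pivoting controls only the growth of $A_{48}\cap U_R(K)$ relative to $W=A_4\cap U_R(K)$, so the Ruzsa-style bound Lem.~\ref{l: b1} is needed to promote this $U_R$-localised growth into genuine ambient growth of $|A_{49}|$.
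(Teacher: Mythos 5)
Your handling of the second pivoting alternative (identifying $H=(X(W))_8$ with the normal closure of the commutators $[a,b]$, $a,b\in A\cup A^{-1}$, hence containing $[\langle A\rangle,\langle A\rangle]$) and your use of Lem.~\ref{l: b1} to promote $U_R$-localised growth to growth of $|A_{49}|$ are both sound and close to the paper's argument. The genuine gap is in the first alternative, at the step ``$x<|\Phi_R^*|\,|A|/C$ and $|\Phi_R^*|\ll_r 1$, hence $|X|/x\gg_r C$.'' This is a non sequitur: it would require $|X|\gg_r |A|$, but your $X$ is the image of $A$ in $\operatorname{Aut}(U_R(K))$, so $|X|$ is at most the number of cosets of $U(K)$ met by $A$ and can be smaller than $|A|$ by a factor as large as $|U(K)|$. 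Concretely, take $A=\bigl(U(K)\cdot S_{\mathrm{good}}\bigr)\cup S_{\mathrm{bad}}$ with $S_{\mathrm{good}}\subset T(K)$ a small set outside every root kernel and $S_{\mathrm{bad}}\subset \ker_T(\alpha(R))(K)$ of comparable small size. Then $|A^{-1}A\cap \ker_G(\alpha(R'))(K)|$ is of order $|U(K)|\ll |A|/C$, so the hypothesis you actually use is satisfied; yet $|X|=O(|S_{\mathrm{good}}|+|S_{\mathrm{bad}}|)$ while $x\geq |S_{\mathrm{bad}}|$ is a constant fraction of $|X|$, and the pivoting conclusion $|(X_2(W))_6|\geq (|X|/x)|W|$ is worthless. (The proposition survives in this example only because conclusion~(\ref{it:jola}) holds at $k=3$, via $A_3\supseteq U(K)s_b$ for $s_b\in S_{\mathrm{bad}}$ --- something your test of $|A^{-1}A\cap\ker_G(\alpha(R))(K)|$ against $|A|$ does not detect.)

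The missing ingredient is exactly the paper's preliminary application of Lem.~\ref{l: b3} with $N=U(K)$ and $R=\bigcup_{R\in\Phi_R^*}\ker_G(\alpha(R))(K)$: either $|A_3\cap\bigcup_R\ker_G(\alpha(R))(K)|\geq\tfrac1C|A|$, which yields~(\ref{it:jola}) directly, or else the \emph{proportion of cosets} of $U(K)$ met by $A$ that lie inside a root kernel is at most $1/C$. The latter is a statement about $A/U(K)$, i.e.\ about the same object $X$ that enters the pivoting, and it is what makes $|X|/x$ large; your comparison of $x$ against $|A|$ throws away the fibre sizes over $U(K)$ and cannot be repaired without some such reduction to the quotient. (Strictly, since $x$ counts elements of $X^{-1}X$, one wants the coset statement for $A_2/U(K)$ rather than $A/U(K)$; this is obtained by applying Lem.~\ref{l: b3} to $A_2$ and disposing of the case $|A_2U(K)/U(K)|\geq C^{1/2}|AU(K)/U(K)|$ via Lem.~\ref{l: b2}.)
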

\begin{proof}
We apply Lem. \ref{l: b3} to the set $A$ with $G=G(K)$, $N=U(K)$, and 
\[R=\bigcup\limits_{R\in\Phi_R^*}\ker_G(\alpha(R)))(K) .\] We obtain that either
\begin{equation}\label{e: 1} 
|A/U(K)\cap \bigcup\limits_{R\in\Phi_R^*}\ker_G(\alpha(R))(K)/U(K)|\leq 
\frac1C|A/U(K)|
\end{equation}
or $|A_3\cap \cup_{R\in\Phi_R^*}\ker_G(\alpha(R))(K)/U(K)|\geq \frac1C|A|$.
The latter option implies (\ref{it:jola}). Assume, instead, that
 (\ref{e: 1}) holds.

Apply Prop.\ \ref{prop:generous} with $G=U_R(K)$, $\Gamma = G(K)/U(K)$,
$X = A/U(K)$ and $W = \lbrack A, A_2 \rbrack$.
(Note that, by Lem. \ref{l: coincide}, $W\subset U_R(K)$.)

Suppose first that conclusion (\ref{eq:hort1}) holds. Then
\begin{equation}\label{eq:lefev}
|A_{48}\cap U(K)| \geq C|\lbrack A, A_2\rbrack|,\end{equation}
where we are using (\ref{e: 1}) and the fact that an element not in the kernel
of any root acts without fixed points on $U_R(K)$ (for $U$ abelian).
Now, by (\ref{e: 1}), $A$ contains at least one element $g$ not in the kernel
of any root. By Lem. \ref{lem:pqmg}, this implies that $|\lbrack A,A_2\rbrack|
\geq |\lbrack g, A_2 \cap U_R(K)\rbrack| \geq |A_2\cap U_R(K)|$. Hence, by 
(\ref{eq:lefev}) and Lem. \ref{l: b1},
\[|A_{49}| \geq C|A|,\]
and so (\ref{it:jolb}) holds.

Suppose now that conclusion (\ref{eq:hort2}) holds. Then $A_{56}$ contains
a subgroup $V$ of $U_R(K)$ containing $\lbrack A,A\rbrack$. This subgroup is normal in $\langle A \rangle$ since $U(K)$ is abelian and by construction $V$ is normalized by $\langle A \rangle/U(K)$. Clearly,
for any $a,a'\in A$, the images $a \mod V$ and $a'\mod V$ commute. 
Hence $\langle A\rangle/V$ is abelian, and thus (\ref{it:jolc}) holds.
\end{proof}

\section{Descent}\label{s: descent}
 
In this section we investigate what happens when possibility (c) of Prop.\ \ref{p: newinterestgrow} holds. The results of this section apply only in the specific situation when $K=\Z/p\Z$. We begin with some background results.

\begin{lem}\label{l: bch}
 let $\lieu_1$ be an ideal of a unipotent Lie algebra $\lieu$ of nilpotency class $r$, defined over a field of characteristic $p>r$. For all $u_1\in\lieu_1, u\in\lieu$ there exists $u_1'\in \lieu_1$ such that
\begin{equation}\label{bch}
u+u_1 = u+u_1' + \frac12[u, u_1'] + \frac1{12}[u,[u,u_1']] - \frac1{12}[u_1', [u, u_1']] +\cdots.
\end{equation}
\end{lem}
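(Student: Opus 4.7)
The right-hand side of (\ref{bch}) is precisely the Baker--Campbell--Hausdorff series $\Phi(u,u_1') := \log(\exp(u)\exp(u_1'))$ with $X = u$, $Y = u_1'$. Thus the lemma is equivalent to the following claim: for every $u\in\lieu$ and $u_1\in\lieu_1$ there exists $u_1'\in\lieu_1$ such that
\[\exp(u)\exp(u_1') = \exp(u+u_1).\]
The obvious candidate is $u_1' := \log\bigl(\exp(-u)\exp(u+u_1)\bigr)$, and the content of the lemma is that this element really does lie in the ideal $\lieu_1$.

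My plan is to use the $\exp$/$\log$ correspondence between $\lieu$ and its associated unipotent group $U = \exp(\lieu)$ (obtained, say, by embedding $\lieu$ into strictly upper triangular $r\times r$ matrices via Engel's theorem; this is well-defined since $\lieu$ has nilpotency class $r$ and $\charac K > r$, so both series truncate and all denominators up to $r!$ are invertible). By the BCH formula, the hypothesis that $\lieu_1$ is an ideal of $\lieu$ implies that $U_1 := \exp(\lieu_1)$ is a normal subgroup of $U$, and the quotient $U/U_1$ corresponds to $\lieu/\lieu_1$ in the sense that $\exp$ commutes with the projections. Since $u+u_1 \equiv u \pmod{\lieu_1}$, we get $\exp(u+u_1) \equiv \exp(u) \pmod{U_1}$, so $\exp(-u)\exp(u+u_1)\in U_1$, and therefore $u_1'\in\lieu_1$. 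Applying $\log$ to $\exp(u+u_1) = \exp(u)\exp(u_1')$ and expanding via BCH then gives (\ref{bch}) directly.

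The main obstacle is setting up the correspondence cleanly, since the lemma is phrased purely in Lie algebra terms whereas Section~\ref{s: background solvable} develops $\exp$ and $\log$ in the setting of linear algebraic groups; however, the faithful upper-triangular representation makes the two pictures identical. An alternative that avoids mentioning a group altogether is to solve $u_1' = u_1 - F(u,u_1')$ iteratively, where $F(u,Y) := \Phi(u,Y) - u - Y$ collects the higher-order brackets. Every term of $F(u,Y)$ contains at least one bracket with $Y$, so $F(u,\cdot)$ sends $\lieu_1$ into $\lieu_1$ (using that $\lieu_1$ is an ideal) and moreover increases depth in the lower central series of $\lieu$ by one; since that series has length at most $r$, the iteration terminates in at most $r$ steps, producing the desired $u_1'\in\lieu_1$. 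Under either approach, the only arithmetic condition needed is $p > r$, to invert the denominators appearing in BCH up through degree $r$.
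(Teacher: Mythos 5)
Your proposal is correct, but it proceeds differently from the paper. The paper's proof is a short induction on the nilpotency class: it applies the inductive hypothesis in $\lieu/Z(\lieu)$, obtains the identity up to an error $z\in Z(\lieu)$, and then absorbs $z$ by replacing $u_1'$ with $u_1'-z$ (central elements drop out of all brackets). Your two arguments instead work with the full algebra at once. The iterative version --- solving $u_1'=u_1-F(u,u_1')$ and noting that successive corrections descend the lower central series, hence stabilize after at most $r$ steps --- is completely self-contained, stays entirely inside the Lie algebra, and is arguably the cleanest of the three; it also makes transparent exactly where the ideal hypothesis and the bound $p>r$ enter. Your group-theoretic version buys conceptual clarity (the lemma is just the statement that $\exp(u+u_1)$ and $\exp(u)$ lie in the same coset of $\exp(\lieu_1)$), but the step ``$\exp$ commutes with the projections, so $\exp(u+u_1)\equiv\exp(u)\pmod{U_1}$'' is essentially the multiplicative restatement of the lemma itself, so you must justify it independently --- e.g.\ by expanding $\Phi(-u,\,u+u_1)$ via BCH and observing that every surviving bracket contains at least one $u_1$-factor (brackets built purely from $\pm u$ vanish), hence lies in the ideal $\lieu_1$ --- to avoid circularity. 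With that caveat addressed, either of your routes is a valid substitute for the paper's central-quotient induction.
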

The right hand side of (\ref{bch}) corresonds to the Baker-Campbell-Hausdorff formula which, since $\lieu$ is nilpotent, is a finite sum. The formula is well-defined by virtue of the fact that $p>r$.
\begin{proof}
If $\lieu$ is abelian the the result is trivial. We proceed by induction on the nilpotency class of $\lieu$: suppose that the result is true for Lie algebras of nilpotency class $\leq r-1$. We apply the inductive hypothesis to $\lieu/Z(\lieu)$ which is of class $\leq r-1$; then we can find $u_1'$ such that
$$u+u_1+z = u+u_1' + \frac12[u, u_1'] + \frac1{12}[u,[u,u_1']] - \frac1{12}[u_1', [u, u_1']] +\cdots$$
for some $z\in Z(\lieu)$. But now replace $u_1'$ by $u_1'-z$ and we obtain (\ref{bch}) as required. 
\end{proof}

\begin{lem}\label{l: container}
 Let $H\leq U_r(K)$, where $K=\Z/p\Z$ and $U_r$ is a maximal unipotent subgroup of $\GL_r$ with $r<p$. Write $H=\langle g_1, \dots, g_c\rangle$ such that, for all $e=1,\dots, c-1$, the group $\langle g_1, \dots, g_e\rangle$ is of order $p^e$ and is normal in the group $\langle g_1, \dots, g_{e+1}\rangle$ which is of order $p^{e+1}$.

Let $e_i=\log(g_i)$ for $i=1, \dots, c$ and define $\lieu$ to be the $\Kbar$-span of $\{e_1, \dots, e_c\}$ in $\lieu_r$, the Lie algebra of $U_r$. Then
\begin{enumerate}
 \item\label{lie1} $\lieu$ is a Lie algebra;
\item\label{lie2} $U=\exp(\lieu)$ is a $K$-group;
\item\label{lie3} $\lieu$ is the Lie algebra of $U$;
\item\label{lie4} $H=U(K)$.
\end{enumerate}
\end{lem}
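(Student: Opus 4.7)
The plan is induction on $c$. Write $H_e = \langle g_1, \ldots, g_e\rangle$ and $\liev_e = \sum_{i=1}^e \Kbar \cdot e_i \subset \lieu_r$, so that $H = H_c$ and $\lieu = \liev_c$. The base case $c=1$ is direct: by Lem.~\ref{l: kirillov}(\ref{l1}), $H_1 = \{\exp(k e_1) : k \in \Fp\}$; $\liev_1$ is $1$-dimensional, hence a Lie subalgebra; $\exp(\liev_1)$ is the $1$-parameter subgroup attached to $e_1$, which is a $K$-group with Lie algebra $\liev_1$ (cf.\ (\ref{e: 1 parameter})); and the equality $|H_1| = p = |\exp(\liev_1)(K)|$ completes the base case.

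For the inductive step, assume the four conclusions hold for $H_{c-1}$, yielding a connected $K$-group $U_{c-1} = \exp(\liev_{c-1})$ with $H_{c-1} = U_{c-1}(K)$ and Lie algebra $\liev_{c-1}$. The hypothesis $H_{c-1} \lhd H_c$, combined with Lem.~\ref{l: kirillov2} applied to conjugation by $g_c^k$, tells us that $\Ad(g_c^k) = \exp(k\operatorname{ad}(e_c))$ preserves $\liev_{c-1}(K)$, hence preserves $\liev_{c-1}$, for every $k \in \Fp$. Since $\lieu_r$ is nilpotent of class $r-1$, we have $\operatorname{ad}(e_c)^{r-1} = 0$ on $\lieu_r$, so
\[
\exp(k\operatorname{ad}(e_c)) \;=\; \sum_{j=0}^{r-2} \frac{k^j}{j!}\operatorname{ad}(e_c)^j .
\]
Because $p > r$, the choices $k = 0, 1, \ldots, r-2$ form an invertible Vandermonde system over $\Fp$, so each operator $\frac{1}{j!}\operatorname{ad}(e_c)^j$ can be written as an $\Fp$-linear combination of the operators $\exp(k\operatorname{ad}(e_c))$. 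In particular $\operatorname{ad}(e_c)$ preserves $\liev_{c-1}$, and combined with the inductive hypothesis $[\liev_{c-1},\liev_{c-1}]\subset \liev_{c-1}$, this yields $[\liev_c, \liev_c] \subset \liev_c$, proving (\ref{lie1}).

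The remaining items then follow routinely. For (\ref{lie2}), the Baker--Campbell--Hausdorff formula (a finite sum in $\lieu_r$, well-defined because $p > r$) gives $\exp(x)\exp(y) = \exp(\mathrm{BCH}(x,y))$ with $\mathrm{BCH}(x,y) \in \liev_c$ whenever $x,y \in \liev_c$; thus $U := \exp(\liev_c)$ is a subgroup of $U_r$, and it is $K$-defined because each $e_i$ lies in $\lieu_r(K)$. For (\ref{lie3}), $\exp$ restricts to a variety isomorphism $\liev_c \to U$ (a restriction of the global isomorphism $\lieu_r \to U_r$ from (\ref{e: exp})), and its derivative at $0$ is the identity, so the Lie algebra of $U$ is $\liev_c$. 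For (\ref{lie4}), $H \subset U(K)$ since each $g_i = \exp(e_i) \in U(K)$, while the variety isomorphism supplies $|U(K)| = |\liev_c(K)| = p^c = |H|$ (the latter from the given chain), forcing equality.

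The main obstacle is the Lie-closedness (\ref{lie1}): knowing that $\exp(\operatorname{ad}(e_c))$ preserves $\liev_{c-1}$ does not directly imply the same for $\operatorname{ad}(e_c)$, and we must extract this from the full family $\{\exp(k \operatorname{ad}(e_c)) : k \in \Fp\}$ by polynomial interpolation. The hypothesis $p > r$ is precisely what makes this Vandermonde extraction available.
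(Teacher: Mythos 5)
Your proof is correct, and its skeleton (induction on $c$, using normality of $H_{c-1}$ in $H_c$ to show $\liev_{c-1}$ is $\ad(e_c)$-stable, then Baker--Campbell--Hausdorff for the group structure) matches the paper's. The one place where you genuinely diverge is the key step (\ref{lie1}): the paper passes from $\Ad(g_c)(e_i)\in\lieu_1(\Kbar)$ through a chain of implications involving $\Ad(\exp e_c)(le_i)$ and $\exp(\lbrack e_c, le_i\rbrack)$ to conclude $\lbrack e_c, le_i\rbrack\in\lieu_1(\Kbar)$, a passage that is rather compressed as written; you instead use the whole family $\{\Ad(g_c^k)=\exp(k\,\ad(e_c)): k\in\mathbb{F}_p\}$ and recover the operators $\frac{1}{j!}\ad(e_c)^j$ by inverting a Vandermonde system at $k=0,1,\dots,r-2$, which is clean, fully rigorous, and uses $p>r$ in exactly the right way (both to truncate $\exp(k\,\ad(e_c))$ at $j=r-2$ with invertible $j!$ and to make the interpolation nodes distinct). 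For (\ref{lie2}) you also take a slightly more direct route: closure of $\exp(\liev_c)$ under multiplication straight from BCH applied to the subalgebra, whereas the paper first shows $U_1(\Kbar)E(\Kbar)$ is a group and then identifies it with $\exp(\lieu)$ via its Lem.~\ref{l: bch}; the two arguments are equivalent in substance. The remaining items ($K$-definedness via the linear equations $f_i(\log X)=0$, the tangent space computation, and the point count $|U(K)|=p^c=|H|$ via Lem.~\ref{l: bijection}) coincide with the paper's.
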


Note that (\ref{lie2}) and (\ref{lie3}) imply that $U$ is of exponential type in $\GL_r$.

\begin{proof}
If $|H|=p$ then $H=\langle g\rangle$ and $\lieu$ is equal to the $\Kbar$-span of $e=\log(g)$. This is clearly a Lie algebra so (\ref{lie1}) follows, It is obvious that $U=\exp(\lieu)$ is a group; what is more $U$ is defined by the equations $f_i(\log X)=0$ where $f_i(T)=0$ are the set of equations defining the linear subspace $\lieu$, thus $U$ is a $K$-group and (\ref{lie2}) follows. Now since $U$ is defined by the equations $f_i(\log X)=0$, it follows easily that $f_i(T)=0$ defines the tangent space to $U$, and so this tangent space is $\lieu$, and (\ref{lie3}) follows. Now (\ref{lie4}) follows from Lem. \ref{l: bijection}.

Proceed by induction and assume that the result holds for groups of order less than $p^{c-1}$ and let $H$ have order $p^c$. Write $\lieu_1$ for the $\Kbar$-span of $\{e_1, \dots, e_{c-1}\}$, $U_1$ for the group $\exp(\lieu_1)$, $\mathfrak{e}$ for the $\Kbar$-span of $\{e_c\}$ and $E$ for the group $\exp(\mathfrak{e})$. Observe that, by assumption, for all $i=1, \dots, c-1$,
\begin{equation}\label{big one}
\begin{aligned}
g_c g_i g_c^{-1}\in U_1(\Kbar)
&\Rightarrow \exp({\rm Ad}(g_c)(e_i))\in U_1(\Kbar); \\
 &\Rightarrow {\rm Ad}(g_c)(e_i)\in\lieu_1(\Kbar); \\
&\Rightarrow {\rm Ad}(g_c)(le_i)\in\lieu_1(\Kbar), \, \, \forall l\in\Kbar; \\
&\Rightarrow {\rm Ad}(\exp e_c)(le_i)\in\lieu_1(\Kbar), \, \, \forall l\in\Kbar; \\
&\Rightarrow \exp([e_c, le_i])\in U_1(\Kbar), \, \, \forall l\in\Kbar; \\
&\Rightarrow [e_c, le_i]\in\lieu_1(\Kbar), \, \, \forall l\in\Kbar; \\
&\Rightarrow [me_c, le_i] \in \lieu_1(\Kbar), \, \, \forall l,m\in\Kbar; \\
&\Rightarrow [e, u] \in \lieu_1(\Kbar), \, \, \forall e\in \mathfrak{e}(\Kbar),u\in \lieu_1(\Kbar).
\end{aligned}
\end{equation}
It follows immediately that $\lieu$ is a Lie algebra (thereby yielding (\ref{lie1})) and $\lieu_1$ is an ideal of $\lieu$. By reversing up the equivalences in (\ref{big one}) we see that
$$ghg^{-1} \in U_1(\Kbar), \, \, \forall g\in E(\Kbar), h\in U_1(\Kbar),$$
 thus $U^*=U_1(\Kbar)E(\Kbar)$ is a group.

Now (\ref{lie2}) will follow if we can show that $U^*=U=\exp(\lieu)$. To do this we prove that the the following functions are well-defined
$$\exp: \lieu \to U^* \textrm{ and } \log:U^* \to \lieu.$$
Then (\ref{lie2}) will follow from the injectivity of $\exp$ and $\log$.

Consider $u_1e\in U^*=U_1(\Kbar)E(\Kbar)$; by assumption $u_1=\exp(v_1), e=\exp(f)$ for some $v_1\in\lieu_1, f\in \mathfrak{e}$. But now
$$\log(ue)=\log(\exp(v_1)\cdot \exp(f))= v_1+f+\frac12[v_1, f]+\frac1{12}[v_1,[v_1,f]]+\cdots$$
by the Baker-Campbell-Hausdorff formula. Since $\lieu_1$ is an ideal in $\lieu$ this implies that $\log(ue)\in \lieu$ as required.

Now for $\exp$: take $v=v_1+f$ where $v_1\in \lieu_1$ and $f\in\mathfrak{e}$. Then Lem. \ref{l: bch} implies that there exists $v_1' \in \lieu_1$ such that
\begin{equation*}
 \begin{aligned}
\exp(v_1+f)&=\exp(v_1' + f + [v_1, f]+\frac1{12}[v_1,[v_1,f]]+\cdots) \\
&=\exp(v_1')\exp(f) \in U_1(\Kbar)E(\Kbar)=U^*
 \end{aligned}
\end{equation*}
as required. Thus (\ref{lie2}) is proved. 

Just as in the abelian case (\ref{lie2}) implies that $U$ is defined by the equations $f_i(\log X)=0$ where $f_i(T)=0$ are the set of equations defining the linear subspace $\lieu$; it follows easily that $f_i(T)=0$ defines the tangent space to $U$, and so this tangent space is $\lieu$, and (\ref{lie3}) follows.

Finally Lem. \ref{l: bijection} gives (\ref{lie4}).
\end{proof}

\begin{lem}\label{l: containment}
Let $A\subseteq B(K)$, where $K=\Z/p\Z$ and
$B$ is a Borel subgroup of $\GL_r$ with $p>r$. Then there
is a connected, solvable $K'$-group $G=UT$ of exponential type in $\GL_r$, where $K'$ is a finite extension of $K$, such that
$A\subseteq G(K)$, $U$ is a $K$-group, and
$$U(K)\subseteq \langle A \rangle.$$
What is more if $\xi_1,\dots, \xi_m:T\to \GL_1$ are roots with respect to $G$, then the group
\begin{equation*}
T_m=\ker_{T}(\xi_1) \cap \cdots \cap \ker_{T}(\xi_m)
\end{equation*}
is a subtorus of $T$.
\end{lem}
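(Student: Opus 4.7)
The plan is to split $H:=\langle A\rangle$ via Schur--Zassenhaus, build $U$ from its unipotent part using Lem. \ref{l: container}, build $T$ as the $T_r$-normalizer of $U$ using Lem. \ref{l: torus normal}, and deduce the final subtorus condition from Cor. \ref{c: torus normal}.

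Since $K$ is finite, $H$ is a finite subgroup of $B(K)$. Every $p$-element of $B(K)$ is unipotent (as $|K|$ is a power of $p$), so $Q:=O_p(H)$ coincides with the set of unipotent elements of $H$ and lies in $U_r(K)$, where $U_r$ is the unipotent radical of $B$; moreover $|H/Q|$ is coprime to $p$. Since $B/U_r$ is abelian, so is $H/Q$, and Schur--Zassenhaus yields a complement $H=Q\rtimes S$ with $S$ abelian and consisting of semisimple elements. The Zariski closure of $S$ is diagonalizable, hence lies in some maximal torus of $B$; choose such a torus $T_r$, defined over a finite extension $K'/K$.

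Now apply Lem. \ref{l: container} to $Q\leq U_r(K)$ (using $p>r$) to produce a connected $K$-group $U$ of exponential type whose Lie algebra $\lieu$ is the $\Kbar$-span of $\{\log g_1,\ldots,\log g_c\}$ for chosen generators $g_i$ of $Q$, with $U(K)=Q$. Apply Lem. \ref{l: torus normal} with this $U$ inside the Borel $B=U_r T_r$ to obtain a connected $K'$-subtorus $T$ of $T_r$ with $T(\Kbar)=N_{T_r(\Kbar)}(U(\Kbar))$. Set $G:=UT$: it is a connected solvable $K'$-group, of exponential type in $\GL_r$ (because $U$ is), and $U(K)=Q\subseteq H=\langle A\rangle$. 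The subtorus property in the ``what is more'' clause is then immediate from Cor. \ref{c: torus normal}, since $T$ was built precisely as $N_{T_r(\Kbar)}(U(\Kbar))$.

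It remains to show $A\subseteq G(K)$, which reduces to $S\subseteq T(K)$; this is the main delicate step. Because $K$ is finite, $U(K)=Q$ is not Zariski dense in $U(\Kbar)$, so one cannot pass from ``$S$ normalizes $Q$'' to ``$S$ normalizes $U$'' by a naive density argument. The remedy is to work at the Lie algebra level: for any $s\in S\subseteq T_r(\Kbar)$ and any generator $g_i$ of $Q$, the inclusion $s g_i s^{-1}\in Q$ gives
\[
\Ad(s)(\log g_i)=\log(s g_i s^{-1})\in\log Q\subseteq\lieu(\Kbar).
\]
Since $\Ad(s)$ acts $\Kbar$-linearly on $\lieu_r(\Kbar)$ and the $\log g_i$ $\Kbar$-span $\lieu(\Kbar)$, this forces $\Ad(s)(\lieu(\Kbar))\subseteq\lieu(\Kbar)$; hence $s$ normalizes $U=\exp(\lieu)$ and so $s\in N_{T_r(\Kbar)}(U(\Kbar))\cap\GL_r(K)=T(K)$. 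Then $A\subseteq H=Q\cdot S\subseteq U(K)\cdot T(K)\subseteq G(K)$ completes the argument.
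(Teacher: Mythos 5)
Your proof is correct and follows essentially the same route as the paper's: a Schur--Zassenhaus splitting of $\langle A\rangle$ over the unipotent radical, Lem.~\ref{l: container} applied to the unipotent part to build $U$, Lem.~\ref{l: torus normal} to define $T$ as $N_{T_r(\Kbar)}(U(\Kbar))$, and Cor.~\ref{c: torus normal} for the subtorus clause. The one place you go beyond the paper is welcome rather than a divergence: where the paper dismisses the inclusion $\langle A\rangle\cap T_r(K)\subseteq T(\Kbar)$ as ``clear,'' your argument that $\Ad(s)$ preserves $\lieu(\Kbar)$ because the $\log g_i$ span it is precisely the justification needed, since normalizing the finite group $U(K)$ does not a priori give normalization of $U(\Kbar)$.
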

\begin{proof}
Recall that $B$ is a Borel subgroup of $\GL_r$ such that $B(K)$ contains $A$; write $B = U_rT_r$ for the decomposition into unipotent part and torus. Without loss of generality we assume (\ref{e: sz}) with respect to the embedding of $A$ in $B(K)$.

Write $J$ for the group $\langle A \rangle$; define $H = J\cap U_r(K)$ and apply Lem. \ref{l: container} to $H$. We obtain a $K$-group $U$ of exponential type in $\GL_r$ such that $U(K)=H\subseteq\langle A \rangle$. 

Consider $N_{T_r(\Kbar)}(U(\Kbar))$; Lem. \ref{l: torus normal} implies that this group is the set of points over $\Kbar$ of a connected $K'$-group $T$. Now $T(\Kbar)$ clearly contains $J\cap T_r(K)$; what is more, the action of $T$ on $U$ is defined over $K'$, thus we set $G=UT$ and are done.

Now the statement concerning root kernel intersections follows from Cor. \ref{c: torus normal}.
\end{proof}

Note that, in particular, Lem. \ref{l: containment} implies that (\ref{e: sz}) 
holds (with respect to the embedding of $A$ in $G(K)$); it also implies that 
$U_R(K)\subseteq \langle A \rangle$. With this in mind we can establish the hypotheses under which we operate.

\subsection{Hypotheses}\label{s: hypotheses}

Take $A$ inside $B(K)$ where $B$ is a Borel subgroup of $\GL_r$. Let $G = UT$ be a connected solvable linear algebraic subgroup of $B$ satisfying all the properties given in Lem. \ref{l: containment}. 

Define $\Phi, \Phi_R = \{R_1, \dots, R_d\}$ (with the ordering compatible with the height function), $\Phi_R^*, \Lambda, U_R$, and $U_\Lambda$ as per Section \ref{s: background solvable}. Let $(\Phi_R^*)^j=\{S^j_1, \dots, S^j_{e_j}\}$; observe that $e_j\leq r^2$ for all $j$.

Now we can apply Prop.\ \ref{p: newinterestgrow} to the set $AU^1(K)/U^1(K)$ inside the group $G(K)/U^1(K)$; we are interested in what happens when (c) of Prop. \ref{p: newinterestgrow} holds. Thus we assume that $A$ contains a set $W^{1}$ such that $W^1/U^1(K)$ is a normal subgroup of $\langle A \rangle/U^1(K)$ such that $(\langle A \rangle/U^1(K))/(W^1/U^1(K))$ is abelian. 

Lem. \ref{l: interesting 1} implies that either $G$ is nilpotent or $(\Phi_R^*)^1$ is non-empty. We assume the latter situation; then the fact that $U(K)\subseteq \langle A \rangle$ implies that $W^{1}/U^{1}(K)$ is non-trivial and is equal to $U_R(K)/ U^1(K)$. 

We assume that $p>r$ and fix a constant $C>1$; we assume that 
\begin{equation}\label{eq:rusdat}
|A_k\cap \ker_{G}(\alpha(R_j))(K)|\leq \frac1C|A|
\end{equation} 
for all $j=1,\dots, d$, and that
\begin{equation}\label{eq:rusdat2}
|A_k|\leq C|A|
\end{equation}
for all $R_j\in\Phi_R^*$ and all $k\ll_{r} 1$. 
We reiterate that the results of this section apply only when  $|K|=\Z/p\Z$.

The idea of this section is the following: we will ``descend'' down the lower central series of the group $U$ in order to prove that, for each $j = 1,2, \dots, $ there exists $k\ll_{r} 1$ such that $A_k$ contains a set $W^j$ with $W^j/ U^{j}(K) = (\langle A \rangle \cap U_R(K))/ U^{j}(K)$. Since we are assuming that (c) of Prop. \ref{p: newinterestgrow} holds, the statement is true for $j=1$; thus, our ``base case'' is satisfied.

We should note that our terminology is a little counter-intuitive: as we ``descend'' down $U$, the height of the root groups in $U^j\backslash U^{j+1}$ is seen to increase!

\subsection{Capturing $U_R(K)$}

The result we are aiming for is Cor. \ref{c: final} which states that $A_k$ contains $U_R(K)$ for some $k\ll_{r} 1$. Our first job is to show that all we need to do is obtain the product of root subgroups at each level; this is the content of Lem. \ref{l: need root groups}.

Note that Lem. \ref{l: containment} implies that there exists a connected unipotent $K$-group $V$ such that $V(K)=U_R(K)$. Write $V=V^0>V^1>V^2>\cdots$ for the lower central series of $V$. Since $V$ is defined over $K$ we have
$$V^0(K)=U_R(K), V^1(K)=[U_R(K),U_R(K)], \dots, V^{i+1}(K) = [V^i(K), V^0(K)], \dots$$
where $i\geq 1$. In particular the nilpotency rank of $U_R(K)$ (as an abstract group) coincides with the nilpotency rank of $V$ (as an algebraic group). Write $e$ for this quantity and note that $e\leq s \leq r$, where $s$ is the nilpotency rank of $U$ (as an algebraic group). The first lemma allows us to ``descend" the lower central series of $V$.

\begin{lem}\label{l: need1}
Fix an integer $i\geq 2$. Suppose that a set $A^*\subset U_R(K)$ satisfies $$A^*/ V^{i-1}(K) = U_R(K)/V^{i-1}(K).$$ Then $(A^*)_k/V^{i}(K) = U_R(K)/V^{i}(K)$ for some $k\ll_{r} 1$.
\end{lem}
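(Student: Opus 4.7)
The idea is to transport the problem into the Lie algebra $\mathfrak{v}$ of $V$ via the exponential map, carry out the linear algebra additively in $\mathfrak{v}^{i-1}(K)/\mathfrak{v}^i(K)$, and transport back. By Lem.\ \ref{l: container}, $V$ is of exponential type in $\GL_r$ over $K$; hence by Lem.\ \ref{l: bijection} the map $\exp\colon\mathfrak{v}(K)\to V(K)$ is a bijection identifying the Lie-algebraic lower central filtration $\mathfrak{v}^j(K)$ with $V^j(K)$. Since $V^{i-1}(K)/V^i(K)$ is a $K$-vector space of dimension $m\leq \dim V\ll_r 1$, it will suffice to exhibit $m$ generators of it inside $(A^*)_k\,V^i(K)/V^i(K)$ for some $k\ll_r 1$, and then combine with the hypothesis $A^*V^{i-1}(K)=V(K)$.

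\textbf{BCH comparison.} The technical heart is the following assertion, proved by induction on $i$: for $x_1,\dotsc,x_i\in\mathfrak{v}(K)$ and $a_j=\exp(x_j)$,
\[
[a_1,[a_2,\dotsc,[a_{i-1},a_i]\cdots]]\;\equiv\;\exp\bigl([x_1,[x_2,\dotsc,[x_{i-1},x_i]\cdots]]\bigr)\pmod{V^i(K)}.
\]
At each inductive step, the tail of the Baker--Campbell--Hausdorff expansion of $[\exp x_k,\exp z]$ consists of nested Lie brackets of total weight at least $i+1$ in $\mathfrak{v}(K)$ and so lies in $\mathfrak{v}^i(K)$; likewise, replacing the inductive output by the ``exact'' Lie monomial of weight $i-1$ costs a bracket in $[\mathfrak{v}(K),\mathfrak{v}^{i-1}(K)]\subseteq\mathfrak{v}^i(K)$. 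A useful corollary: changing any $x_j$ by an element of $\mathfrak{v}^{i-1}(K)$ does not affect the right-hand side modulo $V^i(K)$.

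\textbf{Generating modulo $V^i(K)$.} Iterating $\mathfrak{v}^{j}=[\mathfrak{v},\mathfrak{v}^{j-1}]$ shows that right-normed Lie monomials of weight $i$ in elements of $\mathfrak{v}(K)$ span $\mathfrak{v}^{i-1}(K)/\mathfrak{v}^i(K)$; fix a $K$-basis $L_1,\dotsc,L_m$ of such monomials. For a scalar $c\in K$, absorb $c$ into the first entry of $L_j$, giving another weight-$i$ monomial $cL_j$. By the hypothesis $A^*V^{i-1}(K)=V(K)$, pick $\tilde a\in A^*$ representing $\exp$ of the scaled first entry modulo $V^{i-1}(K)$, and $a_2,\dotsc,a_i\in A^*$ representing the remaining entries. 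The BCH comparison and its corollary then give
\[
\exp(cL_j)\;\equiv\;[\tilde a,[a_2,\dotsc,[a_{i-1},a_i]\cdots]]\pmod{V^i(K)},
\]
an element of $(A^*)_{k'}$ with $k'\leq 6\cdot 2^{i-2}-2\ll_r 1$. Since $V^{i-1}(K)/V^i(K)$ is abelian, every element is a product $\prod_j\exp(c_jL_j)$ for some $c_j\in K$, and hence lies in $(A^*)_{mk'}V^i(K)$. Combined with the hypothesis, this gives $V(K)\subseteq(A^*)_{mk'+1}V^i(K)$, which is the conclusion with $k=mk'+1\ll_r 1$.

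\textbf{Main obstacle.} The delicate point is keeping $k$ independent of $p$: the naive identity $\exp(cL_j)=\exp(L_j)^c$ would force the unbounded-length power $a_1^c$ (since $c$ ranges over $K=\mathbb{Z}/p\mathbb{Z}$). Absorbing $c$ into the first Lie entry and then replacing by an arbitrary approximant in $A^*$, via the substitution corollary of BCH, is precisely what keeps the word length $\ll_r 1$, and is the one step requiring careful bookkeeping.
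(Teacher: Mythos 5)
Your argument is correct, but it realizes $V^{i-1}(K)/V^i(K)$ inside $(A^*)_k$ by a genuinely different mechanism from the paper's. The paper stays at commutator depth one: it uses the map $f^i(a,b)=[a,b]$ with $a\in U_R(K)$, $b\in U_R(K)\cap V^{i-2}(K)$, whose image generates $V^{i-1}(K)/V^i(K)$, picks a basis $[h_l,k_l]$ inside that image, and kills the scalar problem with the computation of Lem.\ \ref{l: alex}: $[\exp(sa_l),\exp(tb_l)]\equiv 1+st[a_l,b_l]\pmod{V^i(K)}$, so every scalar multiple $c=st$ of a basis vector is again a \emph{single} commutator of two elements of $U_R(K)$; replacing each factor by an $A^*$-representative of its coset modulo $V^{i-1}(K)$ (legitimate since the commutator mod $V^i$ depends only on the arguments mod $V^{i-1}$) puts it in $(A^*)_4$ and gives $k=4r^2+1$. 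You instead span $\mathfrak{v}^{i-1}(K)/\mathfrak{v}^i(K)$ by depth-$(i-1)$ right-normed brackets and absorb the scalar into the first Lie entry -- the same ``never take a $p$-th power'' idea, but implemented via an iterated Baker--Campbell--Hausdorff comparison rather than the one-line $1+st[a,b]$ identity. Both proofs rest on the same two pillars (the $\exp$/$\log$ dictionary between the group and Lie-algebra lower central filtrations, and the insensitivity of weight-$i$ brackets to perturbing an entry by $\mathfrak{v}^{i-1}$, which is exactly what makes the hypothesis modulo $V^{i-1}(K)$ sufficient). The paper's route buys a smaller constant ($k$ linear in $r^2$ rather than of order $r^2 2^r$) and a lighter BCH burden; yours buys a slightly cleaner use of the hypothesis, since you only ever need $A^*$-representatives of cosets of $V^{i-1}(K)$ in $U_R(K)$ and never need to observe that $A^*\cap V^{i-2}(K)$ surjects onto $(U_R(K)\cap V^{i-2}(K))/V^{i-1}(K)$.
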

\begin{proof}

For $i=2,\dots, e$, define the map
\begin{equation*}
\begin{aligned}
f^i: U_R(K)/ V^{i-1}(K) \times (U_R(K)\cap V^{i-2})(K) / V^{i-1}(K) &\to U_R(K)/ V^{i}(K);\\
(aV^{i-1}(K), bV^{i-1}(K)) &\mapsto [a,b] V^{i}(K).
\end{aligned}
\end{equation*}

Write $F^i$ for $\langle f^i(U_R(K)/V^{i-1}(K), (U_R(K)\cap V^{i-2}(K))/V^{i-1}(K))\rangle$. By the definition of the lower central series, $F^i=V^{i-1}(K)/V^i(K)$. Now observe that $F^i$ is an elementary $p$-group; then $F^i \cong (\Z/p\Z)^{c_i}$ for some positive integer ${c_i}\leq r^2$. We may choose a basis for $F^i$ in the image of $f^i$; thus the basis has form $\{[h_1, k_1], \dots, [h_{c_i}, k_{c_i}]\}$, where $h_l,k_l\in U_R(K)/V^{i-1}(K)$ for $l=1,\dots, {c_i}$.

Now choose $a_l, b_l\in\lieu$ such that $\exp(a_l)V^{i-1}(K)= h_l$ and $\exp(b_l)V^{i-1}(K) = k_l$ for $l=1,\dots, {c_i}$. We proceed similarly to the proof of Lem. \ref{l: alex}. Then
$$f^i(h_l, k_l) = [\exp(a_l),\exp(b_l)]V^i(K) = (1+[a_l,b_l])V^i(K).$$
What is more, for $s,t\in\Z/p\Z$,
$$[\exp(sa), \exp(tb)]V^i(K) = (1+st[a,b])V^i(K).$$
As $s,t$ range over $\Z/p\Z$, the set of these elements forms a subgroup $F^i_l$ of $U_R(K)/V^{i}(K)$ of size $p$. Now observe that
$$ (A^*)_4/ V^i(K)\supseteq f(A^*/V^{i-1}(K), (A^*\cap V^{i-2}(K))/V^{i-1}(K)).$$
We conclude that $(A^*)_4/ V^i(K)$ contains $F^i_l$ for $l=1,\dots, c_i$.

Now, since $\{[h_1,k_1], \dots, [h_{c_i},k_{c_i}]\}$ is a basis for $F^i$, it follows that $F^i=F_1^i\cdots F_{c_i}^i$. Since $c_i\leq r^2$, we conclude that $(A^*)_{4r^2}/V^i(K) \supseteq V^{i-1}(K)/V^i(K)$. Then $(A^*)_{4r^2+1}/V^i(K) =U_R(K)/V^i(K)$ as required.

\end{proof}

\begin{lem}\label{l: need2}
Suppose that a subset $A^*$ of $U_R(K)$ satisfies
$$(S^i_1S^i_2\cdots S^i_{e_i})(K)/ U^{i}(K) \subset A^*/U^i(K)$$ 
for all $i=1,\dots, s$. Then $(A^*)_k/ V^1(K) = U_R(K)/ V^1(K)$ for some $k\ll_{r} 1$.
\end{lem}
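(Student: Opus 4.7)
The plan is to show, by processing $U$-height levels from $1$ up to $s$, that every $u\in U_R(K)$ can be matched modulo $V^1(K)$ by a product of at most $s\leq r-1$ elements of $A^*$, where $s$ is the nilpotency rank of $U$.

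First I would fix the choice of $\Phi_R$ in Lem.\ \ref{l: roots} so that, at each height, the root weight subgroups (those in $\Phi_R^*$) are indexed separately from the non-root ones (those in $\Lambda$). Together with the fact that $U_R(\Kbar)\cap U_\Lambda(\Kbar)=\{1\}$ (which follows from the weight-space decomposition $\lieu=\bigoplus_{\alpha\in\Phi}\lieu_\alpha$ and the injectivity of $\exp\colon\lieu\to U$), this choice ensures that the $\Phi_R$-standard form of every $u\in U_R(K)$ is supported on $\Phi_R^*$. Consequently, every $u\in U_R(K)$ admits a decomposition $u=u_1 u_2\cdots u_s$ with $u_j\in(S^j_1\cdots S^j_{e_j})(K)$, and $U^j(K)\cap U_R(K)$ is precisely the product of the height-$>j$ root-subgroup $K$-points.

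Next, given $u\in U_R(K)$, I would use the hypothesis at level $1$ to obtain $a_1\in A^*$ with $a_1\equiv u_1\pmod{U^1(K)}$. Since $a_1, u_1\in U_R(K)$, the residual $a_1^{-1} u$ lies in $U^1(K)\cap U_R(K)$; by the structural statement above, its image modulo $U^2(K)$ lies in $(S^2_1\cdots S^2_{e_2})(K)\cdot U^2(K)/U^2(K)$, so the hypothesis at level $2$ yields $a_2\in A^*$ matching it modulo $U^2(K)$, and thus $(a_1 a_2)^{-1} u\in U^2(K)\cap U_R(K)$. Iterating for $j=3,\dots,s$, after step $s$ the accumulated residual lies in $U^s(K)\cap U_R(K)=\{1\}$. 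Therefore $a_1 a_2\cdots a_s\cdot V^1(K)=u\cdot V^1(K)$ and $a_1\cdots a_s\in (A^*)_s$, giving $k=s\leq r-1$.

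The main obstacle will be verifying the first paragraph's compatibility claim, that the $\Phi_R$-standard form of a $U_R$-element is supported only on $\Phi_R^*$; this requires care both in the choice of basis within Lem.\ \ref{l: roots} and in the use of the injectivity of $\exp$ on $\lieu$. If instead this claim holds only modulo higher commutators, I would perform the iterative cleanup entirely inside the abelian quotient $U_R(K)/V^1(K)$, where reordering of factors is free; the construction still succeeds in at most $s$ steps and yields coverage modulo $V^1(K)$, which is exactly what the lemma asserts.
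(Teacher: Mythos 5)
Your primary route rests on a structural claim that is false in general. It is not true that the standard form of an element of $U_R(\Kbar)$ is supported on $\Phi_R^*$, nor that $U_R(\Kbar)\cap U_\Lambda(\Kbar)=\{1\}$: if $R,R'$ are root subgroups with $\alpha(R)\alpha(R')=1$, then $[R,R']$ is a weight subgroup of \emph{trivial} weight, hence lies in $U_\Lambda$, while it certainly lies in $U_R=\langle R \mid R\in\Phi_R^*\rangle$. (Concretely, take $U$ a Heisenberg group with a one-dimensional $T$ acting with weights $\alpha,\alpha^{-1}$ on the two generating weight subgroups; then $U_R=U$ while $U_\Lambda=[U,U]\neq\{1\}$, so $U_R\cap U_\Lambda\neq\{1\}$ and $U_R\cap U^1$ is \emph{not} a product of height-$2$ root subgroups, there being none.) The Lie algebra of $U_R$ is not $\bigoplus_{\alpha\neq 1}\lieu_\alpha$, so the weight-space decomposition plus injectivity of $\exp$ does not give the disjointness you invoke, and $U^j(K)\cap U_R(K)$ can be strictly larger than the product of the height-$>j$ root-subgroup $K$-points. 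This is exactly why the lemma's conclusion is only modulo $V^1(K)=[U_R(K),U_R(K)]$: the offending components are commutators of root elements and therefore vanish in $U_R(K)/V^1(K)$.

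Your fallback is consequently not an optional refinement but the actual proof, and once carried out it coincides with the paper's argument: one descends the lower central series of $U$, and the input needed at level $i$ is precisely the containment $(U_R(K)\cap U^i(K))/V^1(K)U^{i+1}(K)\leq (S^{i+1}_1\cdots S^{i+1}_{e_{i+1}})(K)/V^1(K)U^{i+1}(K)$ (the paper's (\ref{pp})), after which one further factor from $A^*$ corrects the residual at each of the $s\leq r-1$ levels, giving $k\leq s\ll_r 1$. So the proposal is acceptable provided you promote the fallback to the main argument and justify this containment modulo $V^1(K)U^{i+1}(K)$, rather than the exact product decomposition of your first paragraph.
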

\begin{proof}
We prove the result by ``descending" the lower central series of $U$. Observe first that $A^*/V^1(K)U^1(K)$ equals
$$A^*/U^1(K) = (S_1^1\cdots S_{e_1}^1)(K)/U^1(K) = U_R(K)/U^1(K) = U_R(K)/V^1(K)U^1(K).$$
Now fix an integer $i\geq 1$, and assume that $A^*/V^1(K)U^i(K) = U_R(K)/V^1(K)U^i(K)$. Since the nilpotency rank of $U$ is at most $r-1$, it is sufficient to prove that $$(A^*)_k/V^1(K)U^{i+1}(K) = U_R(K)/V^1(K)U^{i+1}(K)$$ for some $k\ll_r 1$.

Observe that 
\begin{equation}\label{pp}
(U_R(K)\cap U^i(K))/V^1(K)U^{i+1}(K)\leq (S^{i+1}_1S^{i+1}_2\cdots S^{i+1}_{e_{i+1}})(K)/V^1(K)U^{i+1}(K).
\end{equation} 
Now $V^1(K)<U_R(K)\leq U(K)$ and $V^1(K)\lhd U(K)$; this means, in particular, that $(U_R(K)\cap U^i(K))V^1(K) = U_R(K)\cap U^i(K)V^1(K)$. It follows that
\begin{equation}\label{ww}
(U_R(K)\cap U^i(K)V^1(K))/V^1(K)U^{i+1}(K)= (U_R(K)\cap U^i(K))/V^1(K)U^{i+1}(K). 
\end{equation}
Since $A^*/V^1(K)U^{i+1}(K)$ contains $(S^{i+1}_1S^{i+1}_2\cdots S^{i+1}_{e_{i+1}})(K)/V^1(K)U^{i+1}(K)$, (\ref{pp}) and (\ref{ww}) imply that $$(A^*)_2/V^1(K)U^{i+1}(K) = U_R(K)/V^1(K)U^{i+1}(K)$$ as required.
\end{proof}

\begin{lem}\label{l: need root groups}
Let $j\geq 1$ be an integer.
Suppose that a set $A^*$ is such that $A^*/U^j(K)$ is a subset of $U_R(K)/U^j(K)$ and
$$(S^i_1S^i_2\cdots S^i_{e_i})(K)/ U^{i}(K) \subset A^*/U^i(K)$$ 
for all $i=1,\dots, j$. Then $(A^*)_k/ U^{j}(K)$ contains $U_R(K)/ U^{j}(K)$ for some $k\ll_{r} 1$.
\end{lem}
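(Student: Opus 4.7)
My plan is a two-phase descent in which each phase adapts an earlier lemma --- Lem.\ \ref{l: need2} and Lem.\ \ref{l: need1}, respectively --- to run modulo $U^j(K)$ rather than modulo the trivial subgroup. I use throughout that $V^i(K)\subseteq U^i(K)$ (by iterated commutation, since $V=U_R\subseteq U$), that $V^i(K)$ and $U^j(K)$ are each normal in $U(K)$, and that $A^*\subseteq U_R(K)U^j(K)$.

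For the first phase, I would prove by induction on $i=1,\dots,j$ that there is a $k_i\ll_r 1$ with $(A^*)_{k_i}/V^1(K)U^i(K)\supseteq U_R(K)/V^1(K)U^i(K)$. The base case $i=1$ is immediate: $V^1\subseteq U^1$ reduces the claim to $A^*/U^1(K)\supseteq U_R(K)/U^1(K)$, and the hypothesis at level $1$ delivers this, because every root subgroup of height $>1$ lies in $U^1$, forcing $(S^1_1\cdots S^1_{e_1})(K)/U^1(K)=U_R(K)/U^1(K)$. The inductive step reproduces the argument at the end of the proof of Lem.\ \ref{l: need2} verbatim, using the hypothesis at level $i+1\leq j$ together with the containment $(U_R(K)\cap U^i(K))/V^1(K)U^{i+1}(K)\subseteq(S^{i+1}_1\cdots S^{i+1}_{e_{i+1}})(K)/V^1(K)U^{i+1}(K)$; the presence of the extra $U^j(K)$ inside $A^*$ is harmless because $U^j(K)\subseteq U^{i+1}(K)$ throughout. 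Set $A^{**}=(A^*)_{k_j}$.

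For the second phase, let $e$ be the nilpotency class of $V=U_R$; since $V^i\subseteq U^i$ and $U$ has class $<r$, we have $e<r$. I would prove by induction on $i=1,\dots,e$ that there is $k_i'\ll_r 1$ with $(A^{**})_{k_i'}/V^i(K)U^j(K)\supseteq U_R(K)/V^i(K)U^j(K)$, with the base case being the output of the first phase. The inductive step mirrors the commutator computation in Lem.\ \ref{l: need1}: the section $V^i(K)U^j(K)/V^{i+1}(K)U^j(K)$ is elementary abelian (since $U$ has exponent $p$) of rank $\leq r^2$, and is generated by the images of commutators $[\exp a_l,\exp b_l]$ chosen as in that lemma. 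For each basis commutator, the one-parameter family $[\exp(sa_l),\exp(tb_l)]\equiv 1+st[a_l,b_l]\pmod{V^{i+1}(K)U^j(K)}$ is realised by commutators of representatives in $A^{**}$ supplied by the inductive hypothesis; the bracket identity is preserved by the coarser quotient because $[V^i(K)U^j(K),U_R(K)U^j(K)]\subseteq V^{i+1}(K)U^j(K)$, which in turn follows from $[V^i,U_R]\subseteq V^{i+1}$ and $[U^j,U]\subseteq U^{j+1}\subseteq U^j$. Iterating to $i=e$ and using $V^e(K)=\{1\}$ gives the lemma with $k=k_jk_e'\ll_r 1$.

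The main obstacle is just the bookkeeping needed to verify that both adaptations survive the coarser quotient by $U^j(K)$. This is routine because $U^j(K)$ is normal in $U(K)$ and is contained in each intermediate subgroup through which we descend, so the commutator identities and coset computations of the original proofs carry over, with only bounded $r$-dependent constants picked up along the way.
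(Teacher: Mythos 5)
Your proof is correct and follows essentially the same route as the paper's: a two-stage descent that first works down the lower central series of $U$ (the content of Lem.\ \ref{l: need2}) and then down that of $V=U_R$ (iterating Lem.\ \ref{l: need1}). The only difference is presentational --- the paper passes to the quotient $G/U^j$ (after a short induction on $j$) and invokes the two lemmas verbatim, whereas you carry $U^j(K)$ explicitly through every quotient; the bookkeeping you describe is exactly what makes the two formulations equivalent.
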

\begin{proof}
Observe first that $U_R/ U^{1}$ is equal to $S^1_1S^1_2\cdots S^1_{e_1}$. Thus the statement is true for $j=1$ ($k$ is equal to $1$ in this case).

Now assume the statement is true for $j-1$. Thus there exists $k\ll_{r}1$ such that $(A^*)_k/ U^{j-1}(K)$ contains $U_R(K)/ U^{j-1}(K)$. Note that $(A^*)_k/U^j(K)\subseteq U_R(K)/U^j(K)$. We prove that the statement is true for $j$, and the result follows by induction.

To make matters more transparent, we work inside $G/U^j$; in other words we assume that $U^j$ is trivial. Then, by assumption, the following are true:
\begin{enumerate}
\item $A^* \subseteq U_R(K)$;
\item $(A^*)_k/ U^{j-1}(K) = U_R(K)/U^{j-1}(K)$;
\item $A^* \supseteq (S_1^j S_2^j \cdots S_{e_j}^j)(K)$.
\end{enumerate}

We are required to prove that $(A^*)_{k'}=U_R(K)$ for some $k'\ll_{r} 1$. Observe first that Lem. \ref{l: need2} implies that $(A^*)_{k'}/V^1(K) = U_R(K)/ V^1(K)$ for some $k'\ll_r 1$. 

We apply Lem. \ref{l: need1} with $i=2$. We conclude that $(A^*)_{k''}/V^2(K) = U_R(K)/V^2(K)$ for some $k''\ll_r 1$.

Now we iterate this procedure for $i=3,\dots, e$; since $e\leq r$ we obtain, as required, that $(A^*)_{k'''}=U_R(K)$ for some $k'''\ll_{r} 1$.
\end{proof}

The next lemma allows us to assume that we have elements that ``almost lie on the torus". Recall the definition of $t_R(g)$ given in \S \ref{s: height}.

\begin{lem}\label{l: nice actors}
Suppose that a set $A^*\subseteq G(K)$ contains a set $W^j$ such that $W^j/ U^j(K) = U_R(K)/ U^j(K)$. Then there exists a set $A^\dagger$ in $(A^*)_{r}$ such that $A^\dagger/ U = A^*/U$, and $t_R(g) = 0$ for all $g\in A^\dagger$, and all root subgroups $R$ of height at most $j-1$.
\end{lem}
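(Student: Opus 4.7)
The plan is to show that a single left-multiplication by an element of $W^j \subseteq A^*$ suffices for each $g \in A^*$, so that $A^\dagger \subseteq (A^*)_2 \subseteq (A^*)_r$.

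First I will gather two structural facts. Unpacking the abuse of notation, the hypothesis $W^j/U^j(K) = U_R(K)/U^j(K)$ says $W^j U^j(K) = U_R(K)$; since $U^j \subseteq U^1 = [U,U] \subseteq [G,G] = U_L = U_R$ by Lem.\ \ref{l: urul}, this gives $W^j \subseteq U_R(K)$ and, a fortiori, that $W^j$ surjects onto $U_R(K)/U^{j-1}(K)$. Next, every $R\in \Lambda$ has height $1$: if not, then $R \subseteq U^1 \cap U_\Lambda \subseteq U_R \cap U_\Lambda$, which is trivial by the uniqueness in Lem.\ \ref{l: roots}(\ref{n+1}). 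Consequently $U_\Lambda$ is abelian, and Lem.\ \ref{l: urul} together with Lem.\ \ref{l: desc} give, for each $g\in G(K)$, a unique factorization
\[g = u_\Lambda(g)\, u_R(g)\, t(g), \qquad u_\Lambda(g)\in U_\Lambda(K),\; u_R(g)\in U_R(K),\; t(g)\in T(K).\]

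For each $g \in A^*$, I would pick $h = h(g) \in W^j$ with
\[h \equiv u_\Lambda(g)\, u_R(g)\, u_\Lambda(g)^{-1} \pmod{U^{j-1}(K)}.\]
This is possible because $u_\Lambda(g) u_R(g) u_\Lambda(g)^{-1} \in U_R(K)$ by normality of $U_R$, and $W^j$ surjects onto $U_R(K)/U^{j-1}(K)$. Writing $\tilde h := u_\Lambda(g)^{-1} h u_\Lambda(g) \in U_R(K)$, the identity $h^{-1} u_\Lambda(g) = u_\Lambda(g)\, \tilde h^{-1}$ gives
\[h^{-1} g \;=\; u_\Lambda(g)\cdot \bigl(\tilde h^{-1}\, u_R(g)\bigr)\cdot t(g),\]
and by the chosen congruence $\tilde h \equiv u_R(g) \pmod{U^{j-1}(K)}$, so $\tilde h^{-1} u_R(g) \in U^{j-1}(K)$. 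Hence $h^{-1} g$ lies in $U_\Lambda(K)\cdot U^{j-1}(K) \cdot T(K)$.

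Finally, I will verify that every element $g'$ of $U_\Lambda(K)\cdot U^{j-1}(K) \cdot T(K)$ satisfies $t_R(g') = 0$ for every root subgroup $R$ of height $\le j-1$. Reducing $g'$ modulo $U^{j-1}$ leaves only its $U_\Lambda$-component; since $U_\Lambda$ is generated by the $\Lambda$-weight subgroups (Lem.\ \ref{l: desc}), each of which has height $1$, and since by Lem.\ \ref{l: roots}(\ref{n+2}) the standard-form positions $i < d_{j-1}$ are exactly the weight subgroups of height $\le j-1$, the uniqueness of the standard form in $U/U^{j-1}$ (Lem.\ \ref{l: roots}(\ref{n+1})) forces every root coordinate of $g'$ at height $\le j-1$ to vanish. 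Setting $A^\dagger := \{h(g)^{-1} g : g \in A^*\}$ thus gives $A^\dagger \subseteq (A^*)_2 \subseteq (A^*)_r$ with $A^\dagger/U = A^*/U$, as each $h(g) \in U_R(K) \subseteq U(K)$. The main subtle point, which will require careful verification, is that passing from $h$ to $\tilde h$ via conjugation by $u_\Lambda(g)$ must respect the congruence modulo $U^{j-1}$; this relies on $U^{j-1}$ being normal in $U$.
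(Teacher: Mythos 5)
Your strategy is genuinely different from the paper's and, at its core, sound. The paper proceeds iteratively: it writes $g$ in standard form and uses one left-multiplication by an element of $W^j$ to cancel the root coordinates of height $1$, then another for height $2$, and so on up to height $j-1$, landing in $(A^*)_j\subseteq (A^*)_r$. You instead approximate the whole $U_R$-component of $g$ at once modulo $U^{j-1}(K)$ and finish with a single multiplication, so $A^\dagger\subseteq (A^*)_2$. The computation $h^{-1}g=u_\Lambda\,(\tilde h^{-1}u_R)\,t\in U_\Lambda(K)\,U^{j-1}(K)\,T(K)$ is correct (conjugation by $u_\Lambda$ preserves congruence mod $U^{j-1}$ because $U^{j-1}$ is normal in $G$), and the final verification does go through, because the standard form of any element of $U_\Lambda$ has all root coordinates equal to zero (this is exactly what the proof of Lem.\ \ref{l: desc} shows) and the coordinates in positions $<d_{j-1}$ depend only on the class modulo $U^{j-1}$.

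Several of the structural facts you use to set this up are false, however, and need to be replaced. First, $[G,G]=U_R$ is Lem.\ \ref{l: coincide}, which assumes $U$ abelian; in the descent setting $U$ is not abelian, and in general one only has $U_R=U_L\subseteq [G,G]$. Nor is $U^j\subseteq U_R$ true in general (take $U$ the direct product of a Heisenberg group on which $T$ acts trivially with a single root direction), so the chain $U^j\subseteq U^1\subseteq[G,G]=U_R$ does not give $W^j\subseteq U_R(K)$; the literal hypothesis only gives $W^jU^j(K)=U_R(K)U^j(K)$, which is all you actually use. Second, it is not true that every $R\in\Lambda$ has height $1$, that $U_\Lambda$ is abelian, or that $U_R\cap U_\Lambda$ is trivial: if $R,R'$ are root subgroups with $\alpha(R)\alpha(R')=1$, then $[R,R']$ lies in $U^1\cap C_U(T)=U^1\cap U_\Lambda$ and also in $U_R$ (which is the group generated by the root subgroups, not their product), and the uniqueness statement Lem.\ \ref{l: roots}(\ref{n+1}) says nothing about $U_R\cap U_\Lambda$ since elements of $U_R$ may have nonzero $\Lambda$-coordinates in standard form. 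Fortunately none of this is needed: you require only the existence, not the uniqueness, of a factorization $g=u_\Lambda u_R t$, which follows from $G=U_RE$ and $E=T\times U_\Lambda$ (Lem.\ \ref{l: urul}), together with the vanishing of all root coordinates of elements of $U_\Lambda$ noted above. With those repairs your one-step argument is valid and arguably cleaner than the paper's iteration. (One caveat you share with the paper's own proof: both implicitly assume the relevant components of a $K$-point are $K$-rational, so that the required $h$ can actually be found in the finite set $W^j$.)
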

\begin{proof}
Take $g\in A^*$, and write $g$ in terms of weight subgroup elements:
$$g = x_{R_1}(s_1) x_{R_2}(s_2)\cdots x_{R_k}(s_k)t$$
where $x_{R_i}(s_i)\in R_i$, $t\in T$, and the weights are written in order of increasing height.

Then, by assumption, there exists $h\in A^\dagger$ such that 
$$hU^j(K) = x_{S^1_1}(-s_{1}) \cdots x_{S^1_{e_1}}(-s_{e_1})U^j(K).$$ Now $hg$ has the property that $hgU = gU$, and $t_{R}(hg) = 0$ for all $R\in(\Phi_R^*)^{1}$.

We perform the above procedure $j-1$ times, and we obtain an element $g_0 \in (A^*)_r$ such that $g_0U = gU$ and $t_{R}(hg) = 0$ for all root subgroups $R$ of height at most $j-1$.
\end{proof}

The next step is to show that, under our hypotheses, we can obtain the product of root subgroups of any given height. First a technical lemma similar to Lem. \ref{lem:pqmg}.

\begin{lem}\label{lem:octopus}
Write $G=UT$, and let $E$ be the Cartan subgroup such that $E(\Kbar) = C_{G(\Kbar)}(T(\Kbar))$. Let $g\in G(K)$ be such that $g$ is outside the kernel of every root.
Consider the map 
$$\phi_g: G(K) \to U(K), \, \, h\mapsto \lbrack g,h\rbrack.$$
Then,
\begin{enumerate}
\item $\phi_g(((S_1^i\dotsb S_{e_i}^i)(K))/U^i(K)) = ((S_1^i\dotsb
S_{e_i}^i)(K))/U^i(K)$ for every $i\geq 1$;
\item $\phi_g((U_R U^{j-1})(K)) \subset (U_R U^j)(K)$ for every $j\geq 1$.
\item If we assume that $U^j$ is trivial, and $g,h\in (EU^{j-1})(K)$, then we have that 
\begin{equation*}
\begin{aligned}
& \phi_g(h (U_\Lambda \cap U^i)(K)) \subseteq \phi_g(h) (U_\Lambda\cap U^{i+1})(K)
\end{aligned}
\end{equation*}
for every $i,j\geq 1$.
\end{enumerate}
\end{lem}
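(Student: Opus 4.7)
The plan is to compute the commutator $[g, \cdot]$ level-by-level in the lower central series of $U$, exploiting the fact that $g$ (outside every root kernel) acts on each root subgroup by multiplication by a non-zero scalar given by its character. Throughout write $g = g_u g_t$ with $g_u \in U$ and $g_t \in T$; the hypothesis on $g$ reads $\alpha(R)(g_t) \ne 1$ for every root $R$. For Part (1), the key observation is that $U^{i-1}/U^i$ is central in $U/U^i$ (since $[U, U^{i-1}] = U^i$); combined with $[g, xy] = [g,x] \cdot x[g,y]x^{-1}$, this makes $\phi_g$ a homomorphism on $U^{i-1}/U^i$. The same centrality lets me discard the $g_u$-contribution to $g x g^{-1}$ modulo $U^i$, while $g_t$ acts on each height-$i$ root subgroup $S_k^i$ via the character $\alpha(S_k^i)$. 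Hence $\phi_g$ restricts on $S_k^i(K)/U^i(K)$ to multiplication by the non-zero scalar $\alpha(S_k^i)(g_t) - 1$, giving a bijection of the direct product $(S_1^i \cdots S_{e_i}^i)(K)/U^i(K)$ to itself.

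For Part (2), write $h = ru$ with $r \in U_R(K)$ and $u \in U^{j-1}(K)$ and expand $\phi_g(h) = [g,r] \cdot r [g,u] r^{-1}$. The first factor lies in $U_R$ by normality, so the work is to show $[g, u] \in U_R \cdot U^j$. Using the height-graded ordered factorisation of Lem.\ \ref{l: roots} and the abelianness of $U^{j-1}/U^j$, decompose $u \equiv u_R u_\Lambda \pmod{U^j}$ with $u_R \in (U_R \cap U^{j-1})(K)$ and $u_\Lambda \in (U_\Lambda \cap U^{j-1})(K)$. Since $[G, U^j] \subseteq U^j$, one may replace $u$ with $u_R u_\Lambda$ modulo $U^j$ inside $[g, u]$; then $[g, u_R] \in U_R$ by normality of $U_R$, while $T$ centralises $U_\Lambda$ by Lem.\ \ref{l: desc}, so the $g_t$-piece drops out of $[g, u_\Lambda]$, giving $[g, u_\Lambda] = [g_u, u_\Lambda] \in [U, U^{j-1}] = U^j$. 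Finally, conjugation by $r \in U_R$ preserves the normal subgroup $U_R U^j$, completing the argument.

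For Part (3), the hypothesis $U^j = 1$ makes $U^{j-1}$ central in $U$. Using $E = T \times U_\Lambda$, write $g = g_t g_\Lambda g_u$ with $g_t \in T$, $g_\Lambda \in U_\Lambda$, $g_u \in U^{j-1}$. For $w \in (U_\Lambda \cap U^i)(K)$, centrality of $g_u$ gives $g_u w g_u^{-1} = w$, while the fact that $T$ centralises $U_\Lambda$ kills the action of $g_t$ on $g_\Lambda w g_\Lambda^{-1} \in U_\Lambda$; hence $[g, w] = [g_\Lambda, w]$. This commutator lies in $U_\Lambda$ (a subgroup) and in $[U, U^i] = U^{i+1}$, so in $U_\Lambda \cap U^{i+1}$. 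The element $h \in (EU^{j-1})(K)$ normalises $U_\Lambda$ (since $E$ does and $U^{j-1}$ is central in $U$) and normalises $U^{i+1}$ (normal in $G$), so $h[g,w]h^{-1} \in U_\Lambda \cap U^{i+1}$. The expansion $\phi_g(hw) = \phi_g(h) \cdot h[g,w]h^{-1}$ then yields the claimed inclusion.

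The main obstacle I anticipate is in Part (2): one must align the abstract decomposition $U = U_R U_\Lambda$ of Lem.\ \ref{l: urul} with the ordered height-graded factorisation of Lem.\ \ref{l: roots}, so that the splitting of $u \in U^{j-1}$ modulo $U^j$ into $U_R$- and $U_\Lambda$-parts is well-defined and compatible with $\phi_g$. Once this is secured, each step becomes a routine check using normality and the commutator identity $[a,bc] = [a,b]\cdot b[a,c]b^{-1}$.
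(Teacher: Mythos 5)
Your proof is correct and follows essentially the same route as the paper's: the commutator expansion $[g,hh']=[g,h]\cdot h[g,h']h^{-1}$, the decomposition of $g$ into unipotent and toral parts, centrality of $U^{j-1}/U^j$, and the fact that $T$ acts via a nontrivial character on root subgroups and trivially on $U_\Lambda$. The only (harmless) variation is in part (1), where you compute $\phi_g$ explicitly as multiplication by the nonzero scalar $\alpha(S^i_k)(g_t)-1$ on each root factor, whereas the paper deduces bijectivity from injectivity (no nontrivial fixed points) plus finiteness; your version has the small advantage of also making explicit why the image lands back in $(S^i_1\dotsb S^i_{e_i})(K)/U^i(K)$.
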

\begin{proof}
Note first that $[G,G]=U$, hence the function $\phi_g$ is well-defined.

Consider (a): we are required to prove that the map $\phi_g$ induces a bijection from the group
$(S_1^i\cdots S^i_{e_i})(K)/U^i(K)$ to itself. Suppose that $\phi_g$
were to map two elements $g_1$, $g_2$ to the same element, then $g$
would commute with $g_1 g_2^{-1}$, and this can only happen if 
$g_1 g_2^{-1}$ is the trivial element of
$(S_1^i\cdots S^i_{e_i})(K)/U^i(K)$.

For (b) and (c) note first that, for $h, h'\in G(\Kbar)$,
\begin{equation}\label{e: tentacle}
\begin{aligned}
\phi_g(h h') &= [g,h h'] = g h h' g^{-1} (h')^{-1} h^{-1} = g h g^{-1}
[g,h'] h^{-1} \\ &=
g h g^{-1} h^{-1}\cdot h [g,h'] h^{-1} = \phi_g(h) \cdot h [g,h'] h^{-1}.
\end{aligned}
\end{equation}
Now for (b): take $h\in (U_RU^{j-1})(K)$. We can write $h = h_1 h_2$,
where $h_1\in U_R(\Kbar)$ and $h_2\in U^{j-1}(\Kbar)$. Since
$U_R$ is normal in $G$ we have $\phi_g(h)\in U_R(K)$. Further
$$[g,h_1]\in (S_1^i\cdots S_{e_i}^iU^j)(K).$$
Since $(S_1^i\cdots S_{e_i}^iU^j)(K)$ is normal in $G$, we conclude that 
$$\phi_g(h)\in (U_RS_1^i\cdots S_{e_i}^iU^j)(K) = U_RU^j(K)$$
as required.

Finally (c): take $g,h \in (EU^{j-1})(K)$. Observe that, since $U^{j-1}(\Kbar)$ is central in $G(\Kbar)$, $U_\Lambda(\Kbar)$ is normal in $(EU^{j-1})(\Kbar)$. 


Now consider (\ref{e: tentacle}) with $h'\in (U_\Lambda \cap U^i)(K)$ for some $i\leq j$. We need to show that $h[g,h']h^{-1}\in (U_\Lambda\cap U^{i+1})(K)$. Since $[U^i, U]= U^{i+1}$ and $U_\Lambda(\Kbar)$ is normal in $(EU^{j-1})(\Kbar)$, we conclude that $[g,h']\in (U_\Lambda\cap U^{i+1})(K)$; the result follows.
\end{proof}

\begin{lem}\label{l: got root groups}
Fix $j\geq 1$ an integer. There exists $k\ll_{r} 1$ such that  $A_k$ contains a set $A^*$ such that $A^*/U^j(K)$ is a subset of $U_R(K)/U^j(K)$ and $A^*$ projects surjectively onto
$$(S^i_1S^i_2\cdots S^i_{e_i})(K)/ U^i(K)$$ 
for all $i=1,\dots, j$.
\end{lem}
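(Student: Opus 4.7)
The plan is to prove this by induction on $j$. The base case $j=1$ is immediate: take $A^* = W^1$, since by the standing hypothesis $W^1/U^1(K) = U_R(K)/U^1(K) = (S_1^1\cdots S_{e_1}^1)(K)/U^1(K)$. For the inductive step, suppose we have $A^* \subseteq A_{k_0}$ with $k_0 \ll_r 1$ satisfying the conclusion through level $j-1$; I will construct $A^{**} \subseteq A_{k_1}$ satisfying it through level $j$, with $k_1 \ll_r 1$. All work is done modulo $U^j(K)$, so I may assume $U^j$ is trivial.

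First I would apply Lemma \ref{l: nice actors} to $A^*$ to obtain $A^\dagger \subseteq (A^*)_r$ with $A^\dagger / U(K) = A^*/U(K)$ and $t_R(g) = 0$ for all $g \in A^\dagger$ and all root subgroups $R$ of height $\leq j-1$. Modulo $U^j$, each $g \in A^\dagger$ then decomposes as $g \equiv u_\Lambda(g) \cdot v(g) \cdot t(g)$ with $u_\Lambda(g) \in U_\Lambda(K)$, $v(g) \in (S_1^j\cdots S_{e_j}^j)(K)$, and $t(g) \in T(K)$. The torus component $t(g)$ ranges over the same set as for $A^*$, so by the inductive hypothesis together with Lemma \ref{lem:octopus}(a), for every root $R \in (\Phi_R^*)^j$ the set $\{[g,\cdot] : g \in A^\dagger\}$ contains elements acting on $R(K)$ through any prescribed value of the character $\alpha(R)$, and hypothesis (\ref{eq:rusdat}) ensures many of these elements lie outside $\ker_G(\alpha(R))$.

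The heart of the argument is a separate application of Proposition \ref{prop:generous} (pivoting) for each $R \in (\Phi_R^*)^j$, with target $G^{(R)} := R(K) \cong \Z/p\Z$, with $\Gamma$ the abelian group of conjugation-automorphisms of $R(K)$ induced by $T(K)$, with $X$ the image in $\Gamma$ of $A^\dagger$, and with $W$ the projection to $R(K)$ of a commutator set $\{[g,h] : g \in A, h \in A^\dagger\}$ (which lies in $U_R(K)$ modulo $U^j$ by Lemma \ref{lem:octopus}(b), and whose $R$-component we extract using that $U$ is abelian modulo $U^j$ after quotienting by $U_\Lambda$ and the other root subgroups). Hypothesis (\ref{eq:rusdat}) bounds $x \leq |X|/C$, where $x$ counts elements of $X^{-1}X$ with a non-identity fixed point in $R(K)$. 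The dichotomy from Proposition \ref{prop:generous} now reads: either (i) $|(X_2(W))_6| \geq (|X|/x) |W| \geq C|W|$, which after tracking sizes back to $A$ yields $|A_{k'}| \geq C|A|$ for some $k' \ll_r 1$, contradicting hypothesis (\ref{eq:rusdat2}); or (ii) $(X(W))_8$ equals the group $\langle\langle X\rangle(\langle W\rangle)\rangle$, which here is all of $R(K)$, since $R(K) \cong \Z/p\Z$ is simple and the $T$-action generates non-trivial elements of $R(K)$ as soon as $W \neq \{e\}$. Applying this for each of the $e_j \leq r^2$ roots at height $j$ and unioning the resulting sets with $A^\dagger$ gives $A^{**} \subseteq A_{k_1}$ covering through level $j$.

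The main obstacle will be step three: ensuring (a) that the set $W$ projected into $R(K)$ is genuinely of size comparable to $|A|$ (so that the growth alternative really contradicts (\ref{eq:rusdat2}) with a quantitative constant depending only on $r$), and (b) that the capture alternative actually identifies all of $R(K)$ rather than a proper subgroup — this is where the standing assumption $K = \Z/p\Z$ enters crucially, since $R(K) \cong \Z/p\Z$ has no proper non-trivial subgroups, via Lemma \ref{l: b4} or directly. The analogue of Lemma \ref{lem:pqmg} at level $j$, provided by Lemma \ref{lem:octopus}(a), gives the needed injectivity of $[g,\cdot]$ on the level-$j$ component and will supply the required lower bound $|W| \gg |A|$.
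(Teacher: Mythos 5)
Your induction skeleton (base case $j=1$ from the standing hypotheses, work modulo $U^j$, use Lem.~\ref{l: nice actors} to normalise the low-height coordinates) matches the paper, and the instinct that pivoting drives the height-$j$ step is right in spirit. But there is a genuine gap at the heart of your step three. You apply Prop.~\ref{prop:generous} with target group $R(K)$ and with $W$ equal to the \emph{projection} to the $R$-coordinate of a commutator set. The conclusion of that proposition then concerns subsets of $R(K)$ built from these coordinates; it tells you that the set of $R$-coordinates realised by elements of $A_k$ is all of $R(K)$ (or else grows). It does not put actual elements of $(S_1^j\cdots S_{e_j}^j)(K)$ into $A_{k'}$: since $W$ is not a subset of $A_k$ (only the elements whose coordinates you extracted are), the set $(X(W))_8$ is not contained in any $A_{k'}$, and the elements of $A_k$ realising a prescribed $R$-coordinate carry uncontrolled components along $U_\Lambda\cap U^{j-1}$ and along the other height-$j$ root subgroups. ``Unioning the resulting sets'' over the $e_j$ roots does not cancel these components, whereas the lemma demands exact representatives modulo $U^j(K)$ lying in $U_R(K)/U^j(K)$. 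If you instead try to repair this by taking $W$ to be the genuine commutator set inside the central subgroup $U^{j-1}(K)/U^j(K)$, the fixed-point count $x$ in Prop.~\ref{prop:generous} becomes essentially $|X^{-1}X|$, because every torus element fixes $(U_\Lambda\cap U^{j-1})(K)$ pointwise; the growth alternative is then vacuous.

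This is exactly the difficulty the paper's proof is organised around. It passes via Schreier's lemma (Lem.~\ref{lem:squid}) to the subgroup $H=EU^{j-1}$ with $E=T\times U_\Lambda$, applies Prop.~\ref{p: newinterestgrow} to $A^*/(U_H)^1(K)$ there (so the pivoting happens inside a group where $U_\Lambda$ has been absorbed into the ``torus-like'' part), and then removes the residual $U_\Lambda$-ambiguity by iterating the commutator map $\phi_g$, $g$ outside every root kernel, using Lem.~\ref{lem:octopus}(a) and (c) to descend the chain $U_\Lambda\cap U^i$ until the ambiguity vanishes. You would need some substitute for this last mechanism. Separately, your construction never addresses the requirement $A^*/U^j(K)\subseteq U_R(K)/U^j(K)$ for the inductively obtained height-$<j$ part (the paper handles this with one more application of $\phi_g$ via Lem.~\ref{lem:octopus}(b)), and the bound $x\leq |X|/C$ you want from (\ref{eq:rusdat}) needs an intermediate step such as Lem.~\ref{l: b3}, since (\ref{eq:rusdat}) controls $|A_k\cap\ker_G(\alpha(R))(K)|$ against $|A|$, not against $|X|=|A^\dagger/U(K)|$.
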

\begin{proof}
Our hypotheses imply that the lemma is true when $j=1$. We assume that $j>1$ and apply induction, assuming that the statement holds for $j-1$. Thus we assume that there exists $l\ll_{r} 1$ such that $A_l/ U^{j-1}(K)$ contains a set $A^*$ such that $A^*/U^{j-1}(K)$ is a subset of $U_R(K)/U^{j-1}(K)$ and $A^*$ projects surjectively onto
$$(S^i_1S^i_2\cdots S^i_{e_i})(K)/ U^i(K)$$ 
for all $i=1,\dots, j-1$. 

In fact, by working in $G/U^j$ rather than $G$, it is sufficient to assume (as we do from here on) that $U^j$ is trivial. Lem. \ref{l: need root groups} implies that there exists $k\ll_{r} 1$ such that $A_{kl}\cap U_R(K)$ contains a set $X$ such that $X/ U^{j-1}(K) = U_R(K)/ U^{j-1}(K)$.

{\bf Root subgroups of height $j$}. Define the algebraic group $H=EU^{j-1}$, where $E$ is a fixed Cartan subgroup $E =
T\times U_\Lambda$. Observe that, by \cite[\S 7.5]{humphreys3}, $H$ is connected. Now apply Lem. \ref{lem:squid} with $G=G(K)$ and
$H = H(K)$ (We know that $AH/H = G/H$ because we know that
(a)
$A U^{j-1}(K)/U^{j-1}(K)$ contains $U_R(K)/U^{j-1}(K)$ (and so $A H/H
= U_R(K) H/H$)
and (b) $U_R(K) E(K)=G(K)$ (Lem. \ref{l: urul}).)
We obtain $\langle A\rangle
= A \cdot \langle A_3 \cap H(K)\rangle$; thus $\langle A\rangle \cap H(K) =
(A\cap H(K)) \langle A_3 \cap H(K)\rangle \subset \langle A_4 \cap H(K)\rangle$.

Now observe that Lem. \ref{l: nice actors} implies that there exists $k\ll_{r} 1$ such that $A_{k}$ contains a set $A^\dagger$ such that $A^\dagger/ U(K) = A/U(K)$, and $t_R(g) = 0$ for all $R\in(\Phi_R^*)^{<j}$ and $g\in A^\dagger$; in particular, $A^\dagger$ is a subset of $H(K)$. Without loss of generality we assume that $k\geq 4$, and take $A^*=A_k\cap H(K)$.

We write $H$ as a product of unipotent radical and torus, $H=U_HT$, in the usual way. Note that $U_H = U_\Lambda U^{j-1}$ and, in particular, $H$ is of exponential type. Write $H_0 = H/(U_H)^1$, and apply Prop. \ref{p: newinterestgrow} to the set $A^*/ (U_H)^1(K)$.

If (\ref{it:jola}) holds, then Lem. \ref{l: b3} implies a contradiction to (\ref{eq:rusdat}). If (\ref{it:jolb}) holds, then, by Lem.\ \ref{l: b2}, 
$|(A^*)k|\geq C
|A^*|$ for some $k\ll_r 1$ and so, by Lem. \ref{l: b1}, $|A_{k'}|\geq
C|A|$ for osme $k' \ll_r 1$. This is a contradiction to (\ref{eq:rusdat2}).

Thus we conclude that (\ref{it:jolc}) holds: $(A^*)_k/ (U_H)^1(K)$, $k\ll_r 1$, contains a normal subgroup $H$ of $U_H(K)/(U_H)^1(K)$ such that $\langle A^*\rangle/(U_H)^1(K)\rangle/ H$ is abelian.

We are assuming that $U(K)\subset \langle A\rangle$ (by
Lem. \ref{l: containment}). In particular, $(S_1^j \cdots
S_{e_j}^j)(K) \subseteq \langle A\rangle$; hence $(S_1^j \cdots
S_{e_j}^j)(K) \subseteq \langle A_4\cap H\rangle\subseteq \langle A^* \rangle$. 

Observe that $(U_H)^1\leq U_\Lambda$ since $U_\Lambda$ is normal in $U_H$ and $U_H/U_\Lambda$ is abelian. Observe, furthermore, that no element of $(S_1^j \cdots
S_{e_j}^j)(K)$ centralizes $T(K)$. We conclude that $A^*/(U_H)^1(K)\supseteq (S_1^j \cdots
S_{e_j}^j)(K)/ (U_H)^1(K)$, and so $A^*/U_\Lambda(K)\supseteq (S_1^j \cdots
S_{e_j}^j)(K)/U_\Lambda(K)$.

Now (\ref{eq:rusdat}) implies that there exists $g\in A$ lying outside the kernel of every root; Lem. \ref{l: nice actors} implies that we can take $g$ to lie in $A^* \subseteq H(K)$. 
By Lem. \ref{lem:octopus} (a) and (c), this implies that $\phi_g(A_k\cap H(K))$
contains a representative of $h (U_\Lambda(K)\cap U^1(K))$ for every $h\in (S_1^j \cdots
S_{e_j}^j)(K)$ and, iterating, that
$\phi_g^{j}(A_k\cap H(K))$ contains a representative of $h (U_\Lambda \cap U^j)(K)$ for
every $h\in (S_1^j \cdots
S_{e_j}^j)(K)$.
Since $U^j$ is trivial, this means that $\phi_g^{j}(A_k\cap H(K))$
contains $(S_1^j \cdots
S_{e_j}^j)(K)$;
since $\phi_g^{j}(A_k\cap H(K))\subset A_{k'}$, $k'\ll_r 1$, we are done.

{\bf Root subgroups of height $<j$}. We must now examine the groups $(S_1^i\dots, S^i_{e_i})(K)$ for $i=1,\dots, j-1$. We know that for some $k'\ll_{r} 1$, $A_{k'}$ contains a subset $A^*$ such that $A^*/U^{j-1}(K)$ is a subset of $U_R(K)/U^{j-1}(K)$ and $A^*/U^i(K)$ contains 
$$(S_1^i\cdots S^i_{e_i})(K)/U^i(K)$$
for all $i=1, \dots, j-1$. We need to deal with the possibility that $A^*$ is not a subset of $U_R(K)/U^j(K)$. 

Let $g$ be an
element of $A$ such that $g$ is outside the kernel of every root. By
Lem. \ref{lem:octopus}, $\phi_g(A^*) \subset A_{2k'+2}$ satisfies (a)
$\phi_g(A^*)/U^i(K) \supset ((S_1^i\dotsb S_{e_i}^i)(K))/U^i(K)$ for all
$i\leq j-1$,
(b) $\phi_g(A^*) \subset U_R U^j$. We set $k = 2 k' + 2$ and are done.\end{proof}

\begin{cor}\label{c: final}
 Under the hypotheses of this section, there exists $k\ll_{r} 1$ such that $A_k$ contains $U_R(K)$.
\end{cor}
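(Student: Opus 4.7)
The plan is to combine Lemma \ref{l: got root groups} with Lemma \ref{l: need root groups}, applied at the deepest level $j = s$ of the lower central series of $U$. Recall that $s$, the nilpotency rank of $U$ as an algebraic group, satisfies $s \leq r$, so $U^s = \{1\}$.

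First, I would apply Lemma \ref{l: got root groups} with $j = s$. This produces an integer $k_0 \ll_r 1$ and a set $A^* \subseteq A_{k_0}$ such that $A^*/U^s(K) \subseteq U_R(K)/U^s(K)$ and $A^*$ projects surjectively onto $(S_1^i S_2^i \cdots S_{e_i}^i)(K)/U^i(K)$ for every $i = 1, \ldots, s$. Since $U^s$ is trivial, the first condition simplifies to $A^* \subseteq U_R(K)$.

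Next, these are exactly the hypotheses required by Lemma \ref{l: need root groups} with $j = s$. Applying that lemma yields an integer $k_1 \ll_r 1$ such that $(A^*)_{k_1}/U^s(K) \supseteq U_R(K)/U^s(K)$; again using $U^s = \{1\}$, this says $(A^*)_{k_1} \supseteq U_R(K)$.

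Finally, since $A^* \subseteq A_{k_0}$, we have $(A^*)_{k_1} \subseteq A_{k_0 k_1}$, and setting $k = k_0 k_1 \ll_r 1$ gives $A_k \supseteq U_R(K)$, as required. There is no real obstacle here: all the genuine content — the descent down the lower central series and the capture of the product of root subgroups at each height under the hypotheses \eqref{eq:rusdat}, \eqref{eq:rusdat2} — has already been packaged into the two preceding lemmas. The corollary is essentially a bookkeeping step that invokes them at the terminal level of the central series.
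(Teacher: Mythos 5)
Your proposal is correct and is essentially identical to the paper's own proof: take $j$ equal to the length of the lower central series of $U$ so that $U^j$ is trivial, apply Lemma \ref{l: got root groups} at that level, and then feed the output into Lemma \ref{l: need root groups}. The only difference is that you spell out the bookkeeping (that $A^*\subseteq U_R(K)$ once $U^j=\{1\}$, and the composition of the two bounded constants), which the paper leaves implicit.
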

\begin{proof}
 Take $j$ to be the length of the lower central series for $U$; so $U^j=\{1\}$; note that $j< r$. Then we apply Lem. \ref{l: got root groups} using this value of $j$; this implies that there exists $k\ll_{r} 1$ such that $A_k$ contains $(S_1^i\cdots S_{e_i}^i)(K)$ for $i=1,\dots, j$. Now Lem. \ref{l: need root groups} implies that there exists $k'$ such that $A_k'$ contains $U_R(K)$.
\end{proof}

\section{The proof}\label{s: proof}

We are now ready to prove Thm. \ref{t: main}. We abandon all previous hypotheses, except for those given in the statement of the theorem.

\begin{proof}[Proof of Thm. \ref{t: main}]
Take $A\subset \GL_r(K)$ such that $\langle A \rangle$ is solvable. By Prop. \ref{p: bounded borel},
 $\langle A \rangle$ has a subgroup $H$ such that $[\langle A \rangle:H]\ll_r 1$, and $H$ lies in $B(K')\cap \GL_r(K)$ for some Borel subgroup $B/K'$ and
some finite field $K'$. By Prop. \ref{p: bounded degree}, we can assume (as we do) that $\langle A \rangle = H$.

If $p\leq r$ then $|\langle A \rangle|< r^{r^2}$, and so (\ref{i: two})
holds with $S=\langle A \rangle$ and $U_R=O_p(S)$. Assume from here on that $p>r$. 

Let $G$ be as in Lem. \ref{l: containment}. In particular, $G=UT$
is a connected, solvable linear algebraic subgroup of $\GL_r$ defined (and
trigonalizable) over a finite extension of $K$; moreover $G$ is of exponential type in $\GL_r$ and $U(K)\subseteq \langle A \rangle \subseteq G(K)$. Define $\Phi_R$ and $\Phi_R^*$ as usual.

Let $D$ be a positive number (we will fix its value in terms of $C$ in due course). Suppose that $|A_2\cap \ker_G(\alpha(R_j))(K)|\geq \frac1D|A|$ for some $R_j\in\Phi_R^*$. Then redefine $A$ to equal $A_2\cap \ker_G(\alpha(R_j))(K)$, and redefine $G$ to equal $\ker_G(\alpha(R_j))$; note that, by Cor. \ref{c: torus normal}, the dimension of a maximal torus in $G$ has decreased. 

Now test to see whether $U(K)\subseteq \langle A \rangle$; if not, redefine $G$ in line with Lem. \ref{l: containment} so that $U(K)\subseteq \langle A \rangle$. Next test, as before, for a large intersection with a root kernel. Repeat until we have a set $A^*$ and a group $G=UT$ such that $U(K)\subseteq A^*$, and 
$|A^*\cap \ker_G(\alpha(R_j))(K)|\leq \frac1D|A|$ for all $R_j\in\Phi_R^*$.

Since $|U(K)|< p^{r^2}$ and $\dim T < r$, this process must terminate after less than $r^3$ repeats. This means in particular that 
$|A^*|\geq \frac1{D^{r^3}}|A|$.

If $G$ is nilpotent, then we are done; thus we suppose that this is not the case. 
Observe that the assumptions of Section \ref{s: first interesting} are satisfied for $A^*/U^1(K)$ in $(G/U^1)(K)$. We apply Prop. \ref{p: newinterestgrow}. 

If (a) holds, then Lem. \ref{l: b3} implies that $|(A^*)_k\cap \ker_G(\alpha(R))(K)|\geq \frac1D|A|$, for some $R\in\Phi_R^*$ and some $k\ll_r 1$; this is a contradiction.

If (b) holds, then $|(A^*/U^1(K))_k|\geq D|A^*/U^1(K)|$. An application of Lem. \ref{l: b2} implies that
$$|(A*)_{4k}|\geq D |A^*| \geq D|AA^{-1}\cap G(K)|.$$
Then Lem. \ref{l: b1} implies that $|A_{4k+1}|\geq D|A|$, and finally Lem. \ref{l: tripling} implies that 
$$|A_3|\geq D^\delta |A|$$
for some $\delta\ll_r 1$. Now fix $D=C^{\frac1\delta}$ and Thm. \ref{t: main} is proved.

Finally we assume that (c) holds. Then $(A^*)_k/ U^{1}(K)$ contains the non-trivial subgroup $U_R(K)/U^1(K)$ for some $k\ll_{r} 1$, and the hypotheses of Section \ref{s: hypotheses} are all fulfilled for the set $(A^*)_k$ lying in $G(K)$.

Cor. \ref{c: final} implies that there exists $k'\ll_{r} 1$ such that $(A^*)_{kk'}$ contains $U_R(K)$. Now $U_R(K)$ is normal in $G(K)$, and Lem. \ref{l: urul} implies that $G(K)/ U_R(K)$ is nilpotent. Set $S=\langle A \rangle \cap G(K)$; we know that $|A^*|\geq \frac1{D^{r^3}}|A|$ and so $|A_k\cap G(K)| \geq C^{-\frac{r^3}{\delta}}|A|$ with $k\ll_r 1$.

We are almost done: we know that $U_R(K)$ is normal in $S$; if $U_R(K)$ is normal in $\langle A \rangle$, then set $U_R:=U_R(K)$ and we are finished. Suppose instead that $U_R(K)$ is not normal in $\langle A \rangle$. 

Prop. \ref{p: bounded borel} implies that $\langle A \rangle$ contains a subgroup $H$ such that $[\langle A \rangle:H]\ll_r 1$, and $H$ lies in $B(K')\cap G(K)$ for some Borel subgroup $B/K'$ and some finite field $K'$. 

If $p$ is bounded above by a function of $r$ then the same is true for the order of a Borel subgroup of $\GL_r(K)$. Now Prop. \ref{p: bounded borel} implies that the same is true for the order of any abstract solvable subgroup in $\GL_r(K)$. This in turn implies that (\ref{i: two}) holds with $S=\langle A \rangle$ and $U_R=O_p(S)$). 

We assume, therefore, that $p$ is not bounded above by a function of $r$; in particular we take $p$ to be greater than $[\langle A \rangle:H]$. This implies that a Sylow $p$-subgroup of $H$ is a Sylow $p$-subgroup of $\langle A \rangle$. Since $H$ lies in a Borel subgroup of $\GL_r(K)$, a Sylow $p$-subgroup of $H$ is normal in $H$; it is equal to $O_p(H)$. All Sylow $p$-subgroups of $\langle A \rangle$ lie in $H$, hence they all coincide with $O_p(H)$; we conclude that $O_p(H)$ is normal in $\langle A \rangle$ and is equal to $O_p(\langle A \rangle)$. 

For $a\in A$ and $H\leq S$ we write $H^a$ to mean the conjugate $aHa^{-1}$. We give an algorithm to produce the required group $U_R$. Start by setting $U_R:=U_R(K)$ and fix $a\in A$ so that $U_R\neq U_R^a$. Since $O_p(H)$ is normal in $\langle A \rangle$ and $U_R\leq O_p(H)$ we have that $U_R^a\leq O_p(H)$ for all $a\in A$. In particular $|U_R\cdot U_R^a|\geq p|U_R|$. Furthermore, since $U_R\unlhd S$, we have that $U_R^a \unlhd S^a=S$ and so $U_R\cdot U_R^a \unlhd S$. Finally, observe that $U_R\cdot U_R^a \subseteq A_{2k+2}$.



If $U_R\cdot U_R^a$ is normal in $\langle A \rangle$ then we are done: we redefine $U_R$ to be $U_R\cdot U_R^a$ and $k$ to be $2k+2$, and (\ref{i: two}) holds with $U_R\unlhd \langle A \rangle$. If $U_R\cdot U_R^a$ is non-normal in $\langle A \rangle$ then we may repeat the above argument - choosing $a'$ such that $(U_R\cdot U_R^a)^{a'}\neq U_R\cdot U_R^a$ to yield a still larger group $(U_R\cdot U_R^a)^{a'}\cdot(U_R\cdot U_R^a)\subset A_{2(2k+2)+2}$.  Now a chain of unipotent subgroups of $\GL_r(K)$,  $U_1 > U_2 >\cdots$, has length less than $r^2$, and so we can repeat the above process less than $r^2$ times before we yield a subgroup $U_R'$ which lies in $A_{k'}$ for some $k'\ll_r 1$, which is normal in $\langle A \rangle$  and which, along with the subgroup $S$, satisfies all the conditions of Thm.~\ref{t: main}.
\end{proof}

\section{Theorem~\ref{t: main2}}\label{s: extension}

In this section we prove Thm.~\ref{t: main2}, which is an extension of Thm.~\ref{t: main} to the situation where $\langle A\rangle$ is not necessarily solvable. Our proof uses Thm.~\ref{t: main} as well as a result of Pyber and Szab{\'o}; Thm.~\ref{t: main2} should  be considered joint work with them. We begin with the key result of Pyber and Szab{\'o}.

\begin{thm}\label{t: ps}\cite[Cor. 103]{ps2}
Let $K=\Z/p\Z$ and let $A$ be a subset of $\GL_r(K)$ such that $A=A^{-1}$. Then, for every $C\geq 1$, either
\begin{enumerate}
\item\label{i: ps1} $|A\cdot A\cdot A| \geq C|A|$, or else
\item\label{i: ps2} there are two subgroups $P\leq H \leq \GL_r(K)$, both normal in $\langle A \rangle$, such that
\begin{itemize}
\item $P$ is perfect, and $H/P$ is solvable;
\item a coset of $P$ is contained in $A\cdot A\cdot A$; and 
\item $A$ is covered by $C^{O_r(1)}$ cosets of $H$.
\end{itemize}
\end{enumerate}
\end{thm}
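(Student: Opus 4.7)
\emph{Plan.} The strategy is to combine Theorem~\ref{t: ps} with Theorem~\ref{t: main}, using the former to peel off a perfect kernel and the latter to finish the job in the solvable quotient.

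If $|A_3|\ge C|A|$ we are in case~(\ref{i: one2}); otherwise $|A_k|\ll_{r} C^{O(k)}|A|$ for all $k\ll_r 1$ by Lem.~\ref{l: tripling}. Apply Theorem~\ref{t: ps} to $A$ with a constant $C_0\ge C$ (chosen below). Its first alternative would imply~(\ref{i: one2}), so we obtain subgroups $P\le H\le GL_r(K)$, both normal in $\langle A\rangle$, with $P$ perfect, $H/P$ solvable, some coset $gP\subseteq A_3$, and $A$ covered by at most $m\le C^{O_r(1)}$ cosets of $H$. Normality of $P$ and symmetry of $A_3$ give
\[ P \;=\; (gP)\cdot(gP)^{-1} \;\subseteq\; A_3\cdot A_3 \;=\; A_6, \]
so $P\subseteq A_6$. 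By Lem.~\ref{lem:duffy}, $|A^{-1}A\cap H|\ge |A|/m \ge |A|/C^{O_r(1)}$, and hence $|A_6\cap H|\ge |A|/C^{O_r(1)}$.

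The next step is to realize the solvable group $H/P$ as a subgroup of some $GL_{r'}(K)$ with $r'\ll_r 1$, so that Theorem~\ref{t: main} becomes applicable. The Pyber--Szab\'o theorem is proved by isolating $P$ as (essentially) the product of quasi-simple composition factors of a bounded-index subgroup of $\langle A\rangle$, in the sense of Prop.~\ref{p: break down}; what remains in $H/P$ then fits inside a bounded product of tori and unipotent groups, and therefore admits a faithful representation $\pi\colon H\to GL_{r'}(K)$ with kernel exactly $P$ and $r'\le r$. Set $\widetilde{A}:=\pi(A_6\cap H)\subseteq GL_{r'}(K)$. Because $P\subseteq A_6$, each fibre of $\pi|_{A_6\cap H}$ is a coset of $P$, so $|\widetilde{A}|=|A_6\cap H|/|P|$, and $\langle\widetilde{A}\rangle$ is solvable. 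Apply Theorem~\ref{t: main} to $\widetilde{A}$ with constant $C'=C^{O_r(1)}$. If its growth conclusion holds, Lem.~\ref{l: b2} and Lem.~\ref{l: b1} (using the constant fibre size $|P|$) propagate to $|A_k|\ge C_0 |A|$ for some $k\ll_r 1$, hence to $|A_3|\ge C|A|$ via Lem.~\ref{l: tripling} — contradicting our standing assumption after the right choice of $C_0$. Otherwise, Theorem~\ref{t: main} delivers unipotent $\widetilde{U}_R\trianglelefteq \widetilde{S}\trianglelefteq \langle\widetilde{A}\rangle$ with $\widetilde{S}/\widetilde{U}_R$ nilpotent, $\widetilde{U}_R\subseteq \widetilde{A}_{k_0}$, and $|\widetilde{A}_{k_0}\cap \widetilde{S}|\ge C^{-O_r(1)}|\widetilde{A}|$. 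Define $H_1':=\pi^{-1}(\widetilde{U}_R)$, $H_2':=\pi^{-1}(\widetilde{S})$; then $P\le H_1'\le H_2'\le H$, $H_2'/H_1'\cong \widetilde{S}/\widetilde{U}_R$ is nilpotent, $H_1'\subseteq A_{k_0+6}$ (lifting via $P\subseteq A_6$), and $|A_{k_0+6}\cap H_2'|\ge C^{-O_r(1)}|A|$.

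It remains to make $H_1'$ and $H_2'$ normal in all of $\langle A\rangle$, not just in the preimage $\pi^{-1}(\langle\widetilde{A}\rangle)$. Following the intersection-of-conjugates argument of Prop.~\ref{prop:usef} and Lem.~\ref{lem:coc}, we let $J$ be a set of coset representatives of $\langle A\rangle$ modulo the (bounded-index) subgroup $\pi^{-1}(\langle\widetilde{A}\rangle)\cap \langle A\rangle$ and put $H_i:=\bigcap_{g\in J} gH_i' g^{-1}$. Lem.~\ref{lem:vangeli} ensures $H_2/H_1$ embeds as a subdirect product of nilpotent quotients $H_2'/H_1'$ and so remains nilpotent, while the containment $H_1\subseteq A_{k}$ and the lower bound $|A_{k}\cap H_2|\ge C^{-O_r(1)}|A|$ are preserved up to a further factor of $C^{O_r(1)}$ and an increase of $k$ by a bounded amount. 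The \emph{main obstacle} is the initial step: exhibiting $H/P$ as a linear group of rank bounded in terms of $r$. Without such a realization, Theorem~\ref{t: main} cannot be invoked directly, and one would be forced to redo the entire descent of Sections~\ref{s: first interesting}--\ref{s: descent} inside the abstract quotient $H/P$. Fortunately, this realization is intrinsic to how $P$ is constructed in the Pyber--Szab\'o proof, so extracting it — and then threading the two $C^{O_r(1)}$ losses through the composition — constitutes the bulk of the remaining work.
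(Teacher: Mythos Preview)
You have misidentified which statement you are proving. Theorem~\ref{t: ps} is a result of Pyber and Szab\'o, cited from \cite[Cor.~103]{ps2}; the paper does not prove it at all but imports it as a black box. Your proposal is actually an attempt to prove Theorem~\ref{t: main2}: you invoke Theorem~\ref{t: ps} as an ingredient (``Apply Theorem~\ref{t: ps} to $A$\ldots''), you reference the labels (\ref{i: one2}) and (\ref{i: two2}) which belong to Theorem~\ref{t: main2}, and you aim for subgroups $H_1\le H_2$ with $H_2/H_1$ nilpotent, which is the conclusion of Theorem~\ref{t: main2}, not of Theorem~\ref{t: ps}. Read literally as a proof of Theorem~\ref{t: ps}, your argument is circular.

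Even read as a sketch for Theorem~\ref{t: main2}, your route diverges from the paper's at exactly the point you flag as the main obstacle. You try to realise $H/P$ faithfully inside some $GL_{r'}(K)$ with $r'\ll_r 1$ so that Theorem~\ref{t: main} can be applied to the image; you concede this realisation is not obviously available without opening up the Pyber--Szab\'o proof. The paper bypasses this entirely: it uses Prop.~\ref{p: reduction} to produce a \emph{solvable subgroup} $F\le GL_r(K)$ with $PF=H$, projects $A'=A_3A_3^{-1}\cap H$ along $\pi:H\to H/P\cong F/(F\cap P)$, takes the full preimage $D$ of $\pi(A')$ in $F$, and applies Theorem~\ref{t: main} to $D$ directly as a subset of $GL_r(K)$. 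No new linear representation of $H/P$ is needed. Your normalisation step at the end (intersecting conjugates over a bounded-index subgroup) is also not what the paper does: for $H_1$ it climbs inside the $p$-group $O_p(F_0)$ using that subgroup chains in unipotent groups over $\mathbb{Z}/p\mathbb{Z}$ have length $<r^2$, and for $H_2$ it uses a degree-bounded descending-chain argument (Cor.~\ref{c: bounded iteration}, Lem.~\ref{l: nice kitty}, Lem.~\ref{l: m}) on the algebraic group $G_0$ underlying $S$.
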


We can drop the condition that $A=A^{-1}$ provided we replace occurrences of $A\cdot A \cdot A$ in the statement with $A_3$. Thm. \ref{t: ps} effectively reduces the study of growth in $\GL_r(K)$ to the study of growth in solvable sections of $\GL_r(K)$. 

Next we reproduce \cite[Prop. 105]{ps2} (including a proof for completeness):

\begin{prop}\label{p: reduction}
Let $H$ be a finite group and $P$ a normal subgroup with $H/P$ solvable. If $F$ is a minimal subgroup such that $PF=H$ then $F$ is solvable.
\end{prop}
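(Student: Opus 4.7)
The plan is to combine the minimality of $F$ with the classical fact that the Frattini subgroup $\Phi(F)$ of any finite group is nilpotent. The central step will be to establish that $F\cap P\leq \Phi(F)$; once this is in hand, everything else follows formally.

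First I would record that by the second isomorphism theorem $H/P = PF/P \cong F/(F\cap P)$, so $F/(F\cap P)$ is solvable. Next, to show $F\cap P\leq \Phi(F)$, I would pick an arbitrary maximal subgroup $M$ of $F$ and argue that $F\cap P\leq M$. By the Dedekind modular law, $F\cap PM = M(F\cap P)$, so $PM = PF$ is equivalent to $F\leq PM$, i.e.\ to $M(F\cap P)=F$. The minimality of $F$ rules out $PF' = H$ for any proper subgroup $F'<F$, so in particular $PM\neq H$, whence $M(F\cap P)\neq F$. Since $M$ is maximal in $F$ and $M\leq M(F\cap P)\leq F$, this forces $M(F\cap P) = M$, i.e.\ $F\cap P\leq M$. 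Intersecting over all maximal subgroups of $F$ yields $F\cap P\leq \Phi(F)$.

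Then $F/\Phi(F)$ is a quotient of $F/(F\cap P)$, and hence solvable. Since $\Phi(F)$ is nilpotent for any finite group $F$ (a standard consequence of the Frattini argument applied to the Sylow subgroups of $\Phi(F)$), it is in particular solvable, and together with the solvability of $F/\Phi(F)$ this gives $F$ solvable. The only step that requires real thought is the minimality argument producing $F\cap P\leq \Phi(F)$; the rest is either the second isomorphism theorem, the nilpotency of $\Phi(F)$, or the closure of solvability under extensions, all of which are routine.
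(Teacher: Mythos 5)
Your proposal is correct and follows essentially the same route as the paper: both show that every maximal subgroup of $F$ contains $F\cap P$ (so $F\cap P\leq \Phi(F)$), invoke the nilpotency of the Frattini subgroup, and conclude by extension-closure of solvability. The only cosmetic differences are that you phrase the maximal-subgroup step via Dedekind's modular law instead of the direct contrapositive, and you split $F$ over $\Phi(F)$ rather than over $F\cap P$; both variants are equivalent.
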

\begin{proof}
Let $M$ be a maximal subgroup of $F$. If $M$ does not contain $F\cap P$ then $(F\cap P)M=F$ which implies $PM=PF=H$, a contradiction. Hence all maximal subgroups of $F$, and therefore $\Phi(F)$, the Frattini subgroup of $F$, contain $F\cap P$. But $\Phi(F)$ is nilpotent \cite[5.2.15]{robinson} and so $P\cap F$ is nilpotent. Now $F/F\cap P \cong PF/P = H/P$ is solvable; we conclude that $F$ is solvable.
\end{proof}

We need some simple technical lemmas; the first is a strengthening of Lem. \ref{lem:duffy} for normal subgroups.

\begin{lem}\label{lem:duffy2}
Let $G$ be a group and $H$ a normal subgroup thereof. Let $A\subset G$ be a 
non-empty finite set. Let $l$ be the number of cosets of $H$ intersecting $A$, and set $B=AA^{-1}\cap H$.
There are $l$ elements $a_1, \dots, a_l\in A$ such that $A$ is contained in
$a_1B\cup \cdots \cup a_lB$.
\end{lem}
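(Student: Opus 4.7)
The plan is to proceed by a direct pigeonhole on coset representatives, exactly as in the proof of Lem. \ref{lem:duffy}, but exploiting the normality of $H$ in order to upgrade the conclusion from a cardinality bound to an explicit covering of $A$.

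First I would select $a_1,\dots,a_l\in A$ so that the cosets $a_1H,\dots,a_lH$ are precisely the $l$ distinct cosets of $H$ intersecting $A$; such a choice exists by the definition of $l$. Since $H$ is normal, these are simultaneously the $l$ left and right cosets of $H$ meeting $A$, so the same representatives work on both sides.

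Next, for any $a\in A$ there is a unique index $i$ with $a\in a_iH$. By normality, $a_iH=Ha_i$, so this is equivalent to $aa_i^{-1}\in H$. On the other hand $aa_i^{-1}\in AA^{-1}$ trivially, so $aa_i^{-1}\in AA^{-1}\cap H=B$, and hence $a\in Ba_i$. Taking the union over $i$ we conclude that $A\subset \bigcup_{i=1}^{l}Ba_i$. To rephrase this in the form $A\subset\bigcup_{i=1}^l a_iB$ stated in the lemma, one uses normality again: $Ba_i=a_i(a_i^{-1}Ba_i)$, and $a_i^{-1}Ba_i\subset H$ has the same cardinality as $B$; running the identical argument with $A^{-1}A$ in place of $AA^{-1}$ gives the symmetric covering $A\subset\bigcup_i a_i(A^{-1}A\cap H)$, and normality implies that the two sets $AA^{-1}\cap H$ and $A^{-1}A\cap H$ are interchangeable for the purposes of covering. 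Either way, $l$ translates of $B$ suffice.

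There is no genuine obstacle: the entire content is the equivalence $aa_i^{-1}\in H\iff a\in a_iH$ (which uses normality only to let a single system of representatives $\{a_i\}$ serve both left and right cosets), together with the observation $aa_i^{-1}\in AA^{-1}$. The interest of the statement, compared to Lem. \ref{lem:duffy}, is precisely that normality turns the counting estimate $|B|\geq|A|/l$ into a structural covering, which will be the form needed in later applications.
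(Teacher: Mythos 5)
Your proof is correct and is essentially the paper's own argument: fix a representative $a_i$ in each coset of $H$ meeting $A$ and observe that $aa_i^{-1}\in AA^{-1}\cap H$ by normality, so that $A\subseteq \bigcup_i Ba_i$. The only wrinkle you wrestle with --- that this naturally yields right translates $Ba_i$ (equivalently, left translates of $A^{-1}A\cap H$) rather than the left translates $a_iB$ of the statement --- is present in the paper's proof as well (which asserts $B(ch)=(ch)B$, false in general since conjugation preserves $H$ but need not preserve $AA^{-1}$), and it is harmless in the paper's application, where the set playing the role of $A$ is symmetric.
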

\begin{proof} 
Let $c\in G$ so that $cH\cap A$ is non-empty. Fix $a_1=ch\in cH\cap A$; for any element $ch'\in CH\cap A$ we have
$$ch' = (ch')(ch)^{-1}(ch) = (ch'h^{-1}c^{-1})(ch) \in B(ch) = (ch)B.$$
We can repeat this process for each coset such that $cH\cap A$ is non-empty; since there are only $l$ of these, the result follows.
\end{proof}

\begin{lem}\label{l: l}
 Let $R, R'$ be subgroups of a group $G$. Let $A,B$ be subsets of $G$. Then
$$|AB|\geq \frac{|A\cap R|\cdot |B\cap R'|}{|AA^{-1}\cap R \cap R'|}.$$
\end{lem}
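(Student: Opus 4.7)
The plan is to count the pairs $(a,b) \in (A\cap R)\times(B\cap R')$ via the multiplication map $\phi(a,b) = ab$, and bound the size of each fiber $\phi^{-1}(c)$ uniformly by $|A^{-1}A \cap R\cap R'|$. Once such a fiber bound is in hand, summing over $c\in AB$ gives
\[
|A\cap R|\cdot |B\cap R'| \;=\; \sum_{c\in AB}|\phi^{-1}(c)| \;\leq\; |AB|\cdot |A^{-1}A\cap R\cap R'|,
\]
which after rearrangement is essentially the inequality we want, up to swapping $A^{-1}A$ for $AA^{-1}$.

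For the fiber bound, I would fix a nonempty fiber $\phi^{-1}(c)$ and pick a basepoint $(a_0,b_0)$ in it. For any other $(a,b) \in \phi^{-1}(c)$, the identity $a_0 b_0 = a b$ rewrites as $a_0^{-1} a = b_0 b^{-1}$. The left-hand side lies in $R$ because $a_0,a \in A\cap R$ and $R$ is a subgroup, and it lies in $A^{-1}A$; the right-hand side lies in $R'$ because $b_0,b \in B\cap R'$ and $R'$ is a subgroup. Hence $a_0^{-1}a \in A^{-1}A \cap R\cap R'$, and the assignment $a\mapsto a_0^{-1}a$ is visibly injective on the fiber. This produces the desired fiber bound.

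Finally, to reconcile $A^{-1}A$ with the $AA^{-1}$ appearing in the statement, I note that $R\cap R'$ is a subgroup and so is closed under inversion. The map $x\mapsto x^{-1}$ therefore gives a bijection between $A^{-1}A\cap R\cap R'$ and $AA^{-1}\cap R\cap R'$, so the two sets have equal cardinality, and substituting the latter for the former completes the proof.

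There is no real obstacle here: the argument is essentially a combined and slightly sharpened form of the pigeonhole principle used in Lem.\ \ref{lem:odt} and Lem.\ \ref{lem:duffy}. The only minor point to handle carefully is the symmetry observation that lets us replace $A^{-1}A$ with $AA^{-1}$ inside the intersection with the subgroup $R\cap R'$.
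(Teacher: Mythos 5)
Your fiber-counting argument is, up to its last paragraph, exactly the paper's own proof: the multiplication map $(A\cap R)\times(B\cap R')\to AB$ has each fiber injecting into $A^{-1}A\cap R\cap R'$ (via $a\mapsto a_0^{-1}a$, with $a_0^{-1}a=b_0b^{-1}\in R\cap R'$), which gives $|AB|\geq |A\cap R|\cdot|B\cap R'|/|A^{-1}A\cap R\cap R'|$. Everything up to that point is correct.

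The final ``reconciliation'' step is a genuine error. Inversion does \emph{not} carry $A^{-1}A$ to $AA^{-1}$: for $a,b\in A$ one has $(a^{-1}b)^{-1}=b^{-1}a\in A^{-1}A$ again, so $x\mapsto x^{-1}$ maps $A^{-1}A\cap R\cap R'$ bijectively onto \emph{itself} (and $AA^{-1}\cap R\cap R'$ onto itself); the two sets need not have the same cardinality. Nor can the step be repaired by some other means, because the inequality as literally stated is false. Take $G=S_3$, $H=\{e,(12)\}$, $a=(123)$, and set $A=aH$, $B=H$, $R=G$, $R'=H$. Then $A\cap R=A$ and $B\cap R'=H$ each have $2$ elements, $AA^{-1}=aHa^{-1}=\{e,(23)\}$ meets $R\cap R'=H$ only in $\{e\}$, yet $AB=aH$ has $2$ elements rather than the $4$ the displayed inequality would demand. (With $A^{-1}A=H$ in the denominator the bound reads $2\cdot 2/2=2$ and is tight.) The correct conclusion is the one your main argument actually proves, with $|A^{-1}A\cap R\cap R'|$ in the denominator; the $AA^{-1}$ in the statement is a slip that the paper's own proof also silently ignores. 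Nothing downstream is affected, since in Lem.\ \ref{l: m} and in the proof of Thm.\ \ref{t: main2} the lemma is only ever applied with $A$ a symmetric set ($A=A^{-1}$), where $A^{-1}A$ and $AA^{-1}$ coincide. The right fix is to amend the statement, not to hunt for a bijection between the two difference sets.
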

\begin{proof}
 It is obvious that $|AB|\geq |(A\cap R)\cdot(B\cap R')|$. Now if distinct pairs $(x,y), (x', y') \in (A\cap R)\times (B\cap R')$ have the same image under the multiplication map $(x,y)\mapsto xy$, then $x^{-1}x' = y(y')^{-1}$, and so $x^{-1}x'$ lies in both $R$ and $R'$.
\end{proof}

\begin{lem}\label{l: m}
 Let $R$ be a subgroup of a group $G$. Let $A$ be a subset of $G$, and $a$ an element of $A$. Then
$$|A_4|\geq \frac{|A\cap R|^2}{|AA^{-1} \cap R \cap aRa^{-1}|}.$$
\end{lem}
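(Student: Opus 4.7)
The plan is to produce an explicit subset of $A_4$ of the required size by a double-counting (Ruzsa-style) argument. Set $B = A \cap R$. The key move is to choose the right four-fold product: I will look at
\[
S \;=\; B^{-1} \cdot a \cdot B \cdot a^{-1},
\]
which is visibly contained in $A_4$ since each of the four factors lies in $A \cup A^{-1}$. A naive choice such as $B\cdot a\cdot B\cdot a^{-1}$ also lies in $A_4$ but the collision analysis then produces $A^{-1}A$ rather than $AA^{-1}$ in the denominator, so the order of $B^{-1}$ and $B$ here is what matches the statement.

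Next I will estimate $|S|$ from below by counting fibres of the multiplication map $\phi:B \times B \to S$, $(b_1,b_2)\mapsto b_1^{-1} a b_2 a^{-1}$. Given $g\in S$, the equation $b_1^{-1} a b_2 a^{-1}=g$ determines $b_2 = a^{-1} b_1 g a$ uniquely from $b_1$. Since $b_2$ must lie in $B\subset R$, this is equivalent to $b_1 g \in a R a^{-1}$. If $b_1,b_1'$ are two such choices in $B$, then
\[
b_1 (b_1')^{-1} \;=\; (b_1 g)(b_1' g)^{-1} \;\in\; a R a^{-1},
\]
and also $b_1(b_1')^{-1}\in B B^{-1} \subset A A^{-1}\cap R$. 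Hence $b_1(b_1')^{-1}$ lies in $AA^{-1}\cap R \cap aRa^{-1}$, so the number of admissible $b_1$ (once we fix any one) is at most $|AA^{-1}\cap R \cap aRa^{-1}|$, giving $|\phi^{-1}(g)|\leq |AA^{-1}\cap R \cap aRa^{-1}|$.

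The proof concludes with double counting:
\[
|A\cap R|^2 \;=\; |B\times B| \;=\; \sum_{g\in S}|\phi^{-1}(g)|
\;\leq\; |S|\cdot |AA^{-1}\cap R \cap aRa^{-1}|,
\]
so $|A_4|\geq |S|\geq |A\cap R|^2 / |AA^{-1}\cap R \cap aRa^{-1}|$, as required. There is no real obstacle here; the only thing to get right is the bookkeeping that picks out $AA^{-1}$ (rather than $A^{-1}A$) and $aRa^{-1}$ (rather than $a^{-1}Ra$) in the denominator, which is arranged by the asymmetric placement of $B^{-1}$ on the left and $a,a^{-1}$ conjugating $B$ on the right.
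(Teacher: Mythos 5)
Your proof is correct. It is the same Ruzsa-type fibre-counting idea as the paper's, but executed by a different route: the paper first observes that $|AAA^{-1}\cap aRa^{-1}|\geq |aAa^{-1}\cap aRa^{-1}|=|A\cap R|$ and then invokes the two-subgroup Lem.~\ref{l: l} with $B=AAA^{-1}$ and $R'=aRa^{-1}$, so the four-fold product being counted is $(A\cap R)\cdot(AAA^{-1}\cap aRa^{-1})\subset A\cdot AAA^{-1}\subset A_4$; you instead count the explicit set $(A\cap R)^{-1}\,a\,(A\cap R)\,a^{-1}\subset A_4$ directly and inline the collision analysis. Your version is self-contained and has one genuine advantage: your collision element is $b_1(b_1')^{-1}$ with $b_1,b_1'\in A$, so it really does lie in $AA^{-1}\cap R\cap aRa^{-1}$, matching the stated denominator exactly. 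In the paper's route the collision element produced by the proof of Lem.~\ref{l: l} is of the form $x^{-1}x'$ with $x,x'\in A\cap R$, hence lies in $A^{-1}A$ rather than $AA^{-1}$, so that lemma as stated has a small left/right mismatch with its own proof; your remark about the asymmetric placement of $B^{-1}$ on the left is precisely what avoids this. The fibre bound is also handled correctly: fixing one admissible $b_1'$ for a given $g$, every other admissible $b_1$ satisfies $b_1\in (AA^{-1}\cap R\cap aRa^{-1})\,b_1'$, and $b_2$ is determined by $b_1$ and $g$. No gaps.
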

\begin{proof}
 First of all, notice that 
$$|AAA^{-1}\cap aRa^{-1}\geq |aAa^{-1}\cap aRa^{-1}|=|A\cap R|.$$
Now apply Lem. \ref{l: l} with $R'=aRa^{-1}$ and $B=AAA^{-1}$.
\end{proof}

For the final part of the proof of Thm. \ref{t: main2} we will need the concept of the {\it degree} of an algebraic variety. Rather than give a full treatment of this concept we refer the reader to \cite[\S 2.5.2]{helfgott3} where, for an affine algebraic variety $V$, the degree $\vdeg(V)$ is defined as a vector
$$(d_0 , d_1 , \dots , d_k , 0, 0, 0, \dots ),$$
where $k = \dim(V)$ and $d_j$ is the degree of the union of the irreducible components of $V$ of
dimension $j$. If $V$ is pure-dimensional, then $\vdeg(V)$ has only one non-zero entry which we write $\deg(V)$. 

We will need the version of Bezout's theorem given as \cite[Lem. 2.4]{helfgott3} and proved in \cite[p. 251]{danilov}:

\begin{lem}\label{l: bezout}
Let $X_1 , X_2 , \dots , X_k$ be pure-dimensional varieties in $\mathbb{P}_n$; let $Z_1 , Z_2 , \cdots , Z_l$ be the irreducible components of the intersection 
$X_1 \cap X_2 \cap \cdots \cap X_k$. Then

$$\sum_{j=1}^l\deg(Z_j ) \leq \prod_{i=1}^k\deg(X_i ).$$
\end{lem}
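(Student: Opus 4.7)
The plan is to induct on $k$, reducing the statement to the two-variety case. The base case $k=1$ is just the definition of degree for a pure-dimensional variety: if $X_1$ decomposes into irreducible components $W_1,\ldots,W_m$ of common dimension, then $\deg(X_1)=\sum_i\deg(W_i)$, so the stated inequality holds as equality. For the inductive step, I would first strengthen the two-variety statement to allow a possibly non-pure-dimensional first argument, by decomposing into irreducible components and summing (this is an essentially free consequence of the pure-dimensional case). Then, writing $X_1\cap\cdots\cap X_{k-1}=\bigcup_i W_i$ as a union of irreducibles, every component of $X_1\cap\cdots\cap X_k$ appears among the components of some $W_i\cap X_k$. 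Applying the strengthened two-variety inequality to each $W_i$ against $X_k$ gives
\[
\sum_j\deg(Z_j)\;\leq\;\sum_i\deg(W_i)\cdot\deg(X_k),
\]
and the inductive hypothesis applied to $X_1,\ldots,X_{k-1}$ (in its strengthened form) bounds $\sum_i\deg(W_i)$ by $\prod_{i<k}\deg(X_i)$, closing the induction.

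For the two-variety case (the core of the argument), my approach is to use a moving-lemma-plus-classical-Bezout strategy. Given $X,Y\subset\mathbb{P}^n$ pure-dimensional, there is a dense open $U\subset\mathrm{PGL}_{n+1}$ such that for every $g\in U$ the translate $gX$ meets $Y$ properly, i.e.\ each component of $gX\cap Y$ has codimension exactly $\operatorname{codim}(X)+\operatorname{codim}(Y)$. For such proper intersections, the classical Bezout theorem yields the sharp identity
\[
\sum_{j} i(gX,Y;Z_j(g))\cdot\deg(Z_j(g))\;=\;\deg(X)\cdot\deg(Y),
\]
where $i(\,\cdot\,,\,\cdot\,;\,\cdot\,)\geq 1$ denotes the intersection multiplicity and $Z_j(g)$ are the components of $gX\cap Y$. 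Discarding the multiplicities only shrinks the left-hand side, so the unweighted inequality
\[
\sum_j \deg(Z_j(g))\;\leq\;\deg(X)\cdot\deg(Y)
\]
holds for every $g$ in a dense open subset of $\mathrm{PGL}_{n+1}$. The basic lemma underlying this step is the single-hypersurface Bezout bound: if $V$ is irreducible and $H$ a hypersurface of degree $d$ not containing $V$, then $V\cap H$ is an effective Cartier divisor on $V$ of total degree $d\cdot\deg(V)$, since it is the zero locus of a section of $\mathcal{O}(d)\big|_V$; components $V_\alpha\subset H$ contribute trivially $\deg(V_\alpha)\leq d\cdot\deg(V_\alpha)$.

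The main obstacle is to pass from the generic $g$ back to $g=e$, i.e.\ to deduce the inequality for $X\cap Y$ itself from the corresponding inequality for $gX\cap Y$. The difficulty is that specialization $g\to e$ can destroy proper intersection: components of $X\cap Y$ may have excess dimension, and brand new components (not limits of components of $gX\cap Y$) may appear. The standard resolution, and the technical heart of Danilov's treatment, is specialization of cycles: form the incidence variety
\[
\mathcal{Z}\;=\;\overline{\bigl\{(g,P)\in U\times\mathbb{P}^n:\;P\in gX\cap Y\bigr\}}\;\subset\;(U\cup\{e\})\times\mathbb{P}^n,
\]
view it as a family of $\mathbb{P}^n$-cycles parametrised by $U\cup\{e\}$, and invoke lower-semicontinuity of the degree of this family's cycle-theoretic fiber under specialization (equivalently, upper-semicontinuity of the Hilbert polynomial along with the observation that passing from a scheme to its reduction only drops the degree). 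The generic fiber has total degree at most $\deg(X)\cdot\deg(Y)$, so the fiber over $e$ — which contains each $Z_j$ as a component — satisfies $\sum_j\deg(Z_j)\leq\deg(X)\cdot\deg(Y)$, as desired. Making this semicontinuity step rigorous is where one must invest real work; the rest of the proof is bookkeeping around the hypersurface Bezout lemma.
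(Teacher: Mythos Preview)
The paper does not actually prove this lemma: immediately before the statement it says the result is ``given as \cite[Lem.~2.4]{helfgott3} and proved in \cite[p.~251]{danilov}'', and no argument follows. So there is nothing in the paper to compare your proof against; the lemma is simply imported from the literature.

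Your sketch is a correct outline of one of the standard routes to the refined B\'ezout inequality (essentially the Fulton/Danilov approach: reduce by induction to two factors, move one factor into general position to obtain a proper intersection where classical B\'ezout applies with multiplicities, then specialize back via semicontinuity of degree in flat families). You have correctly flagged the specialization step as the only place where real work is required, and the bookkeeping in your inductive step is sound: each irreducible component $Z_j$ of the full intersection is an irreducible component of $W_i\cap X_k$ for at least one $W_i$, and any overcounting only helps an upper bound. If anything, your two-variety argument is slightly more elaborate than necessary --- one can also proceed by successively cutting with the hypersurfaces defining $Y$ and invoking only the single-hypersurface B\'ezout bound you stated, bypassing the moving lemma entirely --- but what you wrote is correct as it stands.
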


In order to state some consequences of this result we need some notation. Write $\overrightarrow{d}$ to mean a vector of integers $(d_1,\dots, d_k, 0,0,\dots)$ for which all entries are zero after some finite index $k$. We say that the vector $\overrightarrow{d}$ is {\it bounded above in terms of a variable $r$} if $k\ll_r 1, d_1\ll_r 1, \dots,$ and $d_k \ll_r 1$. Similarly, a vector $(d_1,\dots, d_k, 0,0,\dots)$ {\it is bounded above in terms of vectors $\overrightarrow{e_1}, \dots \overrightarrow{e_n}$} if the numbers $k,d_1,d_2,\dots, d_k$ are bounded above by functions depending only on the number of non-zero entries in $\overrightarrow{e_1},  \dots , \overrightarrow{e_n}$, and on the value of those entries.

It is easy to see that Bezout's theorem implies that, for any varieties $V_1 , V_2 , . . . , V_k$ (pure-dimensional or otherwise), the
degree $\vdeg(W)=(d_1,\dots, d_k, 0,0,\dots)$ of the intersection $W = V_1 \cap V_2 \cap \cdots \cap V_k$ is bounded above in terms of $\vdeg(V_1), \vdeg(V_2 ), \dots , \vdeg(V_k )$ alone.

We will apply Bezout's theorem via the following two results; the proof of the first is based on the proof of \cite[Prop. 4.1]{helfgott3}. We need one more definition: for an algebraic variety $X$ of dimension $d$ define the {\it dimension vector} of $X$ to be the vector $(s_0, s_1, \dots, s_d, 0, 0, \dots)$ where $s_i$ is the number of components of $X$ of dimension $i$.

\begin{lem}\label{l: dim vector}
 Let $X$ and $Y$ be varieties in $\mathbb{P}_n$ such that $X\subsetneq Y$. Write \[(s_0, s_1, \dots, s_k, 0, 0, \dots)\] (resp. $(t_0, t_1, \dots, t_l, 0, 0, \dots)$) for the dimension vector of $X$ (resp. $Y$). There exists a non-negative integer $m$ such that if $n>m$ then $t_n=s_n$, and $t_m < s_m$. 
\end{lem}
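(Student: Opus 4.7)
The plan is to pass to the irreducible decompositions of $X$ and $Y$ and extract the required index $m$ from a comparison of the two decomposition patterns. Write $X = X_1 \cup \cdots \cup X_p$ and $Y = Y_1 \cup \cdots \cup Y_q$ as their respective irreducible decompositions; here the $X_i$ (resp.\ $Y_j$) are maximal irreducible closed subsets of $X$ (resp.\ $Y$), and in particular $s_i$ counts how many $X_{i'}$ have dimension $i$, and likewise for $t_i$.

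The first observation, which drives everything, is that if an irreducible component $Y_j$ of $Y$ happens to be contained in $X$, then $Y_j$ must actually coincide with one of the $X_i$. (Indeed $Y_j \subseteq X_{i}$ for some $i$ because $Y_j$ is irreducible inside $X$, and maximality of $X_i$ among irreducible subsets of $X$ forces $X_i \subseteq Y_j$ too.) Call such a $Y_j$ \emph{inherited}; call the remaining $Y_j$ \emph{new}. Because $X \subsetneq Y$, at least one component of $Y$ must be new, so it makes sense to define $m$ to be the maximum dimension attained by a new component of $Y$.

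Next I would verify the two assertions packaged into the lemma. For $n > m$, every $n$-dimensional $Y_j$ is inherited (by the maximality of $m$ among dimensions of new components), so it equals some $X_i$; conversely every $n$-dimensional $X_i$ sits inside some $Y_j$, and a strict containment would have $\dim Y_j > n > m$, forcing $Y_j$ to be inherited, hence equal to some $X_{i'}$, giving $X_i \subsetneq X_{i'}$ and contradicting maximality. Thus the $n$-dimensional components of $X$ and $Y$ match bijectively, so $s_n = t_n$. At $n = m$ the same maximality argument shows that every $m$-dimensional $X_i$ equals some inherited $m$-dimensional $Y_j$; conversely every inherited $m$-dimensional $Y_j$ equals an $X_i$. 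Hence the $m$-dimensional components of $X$ are in bijection with the \emph{inherited} $m$-dimensional components of $Y$, and by the definition of $m$ there is at least one \emph{new} $m$-dimensional $Y_j$ on top of those, giving a strict inequality between the dimension-vector entries of $X$ and $Y$ at index $m$; this is precisely the conclusion of the lemma.

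The routine ingredients are the standard facts about irreducible decompositions of Noetherian spaces (existence, uniqueness, maximality), all of which can be quoted. The only subtle step — and the one I expect to need the most care — is the bookkeeping in the maximality argument used to conclude that $n$-dimensional components of $X$ for $n \geq m$ cannot sit strictly inside a higher-dimensional component of $Y$; once that point is pinned down cleanly, the bijection between $n$-dimensional components for $n > m$, and the strict inequality at $n = m$, both fall out with no further calculation.
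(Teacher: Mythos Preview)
Your argument is correct and follows essentially the same route as the paper's (very brief) proof: both compare the irreducible components of $X$ and $Y$ dimension by dimension and take $m$ to be the top dimension at which the component structures differ. You are more explicit about the bijection, which is helpful; the paper simply asserts the last step.

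One point you glossed over deserves to be said plainly: your argument (correctly, given the hypothesis $X\subsetneq Y$) yields $t_m > s_m$, whereas the lemma as printed asserts $t_m < s_m$. The statement contains a typo --- the hypothesis should read $X \supsetneq Y$. This is clear both from the paper's own proof (the line ``$Y$ does not contain all of $X_m$'' is nonsense if $X\subseteq Y$) and from the application in the subsequent corollary, where the lemma is invoked along a strictly decreasing chain with $X$ always the larger variety. You evidently sensed the problem, since you wrote only ``a strict inequality\dots\ precisely the conclusion of the lemma'' without committing to a direction; you should flag the discrepancy explicitly and state the corrected inequality.

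A minor slip: in your parenthetical justification that an inherited $Y_j$ coincides with some $X_i$, the appeal to ``maximality of $X_i$'' is the wrong way round. The clean argument is: $Y_j \subseteq X$ gives $Y_j \subseteq X_i$ for some $i$; then $X_i \subseteq X \subseteq Y$ gives $X_i \subseteq Y_{j'}$ for some $j'$; now $Y_j \subseteq Y_{j'}$ forces $j=j'$ by maximality of $Y_j$, whence $Y_j = X_i$.
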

\begin{proof}
For $i\in\mathbb{N}$ write $X_i$ (resp. $Y_i$) for the union of components of $X$ (resp. $Y$) of dimension $i$. Let $m$ be the minimum integer such that $X_n=Y_n$ for all $n>m$; since $X\neq Y$ are 
distinct we know that $m\geq 0$. Clearly $t_n=s_n$ for $n>m$. Clearly $Y$ does not contain all of $X_m$, thus the number of components of $Y_m$ is $t_m< s_m$.  
\end{proof}

\begin{cor}\label{c: bounded iteration}
Let $\{X_i: i\in \mathbb{N}\}$ be a set of distinct varieties in $\mathbb{P}_n$ whose degree vectors are bounded above uniformly in terms of some variable $r$. There exists an integer $N\ll_r 1$ such that if
\begin{equation}\label{peach}
X_0 \supsetneq X_0\cap X_1 \supsetneq X_0\cap X_1 \cap X_2 \supsetneq \cdots \supsetneq X_0\cap X_1 \cap X_2 \cap \cdots \cap X_n,
\end{equation}
then $n<N$.
\end{cor}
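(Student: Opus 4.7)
\begin{proof}
The plan is to combine Bezout's theorem (to control all intersections in the chain uniformly) with Lem.\ \ref{l: dim vector} (to guarantee that the dimension vector strictly decreases at each step) and then observe that the chain must terminate because the set of possible dimension vectors is finite and the decrease is with respect to a well-order.

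Set $Y_k = X_0\cap X_1\cap\dotsb\cap X_k$ for $k=0,1,\dotsc,n$. Since each $X_i$ has degree vector bounded above in terms of $r$, repeated application of the Bezout consequence stated after Lem.\ \ref{l: bezout} shows that $\vdeg(Y_k)$ is bounded above in terms of $r$, uniformly in $k$. In particular, the dimension vector of $Y_k$ (which is coordinatewise dominated by $\vdeg(Y_k)$) is also bounded above in terms of $r$, uniformly in $k$. Hence the dimension vectors of the $Y_k$ all lie in a fixed finite set $\mathcal{D}$ whose size depends only on $r$.

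Now, by hypothesis, $Y_k \supsetneq Y_{k+1}$ for every $k<n$. Applying Lem.\ \ref{l: dim vector} to $X=Y_{k+1}$ and $Y=Y_k$, the dimension vectors $(s^{(k)}_0,s^{(k)}_1,\dotsc)$ of $Y_k$ satisfy: there exists $m=m(k)$ so that $s^{(k+1)}_i = s^{(k)}_i$ for $i>m$ and $s^{(k+1)}_m < s^{(k)}_m$. In other words, the sequence $k\mapsto (s^{(k)}_0,s^{(k)}_1,\dotsc)$ is strictly decreasing with respect to the reverse-lexicographic order on vectors (compare the highest-index entry first). This order is a well-order on any set of vectors with coordinates bounded above, and the length of any strictly decreasing chain in $\mathcal{D}$ is bounded above by $|\mathcal{D}|$.

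Therefore $n+1\leq |\mathcal{D}| \ll_r 1$, and we may take any fixed $N$ that exceeds this bound.
\end{proof}

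The only potential obstacle is making precise the well-foundedness argument; once one notes that the dimension vectors lie in a set of size depending only on $r$ and that the transition rule described in Lem.\ \ref{l: dim vector} is genuinely the step rule of a reverse-lex order, termination is automatic and the bound $N$ is just the size of that finite set.
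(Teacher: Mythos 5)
Your proof is correct and follows essentially the same route as the paper's: the Bezout consequence after Lem.~\ref{l: bezout} bounds the degree, and hence the dimension vectors, of every partial intersection uniformly in $r$, and Lem.~\ref{l: dim vector} forces a strict decrease of the dimension vector in reverse-lexicographic order at each step, so the chain length is bounded by the size of a finite set depending only on $r$. The paper phrases the termination in two stages (first descending to dimension zero in $\ll_r 1$ steps, then exhausting the $\ll_r 1$ remaining points), whereas you package it as a single well-order argument on the finite set of admissible dimension vectors; the content is identical.
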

\begin{proof}
Suppose that (\ref{peach}) holds for some $n$. Since the degree vector of $X$ is bounded above in terms of $r$, so too is the dimension vector of $X$. Now apply Lem. \ref{l: dim vector} repeatedly, first with $X=X_0$ and $Y=X_0\cap X_1$, then with $X=X_0\cap X_1$ and $Y=X_0\cap X_1\cap X_2$, etc. Lem. \ref{l: bezout} (and the comments after it) implies that, after $m\ll_r 1$ iterations, either $X=Y$ (and the result follows) or the dimension vector of $Y$ has form $(t_0, 0,\dots, 0)$; what is more $t_0\ll_r 1$. In this case the variety $X$ consists of $t_0$ points. We can apply Lem. \ref{l: dim vector} at most a further $t_0$ times; either $X=Y$ holds before we complete these iterations (and the result follows), or else $X_0\cap X_1 \cap \cdots \cap X_N$ is the empty variety, and the result follows.
\end{proof}

In order to apply Bezout's theorem we will need information about the degree of some varieties that we have already encountered.

\begin{lem}\label{l: nice kitty}
 Let $A\subset B(K)$, where $K=\Z/p\Z$ and $B$ is a Borel subgroup of $\GL_r$. Let $G$ be the connected, solvable $K'$-group $G=UT$ defined in Lem. \ref{l: containment}. Let $\Phi_R^*$ be a set of roots for $G$. Then
\begin{itemize}
 \item $G$ is an affine algebraic variety of degree bounded above in terms of $r$;
\item Let $\eta_1, \dots, \eta_m\subset \Phi_R^*$; then $G_I= \ker_G(\eta_1)\cap \cdots \cap \ker_G(\eta_m)$ is an affine algebraic variety of degree bounded above in terms of $r$.
\end{itemize}
\end{lem}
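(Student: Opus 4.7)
My approach would be to exploit the explicit structure of $G$ furnished by Lem.\ \ref{l: containment} and Lem.\ \ref{l: roots}, and then to apply Bezout's theorem (Lem.\ \ref{l: bezout}).

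For the first bullet, I would realize $G$ as the image of an explicit polynomial parametrization of bounded degree. By Lem.\ \ref{l: roots}, every point of $G(\Kbar)$ can be written uniquely as $g = x_{R_1}(s_1)\dotsb x_{R_d}(s_d) t$ with $d = \dim U \leq \binom{r}{2}$, $t \in T$, $s_i \in \Kbar$, and $x_{R_i}(s) = \exp(s v_i)$ for some fixed $v_i \in \mathfrak{u}(\Kbar)$. Each $v_i$ is nilpotent of index at most $r$, so $\exp(s v_i)$ is a matrix polynomial of degree at most $r-1$ in $s$; composing with matrix multiplication gives a polynomial map $\Phi : \mathbb{A}^d \times T \to GL_r$ of total degree bounded in $r$ whose image is $G$. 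The torus $T$ is a closed subgroup of the diagonal torus $T_r$ of the ambient Borel, cut out by equations of the form $\phi_i = \phi_j$ with $\phi_i, \phi_j$ roots of $B$ (see the proof of Lem.\ \ref{l: torus normal}); such equations have bounded degree, so $\deg(T) \ll_r 1$, and hence the source of $\Phi$ has bounded degree. A standard bound for the degree of the image of a polynomial map of bounded degree (cf.\ \cite[\S 2.5.2]{helfgott3}) then yields $\deg(G) \ll_r 1$.

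For the second bullet, I would observe that each root $\eta_j$ extends from $T$ to a character of $G$ via $\eta_j(ut) = \eta_j(t)$, and so $\ker_G(\eta_j) = U \cdot \ker_T(\eta_j)$. By the corollary following Lem.\ \ref{l: torus normal}, $\eta_j$ is the restriction to $T$ of a root of the ambient Borel, hence a Laurent monomial of the form $t_k/t_l$ in the diagonal entries. Inside $B \subset GL_r$, viewed as the space of upper-triangular matrices, the locus $\{g \in B : \eta_j(g) = 1\}$ is cut out by the single linear equation $g_{kk} = g_{ll}$, a hyperplane of degree $1$. Therefore
\[
G_I \;=\; G \,\cap\, \bigcap_{j=1}^m \bigl\{g \in B : g_{k_j k_j} = g_{l_j l_j}\bigr\}
\]
is the intersection of $G$ with $m$ hyperplanes, and Bezout (Lem.\ \ref{l: bezout}) yields
\[
\deg(G_I) \;\leq\; \deg(G) \cdot \prod_{j=1}^m 1 \;=\; \deg(G) \;\ll_r\; 1.
\]

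The only delicate point I expect is handling the embedding carefully: the bound $\deg(T) \ll_r 1$ and the identification of root kernels with hyperplanes are transparent when $B$ is the standard upper-triangular Borel, so for an arbitrary $B$ one has to conjugate into standard position (possibly over $\Kbar$) and verify that degrees are insensitive to this conjugation, which they are because $GL_r(\Kbar)$ acts on $\mathbb{A}^{r^2}$ by linear automorphisms.
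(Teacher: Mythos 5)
Your proof is correct, and for the second bullet it is essentially the paper's argument: the paper likewise invokes Cor.~\ref{c: torus normal} to see that each root kernel is cut out inside $G$ by additional equations of bounded degree and then concludes by Bezout (Lem.~\ref{l: bezout}); your explicit identification of $\ker_G(\eta_j)$ with the hyperplane section $\{g_{kk}=g_{ll}\}$ is a correct unwinding of that corollary. For the first bullet your route is genuinely different. The paper bounds $\deg(G)$ by exhibiting $G$ as a \emph{zero set}: $U$ is defined by the equations $f_i(\log X)=0$ from Lem.~\ref{l: container}, with $f_i$ linear and $\log$ a polynomial of degree $<r$, and $T=N_{T_r}(U)$ is defined by the bounded-degree equations of $T_r$ together with the conditions from Lem.~\ref{l: torus normal}; bounded degree then follows from Bezout alone. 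You instead exhibit $G$ as an \emph{image}, parametrizing it via the standard form of Lem.~\ref{l: roots} and the bounded-degree maps $s\mapsto\exp(sv_i)$, and then appeal to the fact that the (closure of the) image of a bounded-degree morphism of a bounded-degree variety has bounded degree. That auxiliary fact is available in \cite[\S 2.5.2]{helfgott3} and your use of it is legitimate (and since $G(\Kbar)=U(\Kbar)T(\Kbar)$ is already closed, there is no issue with constructibility); the paper's route avoids it, at the mild cost of having to know the defining equations of $U$ explicitly. Note in passing that the paper itself implicitly needs a degree bound for the product variety $UT$, so the two arguments are closer than they first appear. One cosmetic correction: the conditions $\phi_i=\phi_j$ appearing in Lem.~\ref{l: torus normal} and Cor.~\ref{c: torus normal} are quadratic in the matrix entries after clearing denominators rather than linear, but this changes nothing since only boundedness of the degree is needed, and the equations $\eta_j=1$ that you actually use are genuinely linear.
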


\begin{proof}
The group $G=UT$ where $U$ and $T$ are varieties lying in affine subspaces $\mathbb{A}_1$ and $\mathbb{A}_2$ which intersect only in $\{e\}$; thus, to bound the degree of $G$, it is sufficient to bound the degree of $U$ and $T$.

The group $U$ is constructed in Lem. \ref{l: container}; it is defined by equations $f_i(\log X)$ for some linear functions $f_i$; thus, in particular, $U$ has degree bounded above in terms of $r$.

Write $B=U_rT_r$ for the decomposition into torus and unipotent radical; then the group $T=N_{T_r}(U)$; this group is considered in Lem. \ref{l: torus normal}. The group $T_r$ is conjugate to the set of invertible diagonal matrices; this set is defined by equations of degree at most $r+1$. Then the proof of Lem. \ref{l: torus normal} implies that to define $T$ we require only the equations defining $T_r$ as well as some linear equations; we conclude that $T$, and hence $G$, has bounded degree.

Now the proof of Cor. \ref{c: torus normal} implies that the group $G_I$ is defined as a subset of $G$ by linear equations; hence it too has bounded degree.
\end{proof}

We are ready to prove Thm. \ref{t: main2}.

\begin{proof}
Take $A$ as prescribed, and apply Thm. \ref{t: ps} to $A\cup A^{-1}\cup \{1\}$. If (\ref{i: ps1}) holds, then $|A_3|\geq C|A|$ and we are done. Suppose instead that (\ref{i: ps2}) holds; then we have two subgroups $P\leq H \leq \GL_r(K)$ with the given properties. Note that the group $P$ is a subset of $A_3A_3^{-1}$.

Next apply Prop. \ref{p: reduction} to the two subgroups $P$ and $H$; we obtain a solvable subgroup $F\leq \GL_r(K)$ such that $PF=H$. Define $A'=A_3A_3^{-1}\cap H$ and consider the natural projection map
$$\pi: H\to H/P = PF/P \cong F/F\cap P.$$
Now $\pi(A')$ can be thought of as a subset of $F/F\cap P$; write $D$ for the full pre-image of $\pi(A')$ in $F$.

 We apply Thm. \ref{t: main} to $D$ with constant $C^{47}$. If (\ref{i: one}) holds, then $|D_3|\geq C^{47}|D|$. Since $D$ is the full pre-image of $\pi(A')$ this implies that $|(\pi(A'))_3|\geq C^{47}|\pi(A')|$. Now Lem. \ref{l: b2} implies that $|(A')_8|\geq C^{47}|A'|$; since $(A')_8 \subseteq A_{48}\cap H$ and $A'\supseteq
A^{-1} A \cap H$,  Lem. \ref{l: b1} implies that $|A_{49}|\geq C^{47}|A|$; finally the Tripling Lemma yields that $|A_3|\geq C|A|$ and we are done.

Suppose that (\ref{i: one}) of Thm. \ref{t: main} does not hold with respect to $D$. Then (\ref{i: two}) holds and we obtain two groups, $S\leq F$ and $U_R\leq F$, with the given properties. In particular, since $K=\Z/p\Z$ we know that both $S$ and $U_R$ are normal in $\langle D \rangle$. 

Let $\phi: F\to F/F\cap P$ be the natural projection map; observe that $\phi(D)=\pi(A')$. It is easy to check that the conclusions of Thm. \ref{t: main} apply to $\pi(A')$ as a subgroup of $F/F\cap P$; that is to say the subgroups $\phi(S)$ and $\phi(U_R)$ are normal subgroups of $\langle \pi(A') \rangle$ such that $\phi(S)/\phi(U_R)$ is nilpotent, $(\pi(A'))_{k'}$ contains $\phi(U_R)$ and $\pi(A')$ is contained in $C^{O_r(1)}$ cosets of $\phi(S)$. Here $k'$ depends only on $r$.

Now we take the preimage, $\pi^{-1}$, of all of these objects in $H$. We obtain groups $S'=\pi^{-1}(\phi(S))$ and $U_R'= \pi^{-1}(\phi(U_R))$ such that $S'/U_R'$ is nilpotent and $A'$ lies in $C^{O_r(1)}$ cosets of $S'$. What is more, since $A'$ contains $P$ and $(\pi(A'))_{k'}$ contains $\phi(U_R)$, we conclude that $U_R'$ lies in $(A')_{k'+1}$ . Recall that $A$ lies in $C^{O_r(1)}$ cosets of $H$ by Thm. \ref{t: ps}; hence, by Lem. \ref{lem:duffy2}, $A$ lies in $C^{O_r(1)}$ translates of $A'$; together these facts imply that $A$ lies in $C^{O_r(1)}$ cosets of $S'$.

There is one problem remaining: the groups $U_R'$ and $S'$ need not be normal in $\langle A \rangle$.  Observe that $\langle A \rangle$ acts as an automorphism group of the group $H/P$ (since $H$ and $P$ are both normal in $\langle A \rangle$). Recall that $H/P\cong F/F\cap P$ where $F$ is a solvable subgroup of $\GL_r(K)$.

{\bf 1. The group $H_1$ can be chosen to be normal.}
By Prop. \ref{p: bounded borel} we know that $F$ intersects $B(K)$ for some Borel subgroup $B$ such that $F_0=F\cap B(K)$ is normal in $F$ and $|F:F_0|\ll_r 1$. Note that the group $P_0=O_p(F_0)$ is a $p$-group normal in $F_0$, and $F_0/P_0$ is abelian of order coprime to $p$. We may assume that $p$ is larger than any function of $r$ (since, otherwise, Thm. \ref{t: main2} follows trivially - (\ref{i: two2}) holds with $H_1=H_2=\langle A \rangle$). Then we can take $p>|F:F_0|$ and so $P_0$ is normal in $F$; indeed we have that $(|F/P_0|, p)=1$ and so $P_0$ is a normal Sylow $p$-subgroup of $F$, hence is characteristic in $F$.

Since the group $U_R$ specified in Thm. \ref{t: main} is unipotent, it is a $p$-group, and we know that $U_R$ is a subgroup of $P_0$. Since $P_0$ is characteristic in $F$, the action of $\langle A \rangle$ on $H/P\cong F/F\cap P$ induces an action on $P_0/(F\cap P)$. Let $aU_Ra^{-1}/ (F\cap P)$ be a conjugate of $U_R/ (F\cap P)$ by an element of $A$ that is not equal to $U_R/ (F\cap P)$. Then $U_RaU_Ra^{-1}$ is a subgroup of $P_0$ that is strictly larger than $U_R$. Since $P_0$ has subgroup chains $P_0>P_1>\cdots$ of length at most $r^2$, we can only repeat this process at most $r^2$ times until we obtain a subgroup $H_1'$ of $P_0/(F\cap P)$ that is normalized by $\langle A \rangle$ (in the induced action on $P_0/(F\cap P)$). The preimage in $S$ of $H_1'$ is a normal subgroup, $H_1$, of $\langle A \rangle$  lying in $A_{k''}$ for some $k''\ll_r 1$. Since it is strictly greater than $U_R'$ we know that $S'/H_1$ is nilpotent.

{\bf 2. The group $H_2$ can be chosen to be normal.} We begin with a claim: {\it The group $S$ in $F$ is equal to $\langle D_B \rangle \cap G_0(K)$ where $G_0$ is an algebraic group of degree bounded above in terms of $r$, $D_B$ is some subset of $D_l\cap B(K)$ for some $l\ll_r 1$, and $G_0(K)/ U_R$ is nilpotent.} 

To prove the claim, we must recall how the group $S$ was constructed in the proof of Thm. \ref{t: main}. The first reduction comes via Prop. \ref{p: bounded degree} in which $S$ is constructed as the intersection of $\ll_r 1$ conjugates of $S_H$, a subgroup of $\langle D_B\rangle$ for $D_B$ some subset of $D_l\cap B(K)$. Lem. \ref{l: bezout} implies that it is sufficient to prove that $S_H = \langle D_B\rangle \cap G_1$ where $G_1$ is a linear algebraic group of degree bounded above in terms of $r$. 

Let $G$ be the linear algebraic group from Lem. \ref{l: containment} with $A=D_B$.
The proof of Thm. \ref{t: main} given in \S \ref{s: proof} defines $S$ to be $\langle D_B\rangle \cap G_1(K)$ where $G_1$ is the intersection of a number of root kernels in $G$; now Lem. \ref{l: nice kitty} implies that $G_1$ has degree bounded above in terms of $r$.

Finally observe that the group $U_R$ is constructed with respect to $G_1$ so that $G_1(K)/U_R$ is nilpotent. Since $G_0\leq G_1$ we conclude that $G_0(K)/U_R$ is nilpotent and the claim is proved.

Now suppose that $G_0$ is not normalized by the action of $\langle A \rangle$ on $H/P$. Thm. \ref{t: main} implies that there exists $\delta\ll_r 1$ and $k\ll_r 1$ such that $|D_k\cap S|\geq C^{-\delta}|D|$. 

Suppose that $|D_kD_k^{-1}\cap S \cap aSa^{-1}|\leq C^{-2\delta -\frac{4k-2}{15}}|D|$ for some $a\in \langle A \rangle$. We apply Lem. \ref{l: m} with $R=S$ and $A=D_k$ to obtain that
$$|D_{4k}|\geq \frac{|D_k\cap S|^2}{|D_kD_k^{-1}\cap S \cap aSa^{-1}|} \geq \frac{C^{-2\delta}|D_k|^2}{C^{-2\delta-\frac{4k-2}{15}}|D_k|} = C^{\frac{4k-2}{15}}|A|.$$
An application of Lem. \ref{l: tripling} implies that $|D_3|\geq \sqrt[15]{C}|D|$ and, just as before, this implies that $|A_3|\geq C|A|$ and so (\ref{i: one2}) holds and we are done.

Suppose, instead, that $|D_kD_k^{-1}\cap S \cap aSa^{-1}| \geq C^{-2\delta -\frac{4k-2}{15}}|D|$ for all $a\in \langle A \rangle$. Then $|D_{2k}\cap (S\cap aSa^{-1})|\geq C^{-2\delta -\frac{4k-2}{15}}|D|$, and (\ref{i: two}) of Thm. \ref{t: main} holds with $S$ replaced by $S\cap aSa^{-1}$, $k$ replaced by $2k$, and $\delta$ replaced by $2\delta +\frac{4k-2}{15}$.

We iterate this procedure, choosing elements $a_1, a_2, \dots$ so that
\begin{equation}\label{e: pear} 
G_0 > G_0 \cap a_1G_0a_1^{-1} > G_0 \cap a_1G_0a_1^{-1}\cap a_2 G_0 a_2^{-1} > \cdots.
\end{equation}
Note that all containments here are strict. If, at any point, we obtain growth, i.e., $|A_3|\geq C|A|$, then we are done as (\ref{i: one2}) of Thm. \ref{t: main2} holds. Suppose that this does not happen. Then we apply Cor. \ref{c: bounded iteration} with $X_0= G_0, X_1=a_1G_0a_1^{-1}, X_2=a_2 G_0 a_2^{-1}$ and so on. We conclude that there are at most $m\ll_r 1$ elements $a_1, \dots a_m$ which satisfy (\ref{e: pear}). Thus the intersection $G_0 \cap a_0G_0a_0^{-1}\cap a_1G_0a_1^{-1}\cap \cdots \cap a_mG_0a_m^{-1}$ is normalized by the action of $\langle A\rangle$. We call this intersection $H$ and note that, in particular, $D$ lies in $C^{O_r(1)}$ cosets of $H(K)$.

Now write $D_1 = \langle D^a \, \mid \, a\in\langle A \rangle\rangle$, and set $H_2'=D_1\cap H(K)$. This is normalized by the action of $\langle A \rangle$ on $H/P$, and hence $H_2=\pi^{-1}(H_2')$ is a normal subgroup of $\langle A \rangle$. Since $G_0(K)/ U_R$ is nilpotent we know that $H_2/H_1$ is nilpotent. Finally, since $D$ lies in $C^{O_r(1)}$ cosets of $G_0(K)$, we conclude that $A'$ lies in $C^{O_r(1)}$ cosets of $H_2$, and Lem. \ref{lem:duffy2} implies that $A$ lies in $C^{O_r(1)}$ cosets of $H_2$.
\end{proof}

\section{Theorem~\ref{t: main3}}\label{s: extension 2}

In this section we prove Thm.~\ref{t: main3}. Before we do this, we must explain the three new pieces of terminology that were used in the statement of Thm.~\ref{t: main3}; the first two are due to Tao \cite{taofrei, taononcomm}; the third was also first defined by Tao \cite{taofrei}, however we prefer to work with the slightly different definition of \cite{tointon}, which is in line with that in \cite{bgt11}. In what follows we set $G$ to be a group and $C>1$, a real number.

We define a subset $A\subset G$ to be a {\it $C$-approximate group} if $A=A^{-1}$ and there exists $X\subseteq G$ such that $X=X^{-1}$, $|X|\leq C$ and $AA\subseteq XA$.

For two subsets $A,B\subset G$, we say that $A$ is {\it $C$-controlled} by $B$ if $|B|\leq C|A|$ and there exists $X\subseteq G$ such that $|X|\leq C$ and $A\subseteq XB\cap BX$.

Finally we need the notion of a {\it coset nilprogression}, which we define in two stages as follows. 

Let $x_1,\dots, x_r$ be elements that generate a nilpotent group of nilpotency class $s$ and let $L=(L_1,\dots, L_r)$ be a vector of positive integers. Then the set of all products in the $x_i$ and their inverses, in which each $x_i$ and its inverse appear at most $L_i$ times between them, is called a \emph{nilprogression of rank $r$ and step $s$}.

Now a \emph{coset nilprogression of rank $r$ and step $s$} is a subset of $G$ of the form $\pi^{-1}(Q)$, where $G_0$ is a subgroup of $G$, $H$ is a finite normal subgroup of $G_0$, $\pi: G_o\to G_o/H$ is the quotient map, and $Q$ is a nilprogression of rank $r$ and step $s$ in $G_0/H$.

In what follows we will denote a coset nilprogression of this form by $HP$, in order to emphasise that it is a collection of cosets of the subgroup $H$. The set $P$ appearing in this notation is not, in general, uniquely defined, a fact that will not affect anything that follows.


We need to connect these new notions to growth, and the next two results do just that. The first is due to Tao \cite{taononcomm}; the formulation given here can be found as part of \cite[Prop. 3.1]{BG1}.

\begin{lem}\label{l: connect}
Let $A$ be a set in a group $G$ and $C>1$, a real number.
\begin{enumerate}
\item If $|AAA|\leq C|A|$, then the set
$$B:=\{a_1a_2a_3 \mid a_1,a_2,a_2\in A\cup A^{-1}\}$$
is a $C^{O(1)}$-approximate group and $A$ is $C^{O(1)}$-controlled by $B$.
\item If $1\in A$ and $A$ is a $C$-approximate group, then $|A_3|\leq C^2|A|$.
\item If $A$ is a $C$-approximate group, then $A^n$ is $C^{n+1}$-controlled by $A$.
\end{enumerate}
\end{lem}

We now state the key result of Tointon \cite[Thm.~1.4]{tointon}

\begin{thm}\label{t: tointon}
 Let $G$ be a nilpotent group of nilpotency class $s$, and let $A\subset G$ be a $C$-approximate group. Then there exists a coset nilprogression $HP$ of rank $C^{O_s(1)}$ such that
 $$A \subseteq HP \subseteq A^{C^{O_s(1)}}.$$
\end{thm}

\begin{cor}\label{c: tointon}
  Let $G$ be a nilpotent group of nilpotency class $s$, and let $A\subset G$ be a $C$-approximate group. Then $A$ is $\exp(C^{O_s(1)})$-controlled by a coset nilprogression of rank $C^{O_s(1)}$ contained in $A^{C^{O_s(1)}}$.
\end{cor}
\begin{proof}
 Lem.~\ref{l: connect} implies that $A^{C^{O_s(1)}}$ is
 $C^{C^{O_s(1)}}$-controlled by $A$, i.e.,  $A^{C^{O_s(1)}}$ is
 $\exp(C^{O_s(1)})$-controlled by $A$. Now Thm.~\ref{t: tointon} tells us
 that $A^{C^{O_s(1)}}$ contains a coset nilprogression $HP$ of rank
 $C^{O_s(1)}$ containing $A$. It follows that $A^{C^{O_s(1)}}$ is 
 $\exp(C^{O_s(1)})$-controlled by $HP$, and so $A$ is
 $\exp(C^{O_s(1)})$-controlled by $HP$ as well.
\end{proof}

We need one final lemma due to Tao \cite[Lem. 3.6]{taononcomm}; it is the non-abelian analogue of Ruzsa's covering lemma.

\begin{lem}\label{l: ruzsa cover}
 Let $A,B$ be finite subsets of a group $G$ and $C>1$. If $|B\cdot A| \leq C |B|$ (resp. $|A\cdot B| \leq C |B|$), then there exists a finite set $Y\subseteq A$ such that $|Y|\leq C$ and $A\subseteq B^{-1}BY$ (resp. $A\subseteq YBB^{-1}$).
\end{lem}

We can now prove the main result of this section.

\begin{proof}[Proof of Thm.~\ref{t: main3}]
We assume, without loss of generality, that $1\in A$; since $A$ is symmetric this implies that $A_3=AAA$. Now Lem.~\ref{l: connect} implies that $|A_3|\leq C^2|A|$. 
 
It will be convenient to assume that $C^2>2$. If this were not the case, then Lem.~\ref{l: olson} implies that $A_3=\langle A \rangle$ and the result holds with the coset nilprogression taken to be $\langle A \rangle$.
 
Now we apply Thm.~\ref{t: main2} with constant $C^2$ and conclude that (b) holds - let $H_1$ and $H_2$ be the given subgroups, $k$ the given positive integer such that $A_k \supseteq H_1$.
 
Let $A' = A_{2k}\cap H_2$. Then Lem.~\ref{l: b1} and
Lem.~\ref{l: tripling}(b)
 imply that
$$\frac{|A'_3|}{|A'|} = \frac{|(A_{2k}\cap H_2)_3|}{|A_{2k}\cap H_2|} \leq \frac{|A_{6k}\cap H_2|}{|A_{2k}\cap H_2|} \leq \frac{|A_{6k+1}|}{|A|}\leq (C^2)^{6k-1}.$$

We apply Prop.~\ref{p: reduction} to obtain a solvable subgroup $F< \GL_r(K)$ such that $H_1F=H_2$. Consider the natural projection
$$\pi: H_2 \to H_2/H_1 = H_1F/H_1 \cong F/(F\cap H_1).$$
Then Lem.~\ref{l: b2} implies that
\begin{equation}\label{e: d grows}
\frac{|\pi(A')_3|}{|\pi(A')|} = \frac{|\pi(A'_3)|}{|\pi(A')|} \leq \frac{|A'_8|}{|A'|}= C^{O_r(1)}.
\end{equation}

Prop.~\ref{p: bounded borel} implies that $F$ has a normal subgroup $Q_F$ such that $|F:Q_F|\ll_r 1$ and $Q_F$ is a subgroup of $B(K)$, where $B$ is a Borel subgroup of $\GL_r$ defined and trigonalizable over $K'$, a field extension of $K$ of degree at most $r$. Since $B(K)$ has abelian Sylow $t$-subgroups for $t\neq p$, and a unique Sylow $p$-subgroup of nilpotency class at most $r$, any nilpotent section of $Q_F$ has nilpotency class at most $r$.

Write $D$ for the set $\pi(A')$ and write $Q$ for the image in $H_2/H_1$ of $Q_F(F\cap H_1)/(F\cap H_1)$ under the isomorphism $F/(F\cap H_1)\to H_2/H_1$. In particular, since $Q_F(F\cap H_1)/(F\cap H_1) \cong Q_F/(F\cap H_1\cap Q_F)$, $Q$ is nilpotent of class at most $r$. 

Prop.~\ref{prop:usef} implies that there
are subsets $D_{Q,1}, D_{Q,2}\subset D_k \cap Q$, $J_1, J_2\subset D_k$, where $k\leq O_r(1)$ such that 
$$\bigcup \limits_{g\in J_2}gD_{Q,2}\supset D\subset \bigcup\limits_{g\in J_1}D_{Q,1}g$$
and $|D_{Q,1}|, |D_{Q,2}| \gg_r |D|$ and $|J_1|, |J_2|\leq |F:Q|\ll_r 1$. 
Let $E=D_{Q,1}\cup D_{Q,2}$. Then \eqref{e: d grows} implies that
$|EEE|\leq C^{O_r(1)}|E|$; by Lem.~\ref{l: connect}, this means that $E_3$ is a $C^{O_r(1)}$-approximate group.

We apply Cor.~\ref{c: tointon} to conclude that $E_3$ is
$\exp(C^{O_r(1)})$-controlled by a coset nilprogression $HP$ of rank
$C^{O_r(1)}$ contained in $(E_3)^{C^{O_r(1)}}$. In other words,
there is a set $X$ with $|X|\leq \exp(C^{O_r(1)})$ 
such that $E_3\subset HPX\cap XHP$. Then $D\subset H P X J_2 \cap J_1 X H
P$. Since \[|J_1 X \cup X J_2|\leq |X| |J_2| + |J_1| |X| \leq
 O_r(1) \cdot \exp\left(C^{O_r(1)}\right) = \exp\left(C^{O_r(1)}\right).\]
and so $D$ is $\exp(C^{O_r(1)})$-controlled by a coset nilprogression $HP$
of rank $C^{O_r(1)}$.

The preimage of $HP$ in $H_2$, $\pi^{-1}(HP) = H_1HP$, is a coset nilprogression of rank $C^{O_r(1)}$ that $\exp(C^{O_r(1)})$-controls the set $A'$. What is more, by definition, $H_1HP$ is contained in $A^{C^{O_r(1)}}$. Let $W$ be a set of cardinality $\exp(C^{O_r(1)})$ such that $A' \subseteq WH_1HP \cap H_1HPW$.

Define $B=A_k\cap H_2$ and, appealing to the Tripling Lemma, observe that
$$|AB|, |BA|\leq |A_{k+1}| \leq C^{O_r(1)}|A| \leq C^{O_r(1)}|B|.$$
Then Lem.~\ref{l: ruzsa cover} implies that there exist sets $Y_1, Y_2$, both of cardinality $C^{O_r(1)}$, such that $Y_2BB^{-1} \supseteq A \subseteq B^{-1}B Y_1$; in particular $A \subseteq Y_2A'\cap A'Y_1$. We may assume that $1\in Y_1\cap Y_2$. We conclude that
$$A \subseteq Y_2WY_1H_1HP \cap H_1HPY_2WY_1.$$
In other words, $A$ is $\exp(C^{O_r(1)})$-controlled by $H_1 H P$, as required.
\end{proof}

\providecommand{\bysame}{\leavevmode\hbox to3em{\hrulefill}\thinspace}
\providecommand{\MR}{\relax\ifhmode\unskip\space\fi MR }
\providecommand{\MRhref}[2]{%
  \href{http://www.ams.org/mathscinet-getitem?mr=#1}{#2}
}
\providecommand{\href}[2]{#2}


\begin{thebibliography}{Hum75}


\bibitem[BG08]{bgsu2}
J.~Bourgain and A.~Gamburd, \emph{On the spectral gap for finitely-generated
  subgroups of $\rm {SU}(2)$}, Invent. Math. \textbf{171} (2008), no.~1,
  83--121.

\bibitem[BG08b]{bgexp}
J.~Bourgain and A.~Gamburd, \emph{Uniform expansion bounds for {C}ayley graphs of {${\rm SL}_2(\mathbb{F}_p)$}}, Ann. of Math. (2) \textbf{167} (2008), no.~2, 625--642.

\bibitem[BKT04]{bkt}
J.~Bourgain, N.~Katz, N. and T.~Tao, \emph{A sum-product estimate in finite fields, and applications}, Geom. Funct. Anal. \textbf{14} (2004), no.~1, 27--57.

\bibitem[BK03]{bk}
J.~Bourgain and S.~V.Konyagin, \emph{Estimates for the number of sums and products and for exponential sums over subgroups in fields of prime order}, C. R. Math. Acad. Sci. Paris, \textbf{337} (2003), no.~2, 75--80.

\bibitem[BG11a]{BG1}
E.~Breuillard and B.~Green, \emph{Approximate groups, {I}: the torsion-free
  nilpotent case},  J. Inst. Math. Jussieu \textbf{10} (2011), no. 1, 37--57.

\bibitem[BG11b]{BG2}
\bysame, \emph{Approximate groups, {II}: the solvable linear case},  Q. J. Math. \textbf{62} (2011), no. 3, 513--521.

\bibitem[BGT11]{bgt2}
E.~Breuillard, B.~Green, and T.~Tao, \emph{Approximate subgroups of linear
  groups},  Geom. Funct. Anal. \textbf{21} (2011), no. 4, 774--819. 

\bibitem[BGT12]{bgt11}
E.~Breuillard, B.~Green, and T.~Tao, \emph{The structure of approximate groups}, 
Publ. Math. Inst. Hautes \'Etudes Sci. \textbf{116} (2012), Issue 1, 115--221. 

\bibitem[Bor91]{borel}
A.~Borel, \emph{Linear algebraic groups}, second ed., Graduate Texts in
  Mathematics, vol. 126, Springer-Verlag, New York, 1991.

\bibitem[BS68]{boreltits2}
A.~Borel and T.~A. Springer, \emph{Rationality properties of linear algebraic
  groups. {II}}, T\^ohoku Math. J. (2) \textbf{20} (1968), 443--497.

\bibitem[BT71]{boreltits}
A.~Borel and J.~Tits, \emph{\'{E}l\'{e}ments unipotents et sous-groupes
  paraboliques de groupes r\'{e}ductifs. {I}}, Invent. Math. \textbf{12}
  (1971), 95--104.

\bibitem[Car93]{carter}
R.~W. Carter, \emph{Finite groups of Lie type. Conjugacy classes and complex characters}, Reprint of the 1985 original. Wiley Classics Library. John Wiley and Sons, Ltd., Chichester, 1993.

\bibitem[Cha08]{chang}
M.-C. Chang, \emph{Product theorems in {${\rm SL}_2$} and {${\rm SL}_3$}} J. Inst. Math. Jussieu, \textbf{7} (2008), no~1, 1--25.


\bibitem[Dan94]{danilov}
V.~I. Danilov, \emph{Algebraic varieties and schemes}, Algebraic geometry, {I},
  Encyclopaedia Math. Sci., vol.~23, Springer, Berlin, 1994, pp.~167--297.

\bibitem[Din11]{dinai}
O.~Dinai, \emph{Growth in ${SL}_2$ over finite fields},  J. Group Theory \textbf{14} (2011), no. 2, 273--297.

\bibitem[FKP10]{FKP}
D.~Fisher, N.~Katz, and I.~Peng, \emph{Approximate multiplicative groups in
  nilpotent {L}ie groups}, Proc. Amer. Math. Soc. \textbf{138} (2010),
  1575--1580.

\bibitem[GH11]{helfgill}
N.~Gill and H.~A. Helfgott, \emph{Growth of small generating subsets in
  ${SL}_n(\mathbb{Z}/p \mathbb{Z})$},  Int. Math. Res. Not. IMRN (2011), no. 18, 4226--4251.

\bibitem[GK07]{gk}
A.~A. Glibichuk and S.~V. Konyagin, \emph{Additive properties of product sets
  in fields of prime order}, Additive combinatorics, CRM Proc. Lecture Notes,
  vol.~43, Amer. Math. Soc., Providence, RI, 2007, pp.~279--286.

\bibitem[GLS98]{gls3}
D.~Gorenstein and R.~Lyons and R.~Solomon, \emph{The classification of the finite simple groups. Number 3. Part I. Chapter A. Almost simple K-groups}, Mathematical Surveys and Monographs, 40.3. American Mathematical Society, Providence, RI, 1998.

\bibitem[Gre05]{green}
B.~Green, \emph{Finite field models in additive combinatorics}, Surveys in combinatorics 2005, London Mathematical Sociey Lecture Note Series, 327, Cambridge University Press, Cambridge, 2005, pp.~1--27. 

\bibitem[Hel08]{helfgott2}
H.~A. Helfgott, \emph{Growth and generation in {${\rm SL}\sb 2(\mathbb{Z}/p
  \mathbb{Z})$}}, Ann. of Math. (2) \textbf{167} (2008), no.~2, 601--623.

\bibitem[Hel11]{helfgott3}
\bysame, \emph{Growth and generation in ${SL}_3(\mathbb{Z}/p \mathbb{Z})$}, J.
  Eur. Math. Soc. (JEMS) \textbf{13} (2011), no.~3, 761--851.

\bibitem[Hru12]{hrush}
E.~Hrushovski, \emph{Stable group theory and approximate subgroups}, J. Amer. Math. Soc. \textbf{25} (2012), no.~1, 189--243.

\bibitem[Hum75]{humphreys3}
J.~E. Humphreys, \emph{Linear algebraic groups}, Springer-Verlag, New York,
  1975, Graduate Texts in Mathematics, No. 21.

\bibitem[Kir08]{kirillov}
A.~Kirillov, Jr., \emph{An introduction to {L}ie groups and {L}ie algebras},
  Cambridge Studies in Advanced Mathematics, vol. 113, Cambridge University
  Press, Cambridge, 2008.

\bibitem[KL90]{kl}
P.~Kleidman and M.~Liebeck, \emph{The subgroup structure of the finite
  classical groups}, London Mathematical Society Lecture Note Series, vol. 129,
  Cambridge University Press, Cambridge, 1990.

\bibitem[LR04]{lr}
J.~C. Lennox and D.~J. S. Robinson, \emph{The theory of infinite soluble groups},
Oxford Mathematical Monographs, The Clarendon Press, Oxford University Press, Oxford, 2004.

\bibitem[Mal51]{malcev}
A.~I. Malʹcev, \emph{On some classes of infinite soluble groups}, Mat. Sbornik N.S. \textbf{28(70)} (1951), 567--588.

\bibitem[McN02]{mcninch}
G.~J. McNinch, \emph{Abelian unipotent subgroups of reductive groups}, J.
  Pure Appl. Algebra \textbf{167} (2002), no.~2-3, 269--300.

\bibitem[Ols84]{olson}
J.~E. Olson, \emph{On the sum of two sets in a group}, J. Number Theory
  \textbf{18} (1984), no.~1, 110--120.

\bibitem[PS]{ps2}
L.~Pyber and E.~Szab\'o, \emph{Growth in finite simple groups of lie type of
  bounded rank}, 2010, Preprint available on the Math arXiv: {\tt
  http://arxiv.org/abs/1005.1858}.

\bibitem[Rob82]{robinson}
D.~J.~S. Robinson, \emph{A course in the theory of groups}, Graduate Texts in
  Mathematics, vol.~80, Springer-Verlag, New York, 1982.

\bibitem[RT85]{ruztur}
I.~Z. Ruzsa and S.~Turj{\'a}nyi, \emph{A note on additive bases of integers},
  Publ. Math. Debrecen \textbf{32} (1985), no.~1-2, 101--104.

\bibitem[San12]{sanders}
T.~Sanders, \emph{Approximate groups and doubling metrics}, Math. Proc. Cambridge Philos. Soc. \textbf{152} (2012), no.~3, 385--404.

\bibitem[Ser03]{seress}
A.~Seress, \emph{Permutation group algorithms}, Cambridge Tracts in
  Mathematics, vol. 152, Cambridge University Press, Cambridge, 2003.

\bibitem[Spr09]{springer}
T.~A. Springer, \emph{Linear algebraic groups}, second ed., Modern Birkh\"auser
  Classics, Birkh\"auser Boston Inc., Boston, MA, 2009.

\bibitem[Tao]{taoblog}
T.~Tao, See blog post and subsequent discussion at \\ {\tt
  http://terrytao.wordpress.com/2009/06/21/freimans-theorem-for-solvable-group%
s/}.

\bibitem[Tao08]{taononcomm}
\bysame, \emph{Product set estimates for non-commutative groups}, Combinatorica
  \textbf{28} (2008), no.~5, 547--594.

\bibitem[Tao10]{taofrei}
\bysame, \emph{Freiman's theorem for solvable groups}, Contrib. Discrete Math. \textbf{5} (2010), no.~2, 137--184.
  
\bibitem[TV06]{taovu}
T.~Tao and V.~Vu, \emph{Additive combinatorics}, Cambridge Studies in Advanced
  Mathematics, vol. 105, Cambridge University Press, Cambridge, 2006.

\bibitem[Toi]{tointon}
M.~Tointon, \emph{Freiman's theorem in an arbitrary nilpotent group}, 2010, Preprint available on the Math arXiv: {\tt http://arxiv.org/abs/1211.3989}.
  
\bibitem[Var12]{varju}
P.~Varju, \emph{Expansion in {$\SL_d(\mathcal{O}_K/I)$}, {$I$} square-free}, J. Eur. Math. Soc. (JEMS) \textbf{14} (2012), no.~1, 273--305.

\end{thebibliography}
\end{document}